\newtheorem{theorem}{Theorem}[section]
\newtheorem{lemma}[theorem]{Lemma}
\newtheorem{corollary}[theorem]{Corollary}
\newtheorem{proposition}[theorem]{Proposition}
\theoremstyle{definition}
\newtheorem{definition}[theorem]{Definition}
\newtheorem{remark}[theorem]{Remark}
\newtheorem{example}[theorem]{Example}
\newcommand{\Q}{\mathrel{Q}}
\newcommand{\Qa}{\mathrel{Q_{\bf A}}}
\newcommand{\meet}{\wedge}
\newcommand{\join}{\vee}
\newcommand\upset{\mathord{\uparrow}}
\newcommand\downset{\mathord{\downarrow}}
\newcommand\Rm{{\bf RM^{t}}}
\newcommand{\twm}{\sqcap}
\newcommand{\twj}{\sqcup}
\newcommand{\twi}{\rightrightharpoons}
\newcommand{\twt}{\circ}
\newcommand{\la}{\langle}
\newcommand{\ra}{\rangle}
\newcommand{\Zo}{{\bf S}}
\newcommand{\Ze}{\mathord{{\bf S}\setminus\{0\}}}
\newcommand\les{\mathrel{\leq^\sharp}}
\newcommand\lesx{\mathrel{\leq_{\bf X}^\sharp}}
\newcommand\lesy{\mathrel{\leq_{\bf Y}^\sharp}}
\newcommand\lesa{\mathrel{\leq_{{\bf A}_*}^\sharp}}
\newcommand\lesb{\mathrel{\leq^{\bowtie}}}
\newcommand\slesb{\mathrel{<^{\bowtie}}}
\newcommand\pbow{\varphi^{\bowtie}}
\newcommand\inv{\mathord{\sim}}
\newcommand\ntwt{\bullet}
\newcommand\ntwi{\Rightarrow}
\newcommand\xia{\xi_{\bf A}}
\newcommand\xib{\xi_{\bf B}}
\newcommand\mux{\mu_{\bf X}}
\newcommand\muy{\mu_{\bf Y}}
\newcommand{\comp}{{\mathsf{c}}}
\newcommand{\grph}{\mathrm{grph}}
\def\ps@pprintTitle{%
 \let\@oddhead\@empty
 \let\@evenhead\@empty
 \def\@oddfoot{}%
 \let\@evenfoot\@oddfoot}
\begin{document}

\begin{frontmatter}

\title{Categories of Models of ${\bf R}$-mingle}

\author{Wesley Fussner}
\ead{wesley.fussner@du.edu}
\author{Nick Galatos\corref{cor1}}
\ead{nikolaos.galatos@du.edu}
\address{Department of Mathematics, University of Denver, 2390 S. York St., Denver, CO 80208, USA}

\cortext[cor1]{Corresponding author}

\begin{abstract}
We give a new Esakia-style duality for the category of Sugihara monoids based on the Davey-Werner natural duality for lattices with involution, and use this duality to greatly simplify a construction due to Galatos-Raftery of Sugihara monoids from certain enrichments of their negative cones. Our method of obtaining this simplification is to transport the functors of the Galatos-Raftery construction across our duality, obtaining a vastly more transparent presentation on duals. Because our duality extends Dunn's relational semantics for the logic ${\bf R}$-mingle to a categorical equivalence, this also explains the Dunn semantics and its relationship with the more usual Routley-Meyer semantics for relevant logics.
\end{abstract}

\begin{keyword}
Sugihara monoids, twist products, relevant logic, G\"odel algebras, relational semantics
\MSC[2010] 03G10\sep  03F52\sep   03B42\sep   03B50\sep    03B52
\end{keyword}

\end{frontmatter}

\section{Introduction}

This study concerns a constellation of categories closely tied to semantics for the relevance logic ${\bf R}$-mingle. At the center of this constellation, the Sugihara monoids form the equivalent algebraic semantics for $\Rm$ (i.e., ${\bf R}$-mingle equipped with Ackermann constants \cite{AndBel1}).  Sugihara monoids have received extensive attention in the literature (see, e.g., \cite{AndBel1, BlokDziobiak, MM,OR,Raftery}), and are known to be equivalent to several neighboring categories (see \cite{GR1,GR2,Urquhart}). These categories are hence all pairwise equivalent, and the interplay between these equivalences is the object of this inquiry. Consequently, we are less concerned with the existence of the equivalences than the \emph{form} which they take. Our attention is therefore focused on the nature of the functors witnessing the equivalences. Scrutiny of these functors reveals how relationships among the categories considered may be transported to different regions of the constellation. This yields, \emph{inter alia}, a categorically-adequate relational semantics for $\Rm$ and an analogue of the twist product construction on dual spaces.

This work stems in part from the authors' efforts to explicate Dunn's Kripke-style semantics for ${\bf R}$-mingle \cite{Dunn}. Dunn's semantics stands out from the more widely-known Routley-Meyer semantics for relevance logics (see \cite{RM1,RM2,RM3}) because it employs a binary, rather than ternary, accessibility relation. We explain this state of affairs by introducing a topological duality for the Sugihara monoids that underwrites Dunn's semantics in the same way that the Esakia duality \cite{Esa} underwrites the Kripke semantics for intuitionistic and modal logics.

After summarizing some necessary background information, Section \ref{sec:twist} lays the groundwork for constructing this duality. We refine the equivalence depicted in \cite{GR1,GR2} between the Sugihara monoids and their enriched negative cones. The latter algebras form a class of relative Stone algebras augmented by a nucleus and a designated constant, and we show that enriching relative Stone algebras by only a designated constant is adequate to achieve categorical equivalence. We show also that this is tantamount to considering relative Stone algebras with a designated filter forming a Boolean algebra. In light of the latter fact, we call such algebras \emph{relative Stone algebras with Boolean constant}. As in \cite{GR1,GR2}, the functors witnessing the equivalence of this section are variants of the negative cone and twist product constructions. However, unlike the functors used in \cite{GR1,GR2}, the functors introduced in this section tie Sugihara monoids much more closely to their involutive lattice reducts, which proves indispensable in the sequel.

Section \ref{sec:booldual} introduces necessary background on the Priestley and Esakia dualities, and develops a duality for relative Stone algebras with Boolean constant and their bounded analogues. It also explains the connection between the duality introduced here and the Bezhanishvili-Ghilardi duality for Heyting algebras equipped with nuclei \cite{BezGhi}.

In Section \ref{sec:natdual}, we recall some facts about natural duality theory and the Davey-Werner natural duality \cite{DavWer} between Kleene algebras and certain structured topological spaces that we call \emph{Kleene spaces}. We then extend the Davey-Werner duality to algebras without lattice bounds, and introduce a class of special Kleene spaces that we call Sugihara spaces.

Section \ref{sec:sugidual} uses the results of the previous two sections to develop a topological duality for Sugihara monoids. This duality is anchored in the modified version of the Davey-Werner duality introduced in the previous section, and stands to the Davey-Werner duality in much the same way that the Esakia duality stands to Priestley's duality for bounded distributive lattices. In particular, we show that the category of Sugihara monoids is dually equivalent to the category of Sugihara spaces.

Section \ref{sec:refcon} introduces a covariant equivalence between certain categories of structured topological spaces. In particular, it gives an explicit connection between the duality of the previous section and Urquhart's well-known duality for relevant algebras \cite{Urquhart} that we call the \emph{reflection construction}. Because the Urquhart duality extends the Routley-Meyer semantics to a categorical equivalence in the same way our duality extends Dunn's semantics to a categorical equivalence, the reflection construction explains the connection between the Dunn and Routley-Meyer semantics for ${\bf R}$-mingle. The reflection construction also amounts to a translation of the functors of Section \ref{sec:twist} to dual spaces, giving a version of the twist product construction on the duals of algebras. This presentation of the twist product turns out to be vastly simpler than its manifestation on the algebraic side of the duality, opening the door to the possibility of generalizing the construction to wider contexts.

\begin{figure}
\begin{center}
\begin{tikzcd}[row sep=2.8cm, column sep=2.8cm]
\textsf{SM}_{\bot} \arrow[shift left =0.5ex]{d}{(-)_{\bowtie}}
\arrow[shift left =0.5ex]{r}{(-)_*}
\arrow[shift left =0.5ex]{dr}{(-)_+}
& \textsf{SRS} \arrow[shift left=0.5ex]{d}{(-)_{\bowtie}}
\arrow[shift left=0.5ex]{l}{(-)^*} \\
\arrow[shift left=0.5ex]{u}{(-)^{\bowtie}}
\textsf{bGA} \arrow[shift left =0.5ex]{r}{(-)_*}
& \textsf{SS}
\arrow[shift left =0.5ex]{ul}{(-)^+}
\arrow[shift left=0.5ex]{l}{(-)^*} 
\arrow[shift left=0.5ex]{u}{(-)^{\bowtie}}
\end{tikzcd}
\end{center}

\caption{The diagram above depicts several of the equivalences considered in this study. The left-hand side gives the equivalence between the category of bounded Sugihara monoids \textsf{SM}$_{\bot}$ and the category \textsf{bGA} of G\"odel algebras enriched with a Boolean constant given in Section \ref{sec:twist}. The bottom of the diagram refers to the Esakia duality for G\"odel algebras with Boolean constant articulated in Section \ref{sec:booldual}, whereas the top of the diagram refers to Urquhart's duality for relevant algebras, as specialized to bounded Sugihara monoids. The diagonal equivalence is an Esakia-style duality for bounded Sugihara monoids developed in Section \ref{sec:sugidual}. The right-hand side of the diagram alludes to the dual version of the equivalence on the algebraic side of the square, given in the work on topological twist products in Section \ref{sec:refcon}. All the equivalences except those involving the category of Sugihara relevant spaces \textsf{SRS} have analogues for algebras without universal lattice bounds as well.}
\label{fig:summary}
\end{figure}

\section{Twist product representations for Sugihara monoids}\label{sec:twist}

We first recall some facts about commutative residuated lattices that are necessary to our investigation. For general reference on commutative residuated lattices and the proofs of the propositions alluded to here, we refer the reader to \cite{GJKO} and \cite{HRT}.

\subsection{Commutative residuated lattices} A \emph{commutative residuated lattice} (CRL) is an algebra $(A,\meet,\join,\cdot,\to,t)$ such that $(A,\meet,\join)$ is a lattice, $(A,\cdot, t)$ is a commutative monoid, and for all $a,b,c\in A$,
$$a\cdot b\leq c \iff a\leq b\to c$$
\noindent Note that the neutral element $t$ is sometimes denoted in the literature by $1$ or $e$.

A CRL need not enjoy bounds with respect to its underlying lattice order, but in the event that a CRL ${\bf A}$ possesses a lower bound $\bot$, it is bounded above as well. In fact, the upper bound of such a CRL is definable via the term $\bot\to\bot$. We thus refer to the expansion of a CRL ${\bf A}$ by a constant symbol $\bot$ designating a lower bound as a \emph{bounded} CRL. This expansion is term-equivalent to an expansion of ${\bf A}$ by constant symbols designating both the least and greatest elements of ${\bf A}$.

When ${\bf A}$ is an algebra with a (bounded) CRL reduct, we will denote its carrier by $A$ and its (bounded) lattice reduct by $\mathbb{A}$.

A CRL is called:

\begin{itemize}
\item \emph{integral} if the monoid identity is the greatest element with respect to its lattice order,
\item \emph{distributive} if its lattice reduct is a distributive lattice,
\item \emph{idempotent} if it satisfies the identity $x\cdot x = x$,
\item \emph{semilinear} if it is a subdirect product of totally-ordered CRLs.
\end{itemize}

\noindent The class of CRLs axiomatized by any (possibly empty) subset of the above conditions forms a variety. The following summarizes some significant quasiidentities that hold in these varieties.

\begin{proposition}\label{prop1}

Let ${\bf A} = (A,\meet,\join,\cdot,\to,t)$ be a CRL. Then ${\bf A}$ satisfies

\begin{enumerate}
\item $a\cdot (a\to b) \leq b$
\item $a\cdot (b\join c) = (a\cdot b) \join (a\cdot c)$
\item $a\to (b\meet c) = (a\to b)\meet (a\to c)$
\item $(a\join b)\to c = (a\to c)\meet (b\to c)$
\item $(a\cdot b)\to c = a\to (b\to c) = b\to (a\to c)$
\item $a\leq b \implies a\cdot c\leq b\cdot c$
\item $a\leq b \implies c\to a\leq c\to b$
\item $a\leq b \implies b\to c\leq a\to c$
\end{enumerate}

\end{proposition}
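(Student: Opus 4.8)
The plan is to derive all eight items directly from the residuation law $a\cdot b\leq c \iff a\leq b\to c$, together with the commutativity and associativity of $\cdot$ and the defining universal properties of $\meet$ and $\join$. The proposition is essentially an unpacking of the fact that for each fixed $b$ the maps $x\mapsto x\cdot b$ and $y\mapsto b\to y$ form a monotone residuated pair (a Galois-type connection). Accordingly, I would first isolate the order-theoretic consequences of such a pair and then read off each item, treating the foundational facts as lemmas on which the rest depend.

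First I would establish the monotonicity facts (6)--(8) and the evaluation inequality (1). For (1), applying residuation to the trivial inequality $a\to b\leq a\to b$ yields $(a\to b)\cdot a\leq b$, and commutativity gives (1). For (6), from $b\cdot c\leq b\cdot c$ residuation gives $b\leq c\to(b\cdot c)$; chaining with $a\leq b$ and applying residuation in the reverse direction yields $a\cdot c\leq b\cdot c$. Items (7) and (8) are obtained in the same spirit: (7) by residuating $(c\to a)\cdot c\leq a\leq b$, and (8) by combining (1) and (6) to get $(b\to c)\cdot a\leq (b\to c)\cdot b\leq c$ and then residuating.

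Next I would prove the currying identity (5). By residuation, $x\leq (a\cdot b)\to c$ iff $x\cdot(a\cdot b)\leq c$, while $x\leq a\to(b\to c)$ iff $(x\cdot a)\cdot b\leq c$; associativity and commutativity show these conditions coincide for every $x$, so the three terms of (5) are equal by antisymmetry of the order.

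Finally I would handle the lattice-interaction laws (2)--(4). Each is a double inequality whose easy direction follows at once from the monotonicity facts (6)--(8), while the reverse direction carries the substantive content, expressing that $x\mapsto x\cdot a$ preserves joins and that the residual sends joins in the appropriate argument to meets. For (2), the inequality $(a\cdot b)\join(a\cdot c)\leq a\cdot(b\join c)$ comes from (6); for the converse I would residuate to reduce the goal to $b\join c\leq a\to\big((a\cdot b)\join(a\cdot c)\big)$, then use that $\join$ is a least upper bound together with two applications of residuation to the summands. Items (3) and (4) follow the same template using the universal property of $\meet$ as a greatest lower bound. I do not anticipate a genuine obstacle, since every step is a formal manipulation of the adjunction; the only point demanding care is precisely these reverse inequalities in (2)--(4), where one must invoke the universal property of $\join$ or $\meet$ rather than mere monotonicity, so I would spell those arguments out in full.
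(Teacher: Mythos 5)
Your proposal is correct: every step (the evaluation inequality from $a\to b\leq a\to b$, the monotonicity facts, the currying argument for (5) via ``for all $x$'' and antisymmetry, and the reverse inequalities in (2)--(4) via the universal properties of $\join$ and $\meet$) is a valid instance of the residuation adjunction. The paper itself gives no proof, deferring to the standard references \cite{GJKO} and \cite{HRT}, and your derivation is essentially the same Galois-connection argument found there, so there is nothing to add.
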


\begin{proposition}\label{prop2}

Let ${\bf A} = (A,\meet,\join,\cdot,\to,t)$ be a semilinear CRL. Then ${\bf A}$ satisfies

\begin{enumerate}
\item $t\leq (a\to b)\join (b\to a)$
\item $a\cdot (b\meet c) = (a\cdot b)\meet (a\cdot c)$
\item $a\to (b\join c) = (a\to b)\join (a\to c)$
\item $(a\meet b)\to c = (a\to c)\join (b\to c)$
\end{enumerate}

\end{proposition}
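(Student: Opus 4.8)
The plan is to reduce all four identities to the case of totally-ordered CRLs. Since ${\bf A}$ is semilinear, it embeds as a subdirect product into $\prod_i {\bf A}_i$ where each ${\bf A}_i$ is totally ordered, and each of the four statements is an \emph{identity} (a universally quantified equation). Because the class of algebras satisfying a fixed identity is closed under subalgebras and direct products, it suffices to verify each identity in an arbitrary totally-ordered CRL; the identity then transfers to the subdirect product automatically. So I would fix a chain ${\bf C}$ and argue there.

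On any CRL one inequality of each of (2)--(4) already holds by the monotonicity laws of Proposition \ref{prop1}(6)--(8), so only the reverse inequality genuinely needs the chain hypothesis. Concretely: from $b\meet c\leq b$, $b\meet c\leq c$ and Proposition \ref{prop1}(6) we get $a\cdot(b\meet c)\leq (a\cdot b)\meet(a\cdot c)$; from $b,c\leq b\join c$ and Proposition \ref{prop1}(7) we get $(a\to b)\join(a\to c)\leq a\to(b\join c)$; and from $a\meet b\leq a,b$ with Proposition \ref{prop1}(8) we get $(a\to c)\join(b\to c)\leq (a\meet b)\to c$. These hold in every CRL, with no appeal to semilinearity.

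It then remains to establish the reverse inequalities, together with all of (1), on a chain by case analysis on the linear order. For (1), totality gives $a\leq b$ or $b\leq a$; since residuation yields $t\leq x\to y$ iff $t\cdot x = x\leq y$, one of the two joinands already lies above $t$. For (2), assume without loss of generality $b\leq c$; then $b\meet c=b$ and $a\cdot b\leq a\cdot c$, so $(a\cdot b)\meet(a\cdot c)=a\cdot b=a\cdot(b\meet c)$. For (3), taking $b\leq c$ gives $b\join c=c$ and $a\to b\leq a\to c$, whence $(a\to b)\join(a\to c)=a\to c=a\to(b\join c)$. For (4), taking $a\leq b$ gives $a\meet b=a$ and, by Proposition \ref{prop1}(8), $b\to c\leq a\to c$, so $(a\to c)\join(b\to c)=a\to c=(a\meet b)\to c$. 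Combining the two directions in each case yields the desired equalities on ${\bf C}$.

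The main obstacle here is conceptual rather than computational: the key is recognizing that semilinearity enters \emph{only} through the subdirect representation, so that once the general monotone inequalities from Proposition \ref{prop1} are in hand, the whole argument collapses to a two-case comparison on a chain. Everything after the reduction is routine, and the only mild subtlety is checking that the identity-preservation argument legitimately lifts the chain verification back to the subdirect product.
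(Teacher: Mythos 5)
Your proof is correct, and it is essentially the canonical argument: the paper itself gives no proof of this proposition, deferring to \cite{GJKO} and \cite{HRT}, where exactly your strategy --- observe that each statement is an identity (inequalities being expressible as equations), hence preserved under subalgebras and products, and then verify it on an arbitrary chain by a two-case comparison --- is the standard treatment. Your chain-case computations for (1)--(4) and the lifting step via the subdirect representation are all sound.
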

Note that a CRL is semilinear if and only if it is distributive and satisfies the identity (1) of Proposition \ref{prop2}.

A CRL for which $\cdot$ coincides with $\meet$ is called a \emph{Brouwerian algebra}, and an expansion of a Brouwerian algebra by a least element $\bot$ is called a \emph{Heyting algebra}. Brouwerian and Heyting algebras are among the most thoroughly-studied of all CRLs, and are integral, distributive, and idempotent. The semilinear Brouwerian algebras are called \emph{relative Stone algebras}, and the semilinear Heyting algebras are called \emph{G\"odel algebras}. We denote respectively by \textsf{Br}, \textsf{HA}, \textsf{RSA}, and \textsf{GA} the categories of Brouwerian algebras, Heyting algebras, relative Stone algebras, and G\"odel algebras. Here and whenever else we consider a category whose objects are algebras, we assume that the morphisms are the algebraic homomorphisms without additional comment. The following summarizes some useful algebraic properties of objects of these categories.

\begin{proposition}

Let ${\bf A}$ be an object of \textsf{Br}, \textsf{HA}, \textsf{RSA}, or \textsf{GA}. Then ${\bf A}$ satisfies the identities

\begin{enumerate}
\item $a\to a = t$
\item $a\meet (a\to b) = a\meet b$
\item $b\leq a\to b$
\end{enumerate}

\end{proposition}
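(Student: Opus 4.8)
The plan is to derive all three identities directly from the defining residuation property, exploiting the fact that each of these algebras is a Brouwerian algebra (or an expansion of one), so that multiplication coincides with meet, together with integrality. Two structural observations do all the work. First, since $\cdot$ agrees with $\meet$, the residuation law specializes to $a\meet b\leq c \iff a\leq b\to c$. Second, since the algebras are integral, $t$ is the top element with respect to the lattice order. With these in hand the three claims become essentially lattice-theoretic.

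For identity (3), residuation gives $b\leq a\to b \iff b\meet a\leq b$, and the right-hand inequality is immediate since $b\meet a\leq b$. For identity (1), the same residuation equivalence yields $x\leq a\to a \iff x\meet a\leq a$, and the latter holds for every $x\in A$; hence $a\to a$ is the greatest element of $A$, which by integrality is exactly $t$, giving $a\to a=t$. (Equivalently, $t\meet a=a\leq a$ forces $t\leq a\to a$, while $a\to a\leq t$ holds because $t$ is the top.)

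Finally, I would establish identity (2) by antisymmetry. For the inequality $a\meet(a\to b)\leq a\meet b$, I invoke Proposition \ref{prop1}(1), which (reading $\cdot$ as $\meet$) gives $a\meet(a\to b)\leq b$; combined with the trivial bound $a\meet(a\to b)\leq a$, this yields $a\meet(a\to b)\leq a\meet b$. For the reverse inequality $a\meet b\leq a\meet(a\to b)$, I apply identity (3), already proved, to replace $b$ by $a\to b$ inside the meet, using monotonicity of $\meet$.

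There is no substantive obstacle here; everything reduces to the identification $\cdot=\meet$ and integrality. The only point requiring attention is the dependency order: identity (2) relies on identity (3), so I would prove (3) (and (1)) before turning to (2), rather than attempting all three simultaneously.
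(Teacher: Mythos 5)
Your proof is correct and follows the standard route: the paper itself states this proposition without proof, deferring to the references \cite{GJKO} and \cite{HRT}, and the argument given there is exactly yours, namely residuation specialized via $\cdot=\meet$ to $a\meet b\leq c \iff a\leq b\to c$, with $t$ the greatest element. Two minor simplifications are available: integrality need not be invoked as a separate hypothesis, since $a=a\cdot t=a\meet t$ already shows $t$ is the top element of any Brouwerian algebra; and in (2) the inequality $a\meet(a\to b)\leq b$ follows directly by applying residuation to $a\to b\leq a\to b$, so the appeal to Proposition \ref{prop1}(1) is dispensable (though harmless, as that proposition is available earlier in the paper).
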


\begin{proposition}[{{\cite[Lemma 4.1]{GR2}}}]\label{prop:topjoin}
Let ${\bf A}$ be an object of \textsf{RSA} and let $a,b\in A$. Then the following are equivalent.
\begin{enumerate}
\item $a\to b=b$ and $b\to a=a$.
\item $a\join b = t$.
\end{enumerate}
\end{proposition}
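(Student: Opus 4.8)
The plan is to reduce both implications to identities already established above, after pinning down two elementary facts about the neutral element $t$ in a relative Stone algebra. Since relative Stone algebras are integral, $t$ is the greatest element of $\mathbf{A}$, so any meet or join involving $t$ collapses: $x \meet t = x$ and $x \join t = t$. I would also record that $t \to x = x$ for every $x \in A$. The inequality $t \to x \leq x$ follows by residuation, since $t \cdot (t \to x) \leq x$ and $t$ is neutral; the reverse inequality $x \leq t \to x$ is the instance $a = t$ of the identity $b \leq a \to b$ from the proposition on Brouwerian algebras. These two observations are the only preliminary work required.

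For the direction $(2) \Rightarrow (1)$, I would assume $a \join b = t$ and apply the CRL identity $(a \join b) \to c = (a \to c) \meet (b \to c)$, namely Proposition \ref{prop1}(4), with $c = b$. The left-hand side becomes $t \to b = b$, while the right-hand side is $(a \to b) \meet (b \to b) = (a \to b) \meet t = a \to b$, using $b \to b = t$ and integrality. This yields $a \to b = b$, and the symmetric choice $c = a$ gives $b \to a = a$. The direction $(1) \Rightarrow (2)$ is even more immediate: by semilinearity, Proposition \ref{prop2}(1) gives $t \leq (a \to b) \join (b \to a)$, and integrality forces equality, so $(a \to b) \join (b \to a) = t$. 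Substituting the hypotheses $a \to b = b$ and $b \to a = a$ then gives $b \join a = t$.

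I do not expect a genuine obstacle here; the entire content lies in selecting the correct pre-established identities and verifying the two small facts about $t$. The one point that demands care is confirming that each invoked identity actually holds in the class \textsf{RSA}: Proposition \ref{prop1}(4) is valid in every CRL, whereas Proposition \ref{prop2}(1) relies on semilinearity, which relative Stone algebras enjoy by definition, and the collapse of the prelinearity inequality to an equality relies on integrality, which they inherit as Brouwerian algebras.
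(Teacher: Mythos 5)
Your proof is correct. One thing to note at the outset: the paper itself supplies no proof of this proposition---it is imported as \cite[Lemma 4.1]{GR2}---so there is no internal argument to compare against, and your write-up is in effect filling a deliberate gap. Checking the details: your two preliminary facts are sound ($t\to x\leq x$ by residuation with $t$ neutral, $x\leq t\to x$ as the instance $a=t$ of $b\leq a\to b$, hence $t\to x=x$); the direction $(2)\Rightarrow(1)$ correctly instantiates Proposition \ref{prop1}(4) at $c=b$ and $c=a$, using $b\to b=t$ and integrality to collapse the meet; and $(1)\Rightarrow(2)$ correctly upgrades the prelinearity inequality of Proposition \ref{prop2}(1) to the equality $(a\to b)\join(b\to a)=t$ via integrality before substituting the hypotheses. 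Your closing diligence about where each identity is valid (Proposition \ref{prop1}(4) in all CRLs, Proposition \ref{prop2}(1) only under semilinearity) is exactly the right care. It is worth observing that both directions are quasi-identities, so an alternative standard route---and one in the spirit of how the paper elsewhere argues, e.g.\ in Lemma \ref{nterm}---would be to verify the claim on totally ordered relative Stone algebras, where $x\to y$ is $t$ if $x\leq y$ and $y$ otherwise, and then invoke the subdirect decomposition into chains; your argument avoids any representation theorem and is purely equational, which makes it shorter and self-contained given the identities already recorded in the paper.
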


A \emph{nucleus} on a CRL ${\bf A}$ is a closure operator $N\colon {\bf A}\to {\bf A}$ satisfying the identity
$$Na\cdot Nb \leq N(a\cdot b)$$
\noindent One canonical way of defining a nucleus is given by the following.

\begin{example}\label{nucexample}

Let ${\bf A}=(A,\meet,\join,\to,t)$ be a Brouwerian algebra and let $d\in A$. Then the map $N\colon {\bf A}\to {\bf A}$ defined by $Na = d\to a$ is a standard example of a nucleus on ${\bf A}$.

\end{example}

Every CRL may be associated with an integral CRL via the \emph{negative cone} construction. Given a 
CRL ${\bf A} = (A,\meet,\join,\cdot,\to,t)$, let $A^- = \{a\in A : a\leq t\}$ be its collection of \emph{negative elements} and define the negative cone of ${\bf A}$ to be the algebra ${\bf A}^- = (A^-, \meet,\join,\cdot, \to^-, t)$, where $a\to^- b = (a\to b)\meet t$. Then ${\bf A}^-$ is a CRL, and it is obviously integral.

\subsection{Sugihara monoids and their negative cones}

A unary operation $\neg$ on a CRL that satisfies the identities $\neg\neg x=x$ and $\neg x\to y = y\to\neg x$ is called an \emph{involution}, and an expansion of a CRL by an involution is called an \emph{involutive} CRL. A \emph{Sugihara monoid} is a distributive, idempotent, involutive CRL. We denote the category of Sugihara monoids by $\textsf{SM}$, and the category of bounded Sugihara monoids by \textsf{SM}$_\bot$. The Sugihara monoids form a variety, and as Dunn proved in \cite{AndBel1}, they are semilinear.

The Sugihara monoids form the equivalent algebraic semantics (in the sense of \cite{BlokPigozzi}) for the relevance logic $\Rm$ of {\bf R}-mingle as formulated with Ackermann constants. The Sugihara monoids satisfying the identity $\neg t =t$ are called \emph{odd}, and the odd Sugihara monoids (with bounds) form the equivalent algebraic semantics of the logic {\bf IUML$^*$} ({\bf IUML}, respectively) of \cite{MM}.

We consider some examples of significant Sugihara monoids that will be useful in the sequel.

\begin{example}\label{oddex}

Define an algebra $\Zo = (\mathbb{Z}, \meet,\join,\cdot, \to, 0, -)$, where $\meet$ and $\join$ give the lattice operations of the usual order on the integers $\mathbb{Z}$, $-$ is the usual additive inversion on the integers, $\cdot$ is given by

\[ x\cdot y = \begin{cases} 
      x & |x|>|y| \\
      y & |x|<|y| \\
      x\meet y & |x|=|y| \\
   \end{cases}
\]\\
\noindent and $\to$ is given by

\[ x\to y = \begin{cases} 
      (-x)\join y & x\leq y \\
      (-x)\meet y & x\not\leq y \\
   \end{cases}
\]\\

\noindent Then $\Zo$ is a Sugihara monoid. $\Zo$ is obviously odd.

\end{example}

\begin{example}

Define a Sugihara monoid $\Ze = (\mathbb{Z} \setminus\{0\},\meet,\join,\cdot, \to, 1, -)$, where each of the non-nullary operations are defined as in Example \ref{oddex}. Then $\Ze$ is a Sugihara monoid with monoid identity $1$. $\Ze$ is not odd.

\end{example}

\begin{example}

Let $n$ be a positive integer. If $n=2m+1$ is odd, the set $\{-m,\dots, -1,0,1,\dots, m\}$ is the universe of a subalgebra of $\Zo$ having $n$ elements. If $n=2m$ is even, then $\{-m,\dots,-1,1,\dots m\}$ is the universe of a subalgebra of $\Ze$ having $n$ elements. We denote the $n$-element Sugihara monoid so defined by ${\bf S}_n$. The Sugihara monoid ${\bf S}_n$ is odd if and only if $n$ is odd.

\end{example}

\begin{figure}
\begin{center}
\begin{tikzpicture}
    \node[label=right:\tiny{$\la 2,2\ra$}] at (0,0) {$\bullet$};
    \node[label=left:\tiny{$\la 1,1\ra$}] at (0,-0.5) {$\bullet$};
    \node[label=left:\tiny{$\la 0,1\ra$}] at (-0.5,-1)  {$\bullet$};
    \node[label=right:\tiny{$\la 1,-1\ra$}] at (0.5,-1) {$\bullet$};
    \node[label=right:\tiny{$\la 0,-1\ra$}] at (0,-1.5) {$\bullet$};
    \node[label=left:\tiny{$\la -1,1\ra$}] at (-1,-1.5) {$\bullet$};
    \node[label=right:\tiny{$\la -1,-1\ra$}] at (-0.5,-2) {$\bullet$};
    \node[label=right:\tiny{$\la -2,-2\ra$}] at (-0.5,-2.5) {$\bullet$};

    \draw (0,0) -- (0,-0.5);
    \draw (0,-0.5) -- (-0.5,-1);
    \draw (0,-0.5) -- (0.5,-1);
    \draw (-0.5,-1) -- (0,-1.5);
    \draw (0.5,-1) -- (0,-1.5);
    \draw (-0.5,-1) -- (-1,-1.5);
    \draw (-1,-1.5) -- (-0.5,-2);
    \draw (0.5,-1) -- (-0.5,-2);
    \draw (-0.5,-2) -- (-0.5,-2.5);
\end{tikzpicture}
\end{center}
\caption{Hasse diagram for ${\bf E}$}
\label{fig:HasseE}
\end{figure}

\begin{example}\label{parex}
We may define a nonlinear example on the subuniverse of ${\bf S}_5\times{\bf S}_4$ given by $$E = \{\la -2,-2\ra,\la -1,-1\ra,\la -1,1\ra,\la 0,-1\ra, \la 0,1\ra, \la 1,-1\ra, \la 1,1\ra, \la 2,2\ra\}.$$ $E$ forms the carrier of a subalgebra ${\bf E}$ of  ${\bf S}_5\times {\bf S}_4$, whose Hasse diagram is given in Figure \ref{fig:HasseE}.
\end{example}

With these examples in mind, we recall the following well-known fact (see, e.g., \cite{OR}).

\begin{proposition}\label{prop:Sugiharagenerators}
The Sugihara monoids are generated as a quasivariety by $\{\Zo, \Ze\}$.
\end{proposition}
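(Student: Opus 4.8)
The plan is to prove the two inclusions $\mathbb{Q}(\{\Zo,\Ze\}) \subseteq \textsf{SM}$ and $\textsf{SM} \subseteq \mathbb{Q}(\{\Zo,\Ze\})$, where $\mathbb{Q}$ denotes generation as a quasivariety. The first inclusion is immediate, since $\Zo$ and $\Ze$ are Sugihara monoids and $\textsf{SM}$ is a variety, hence a quasivariety. For the converse I would use the fact that $\mathbb{Q}(\{\Zo,\Ze\}) = \mathbb{ISPP}_{\mathrm{U}}(\{\Zo,\Ze\})$, so that it suffices to realize each Sugihara monoid as a subalgebra of a product of members already known to lie in the quasivariety. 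Since Sugihara monoids are semilinear, every $\mathbf{A} \in \textsf{SM}$ is a subdirect product of totally ordered Sugihara monoids; as quasivarieties are closed under $\mathbb{S}$ and $\mathbb{P}$, it is then enough to show that every totally ordered Sugihara monoid belongs to $\mathbb{Q}(\{\Zo,\Ze\})$.

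Next I would analyze the operations of a totally ordered Sugihara monoid $\mathbf{C}$. Because the order is total, $\meet$ and $\join$ return one of their arguments, and the residuation law together with idempotence forces $\cdot$ and $\to$ to obey the same rules as in Example \ref{oddex}; in particular $a \cdot b \in \{a,b\}$ and $a \to b \in \{\neg a, b\}$ for all $a,b \in C$. Consequently, for any finite $X \subseteq C$ the set $X \cup \neg X \cup \{t, \neg t\}$ is already closed under all the operations, and hence is a finite subalgebra containing $X$; thus every finitely generated subalgebra of $\mathbf{C}$ is finite. A finite totally ordered Sugihara monoid with $n$ elements is determined up to isomorphism by $n$ together with the parity governing its involution, and is therefore isomorphic to ${\bf S}_n$; by the construction of ${\bf S}_n$ it embeds into $\Zo$ when $n$ is odd and into $\Ze$ when $n$ is even. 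Hence every finitely generated subalgebra of $\mathbf{C}$ lies in $\mathbb{IS}(\{\Zo,\Ze\}) \subseteq \mathbb{Q}(\{\Zo,\Ze\})$.

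Finally, $\mathbf{C}$ is the directed union of its finitely generated subalgebras, each of which is finite and lies in $\mathbb{Q}(\{\Zo,\Ze\})$. Since quasi-identities are universal first-order sentences, they are preserved under directed unions of subalgebras: any tuple witnessing the hypothesis of a quasi-identity lies in one of the finite subalgebras, where the conclusion then holds, and quantifier-free formulas are absolute between a subalgebra and the whole algebra. Therefore $\mathbf{C}$ satisfies every quasi-identity valid in all the ${\bf S}_n$, and in particular every quasi-identity valid in both $\Zo$ and $\Ze$, so $\mathbf{C} \in \mathbb{Q}(\{\Zo,\Ze\})$. Combined with the reduction of the first paragraph, this yields $\textsf{SM} \subseteq \mathbb{Q}(\{\Zo,\Ze\})$ and completes the argument. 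I expect the main obstacle to be the verification that the monoid and residual operations on an \emph{arbitrary} totally ordered Sugihara monoid are forced to agree with those of Example \ref{oddex} --- that is, that $a \cdot b \in \{a,b\}$ and $a \to b \in \{\neg a, b\}$ --- since this is precisely what makes finitely generated subalgebras finite and drives the whole reduction; the closure of quasivarieties under directed unions is then a routine model-theoretic observation.
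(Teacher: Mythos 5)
Note first that the paper does not actually prove this proposition: it is recorded as a well-known fact with a citation to \cite{OR}, so there is no internal proof to compare against. Your argument is essentially the standard proof from the literature, and its skeleton is sound: $\mathbb{Q}(\{\Zo,\Ze\})\subseteq\textsf{SM}$ is trivial; semilinearity reduces the converse to totally ordered Sugihara monoids; conservativity of the operations makes every finitely generated subalgebra of a Sugihara chain finite; finite Sugihara chains are the ${\bf S}_n$, which embed in $\Zo$ or $\Ze$ by construction; and the closing step is correct because $\mathbb{Q}(\{\Zo,\Ze\})$ is the class of models of the quasi-identities valid in $\{\Zo,\Ze\}$, and universal Horn sentences pass to directed unions of subalgebras exactly as you say.

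The two steps you defer are genuine but fillable, so you should know they do not fail. For conservativity: if $x,y\geq t$ then monotonicity and idempotence give $x\join y\leq x\cdot y\leq(x\join y)\cdot(x\join y)=x\join y$, so $x\cdot y=x\join y$; dually $x\cdot y=x\meet y$ when $x,y\leq t$; and for $x\leq t\leq y$, setting $z=x\cdot y$ one computes $z\cdot x=z\cdot y=z$ from idempotence and associativity, so if $z\geq t$ then $z=z\cdot y=z\join y=y$, while if $z\leq t$ then $z=z\cdot x=z\meet x=x$. Hence $x\cdot y\in\{x,y\}$ in all cases, and $x\to y=\neg(x\cdot\neg y)\in\{\neg x,y\}$ by contraposition, which is what your local finiteness argument needs. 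For the identification with ${\bf S}_n$, slightly more is required than you state: on a finite chain the antitone involution is the unique order-reversing bijection; the inequality $a\meet\neg a\leq\neg t\leq t\leq a\join\neg a$ (Proposition \ref{prop:reduct}) then pins $t$ to the fixed point of the involution when $n$ is odd and to the upper of the two middle elements when $n$ is even; and the mixed-sign products are forced by residuation, since for $x\leq t\leq y$ one has $x\cdot\neg x\leq\neg t$ (as $\neg x=x\to\neg t$ and $a\cdot(a\to b)\leq b$ by Proposition \ref{prop1}), whence conservativity gives $x\cdot\neg x=x$, so $x\to x=\neg x$ and $x\cdot y\leq x$ iff $y\leq\neg x$ --- i.e., the product is the argument of larger absolute value, with the meet in case of ties, exactly as in Example \ref{oddex}. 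In particular there is no separate ``parity parameter'': $n$ alone determines the algebra, since the parity of the involution is itself determined by $n$. With these two verifications written out, your proof is complete.
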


The central result of \cite{GR2} establishes that \textsf{SM} is equivalent to the category \textsf{EnSM$^-$} of enriched negative cones of Sugihara monoids, which we define presently. The objects of \textsf{EnSM$^-$} are algebras ${\bf A} = (A, \meet, \join, \to, t, N, f)$, where $(A,\meet,\join,\to, t)$ is a relative Stone algebra, $N$ is a nucleus on {\bf A}, and $f\in A$, all satisfying the universal conditions
\begin{align*}
a\join (a\to f) &= t\\
N (Na\to a) &= t\\
Na = t \iff &f\leq a
\end{align*}
\noindent Similarly define \textsf{EnSM$^-_\bot$} to be the category whose objects are expansions of objects of \textsf{EnSM$^-$} by a least element $\bot$ and whose morphisms are those of \textsf{EnSM$^-$} preserving the constant $\bot$.

The covariant functors $C$ and $S$, defined as follows, witness the equivalence of \textsf{EnSM$^-$} and \textsf{SM}. First, define the functor $C\colon \textsf{SM}\to \textsf{EnSM$^-$}$ for a Sugihara monoid ${\bf A} =(A,\meet,\join,\cdot,\to,t,\neg)$ of \textsf{SM} by $C({\bf A}) = ({\bf A}^-, N, \neg t)$, where $N$ is the nucleus on ${\bf A}^-$ defined by $Na = (a\to t)\to t$. For a morphism $h\colon {\bf A}\to {\bf B}$ of \textsf{SM}, define $C(h)\colon C({\bf A})\to C({\bf B})$ by $C(h) = h\restriction_{A^-}$, the restriction of $h$ to the collection of negative elements of ${\bf A}$.

To obtain the reverse functor, for an object ${\bf A} =(A,\meet,\join,\to,t, N, f)$ of \textsf{EnSM$^-$} define
$$\Sigma({\bf A}) = \{\langle a,b\rangle \in A\times A : a\join b = t \text{ and } Nb=b\}.$$
\noindent Define the functor $S\colon \textsf{EnSM$^-$}\to \textsf{SM}$ on objects ${\bf A} =(A,\meet,\join,\to,t, N, f)$ of \textsf{EnSM$^-$} by $S({\bf A}) = (\Sigma ({\bf A}), \twm,\twj,\twt,\twi, \langle t,t \rangle, \neg)$, where 
\begin{align*}
\langle a,b\rangle \twm \langle c,d \rangle &= \langle a\meet c, b\join d \rangle\\
\langle a,b\rangle \twj \langle c,d\rangle &= \langle a\join c, b\meet d\rangle\\
\langle a,b\rangle \twt \langle c,d\rangle &= \langle ((a\to d)\meet (c\to b))\to (a\meet c), N((a\to d)\meet (c\to b))\rangle\\
\langle a,b\rangle \twi \langle c,d\rangle &= \langle (a\to c)\meet (d\to b), N(((a\to c)\meet (d\to b))\to (a\meet d))\rangle\\
\neg \langle a,b \rangle &= \langle a,b\rangle \twi \langle f,t \rangle \\
&= \la (a\to f)\meet b, N(((a\to f)\meet b)\to a) \ra
\end{align*}

\noindent For a morphism $h\colon {\bf A}\to {\bf B}$ of \textsf{EnSM$^-$}, define a morphism  $S(h)\colon S({\bf A})\to S({\bf B})$ of \textsf{SM} by $S(h)\la a,b \ra = \la h(a), h(b) \ra$. Under these definitions, the functors $C$ and $S$ yield a (covariant) equivalence between the categories \textsf{EnSM$^-$} and \textsf{SM}. Moreover, this equivalence may be extended to the bounded algebras arising from objects of \textsf{EnSM$^-$} and \textsf{SM}, giving an equivalence between the corresponding categories of bounded algebras \textsf{EnSM$^-_\bot$} and \textsf{SM}$_\bot$. $C$ and $S$ are extended as follows in order to obtain the latter equivalence. If $({\bf A},\bot)$ is an object of \textsf{SM}$_\bot$, extend the definition of $C$ by associating to $({\bf A}, \bot)$ the object $(C({\bf A}), \bot)$ of \textsf{EnSM$^-_\bot$}. Likewise, if $({\bf A},\bot)$ is an object of \textsf{EnSM$^-_\bot$}, extend $S$ by associating with $({\bf A},\bot)$ the Sugihara monoid $S({\bf A})$ with designated lower-bound $la \bot,t\ra$.

In \cite{GR2}, the functor $C$ was called the \emph{nuclear negative cone functor}. On the other hand, the functor $S$ is a variant of the twist product construction, originally introduced by Kalman in \cite{Kalman} (but see also, e.g., \cite{Fidel, Kracht, O1, O2, O3, TsinWille} for a sample of the rapidly-growing literature on twist products). It is noteworthy that the involution arising from $S$ does not coincide with the usual twist product involution $\la a,b\ra\mapsto \la b,a\ra$, although it does when the equivalence depicted above is restricted to \emph{odd} Sugihara monoids, as chronicled in \cite{GR1}. This mismatch between the usual twist product involution and the involution arising from $S$ proves undesirable for the applications that follow, so we first recast the construction from \cite{GR2} in order to restore the simple involution $\la a,b\ra\mapsto \la b,a\ra$. This requires further scrutiny of the algebraic structure of the variety \textsf{EnSM$^-$}.

\subsection{Algebras with Boolean constant}

Let ${\bf A}$ be a Brouwerian algebra. We call a lattice filter $F$ of ${\bf A}$ a \emph{Boolean filter} if $F$, considered as a lattice with the operations inherited from ${\bf A}$, is a Boolean lattice (i.e., a complemented, bounded, distributive lattice). Note that we admit the one-element Boolean lattice as a potential Boolean filter, and under this convention every Brouwerian algebra has at least one Boolean filter (i.e., $\{t\}$, where $t$ is the greatest element of the Brouwerian algebra).

\begin{lemma}\label{lem:boolneg}
Let ${\bf A} = (A,\meet,\join,\to,t)$ be a Brouwerian algebra, $F$ be a Boolean filter of ${\bf A}$ with least element $f$, and $a\in F$. Then the complement of $a$ in $F$ is precisely $a\to f$.
\end{lemma}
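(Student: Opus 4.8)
The plan is to show that for $a\in F$, the element $a\to f$ serves as the complement of $a$ within the Boolean lattice $F$. Recall that the complement of $a$ in $F$ is the unique element $c\in F$ satisfying $a\meet c = f$ (the least element of $F$) and $a\join c = t$ (the greatest element of $F$, since $t$ is the top of the Brouwerian algebra and $F$ is a filter containing it). Thus I first need to verify that $a\to f$ genuinely lies in $F$, and then that it satisfies these two defining equations. Uniqueness of complements in a Boolean lattice will then finish the identification.

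First I would establish membership: since $F$ is a filter with least element $f$ and $f\leq a\to f$ by part (3) of the proposition on Brouwerian identities ($b\leq a\to b$), we get $a\to f\in F$ because filters are upward closed. Next I would verify the meet condition. Using part (2) of that same proposition, $a\meet(a\to b)=a\meet b$, so $a\meet (a\to f)=a\meet f=f$, the last equality holding because $f$ is the least element of $F$ and $a\in F$. This handles $a\meet(a\to f)=f$ cleanly.

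The main obstacle, and the step requiring the most care, will be the join condition $a\join(a\to f)=t$. This is where the hypothesis that $F$ is a Boolean filter (rather than an arbitrary filter) must be used, since in a general relative Stone algebra one need not have $a\join(a\to f)=t$ for arbitrary $f\leq a$. Here I would exploit that $a$ has an honest complement $c$ in the Boolean lattice $F$, meaning $a\meet c=f$ and $a\join c=t$. From $a\meet c=f$ and residuation, $c\leq a\to f$, so $t=a\join c\leq a\join(a\to f)\leq t$, forcing $a\join(a\to f)=t$. This simultaneously shows $a\to f$ dominates the complement $c$; combined with the meet computation above, $a\to f$ is itself a complement of $a$ in $F$.

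Finally, since $F$ is a Boolean lattice and complements in a (bounded, distributive) Boolean lattice are unique, the element $a\to f$ must coincide with the complement $c$ of $a$ in $F$. This yields the claim that the complement of $a$ in $F$ is precisely $a\to f$.
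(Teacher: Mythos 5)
Your proof is correct and follows essentially the same route as the paper: membership of $a\to f$ in $F$ via $f\leq a\to f$, the meet condition from $a\meet(a\to f)\leq f$ together with $f$ being least in $F$, and the join condition by residuating $a\meet c\leq f$ to get $c\leq a\to f$ and hence $t=a\join c\leq a\join(a\to f)$. Your closing appeal to uniqueness of complements in a distributive lattice is a harmless (and slightly more explicit) finish than the paper's.
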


\begin{proof}
Note that $a\to f\geq f$ gives that $a\to f\in F$. Since $a\in F$ as well, this shows that $a\meet (a\to f)\in F$. But $a\meet (a\to f)\leq f$, and as $f$ is the least element of $F$, it follows that $a\meet (a\to f)=f$. On the other hand, since $F$ is a Boolean filter and $a\in F$, $a$ has a complement $c$ in $F$. This gives that $a\meet c\leq f$, so by residuation we get $c\leq a\to f$. Then $t=a\join c\leq a\join (a\to f)$, so $a\join (a\to f)=t$. It follows that $a\to f$ is the complement of $a$ in $F$.
\end{proof}

\begin{proposition}\label{bool}
Let ${\bf A} = (A,\meet,\join,\to,t)$ be a Brouwerian algebra and let $f\in A$. Then the following are equivalent.
\begin{enumerate}
\item $a\join (a\to f)=t$ for all $a\in\upset f$.
\item $a\join (a\to f)=t$ for all $a\in A$.
\item $\upset f$ is a Boolean lattice.
\end{enumerate}
\end{proposition}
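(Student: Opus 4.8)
The plan is to establish the cycle $(2)\Rightarrow(1)\Rightarrow(3)\Rightarrow(2)$. The implication $(2)\Rightarrow(1)$ is immediate, since $\upset f\subseteq A$. The first substantive step is $(1)\Rightarrow(3)$. I would begin by noting that $\upset f$, equipped with the operations inherited from ${\bf A}$, is a bounded sublattice with least element $f$ and greatest element $t$, and is distributive because ${\bf A}$ is. Thus only complementation needs to be verified, and I would show that for each $a\in\upset f$ the element $a\to f$ serves as its complement. It lies in $\upset f$ because $f\leq a\to f$ (the identity $b\leq a\to b$ recorded above). The identity $a\join(a\to f)=t$ is exactly hypothesis $(1)$, and for the meet I would compute $a\meet(a\to f)=a\meet f=f$, using the identity $a\meet(a\to b)=a\meet b$ together with $a\geq f$. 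This makes $\upset f$ complemented, bounded, and distributive, hence a Boolean lattice.

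For $(3)\Rightarrow(2)$, observe that $\upset f$ is now a Boolean filter with least element $f$, so Lemma \ref{lem:boolneg} yields $a\join(a\to f)=t$ for every $a\in\upset f$. The crux is to extend this to an arbitrary $a\in A$, which need not lie above $f$. Here I would apply the preceding identity to $a\join f\in\upset f$ and simplify $(a\join f)\to f$ by Proposition \ref{prop1}(4) together with $f\to f=t$, obtaining $(a\join f)\to f=(a\to f)\meet t=a\to f$. This gives $(a\join f)\join(a\to f)=t$. Finally, since $f\leq a\to f$, the summand $f$ is absorbed, so $a\join(a\to f)=(a\join f)\join(a\to f)=t$, as required.

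The bulk of the computation is routine residuation bookkeeping drawn from Proposition \ref{prop1} and the basic Brouwerian identities. The one step that genuinely demands care is the extension from $\upset f$ to all of $A$ in $(3)\Rightarrow(2)$: the device of first joining with $f$ to land inside the Boolean filter, simplifying the resulting implication, and then absorbing $f$ back out is exactly what bridges the gap between conditions stated for $\upset f$ and conditions stated globally.
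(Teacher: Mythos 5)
Your proof is correct and follows essentially the same route as the paper: the cycle $(1)\Rightarrow(3)\Rightarrow(2)\Rightarrow(1)$, with $a\to f$ as the complement witness in $\upset f$ and Lemma \ref{lem:boolneg} driving $(3)\Rightarrow(2)$. The only (cosmetic) difference is that in $(3)\Rightarrow(2)$ you apply the complement identity to $a\join f$ and then absorb $f$, whereas the paper applies it to $a\join(a\to f)$; both computations are routine and equivalent in spirit.
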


\begin{proof}
First, we show that (1) implies (3), so suppose that $a\join (a\to f)=t$ for all $a\in\upset f$. Let $a\in\upset f$. Then $a\meet (a\to f)\leq f$, so as $a\to f\geq f$ yields $a,a\to f\in\upset f$ this gives $a\meet (a\to f)=f$. On the other hand, $a\join (a\to f)=t$ by hypothesis. This shows that each $a\in\upset f$ has a complement (namely, $a\to f$), and hence that $\upset f$ is a Boolean filter.

Second, we show that (3) implies (2). Suppose that $\upset f$ is a Boolean filter, and let $a\in A$. Then since $a\to f\geq f$, we have that $a\join (a\to f)\in\upset f$ and hence has a complement  in $\upset f$, and this complement is $(a\join (a\to f))\to f$ by Lemma \ref{lem:boolneg}. Observe that
\begin{align*}
t &= (a\join (a\to f))\join ((a\join (a\to f))\to f)\\
&= (a\join (a\to f))\join ((a\to f)\meet ((a\to f)\to f))\\
&\leq (a\join (a\to f))\join f\\
&\leq a\join (a\to f)
\end{align*}
This gives that $a\join (a\to f)=t$ as desired.

Since (2) implies (1) trivially holds, this gives the result.
\end{proof}

In light of Proposition \ref{bool}, we call an expansion of a Brouwerian algebra (Heyting algebra) ${\bf A}$ by a designated constant $f$ satisfying $a\join (a\to f) = t$ a \emph{Brouwerian algebra with Boolean constant} (respectively, \emph{Heyting algebra with Boolean constant}). For the present purposes, our interest is focused on the semilinear members of these classes. We thus denote the category of relative Stone algebras with Boolean constant by \textsf{bRSA}. Likewise, we denote the category of G\"odel algebras with Boolean constant by \textsf{bGA}. For brevity, we respectively call the objects of these categories bRS-algebras and bG-algebras.

In spite of the defining condition $Na=t \iff f\leq a$, the objects of \textsf{EnSM$^-$} turn out to form a variety. The subdirect irreducibles in this variety are characterized by the comments on pp. $3207$ and $3192$ of \cite{GR2} as follows.

\begin{proposition}

An object $(A,\meet,\join,\to,t,N,f)$ of \textsf{EnSM$^-$} is subdirectly irreducible iff it is totally ordered, $\{a\in A : a < t\}$ has a greatest element, and one of the following holds:

\begin{enumerate}

\item $f=t$ and $N$ is the identity function on $A$, or
\item $f$ is the greatest element of $\{a\in A : a <t\}$, $Nf=t$, and $Na=a$ whenever $a\neq f$.

\end{enumerate}

\end{proposition}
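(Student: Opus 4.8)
The plan is to reduce subdirect irreducibility of an object ${\bf A}=(A,\meet,\join,\to,t,N,f)$ to a purely order-theoretic condition on its relative Stone reduct, by showing that the nucleus and the constant impose \emph{no} restriction whatsoever on the congruence lattice. First I would recall the standard fact that congruences of a Brouwerian algebra are in lattice isomorphism with its lattice filters, a filter $F$ corresponding to the congruence $\theta_F$ given by $x\mathrel{\theta_F}y \iff (x\to y)\meet(y\to x)\in F$. Since \textsf{EnSM$^-$} is a variety with a lattice reduct, and hence congruence distributive, it suffices to locate the subdirectly irreducibles via their monoliths. The crux is that every lattice filter of the reduct already yields a congruence of the \emph{full} algebra: the constant $f$ is preserved automatically, and compatibility with $N$ follows from the inequality $a\to b \leq Na\to Nb$. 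This inequality I would establish from the nucleus law and extensivity via $Na\meet N(a\to b)\leq N(a\meet(a\to b))=N(a\meet b)\leq Nb$, which gives $N(a\to b)\leq Na\to Nb$, and then $a\to b\leq N(a\to b)$. Consequently $\mathrm{Con}({\bf A})$ is isomorphic to the filter lattice of the relative Stone reduct, so ${\bf A}$ is subdirectly irreducible if and only if that reduct possesses a least nontrivial filter.

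Next I would show that a relative Stone algebra has a least nontrivial filter exactly when it is a chain possessing a greatest element $s$ strictly below $t$. For the forward direction, a least nontrivial filter $F_0$ contains some $a<t$, whence $\upset a\subseteq F_0$ and minimality forces $F_0=\upset a$; since $F_0$ lies below every nontrivial filter $\upset b$ with $b<t$, we get $b\leq a$ for all such $b$, so $a$ is the sought greatest element $s$. Total order then comes from semilinearity: if $x,y$ were incomparable, Proposition \ref{prop2}(1) gives $(x\to y)\join(y\to x)=t$ with both joinands strictly below $t$, hence both $\leq s$, forcing their join to be $\leq s<t$, a contradiction. Conversely, on such a chain the filter $\upset s=\{s,t\}$ is contained in every nontrivial filter and is itself nontrivial, so it is the least one. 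This pins down the first two clauses of the statement.

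It remains to read off the admissible values of $f$ and $N$ on a subdirectly irreducible chain. From the Boolean condition $a\join(a\to f)=t$: for $a>f$ one has $a\to f=f$ on a chain, so $a\join(a\to f)=a$, forcing $a=t$; thus no element lies strictly between $f$ and $t$, i.e.\ $f=t$ or $f=s$. For the nucleus I would first derive $Na\meet f\leq a$ from $N(Na\to a)=t$ together with $Na=t\iff f\leq a$ (the former yields $f\leq Na\to a$, and residuation does the rest). If $f=t$, this reads $Na\leq a$, which with extensivity gives $Na=a$ for all $a$, so $N$ is the identity, which is Case (1). If $f=s$, then for $a<s$ we have $Na\neq t$, hence $Na\leq s$ and so $Na\meet s=Na\leq a$, giving $Na=a$, while $Ns=t$ since $s\geq f$; this is Case (2). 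Each of the two resulting chains is then routinely checked to satisfy all the defining conditions, completing both directions.

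The step I expect to be the main obstacle, and indeed the linchpin of the whole argument, is the inequality $a\to b\leq Na\to Nb$ guaranteeing that $N$ never refines the congruence lattice: once this is in hand the problem collapses to the well-understood congruence theory of relative Stone algebras, and everything else is the elementary chain computation above. An alternative and more conceptual route would be to transport subdirect irreducibility across the equivalence between \textsf{EnSM$^-$} and \textsf{SM} and invoke the known description of totally ordered Sugihara monoids; but the direct congruence--filter approach is self-contained and explains precisely \emph{why} the nucleus plays no role in separating the subdirectly irreducibles.
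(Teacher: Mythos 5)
Your proposal is correct, but it cannot be compared line-by-line with a proof in the paper, because the paper offers none: the proposition is stated with a citation to the comments on pp.~3192 and 3207 of \cite{GR2}, so your argument is a self-contained reconstruction rather than a parallel of anything in this text. Your route is the natural one and, as far as I can check, every step is sound. The linchpin you identify is genuinely the right one: congruences of a Brouwerian algebra are the filter congruences $\theta_F$, and the inequality $a\to b\leq Na\to Nb$ (correctly derived from $Na\meet N(a\to b)\leq N(a\meet (a\to b))=N(a\meet b)\leq Nb$, residuation, and extensivity) shows every $\theta_F$ is compatible with $N$, while compatibility with the constant $f$ is automatic by reflexivity; hence the congruence lattice of an object of \textsf{EnSM$^-$} is exactly the filter lattice of its relative Stone reduct, and subdirect irreducibility reduces to the existence of a least nontrivial filter. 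Your subsequent order-theoretic analysis is also right, and it has a pleasant by-product worth noting: within \textsf{RSA}, the existence of a greatest element of $\{a : a<t\}$ already forces the chain condition via $t\leq (x\to y)\join (y\to x)$, so the ``totally ordered'' clause of the proposition is redundant given the coatom clause. The case analysis pinning down $f$ (no element strictly between $f$ and $t$, using $x\to y=y$ when $x\not\leq y$ on a chain) and $N$ (from $f\meet Na\leq a$, which follows from $N(Na\to a)=t$ and the biconditional $Na=t\iff f\leq a$) matches the two cases of the statement exactly.

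Two small remarks, neither a gap. First, your appeal to congruence distributivity is inert: subdirect irreducibility via the monolith is definitional and needs no Maltsev condition, so that sentence can be dropped. Second, for the stated biconditional the converse direction needs only that a chain with coatom has $\upset s$ as least nontrivial filter; the verification that the two chains of cases (1) and (2) satisfy all the defining universal conditions is only required if one additionally wants to know that both cases are realized, since your forward direction already shows the conditions on $f$ and $N$ are forced on any subdirectly irreducible object. Your closing observation is also apt: one could instead transport subdirect irreducibility across the equivalence with \textsf{SM}, but the direct congruence--filter argument is cleaner and explains why the nucleus and constant contribute nothing to the congruence lattice, which is precisely the structural fact underlying the cited description in \cite{GR2}.
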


\noindent By arguing on generating algebras for the variety, we obtain the following.

\begin{lemma}\label{nterm}

\textsf{EnSM$^-$} satisfies the identity $Na = f\to a$.

\end{lemma}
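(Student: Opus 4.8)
The plan is to prove the identity by establishing the two inequalities $Na \leq f\to a$ and $f\to a \leq Na$ in every object of \textsf{EnSM$^-$}, each directly from the defining conditions. Observe first that $f\to a$ is itself a nucleus (this is Example \ref{nucexample} with $d=f$), so the content of the lemma is that the given nucleus $N$ must coincide with this canonical one. Two preliminary facts will drive both inequalities. First, instantiating the defining equivalence $Na=t \iff f\leq a$ at $a=f$ yields $Nf=t$. Second, since $N$ is a closure operator satisfying $Nx\cdot Ny\leq N(x\cdot y)$ and $\cdot$ coincides with $\meet$, monotonicity gives $N(x\meet y)\leq Nx\meet Ny$ while the nucleus inequality gives the reverse, so $N$ preserves binary meets.

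For the inequality $Na\leq f\to a$, I would start from the defining condition $N(Na\to a)=t$. Applying the equivalence $Nx=t\iff f\leq x$ with $x=Na\to a$ gives $f\leq Na\to a$, and residuating (recall $\cdot=\meet$) yields first $f\meet Na\leq a$ and then $Na\leq f\to a$.

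For the reverse inequality $f\to a\leq Na$, I would begin with $f\meet(f\to a)\leq a$, which is Proposition \ref{prop1}(1) instantiated at $f$ and $a$. Applying the monotone operator $N$ and using that $N$ preserves meets together with $Nf=t$ gives $N(f\to a)=Nf\meet N(f\to a)=N(f\meet(f\to a))\leq Na$. Since $f\to a\leq N(f\to a)$ by extensivity of the closure operator, this yields $f\to a\leq Na$. Combining the two inequalities gives $Na=f\to a$.

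The step I expect to be the main obstacle is the reverse inequality $f\to a\leq Na$: the forward inequality is an almost mechanical consequence of $N(Na\to a)=t$, whereas the reverse one hinges on the two less obvious observations that $Nf=t$ and that $N$ distributes over meets. An alternative, matching the remark that one may argue on generating algebras, would be to invoke Birkhoff's subdirect representation theorem and the preceding characterization of the subdirectly irreducible objects: these are chains, either with $f=t$ and $N$ the identity (where both sides equal $a$) or with $f$ the unique coatom (where one checks $a\geq f$, giving both sides $t$, and $a<f$, giving both sides $a$). It is worth noting that the direct argument above uses neither semilinearity nor the Boolean condition $a\join(a\to f)=t$, so the identity in fact persists in the broader setting of Brouwerian algebras equipped with a nucleus and constant satisfying only $N(Na\to a)=t$ and $Na=t\iff f\leq a$.
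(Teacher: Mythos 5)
Your proof is correct, but it takes a genuinely different route from the paper. The paper argues exactly along the lines of your closing ``alternative'': since \textsf{EnSM$^-$} is a variety, it suffices to verify the identity on subdirectly irreducible members, and the paper then uses the stated characterization of these (chains in which either $f=t$ and $N$ is the identity, or $f$ is the coatom with $Nf=t$ and $Na=a$ for $a\neq f,t$) together with the formula for $\to$ in totally ordered Brouwerian algebras to check $Na=f\to a$ case by case. Your main argument is instead a direct equational one in an arbitrary member: $Na\leq f\to a$ follows from $N(Na\to a)=t$ via the equivalence $Nx=t\iff f\leq x$ and residuation, while $f\to a\leq Na$ follows from $f\meet(f\to a)\leq a$ using $Nf=t$, meet-preservation of $N$, and extensivity --- all of which you justify correctly (meet-preservation is immediate from monotonicity plus the nucleus inequality with $\cdot=\meet$; in fact you could skip full meet-preservation and use only $Nf\meet N(f\to a)\leq N(f\meet(f\to a))$ directly). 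What each approach buys: the paper's verification is very short \emph{given} the subdirect irreducibility characterization it imports from \cite{GR2}, but depends on that nontrivial structural result and on semilinearity; yours is self-contained, avoids Birkhoff's theorem entirely, and, as you observe, proves the stronger statement that the identity holds in any Brouwerian algebra with a nucleus and constant satisfying only $N(Na\to a)=t$ and $Na=t\iff f\leq a$, with no use of the Boolean condition $a\join(a\to f)=t$ or of linearity.
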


\begin{proof}

It suffices to check the identity $Na=f\to a$ on subdirectly irreducible algebras, so let ${\bf A}=(A,\meet,\join,\to,t,N,f)$ be a subdirectly irreducible algebra in \textsf{EnSM$^-$}. If $f=t$ and $N$ is the identity function on $A$, then the result trivially follows since $f\to a = t\to a = a$ for any $a\in A$ and $Na=a$ for any $a\in A$.

In the remaining case, ${\bf A}$ is a chain and $N$ satisfies
\[ Na = \begin{cases} 
      t & x=f,t \\
      a & a\neq f,t \\
   \end{cases}
\]
\noindent Note also that in any totally-ordered Brouwerian algebra,
\[ x\to y = \begin{cases} 
      t & x\leq y \\
      y & x\not\leq y \\
   \end{cases}
\]
\noindent We may therefore compute
\[ f\to a = \begin{cases} 
      t & f\leq a \\
      a & f\not\leq a \\
   \end{cases}
\]

\noindent Since $t$ covers $f$ in the present case, we have that $f\leq a$ iff $a=f$ or $t$, which gives the result.
\end{proof}

\begin{proposition}\label{termeq}

\textsf{EnSM$^-$} is term-equivalent to \textsf{bRSA}, and \textsf{EnSM}$^-_\bot$ is term-equivalent to \textsf{bGA}.

\end{proposition}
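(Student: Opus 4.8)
The key observation is that the signatures of \textsf{EnSM$^-$} and \textsf{bRSA} differ only in the presence of the unary operation $N$: both share the relative-Stone operations $(\meet,\join,\to,t)$ together with the constant $f$. Moreover, \textsf{bRSA} is a variety, being axiomatized by the relative Stone algebra identities together with the single identity $a\join(a\to f)=t$. The plan is therefore to exhibit a term in the language of \textsf{bRSA} that recovers $N$, and to verify that this term-extension and the reduct map forgetting $N$ are mutually inverse and carry one variety onto the other. Lemma \ref{nterm} supplies exactly the required term: within \textsf{EnSM$^-$} we have $Na = f\to a$. Since $N$ is then a term operation, morphisms match automatically, so it suffices to argue at the level of objects.

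For the forward direction, let ${\bf A} = (A,\meet,\join,\to,t,N,f)$ be an object of \textsf{EnSM$^-$} and consider its reduct $(A,\meet,\join,\to,t,f)$. By definition $(A,\meet,\join,\to,t)$ is a relative Stone algebra, and the first of the universal conditions defining \textsf{EnSM$^-$} is precisely $a\join(a\to f)=t$; hence the reduct is a bRS-algebra. By Lemma \ref{nterm} the operation $N$ is recovered from this reduct by the term $f\to a$, so no information is lost in passing to the reduct.

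For the reverse direction, let $(A,\meet,\join,\to,t,f)$ be a bRS-algebra and define $Na := f\to a$. Since a relative Stone algebra is in particular a Brouwerian algebra, Example \ref{nucexample} (taking $d=f$) shows that $N$ is a nucleus, which handles the multiplicativity and closure requirements at once. It then remains only to verify the two surviving \textsf{EnSM$^-$} conditions. For the biconditional, residuation gives $f\to a = t \iff f\leq a$, so $Na=t\iff f\leq a$. For $N(Na\to a)=t$, applying this biconditional reduces the claim to $f\leq (f\to a)\to a$, which by residuation is $f\meet(f\to a)\leq a$; this is Proposition \ref{prop1}(1) with the product taken to be $\meet$. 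Thus $(A,\meet,\join,\to,t,N,f)$ lies in \textsf{EnSM$^-$}. The round trips are now immediate: starting from a bRS-algebra, defining $N$ and forgetting it returns the original algebra, while starting from an object of \textsf{EnSM$^-$}, forgetting $N$ and recovering it via $f\to a$ returns the original $N$ by Lemma \ref{nterm}. This establishes the term-equivalence of \textsf{EnSM$^-$} and \textsf{bRSA}. The bounded case is identical: the constant $\bot$ belongs to both signatures and is untouched by the term $f\to a$, so the same term-equivalence restricts to one between \textsf{EnSM}$^-_\bot$ and \textsf{bGA}.

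The only genuine content beyond bookkeeping lies in the reverse direction, where one must check that the single term $f\to a$ simultaneously yields a nucleus and satisfies both remaining axioms; I expect this to be the main point to get right, although each verification collapses immediately to Example \ref{nucexample}, Proposition \ref{prop1}(1), and residuation.
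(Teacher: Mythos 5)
Your proposal is correct and follows essentially the same route as the paper: both directions rest on Lemma \ref{nterm} and Example \ref{nucexample}, with the reduct/term-extension maps inverse to each other. The only (cosmetic) difference is in verifying $N(Na\to a)=t$, where you first establish $Na=t\iff f\leq a$ and then residuate down to $f\meet(f\to a)\leq a$ via Proposition \ref{prop1}(1), whereas the paper computes $f\to((f\to a)\to a)=(f\to a)\to(f\to a)=t$ directly using the exchange law of Proposition \ref{prop1}(5); both verifications are equally valid.
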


\begin{proof}
Lemma \ref{nterm} shows that in any object ${\bf A} = (A,\meet,\join,\to,t,N,f)$ of \textsf{EnSM$^-$}, $N$ is definable in the $(\meet,\join,\to,t,f)$-reduct of ${\bf A}$. Since the $(\meet,\join,\to,t,f)$-reduct of such an object ${\bf A}$ of \textsf{EnSM$^-$} satisfies $a\join (a\to f)=t$ by definition, such an ${\bf A}$ is a bRS-algebra.

On the other hand, suppose that ${\bf A} = (A,\meet,\join,\to,t,f)$ is a bRS-algebra, and define $N\colon A\to A$ by $Na=f\to a$. Then $N$ is a nucleus by Example \ref{nucexample}. Moreover, observe that for any $a\in A$,
\begin{align*}
N(Na\to a) &= f\to ((f\to a)\to a)\\
&= (f\to a) \to (f\to a)\\
&= t
\end{align*}

\noindent so the identity $N(Na\to a)=t$ holds.

To see that the condition $Na=t$ iff $f\leq a$ holds, observe that
\begin{align*}
Na=t &\iff f\to a = t\\
&\iff t\leq f\to a\\
&\iff f\leq a
\end{align*}

\noindent It follows that every bRS-algebra is the $(\meet,\join,\to,t,f)$-reduct of an object of \textsf{EnSM$^-$}, so that \textsf{EnSM$^-$} is term-equivalent to \textsf{bRSA}. The term-equivalence of \textsf{EnSM}$^-_\bot$ and \textsf{bGA} follows by the same argument.
\end{proof}

The previous proposition shows that the addition of the nucleus to the signature is extraneous in the definition of \textsf{EnSM$^-$}. In order to obtain an equivalence between \textsf{SM} and the (enriched) negative cones of its members, we therefore need only consider expansions of the negative cones by a single designated constant rather than a designated constant and a nucleus. In particular, \textsf{SM} is categorically equivalent to \textsf{bRSA} and \textsf{SM}$_\bot$ is categorically equivalent to \textsf{bGA}. We modify the functors $C$ and $S$ described above to obtain this equivalence as follows. Define $S\colon \textsf{bRSA}\to\textsf{SM}$ in the same way as before, but replacing instances of $N$ in the definitions of $\twt$ and $\twi$ with $Na=f\to a$. Define a functor $(-)_{\bowtie}\colon\textsf{SM}\to\textsf{bRSA}$ by ${\bf A}_{\bowtie} = ({\bf A}^-, \neg t)$. Then replacing $S$ and $C$ with the new $S$ and $(-)_{\bowtie}$ produces an equivalence of categories between \textsf{SM} and \textsf{bRSA}. Similar remarks apply to \textsf{SM}$_\bot$ and \textsf{bGA}.

For the treatment to follow, it is desirable that we replace $S$ by a different functor that situates the equivalence more naturally among existing work on twist products. For a bRS-algebra ${\bf A} = (A,\meet,\join,\to,t,f)$, define\footnote{Here we borrow notation from the twist product construction. This should not, however, be confused with what is sometimes referred to in the literature as the \emph{full} twist product.} $$A^{\bowtie} = \{\la a,b\ra\in A\times A : a\join b =t \text{ and } a\meet b \leq f\}$$

\noindent Moreover, for $\la a,b\ra,\la c,d\ra\in A\times A$, define $\la a,b\ra\twm \la c,d\ra = (a\meet c, b\join b)$ and $\la a,b\ra \twj \la c,d\ra = \la a\join c, b\meet d\ra$ as in the definition of $S$. Then $(A\times A, \twm, \twj)$ is a lattice.

\begin{lemma}\label{reslem0}

Let ${\bf A} = (A,\meet,\join,\to,t,f)$ be a bRS-algebra. Then $\Sigma ({\bf A})$ and $A^{\bowtie}$ are universes of sublattices of $(A\times A,\twm,\twj)$.

\end{lemma}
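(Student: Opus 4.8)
The plan is to show that each of the two subsets $\Sigma({\bf A})$ and $A^{\bowtie}$ is closed under the two lattice operations $\twm$ and $\twj$ of the product lattice $(A\times A,\twm,\twj)$; since closure of a subset under both meet and join is exactly what is needed to exhibit it as the universe of a sublattice, this suffices. So I would fix arbitrary pairs $\la a,b\ra$ and $\la c,d\ra$ lying in the relevant set, compute $\la a,b\ra\twm\la c,d\ra = \la a\meet c, b\join d\ra$ and $\la a,b\ra\twj\la c,d\ra = \la a\join c, b\meet d\ra$, and verify that each resulting pair again satisfies the defining conditions of the set in question.

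For $\Sigma({\bf A})=\{\la a,b\ra : a\join b = t \text{ and } Nb = b\}$ (with $N$ the nucleus $Nx = f\to x$), I would treat the two defining conditions separately. The $a\join b = t$ condition is handled using the semilinearity identities of Proposition \ref{prop2}: from $a\join b = t$ and $c\join d = t$ one derives $(a\meet c)\join(b\join d) = t$ and $(a\join c)\join(b\meet d) = t$ by distributing and absorbing, so the first coordinate joined with the second still yields $t$ in both cases. For the nucleus condition $Nb = b$, I would use that a nucleus is a closure operator, so $b\le Nb$ always, and the reverse inequalities $N(b\join d)\le b\join d$ and $N(b\meet d)\le b\meet d$ follow because $N$ fixes both $b$ and $d$ (a closure operator preserves meets of its fixed points, and $N(b\join d)\le Nb\join\cdots$ requires a short argument; more cleanly, $N(b\meet d) = Nb\meet Nd = b\meet d$ since nuclei preserve meets, and for the join one uses that $N$ fixes $b,d$ and monotonicity). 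This second coordinate verification is the only place demanding care.

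For $A^{\bowtie} = \{\la a,b\ra : a\join b = t \text{ and } a\meet b\le f\}$, the $a\join b = t$ part is identical to the computation above. The new condition $a\meet b\le f$ must be checked for both $\la a\meet c, b\join d\ra$ and $\la a\join c, b\meet d\ra$: that is, $(a\meet c)\meet(b\join d)\le f$ and $(a\join c)\meet(b\meet d)\le f$. Here I would distribute using Proposition \ref{prop2}(2) to write $(a\meet c)\meet(b\join d) = (a\meet c\meet b)\join(a\meet c\meet d)$ and then bound each disjunct by one of the hypotheses $a\meet b\le f$ or $c\meet d\le f$ (since $a\meet c\meet b\le a\meet b\le f$ and $a\meet c\meet d\le c\meet d\le f$), using that $f$ is a join of things below it; the second pair is symmetric.

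The main obstacle will be the nucleus-fixedness condition in the $\Sigma({\bf A})$ case, specifically verifying $N(b\join d) = b\join d$ from $Nb = b$ and $Nd = d$, since a nucleus need not in general preserve joins. I expect to resolve this by invoking the concrete form $Nx = f\to x$ from Lemma \ref{nterm} together with the semilinear identity $f\to(b\join d) = (f\to b)\join(f\to d)$ of Proposition \ref{prop2}(3), which gives $N(b\join d) = Nb\join Nd = b\join d$ directly. The $A^{\bowtie}$ verifications, by contrast, are routine applications of distributivity and should present no difficulty.
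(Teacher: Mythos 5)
Your proposal is correct and follows essentially the same route as the paper's proof: verify the three defining conditions separately, using distributivity for the $a\join b=t$ and $a\meet b\leq f$ conditions, and using the concrete form $Nx=f\to x$ together with Proposition \ref{prop1}(3) and the semilinear identity of Proposition \ref{prop2}(3) to show the fixed points of $N$ are closed under $\meet$ and $\join$. The only quibble is a citation slip: the lattice distribution $(a\meet c)\meet(b\join d)=(a\meet c\meet b)\join(a\meet c\meet d)$ is justified by distributivity of the lattice reduct (equivalently Proposition \ref{prop1}(2), since $\cdot=\meet$ in Brouwerian algebras), not by Proposition \ref{prop2}(2).
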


\begin{proof}
Suppose that $\la a, b\ra,\la c, d\ra\in A\times A$ satisfy $a\join b = c\join d = t$. Then by distributing,
\begin{align*}
(a\meet c)\join (b\join d) &= ((a\join b) \meet (c\join b))\join d\\
&= (t\meet (c\join b))\join d\\
&= t
\end{align*}

\noindent and a symmetric argument shows that $(a\join c)\join (b\meet d)=t$ as well.

Next suppose $\la a, b\ra,\la c, d\ra\in A\times A$ with $Nb=b$ and $Nd=d$, where the nucleus $Nx=f\to x$ is defined as above. Then $N(b\meet d)=b\meet d$ by Proposition \ref{prop1}(3), and $N(b\join d)=b\join d$ by Proposition \ref{prop2}(3).

Finally, suppose that $\la a, b\ra,\la c, d\ra\in A\times A$ with $a\meet b\leq f$ and $c\meet d\leq f$. Then
\begin{align*}
(a\meet c)\meet (b\join d) &= (a\meet c\meet b)\join (a\meet c\meet d)\\
&\leq (f\meet c) \join (f\meet a)\\
&\leq f
\end{align*}
\noindent and symmetrically $(a\join c) \meet (b\meet d)\leq f$ as well.

The first and second paragraphs show that $\Sigma ({\bf A})$ is closed under $\twm$ and $\twj$, whereas the first and third paragraphs show that $A^{\bowtie}$ is closed under $\twm$ and $\twj$. This gives the result.
\end{proof}

For a bRS-algebra ${\bf A} = (A,\meet,\join,\to,t,f)$, define a map $\overline{\delta}_{\bf A}\colon A\times A\to A\times A$
by $$\overline{\delta}_{\bf A} \la a, b\ra = \la a,f\to b\ra=\la a,Nb\ra$$

\begin{lemma}

$\overline{\delta}_{\bf A}$ is a lattice endomorphism of $(A\times A, \twm, \twj)$.

\end{lemma}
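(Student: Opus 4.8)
The plan is to verify directly that $\overline{\delta}_{\bf A}$ commutes with both lattice operations $\twm$ and $\twj$. Since $\overline{\delta}_{\bf A}\la a,b\ra = \la a, Nb\ra$ acts as the identity on the first coordinate and as $N=f\to(-)$ on the second, the problem decouples coordinatewise: the first coordinate is handled trivially by the identity map, and the entire content lies in showing that $N$ is a lattice homomorphism on the second coordinate. Because the second coordinates of $\twm$ and $\twj$ use $\join$ and $\meet$ respectively (note the order-reversal inherent in the twist construction), this amounts to establishing that $N$ preserves both joins and meets.

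Concretely, for the twist meet I would compute
\[
\overline{\delta}_{\bf A}(\la a,b\ra\twm\la c,d\ra) = \overline{\delta}_{\bf A}\la a\meet c, b\join d\ra = \la a\meet c, N(b\join d)\ra,
\]
while
\[
\overline{\delta}_{\bf A}\la a,b\ra \twm \overline{\delta}_{\bf A}\la c,d\ra = \la a, Nb\ra \twm \la c, Nd\ra = \la a\meet c, Nb\join Nd\ra,
\]
so that preservation of $\twm$ reduces exactly to the identity $N(b\join d) = Nb\join Nd$, i.e. $f\to(b\join d) = (f\to b)\join(f\to d)$. Dually, preservation of $\twj$ reduces to $N(b\meet d) = Nb\meet Nd$, i.e. $f\to(b\meet d) = (f\to b)\meet(f\to d)$.

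The meet-preservation identity is immediate from Proposition \ref{prop1}(3), which holds in every CRL. The join-preservation identity is the only step requiring more than the bare CRL axioms: it is an instance of Proposition \ref{prop2}(3), which applies because relative Stone algebras are semilinear. I expect this to be the only genuine (if minor) obstacle — one must remember that $f\to(-)$ distributes over joins solely by virtue of semilinearity, so the argument would fail for an arbitrary Brouwerian algebra and essentially relies on the \textsf{RSA} hypothesis. With both identities in hand the two displayed computations agree, and the symmetric pair for $\twj$ follows in the same way, so $\overline{\delta}_{\bf A}$ is a lattice endomorphism.
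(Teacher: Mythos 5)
Your proposal is correct and follows essentially the same route as the paper, which likewise disposes of the lemma by citing Proposition \ref{prop2}(3) for the identity $f\to(b\join d)=(f\to b)\join(f\to d)$ in the $\twm$ case and Proposition \ref{prop1}(3) for $f\to(b\meet d)=(f\to b)\meet(f\to d)$ in the $\twj$ case. Your observation that join-preservation of $N$ genuinely depends on semilinearity is accurate and is exactly what the paper's appeal to Proposition \ref{prop2}(3) encodes.
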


\begin{proof}
Let $\la a, b\ra,\la c, d\ra\in A\times A$. Then by a calculation with Proposition \ref{prop2}(3) gives $\overline{\delta}_{\bf A} (\la a, b\ra\twm \la c, d\ra)=\overline{\delta}_{\bf A}\la a, b\ra\twm \overline{\delta}_{\bf A}\la c, d\ra$, and an analogous computation with Proposition \ref{prop1}(3) gives $\overline{\delta}_{\bf A} (\la a, b\ra\twj \la c, d\ra)=\overline{\delta}_{\bf A}\la a, b\ra\twj \overline{\delta}_{\bf A}\la c, d\ra$.
\end{proof}

Suppose that $\la a, b\ra\in A\times A$ satisfies $a\join b=t$. Then since $f\to b\geq b$, we have also that $a\join (f\to b) = t$. Moreover, the second coordinate of the pair $\overline{\delta}_{\bf A} \la a, b\ra = \la a,f\to b\ra = \la a,Nb\ra$ is an $N$-closed element of ${\bf A}$ since $N$ is idempotent. These considerations show that $\overline{\delta}_{\bf A} [A^{\bowtie}]\subseteq \Sigma ({\bf A})$, and we may thus define a lattice homomorphism $\delta_{\bf A}\colon (A^{\bowtie}, \twm,\twj)\to (\Sigma ({\bf A}), \twm,\twj)$ by $\delta_{\bf A} = \overline{\delta}_{\bf A}\restriction_{A^{\bowtie}}$.

\begin{lemma}\label{iso}

$\delta_{\bf A}$ is a lattice isomorphism with inverse given by
$$\delta_{\bf A}^{-1}\langle a,b\rangle = \langle a, b\meet (a\to f)\rangle.$$
\end{lemma}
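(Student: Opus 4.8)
The plan is to verify directly that the displayed formula defines a two-sided inverse for $\delta_{\bf A}$. Since the previous lemma already establishes that $\delta_{\bf A}$ is a lattice homomorphism, producing a set-theoretic inverse immediately upgrades it to a lattice isomorphism (the inverse of a bijective lattice homomorphism is automatically a homomorphism). Let me write $\varepsilon\la a,b\ra = \la a,\, b\meet (a\to f)\ra$ for the candidate inverse and organize the work into three checks: that $\varepsilon$ maps $\Sigma({\bf A})$ into $A^{\bowtie}$, that $\delta_{\bf A}\circ\varepsilon$ is the identity on $\Sigma({\bf A})$, and that $\varepsilon\circ\delta_{\bf A}$ is the identity on $A^{\bowtie}$.

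First I would confirm $\varepsilon[\Sigma({\bf A})]\subseteq A^{\bowtie}$. Given $\la a,b\ra\in\Sigma({\bf A})$, so $a\join b=t$ and $Nb=b$, distributivity yields $a\join (b\meet (a\to f)) = (a\join b)\meet (a\join (a\to f))$, and both meetands are $t$: the first by hypothesis and the second because ${\bf A}$ is a bRS-algebra. For the second defining condition of $A^{\bowtie}$ one only needs $a\meet b\meet (a\to f)\leq a\meet (a\to f)\leq f$, the last step being Proposition \ref{prop1}(1) read in the Brouwerian setting where $\cdot=\meet$. The composite $\delta_{\bf A}\circ\varepsilon$ is then routine: on $\la a,b\ra\in\Sigma({\bf A})$ it produces second coordinate $N(b\meet (a\to f)) = Nb\meet N(a\to f)$ by Proposition \ref{prop1}(3), where $Nb=b$ by hypothesis and $N(a\to f) = f\to(a\to f) = a\to(f\to f) = a\to t = t$ via Proposition \ref{prop1}(5) and the standard Brouwerian identities $f\to f=t$ and $a\to t=t$. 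Hence this composite collapses to $\la a,b\ra$.

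The substantive direction is $\varepsilon\circ\delta_{\bf A}$. On $\la a,b\ra\in A^{\bowtie}$ this yields second coordinate $(f\to b)\meet (a\to f)$, so I must prove the identity $(f\to b)\meet (a\to f)=b$. The inequality $\geq$ is quick: $b\leq f\to b$ is the standard Brouwerian fact $b\leq x\to b$ with $x=f$, while $b\leq a\to f$ follows from the defining inequality $a\meet b\leq f$ of $A^{\bowtie}$ by residuation. The reverse inequality $(f\to b)\meet (a\to f)\leq b$ is the main obstacle, and here I would exploit $a\join b=t$. Writing $w=(f\to b)\meet (a\to f)$, distributivity gives $w = w\meet(a\join b) = (w\meet a)\join (w\meet b)$; the summand $w\meet b$ lies below $b$ trivially, and for $w\meet a$ I would observe that $w\meet a\leq (a\to f)\meet a\leq f$ (again Proposition \ref{prop1}(1)) and simultaneously $w\meet a\leq f\to b$, so that $w\meet a\leq f\meet (f\to b)=f\meet b\leq b$ using the standard identity $x\meet (x\to y)=x\meet y$. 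Thus $w\leq b$, establishing the equality.

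With both composites shown to be identities, $\delta_{\bf A}$ is a bijection whose inverse is exactly $\varepsilon$, and being a lattice homomorphism it is therefore a lattice isomorphism with the stated inverse. I expect the only place requiring genuine care to be the reverse inequality above; every other step is a direct appeal to the residuation laws of Propositions \ref{prop1} and \ref{prop2} together with the defining condition $a\join (a\to f)=t$ of bRS-algebras.
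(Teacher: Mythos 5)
Your proof is correct, but it takes a genuinely different route from the paper's. The paper proves bijectivity in two separate halves: injectivity is obtained from the Boolean structure of $\upset f$ --- from $\delta_{\bf A}\la a,b\ra=\delta_{\bf A}\la c,d\ra$ it deduces that $b\join f$ and $d\join f$ are both complements of $a\join f$ in the Boolean lattice $\upset f$ (via Lemma \ref{lem:boolneg} and Proposition \ref{bool}), so $b\join f=d\join f$, and then a short distributivity computation gives $b=d$; surjectivity is then the same computation you use for $\delta_{\bf A}\circ\varepsilon=\mathrm{id}$, namely $f\to(b\meet(a\to f))=b$ when $Nb=b$. You instead verify that $\varepsilon$ is a two-sided inverse, and the substantive half of your argument, the identity $(f\to b)\meet(a\to f)=b$ on $A^{\bowtie}$, is exactly the paper's \emph{later} Lemma \ref{lem:invtech}, which it proves after Lemma \ref{iso} (by a different method: Proposition \ref{prop:topjoin} turns $a\join b=t$ into $a\to b=b$, and residuation finishes) and uses only for the involution result, Proposition \ref{prop:inv}. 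Your proof of that identity via the distributive decomposition $w=(w\meet a)\join(w\meet b)$ with $w\meet a\leq f\meet(f\to b)=f\meet b$ is sound and, if anything, more elementary, since it bypasses Proposition \ref{prop:topjoin}. Comparing the two: the paper's route gets injectivity abstractly, without needing to guess the inverse formula, and showcases the Boolean-filter structure that motivates the terminology; your route is more economical and self-contained --- it avoids the complement-uniqueness machinery entirely and proves Lemma \ref{lem:invtech} en route, so in the paper's development the subsequent involution computation would then come essentially for free. I see no gaps: your side conditions ($\varepsilon[\Sigma({\bf A})]\subseteq A^{\bowtie}$, well-definedness, and the appeal to the fact that a bijective lattice homomorphism has a homomorphic inverse) are all checked or standard.
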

\begin{proof}
To see that $\delta_{\bf A}$ is a lattice isomorphism, it suffices to show that $\delta_{\bf A}$ is a bijection. For proving $\delta_{\bf A}$ is one-to-one, suppose that $\la a,b\ra,\la c,d\ra\in A^{\bowtie}$ with $\delta_{\bf A} \la a,b\ra=\delta_{\bf A} \la c,d\ra$. Then $\la a,f\to b\ra=\la c,f\to d\ra$, so $a=c$ and $f\to b=f\to d$. Then $f\to b\leq f\to d$, so by residuation $f\meet b = f\meet (f\to b)\leq d$. Observe that since $\la a,b\ra\in A^{\bowtie}$ we have $a\meet b\leq f$ and $a\join b = t$, and by distributivity $(a\join f)\meet (b\join f) = (a\meet b)\join f = f$. Moreover, $(a\join f)\join (b\join f) = t\join f =t$. This shows that $a\join f$ and $b\join f$ are complements in the Boolean lattice $\upset f$. Since $\la a,d\ra\in A^{\bowtie}$ as well, an identical argument shows that $a\join f$ and $d\join f$ are also complements in $\upset f$. Because complements are unique in a Boolean lattice, this gives $b\join f = d\join f$. Because $b\meet f\leq d$,
\begin{align*}
b &= b\meet (b\join f)\\
&= b\meet (d\join f)\\
&= (b\meet d) \join (b\meet f)\\
&\leq (b\meet d)\join d\\
&= d
\end{align*}
\noindent so that $b\leq d$. An identical argument shows that $d\leq b$, so $b=d$. This proves $\delta$ is one-to-one.

To see that $\delta_{\bf A}$ is onto, let $\la a,b\ra\in \Sigma ({\bf A})$. Then $a\join b = t$ and $b=f\to b$. Observe that $a\meet b\meet (a\to f) = a\meet f\meet b\leq f$, and also by distributivity
\begin{align*}
a\join (b\meet (a\to f)) &= (a\join b)\meet (a\join (a\to f))\\
&= t\join t\\
&= t
\end{align*}
\noindent so $a\join (b\meet (a\to f))=t$. This gives that $\la a,b\meet (a\to f)\ra \in A^{\bowtie}$. Note also that
\begin{align*}
f\to (b\meet (a\to f)) &= (f\to b)\meet (f\to (a\to f))\\
&= (f\to b)\meet ((f\meet a)\to f))\\
&= (f\to b)\meet t\\
&= f\to b\\
&= b
\end{align*}
\noindent It follows that $\delta_{\bf A} \la a, b\meet (a\to f)\ra = \la a,b\ra$, so $\delta_{\bf A}$ is onto. The computation above also shows that the inverse of $\delta_{\bf A}$ is given by $\la a,b\ra\mapsto \la a,b\meet (a\to f)\ra$ as claimed.
\end{proof}

Owing to the fact that $(\Sigma ({\bf A}),\twm,\twj)$ is the reduct of a residuated lattice determined by the action of $S$ on ${\bf A}$, the isomorphism $\delta_{\bf A}$ allows us to endow $A^{\bowtie}$ with a residuated multiplication by transport of structure. In more detail, the proof of Lemma \ref{iso} shows that $\delta_{\bf A}$ has an inverse $\delta^{-1}_{\bf A}$ defined by 
$$\delta_{\bf A}^{-1}\langle a,b\rangle = \langle a, b\meet (a\to f)\rangle$$
\noindent Define binary operations $\ntwt$ and $\ntwi$ on $A^{\bowtie}$ by
\begin{align*}
\la a,b\ra\ntwt \la c,d\ra &= \delta_{\bf A}^{-1}(\delta_{\bf A} \la a,b\ra \twt \delta_{\bf A} \la c,d\ra)\\
\la a,b\ra \ntwi \la c,d\ra &= \delta_{\bf A}^{-1}(\delta_{\bf A} \la a,b\ra\twi \delta_{\bf A} \la c,d\ra)
\end{align*}
Written explicitly, the operation $\ntwt$ is given by $\la a,b\ra \ntwt \la c,d\ra = \la s,t\ra$, where
$$s=((a\meet f)\to d)\meet [((c\meet f)\to d)\to (a\meet c)]$$
and
$$t=((a\meet f)\to d)\meet ((c\meet f)\to d)\meet (s\to f).$$
On the other hand, the operation $\ntwi$ is given by $\la a,b\ra\ntwi\la c,d\ra=\la w,v\ra$, where
$$w=(a\to c)\meet ((f\meet d)\to b)$$
and
$$v=[(f\meet (a\to c)\meet (d\to b))\to (a\meet (f\to d))]\meet (w\to f)$$
With these operations, we immediately obtain the following.

\begin{proposition}\label{resilat}
If ${\bf A} = (A,\meet,\join,\to,t,f)$ is a bRS-algebra. Then \newline $(A^{\bowtie}, \twm, \twj, \ntwt, \ntwi, \la t,f\ra)$ is a CRL.
\end{proposition}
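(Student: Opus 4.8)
The plan is to prove the statement entirely by transport of structure along the isomorphism $\delta_{\bf A}$, so that none of the explicit formulas for $\ntwt$ and $\ntwi$ displayed above actually need to be manipulated. The essential observation is that $(\Sigma({\bf A}), \twm, \twj, \twt, \twi, \la t,t\ra)$ is a CRL: it is precisely the commutative residuated lattice reduct of the Sugihara monoid $S({\bf A})$, which under the present definition of $S$ (with $Na = f\to a$) we have already recognized as a Sugihara monoid and hence a CRL. By Lemma \ref{iso}, $\delta_{\bf A}$ is a lattice isomorphism from $(A^{\bowtie}, \twm, \twj)$ onto $(\Sigma({\bf A}), \twm, \twj)$, with inverse $\delta_{\bf A}^{-1}\la a,b\ra = \la a, b\meet (a\to f)\ra$.

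Next I would check that $\delta_{\bf A}$ is in fact a homomorphism for the full CRL signature. For the multiplication and the residual this is immediate from the definitions: applying $\delta_{\bf A}$ to the defining equation $\la a,b\ra\ntwt\la c,d\ra = \delta_{\bf A}^{-1}(\delta_{\bf A}\la a,b\ra\twt\delta_{\bf A}\la c,d\ra)$ and cancelling $\delta_{\bf A}\delta_{\bf A}^{-1}$ yields $\delta_{\bf A}(\la a,b\ra\ntwt\la c,d\ra) = \delta_{\bf A}\la a,b\ra\twt\delta_{\bf A}\la c,d\ra$, and the analogous identity holds for $\ntwi$ and $\twi$. It then remains to verify that $\delta_{\bf A}$ carries the designated constant $\la t,f\ra$ to the monoid identity $\la t,t\ra$ of $\Sigma({\bf A})$. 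Here one first notes that $\la t,f\ra\in A^{\bowtie}$, since $t\join f = t$ and $t\meet f = f\leq f$; then $\delta_{\bf A}\la t,f\ra = \la t, f\to f\ra = \la t,t\ra$, as required.

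With these verifications in hand, $\delta_{\bf A}$ is an isomorphism of algebras in the CRL signature from $(A^{\bowtie}, \twm, \twj, \ntwt, \ntwi, \la t,f\ra)$ onto the CRL reduct of the Sugihara monoid $S({\bf A})$. Since the class of CRLs is closed under isomorphic images, the former is itself a CRL, which is the claim. I do not anticipate any genuine obstacle: the explicit term descriptions of $\ntwt$ and $\ntwi$ play no role in the argument and are recorded only for later use, while the sole computation of substance is the identity $\delta_{\bf A}\la t,f\ra = \la t,t\ra$, which reduces to $f\to f = t$. The only point deserving any care is to confirm beforehand that $\la t,f\ra$ indeed belongs to $A^{\bowtie}$, so that it is a legitimate candidate for the identity element.
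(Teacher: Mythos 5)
Your proposal is correct and takes essentially the same approach the paper intends: the operations $\ntwt$ and $\ntwi$ are defined precisely by transport of structure along the lattice isomorphism $\delta_{\bf A}$ of Lemma \ref{iso}, so the paper records the proposition as immediate from the fact that $(\Sigma({\bf A}),\twm,\twj,\twt,\twi,\la t,t\ra)$ is the CRL reduct of the Sugihara monoid $S({\bf A})$. The two details you make explicit---that $\la t,f\ra\in A^{\bowtie}$ and that $\delta_{\bf A}\la t,f\ra=\la t,f\to f\ra=\la t,t\ra$---are exactly what the paper leaves tacit, and both are verified correctly.
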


In fact, the CRL $(A^{\bowtie}, \twm, \twj, \ntwt, \ntwi, \la t,f\ra)$ may be enriched with a natural involution $\inv$ given by $\inv \la a,b\ra = \la b,a\ra$. Since $\la a,b\ra\in A^{\bowtie}$ obviously implies $\la b,a\ra\in A^{\bowtie}$, $\inv$ is a well-defined binary operation on $A^{\bowtie}$. We will show that the addition of $\inv$ makes $(A^{\bowtie}, \twm, \twj, \ntwt, \ntwi, \la t,f\ra)$ a Sugihara monoid. For this, we require the following lemma.

\begin{lemma}\label{lem:invtech}
$\langle a,b\rangle\in A^{\bowtie}$ implies $(a\to f)\meet (f\to b)=b$.
\end{lemma}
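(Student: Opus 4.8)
The set $A^{\bowtie}$ is cut out by the two conditions $a\join b = t$ and $a\meet b\leq f$, so the plan is to prove the equality by splitting it into the two inequalities $b\leq (a\to f)\meet(f\to b)$ and $(a\to f)\meet(f\to b)\leq b$, arranging matters so that each inequality draws on exactly one of the two defining conditions. This keeps the roles of the hypotheses transparent.

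For the inequality $b\leq (a\to f)\meet(f\to b)$, I would use only the condition $a\meet b\leq f$. Since $b\leq f\to b$ holds in every Brouwerian algebra, it suffices to check $b\leq a\to f$; but this is exactly $a\meet b\leq f$ read through residuation (recall that $\cdot$ coincides with $\meet$, so $a\meet b\leq f$ is equivalent to $b\leq a\to f$ by commutativity). Taking the meet of $b\leq a\to f$ and $b\leq f\to b$ gives the claim.

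The reverse inequality $(a\to f)\meet(f\to b)\leq b$ is where $a\join b = t$ must be brought in, and this is the only genuinely creative step. The idea is to insert $t = a\join b$ and distribute (Proposition \ref{prop1}(2)):
$$(a\to f)\meet(f\to b) = (a\to f)\meet(f\to b)\meet(a\join b) = [(a\to f)\meet(f\to b)\meet a]\join [(a\to f)\meet(f\to b)\meet b].$$
The second disjunct is visibly below $b$. For the first disjunct I would apply the standard Brouwerian identities $a\meet(a\to f) = a\meet f$ and $f\meet(f\to b) = f\meet b$ to rewrite $(a\to f)\meet a\meet(f\to b)$ as $a\meet f\meet b$, which is again below $b$; hence the whole join is below $b$. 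The main obstacle is thus purely one of bookkeeping — recognizing that multiplying through by $a\join b = t$ and distributing reduces everything to these two absorption-type identities.

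As a cleaner alternative worth recording as a remark, one may first establish the identity $a\to f = b\join f$ and then compute directly. Indeed, the computations of Lemma \ref{iso} show that $a\join f$ and $b\join f$ are complements in the Boolean lattice $\upset f$ (Proposition \ref{bool}), so $b\join f$ is the complement of $a\join f$, which by Lemma \ref{lem:boolneg} equals $(a\join f)\to f$; and $(a\join f)\to f = (a\to f)\meet(f\to f) = a\to f$ by Proposition \ref{prop1}(4). Substituting, $(a\to f)\meet(f\to b) = (b\join f)\meet(f\to b) = (b\meet(f\to b))\join(f\meet(f\to b)) = b\join(b\meet f) = b$, using $b\leq f\to b$, the identity $f\meet(f\to b) = f\meet b$, and absorption. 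Either route closes the lemma with only routine calculation once the right hypothesis is invoked.
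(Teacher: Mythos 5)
Your proof is correct, and the interesting half of it takes a genuinely different route from the paper's. The lower bound $b\leq (a\to f)\meet(f\to b)$ is handled exactly as in the paper: residuate $a\meet b\leq f$ to get $b\leq a\to f$ and use $b\leq f\to b$. For the upper bound, the paper establishes the same core estimate you do --- $a\meet(a\to f)\meet(f\to b)\leq f\meet(f\to b)\leq b$ --- but then finishes by residuating against $a$, obtaining $(a\to f)\meet(f\to b)\leq a\to b$, and closing with $a\to b=b$, which it extracts from $a\join b=t$ via Proposition \ref{prop:topjoin}. You instead inject $t=a\join b$ directly, distribute the meet over the join, and observe that both disjuncts collapse below $b$ using only the absorption-type identities $a\meet(a\to f)=a\meet f$ and $f\meet(f\to b)=f\meet b$. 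Your version buys a purely equational argument that never leaves the lattice-plus-basic-Brouwerian fragment and does not depend on Proposition \ref{prop:topjoin}; the paper's version buys brevity, since Proposition \ref{prop:topjoin} is already on hand and is reused immediately afterward in the proof of Proposition \ref{prop:inv}. Your alternative remark --- deriving $a\to f=b\join f$ from uniqueness of complements in the Boolean filter $\upset f$ (Proposition \ref{bool}, Lemma \ref{lem:boolneg}, and Proposition \ref{prop1}(4)) and then computing directly --- is also sound, and in fact recycles the complementation computation that the paper carries out inside the proof of Lemma \ref{iso}; it makes the role of the Boolean constant the most visible of the three arguments, at the cost of invoking heavier machinery than the statement needs.
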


\begin{proof}
Let $\langle a,b\rangle\in A^{\bowtie}$. Then $a\meet b\leq f$ and $a\join b =t$. The inequality $a\meet b\leq f$ gives $b\leq a\to f$ by residuation, whence $b=b\meet (f\to b)\leq (a\to f)\meet (f\to b)$. On the other hand, Proposition \ref{prop:topjoin} together with $a\join b=t$ yields $a\to b=b$. Notice that $a\meet (a\to f)\meet (f\to b) \leq f\meet (f\to b)\leq b$, and residuation then gives $(a\to f)\meet (f\to b)\leq a\to b = b$. This proves the claim.
\end{proof}

\begin{proposition}\label{prop:inv}
Let ${\bf A}$ be an object of $\textsf{bRSA}$. Then for all $\la a,b\ra\in A^{\bowtie}$, $\neg\delta_{\bf A}\la a,b\ra = \delta_{\bf A}(\inv \la a,b\ra)$, and hence $\delta_{\bf A}$ is an isomorphism of $\textsf{SM}$.
\end{proposition}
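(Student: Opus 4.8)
The plan is to establish the intertwining identity $\neg\delta_{\bf A}\la a,b\ra = \delta_{\bf A}(\inv \la a,b\ra)$ by a direct coordinatewise computation, and then to leverage the fact that $\delta_{\bf A}$ is already known to be a CRL isomorphism. Fix $\la a,b\ra\in A^{\bowtie}$, so that $a\join b=t$ and $a\meet b\leq f$; these conditions are symmetric in $a$ and $b$, so $\la b,a\ra\in A^{\bowtie}$ as well and both sides of the claimed identity are defined. On the left, I would first apply $\delta_{\bf A}$ to obtain $\delta_{\bf A}\la a,b\ra=\la a,f\to b\ra$, and then apply the involution $\neg$ of $S({\bf A})$ to this pair, yielding a pair whose first coordinate is $(a\to f)\meet(f\to b)$ and whose second coordinate is $N(((a\to f)\meet(f\to b))\to a)$. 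On the right, I would simply compute $\delta_{\bf A}(\inv \la a,b\ra)=\delta_{\bf A}\la b,a\ra=\la b,f\to a\ra$. The goal is then to match the two resulting pairs coordinate by coordinate.

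The first coordinate of $\neg\delta_{\bf A}\la a,b\ra$ is $(a\to f)\meet(f\to b)$, which equals $b$ by Lemma \ref{lem:invtech}; this matches the first coordinate $b$ on the right. For the second coordinate, substituting the just-computed value $b$ into the argument of $N$ reduces it to $N(b\to a)$. Since $a\join b=t$, Proposition \ref{prop:topjoin} gives $b\to a=a$, so this simplifies to $Na=f\to a$, which matches the second coordinate on the right. Thus both coordinates agree and the intertwining identity holds.

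For the final assertion, I would observe that $\delta_{\bf A}$ is a lattice isomorphism by Lemma \ref{iso} and in fact a CRL isomorphism onto $S({\bf A})$: the operations $\ntwt$ and $\ntwi$ on $A^{\bowtie}$ were defined precisely by transport of structure, so applying $\delta_{\bf A}$ to their defining equations shows it preserves $\ntwt$ and $\ntwi$, while $\delta_{\bf A}\la t,f\ra=\la t,t\ra$ handles the monoid unit. The identity established above shows that $\delta_{\bf A}$ additionally intertwines $\inv$ with $\neg$, so transporting the Sugihara monoid structure of $S({\bf A})$ back along $\delta_{\bf A}$ makes $(A^{\bowtie},\twm,\twj,\ntwt,\ntwi,\la t,f\ra,\inv)$ a Sugihara monoid and renders $\delta_{\bf A}$ an isomorphism in $\textsf{SM}$. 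The only genuine obstacle is the verification of the intertwining identity, and even this is slight: its entire computational burden has been front-loaded into Lemma \ref{lem:invtech} and Proposition \ref{prop:topjoin}, so that the remaining manipulations are purely formal.
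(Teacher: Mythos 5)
Your proposal is correct and takes essentially the same route as the paper's own proof: both verify the identity $\neg\delta_{\bf A}\la a,b\ra = \delta_{\bf A}(\inv\la a,b\ra)$ by direct computation, reducing the first coordinate to $b$ via Lemma \ref{lem:invtech} and the second to $f\to a$ via Proposition \ref{prop:topjoin}, and both then conclude that $\delta_{\bf A}$ is an \textsf{SM}-isomorphism because preservation of the CRL operations is automatic from the transport-of-structure definition of $\ntwt$ and $\ntwi$. Your explicit remarks on the symmetry of the defining conditions for $A^{\bowtie}$ and on $\delta_{\bf A}\la t,f\ra=\la t,t\ra$ are details the paper leaves tacit, but the argument is the same one.
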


\begin{proof}
Let $\la a,b\ra\in A^{\bowtie}$. Then $a\join b = t$ gives $a\to b = b$ and $b\to a=a$ by Proposition \ref{prop:topjoin}, and $(a\to f)\meet (f\to b) = b$ by Lemma \ref{lem:invtech}. Using these facts, observe that
\begin{align*}
\neg \delta_{\bf A} \la a,b \ra &= \neg \langle a,f\to b \ra\\
&= \la a,f\to b\rangle \twi \langle f,t \ra\\
&= \la (a\to f)\meet (t\to (f\to b)), f\to [((a\to f)\meet (t\to (f\to b))\to (a\meet t)]\ra\\
&= \la (a\to f)\meet (f\to b), f\to [((a\to f)\meet (f\to b))\to a\ra\\
&= \la b, f\to (b\to a)\ra\\
&= \la b, f\to a\ra\\
&= \delta_{\bf A}(\inv \la a,b \ra)
\end{align*}
The above shows that $\delta_{\bf A}$ preserves $\inv$ as well as the CRL operations. $\delta_{\bf A}$ is hence an isomorphism in \textsf{SM} for each object ${\bf A}$ in \textsf{bRSA}.
\end{proof}

Given a bRS-algebra ${\bf A}$, the above shows that the Sugihara monoid $S({\bf A})$ is isomorphic to $(A^{\bowtie}, \twm, \twj, \ntwt, \ntwi, \la t,f\ra,\inv)$. The involution $\inv$ is much simpler than the involution given in the definition of $S({\bf A})$, but this simplicity comes at the price of complicating the monoid operation and its residual.

Informed by these remarks, we define a functor $(-)^{\bowtie}\colon \textsf{bRSA}\to\textsf{SM}$ as follows. For an object ${\bf A} = (A,\meet,\join,\to,t,f)$ of \textsf{bRSA}, define ${\bf A}^{\bowtie}$ to be the Sugihara monoid $(A^{\bowtie}, \twm, \twj, \ntwt, \ntwi, \langle t,f \rangle, \inv)$. If $h\colon {\bf A}\to {\bf B}$ is a morphism in \textsf{bRSA}, define $h^{\bowtie}\colon {\bf A}^{\bowtie}\to {\bf B}^{\bowtie}$ by $h^{\bowtie}\langle a,b\rangle = \langle h(a), h(b)\rangle$. 

\begin{lemma}

Let $h\colon {\bf A}\to {\bf B}$ is a morphism in \textsf{bRSA}. Then $h^{\bowtie}$ is a morphism in \textsf{SM}.

\end{lemma}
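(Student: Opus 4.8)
The plan is to avoid verifying preservation of the complicated operations $\ntwt$ and $\ntwi$ coordinate by coordinate, and instead to transport the problem across the isomorphisms $\delta$. First I would record that $h^{\bowtie}$ is well defined, i.e.\ that $h^{\bowtie}[A^{\bowtie}]\subseteq B^{\bowtie}$. Indeed, if $\la a,b\ra\in A^{\bowtie}$ then $a\join b=t$ and $a\meet b\leq f$; applying the lattice homomorphism $h$, which preserves $t$ and $f$ and is monotone, yields $h(a)\join h(b)=h(a\join b)=t$ and $h(a)\meet h(b)=h(a\meet b)\leq h(f)=f$, so $\la h(a),h(b)\ra\in B^{\bowtie}$.

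The key step is to establish the naturality square
\[
\delta_{\bf B}\circ h^{\bowtie}=S(h)\circ\delta_{\bf A}.
\]
This is a short computation. For $\la a,b\ra\in A^{\bowtie}$ we have, on one hand, $S(h)(\delta_{\bf A}\la a,b\ra)=S(h)\la a,f\to b\ra=\la h(a),h(f\to b)\ra$, using that $\delta_{\bf A}\la a,b\ra=\la a,f\to b\ra$ and that $S(h)$ acts coordinatewise by $h$. On the other hand, $\delta_{\bf B}(h^{\bowtie}\la a,b\ra)=\delta_{\bf B}\la h(a),h(b)\ra=\la h(a),f\to h(b)\ra$. Since $h$ preserves both $\to$ and the constant $f$, it commutes with the nucleus term $Nx=f\to x$, giving $f\to h(b)=h(f)\to h(b)=h(f\to b)$; hence the two sides agree.

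With the square in hand the conclusion is immediate. Rearranging gives $h^{\bowtie}=\delta_{\bf B}^{-1}\circ S(h)\circ\delta_{\bf A}$. By Proposition \ref{prop:inv} each of $\delta_{\bf A}$ and $\delta_{\bf B}$ is an isomorphism in \textsf{SM}, and $S(h)$ is a morphism of \textsf{SM} because $S$ is a functor into \textsf{SM}. Thus $h^{\bowtie}$, being a composite of \textsf{SM}-morphisms, is itself a morphism of \textsf{SM}.

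I expect the only delicate point to be the verification of the naturality square, and within it the single fact that $h$ commutes with $Nx=f\to x$; this is exactly what reconciles the coordinatewise map $h^{\bowtie}$ with the map $S(h)$, whose target lives in $\Sigma(\cdot)$ and so carries an $N$-closed second coordinate. Everything else (well-definedness, and the formal deduction that the square forces the result) is routine. An alternative, more pedestrian route would check preservation of each operation directly: $\twm$ and $\twj$ are coordinatewise and $\inv$ merely swaps coordinates, so these are trivial, while $\ntwt$ and $\ntwi$ are given by explicit terms in $\meet$, $\to$, and $f$, all preserved by $h$; however, this path is computationally heavy and obscures why the result holds, so I would favor the transport argument.
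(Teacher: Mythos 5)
Your proposal is correct and follows essentially the same route as the paper: both establish the commuting square $\delta_{\bf B}\circ h^{\bowtie}=S(h)\circ\delta_{\bf A}$ via the computation that $h$ commutes with the nucleus term $Nx=f\to x$, and then conclude $h^{\bowtie}=\delta_{\bf B}^{-1}\circ S(h)\circ\delta_{\bf A}$ is a composite of \textsf{SM}-morphisms. Your explicit check that $h^{\bowtie}[A^{\bowtie}]\subseteq B^{\bowtie}$ is a small addition the paper leaves implicit, but it does not change the argument.
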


\begin{proof}
Let $h\colon {\bf A}\to {\bf B}$ be a morphism in \textsf{bRSA}. From the results of \cite{GR2} it follows that the map $S(h)\colon S({\bf A})\to S({\bf B})$ defined by $S(h)\la a,b\ra = \la h(a),h(b)\ra$ is a morphism in \textsf{SM}. Observe that for any $\la a,b\ra\in A^{\bowtie}$,
\begin{align*}
S(h)(\delta_{\bf A}\la a,b\ra) &= S(h)\la a,f^{\bf A}\to b\ra\\
&= \la h(a),h(f^{\bf A}\to b)\ra\\
&= \la h(a),h(f^{\bf A})\to h(b)\ra\\
&= \la h(a),f^{\bf B}\to h(b)\ra\\
&= \delta_{\bf B} \la h(a), h(b)\ra\\
&= \delta_{\bf B} (h^{\bowtie}\la a,b\ra)
\end{align*}
\noindent It follows that $h^{\bowtie}=\delta_{\bf B}^{-1}\circ S(h)\circ \delta_{\bf A}$, hence is the composition of morphisms in \textsf{SM}.
\end{proof}

\begin{lemma}

$(-)^{\bowtie}$ is functorial.

\end{lemma}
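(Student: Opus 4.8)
The plan is to verify the two defining conditions of a functor: preservation of identity morphisms and preservation of composition. The object assignment ${\bf A}\mapsto {\bf A}^{\bowtie}$ has already been shown to land in \textsf{SM} (via Proposition \ref{resilat} together with the involution computations culminating in Proposition \ref{prop:inv}), and the preceding lemma establishes that $h^{\bowtie}$ is a morphism of \textsf{SM} whenever $h$ is a morphism of \textsf{bRSA}. Thus both the object and arrow assignments are well-defined, and only functoriality proper remains to be checked.

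I would exploit the factorization $h^{\bowtie} = \delta_{\bf B}^{-1}\circ S(h)\circ \delta_{\bf A}$ established in the proof of the previous lemma, together with the fact that $S\colon\textsf{bRSA}\to\textsf{SM}$ is itself a functor (inherited from \cite{GR2}). For preservation of identities, since $S(\mathrm{id}_{\bf A}) = \mathrm{id}_{S({\bf A})}$ we obtain $(\mathrm{id}_{\bf A})^{\bowtie} = \delta_{\bf A}^{-1}\circ \mathrm{id}_{S({\bf A})}\circ \delta_{\bf A} = \mathrm{id}_{{\bf A}^{\bowtie}}$. For preservation of composition, given $h\colon {\bf A}\to{\bf B}$ and $g\colon {\bf B}\to {\bf C}$ one computes $(g\circ h)^{\bowtie} = \delta_{\bf C}^{-1}\circ S(g\circ h)\circ \delta_{\bf A} = \delta_{\bf C}^{-1}\circ S(g)\circ S(h)\circ \delta_{\bf A}$, and comparing this with $g^{\bowtie}\circ h^{\bowtie} = \delta_{\bf C}^{-1}\circ S(g)\circ \delta_{\bf B}\circ \delta_{\bf B}^{-1}\circ S(h)\circ \delta_{\bf A}$ the inner $\delta_{\bf B}\circ\delta_{\bf B}^{-1}$ telescopes, yielding equality.

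Alternatively, and perhaps more transparently, one can argue directly from the coordinatewise definition $h^{\bowtie}\la a,b\ra = \la h(a), h(b)\ra$: both conditions follow at once, since $(\mathrm{id}_{\bf A})^{\bowtie}\la a,b\ra = \la a,b\ra$ and $(g\circ h)^{\bowtie}\la a,b\ra = \la g(h(a)), g(h(b))\ra = g^{\bowtie}\la h(a),h(b)\ra = (g^{\bowtie}\circ h^{\bowtie})\la a,b\ra$. On this route the only point requiring care is the well-definedness already implicit above, namely that $h^{\bowtie}$ carries $A^{\bowtie}$ into $B^{\bowtie}$; this holds because $h$ preserves $\meet$, $\join$, $t$, and $f$, so that $h(a)\join h(b) = t$ and $h(a)\meet h(b)\leq f^{\bf B}$ whenever $a\join b = t$ and $a\meet b\leq f^{\bf A}$. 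I expect no genuine obstacle here: the mathematical content of the statement is entirely carried by the earlier verification that $h^{\bowtie}$ is an \textsf{SM}-morphism, and functoriality itself is a routine consequence of the coordinatewise action. The only mild subtlety is notational bookkeeping in the composition case, which the factorization through $S$ organizes cleanly.
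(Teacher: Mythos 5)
Your proposal is correct and its main argument is exactly the paper's: both factor $h^{\bowtie}$ as $\delta_{\bf B}^{-1}\circ S(h)\circ \delta_{\bf A}$, invoke the functoriality of $S$, and telescope the inner $\delta_{\bf B}\circ\delta_{\bf B}^{-1}$ to get preservation of composition, with identities being immediate. Your alternative coordinatewise verification (including the well-definedness check that $h$ carries $A^{\bowtie}$ into $B^{\bowtie}$) is also sound, but it is an optional extra rather than a genuinely different route.
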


\begin{proof}
Let $g\colon {\bf A}\to{\bf B}$ and $h\colon {\bf B}\to {\bf C}$ be morphisms in \textsf{bRSA}. Notice that the functoriality of $S$ yields
\begin{align*}
(h\circ g)^{\bowtie} &= \delta_{\bf C}^{-1}\circ S(h\circ g)\circ \delta_{\bf A}\\
&= \delta_{\bf C}^{-1}\circ S(h)\circ S(g)\circ \delta_{\bf A}\\
&= \delta_{\bf C}^{-1}\circ S(h)\circ \delta_{\bf B}\circ \delta_{\bf B}^{-1}\circ S(g)\circ \delta_{\bf A}\\
&= h^{\bowtie}\circ g^{\bowtie},
\end{align*}
\noindent and it is obvious that $(-)^{\bowtie}$ preserves the identity map.
\end{proof}

Having established the functoriality of $(-)^{\bowtie}$, it remains to show that it provides a reverse functor for $(-)_{\bowtie}\colon\textsf{SM}\to\textsf{bRSA}$.

\begin{lemma}

Let ${\bf A}$ be an object of \textsf{bRSA}. Then ${\bf A}\cong ({\bf A}^{\bowtie})_{\bowtie}$.

\end{lemma}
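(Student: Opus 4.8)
The plan is to exhibit an explicit isomorphism by recognizing that $({\bf A}^{\bowtie})_{\bowtie}$ is the negative cone of the Sugihara monoid ${\bf A}^{\bowtie}$ equipped with the designated constant $\inv\la t,f\ra$, and then showing that this negative cone is carried isomorphically onto ${\bf A}$ by the first-coordinate projection.

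First I would identify the underlying set of $({\bf A}^{\bowtie})_{\bowtie}$. Since the lattice operations act coordinatewise, $\la a,b\ra\twl\la c,d\ra$ holds iff $a\leq c$ and $b\geq d$, so the negative cone consists of exactly those $\la a,b\ra\in A^{\bowtie}$ with $b\geq f$. I claim every such pair has the form $\la a, a\to f\ra$. Indeed, if $\la a,b\ra\in A^{\bowtie}$ with $b\geq f$, then $a\meet b\leq f$ gives $b\leq a\to f$ by residuation, while $a\join b=t$ together with $b\geq f$ shows (exactly as in the proof of Lemma~\ref{iso}) that $a\join f$ and $b$ are complements in the Boolean lattice $\upset f$, which is Boolean by Proposition~\ref{bool}. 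Since $a\join f$ and $a\to f$ are also complements in $\upset f$ by Lemma~\ref{lem:boolneg}, uniqueness of complements forces $b=a\to f$. Conversely, $\la a, a\to f\ra$ always lies in $A^{\bowtie}$ and is negative. Hence $\varphi\colon a\mapsto\la a,a\to f\ra$ is a bijection from $A$ onto the negative cone, with inverse the first-coordinate projection $\pi_1\colon\la a,b\ra\mapsto a$.

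The key observation, which lets me avoid the unwieldy formula for $\ntwi$, is that $\pi_1$ is patently a lattice homomorphism: since $\twm$ and $\twj$ act coordinatewise in the first slot by $\meet$ and $\join$ respectively, $\pi_1(\la a,b\ra\twm\la c,d\ra)=a\meet c$ and $\pi_1(\la a,b\ra\twj\la c,d\ra)=a\join c$. Being a bijective lattice homomorphism, $\pi_1$ is a lattice isomorphism, and therefore so is $\varphi$. Now both ${\bf A}$ and the negative cone are relative Stone algebras; in any Brouwerian algebra the residual $\to$ is the relative pseudocomplement $x\to y=\max\{z: x\meet z\leq y\}$, which is determined entirely by the lattice order. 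Consequently any lattice isomorphism between Brouwerian algebras automatically preserves $\to$, so $\varphi$ preserves $\to$ for free. Finally I would check the constants: $\varphi(t)=\la t, t\to f\ra=\la t,f\ra$ is the monoid identity of ${\bf A}^{\bowtie}$ (the top of the negative cone), while $\varphi(f)=\la f, f\to f\ra=\la f,t\ra=\inv\la t,f\ra$ is exactly the designated constant $\neg t$ attached by $(-)_{\bowtie}$. Thus $\varphi$ is an isomorphism of bRS-algebras.

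The only place a genuine difficulty could arise is the preservation of implication: approached head-on, this would require simplifying $\la a,a\to f\ra\ntwi\la c,c\to f\ra$ through the explicit definition of $\ntwi$ and then meeting with $\la t,f\ra$. I expect that to be the main obstacle in any direct calculation, but the relative-pseudocomplement observation sidesteps it entirely. (Alternatively, one could argue abstractly: Proposition~\ref{prop:inv} gives an isomorphism $\delta_{\bf A}\colon{\bf A}^{\bowtie}\cong S({\bf A})$ in $\textsf{SM}$, so functoriality of $(-)_{\bowtie}$ reduces the claim to $(S({\bf A}))_{\bowtie}\cong{\bf A}$, which follows from the Galatos--Raftery equivalence of \cite{GR2} via the term-equivalence of Proposition~\ref{termeq}; but the direct projection argument is more transparent and self-contained.)
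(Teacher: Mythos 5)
Your proof is correct, but it takes a genuinely different route from the paper's, which disposes of this lemma in two lines: it invokes the isomorphism $\delta_{\bf A}\colon {\bf A}^{\bowtie}\to S({\bf A})$ in \textsf{SM} (Lemma \ref{iso} and Proposition \ref{prop:inv}) and then cites the Galatos--Raftery result that $S({\bf A})_{\bowtie}\cong {\bf A}$ --- precisely the ``abstract'' argument you relegate to your final parenthesis, modulo the term-equivalence of Proposition \ref{termeq}. Your main argument instead builds the isomorphism explicitly: you identify the negative cone of ${\bf A}^{\bowtie}$ as $\{\la a, a\to f\ra : a\in A\}$ via uniqueness of complements in the Boolean lattice $\upset f$ (Proposition \ref{bool} and Lemma \ref{lem:boolneg}, together with the one-line identity $(a\join f)\to f = a\to f$ from Proposition \ref{prop1}(4), which you leave implicit), note that the first projection is a bijective lattice homomorphism, and then obtain preservation of $\to$ for free since the residual of a Brouwerian algebra is the order-determined relative pseudocomplement. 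That last step quietly uses that the negative cone of ${\bf A}^{\bowtie}$ is itself Brouwerian --- i.e., that $\cdot=\meet$ there, which follows from integrality and idempotence and is implicit in the paper's framework, where objects of \textsf{EnSM$^-$} are relative Stone algebras --- so it is sound. What the paper's route buys is brevity and reuse of established machinery; what yours buys is a self-contained argument independent of the external result from \cite{GR2}, an explicit normal form $\la a, a\to f\ra$ for the negative elements of ${\bf A}^{\bowtie}$ that is useful for concrete computation, and a clean way of sidestepping the unwieldy formula for $\ntwi$, which any head-on verification would otherwise have to grind through.
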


\begin{proof}
Observe that ${\bf A}^{\bowtie}\cong S({\bf A})$ via $\delta_{\bf A}$, and by the results of \cite{GR2}, $S({\bf A})_{\bowtie}\cong {\bf A}$. It follows that $({\bf A}^{\bowtie})_{\bowtie}\cong {\bf A}$.
\end{proof}

\begin{lemma}

Let ${\bf A}$ be an object of \textsf{SM}. Then ${\bf A}\cong ({\bf A}_{\bowtie})^{\bowtie}$.

\end{lemma}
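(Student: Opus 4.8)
The plan is to reduce this to the Galatos--Raftery equivalence of \cite{GR2} together with the results already established in this section. The key observation is that $(-)^{\bowtie}$ factors through the old twist-product functor $S$ via the isomorphism $\delta$, while $(-)_{\bowtie}$ agrees with the nuclear negative cone functor $C$ up to the term-equivalence of Proposition \ref{termeq}. Chaining these identifications against the known isomorphism $S(C({\bf A}))\cong {\bf A}$ will give the result.

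First I would unwind the definitions to see that ${\bf A}_{\bowtie}=({\bf A}^-,\neg t)$ is precisely the $(\meet,\join,\to,t,f)$-reduct of $C({\bf A})=({\bf A}^-,N,\neg t)$, where $f=\neg t$. By Lemma \ref{nterm}, the nucleus $N$ of the \textsf{EnSM$^-$}-object $C({\bf A})$ satisfies $Na=f\to a$. Consequently, applying the redefined functor $S\colon\textsf{bRSA}\to\textsf{SM}$---which by construction replaces every occurrence of $N$ in the definitions of $\twt$ and $\twi$ with $f\to(-)$---to the bRS-algebra ${\bf A}_{\bowtie}$ yields exactly the same Sugihara monoid as applying the original $S$ to the \textsf{EnSM$^-$}-object $C({\bf A})$. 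That is, $S({\bf A}_{\bowtie})=S(C({\bf A}))$ on the nose.

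Next I would invoke Proposition \ref{prop:inv}: for the bRS-algebra ${\bf A}_{\bowtie}$, the map $\delta_{{\bf A}_{\bowtie}}$ is an isomorphism in \textsf{SM} from $({\bf A}_{\bowtie})^{\bowtie}$ onto $S({\bf A}_{\bowtie})$. Combining the displayed equality with the equivalence of \cite{GR2}, which provides a natural isomorphism $S(C({\bf A}))\cong {\bf A}$, I obtain the chain
$$({\bf A}_{\bowtie})^{\bowtie}\;\cong\;S({\bf A}_{\bowtie})\;=\;S(C({\bf A}))\;\cong\;{\bf A},$$
which is the desired isomorphism.

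The main obstacle is the middle step, namely justifying that $S({\bf A}_{\bowtie})$ and $S(C({\bf A}))$ coincide rather than merely being isomorphic. This is not a computation but a careful bookkeeping argument: one must confirm that the only role $N$ plays in the definition of the old functor $S$ is through the term $f\to(-)$ (which is exactly what the redefinition of $S$ codifies), and that the term-equivalence of Proposition \ref{termeq} identifies $C({\bf A})$ with ${\bf A}_{\bowtie}$ as the underlying data on which $S$ operates. Once this identification is secured, the remaining steps are immediate appeals to Proposition \ref{prop:inv} and the results of \cite{GR2}.
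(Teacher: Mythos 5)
Your proposal is correct and follows essentially the same route as the paper, whose proof is the one-line chain ${\bf A}\cong S({\bf A}_{\bowtie})\cong ({\bf A}_{\bowtie})^{\bowtie}$ via \cite{GR2} and the isomorphism $\delta_{{\bf A}_{\bowtie}}$ of Proposition \ref{prop:inv}. You merely make explicit the bookkeeping the paper leaves implicit, namely that by Lemma \ref{nterm} and the term-equivalence of Proposition \ref{termeq} the new $S$ applied to ${\bf A}_{\bowtie}$ coincides with the old $S$ applied to $C({\bf A})$, which is exactly how the paper's earlier assertion that $S$ and $(-)_{\bowtie}$ form an equivalence is justified.
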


\begin{proof}
By \cite{GR2} and $\delta_{{\bf A}_{\bowtie}}$, ${\bf A}\cong S({\bf A}_{\bowtie})\cong ({\bf A}_{\bowtie})^{\bowtie}$.
\end{proof}

\begin{lemma}

There is a bijection from $\textsf{bRSA}({\bf A},{\bf B})$ to $\textsf{SM}({\bf A}^{\bowtie},{\bf B}^{\bowtie})$.

\end{lemma}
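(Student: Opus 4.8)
The plan is to show that the map $h\mapsto h^{\bowtie}$ is the asserted bijection, by factoring it through the equivalence witnessed by the functor $S$ of \cite{GR2} together with the isomorphisms $\delta$. The strategy exploits that both of the pieces out of which $(-)^{\bowtie}$ is built on morphisms, namely $S$ and conjugation by $\delta$, are already known to be bijective on the relevant hom-sets.

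First I would recall that, by \cite{GR2}, the functors $S$ and $C$ furnish a categorical equivalence between \textsf{EnSM$^-$} and \textsf{SM}; in particular $S$ is fully faithful, so $h\mapsto S(h)$ is a bijection from $\textsf{EnSM}^-({\bf A},{\bf B})$ onto $\textsf{SM}(S({\bf A}),S({\bf B}))$. By the term-equivalence of Proposition \ref{termeq}, each bRS-algebra is, after reinstating the definable nucleus $Na=f\to a$, an object of \textsf{EnSM$^-$} with exactly the same homomorphisms, so $\textsf{bRSA}({\bf A},{\bf B})$ is identified with $\textsf{EnSM}^-({\bf A},{\bf B})$. Hence $h\mapsto S(h)$ is a bijection from $\textsf{bRSA}({\bf A},{\bf B})$ onto $\textsf{SM}(S({\bf A}),S({\bf B}))$.

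Next I would use the \textsf{SM}-isomorphisms $\delta_{\bf A}\colon {\bf A}^{\bowtie}\to S({\bf A})$ and $\delta_{\bf B}\colon {\bf B}^{\bowtie}\to S({\bf B})$ established in Proposition \ref{prop:inv}. Conjugation by these isomorphisms, $\psi\mapsto \delta_{\bf B}^{-1}\circ\psi\circ\delta_{\bf A}$, is a bijection from $\textsf{SM}(S({\bf A}),S({\bf B}))$ onto $\textsf{SM}({\bf A}^{\bowtie},{\bf B}^{\bowtie})$, with inverse $\varphi\mapsto \delta_{\bf B}\circ\varphi\circ\delta_{\bf A}^{-1}$. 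Since the lemma preceding Proposition \ref{resilat} shows $h^{\bowtie}=\delta_{\bf B}^{-1}\circ S(h)\circ\delta_{\bf A}$, the composite of the two bijections above is precisely the map $h\mapsto h^{\bowtie}$. Being a composition of bijections, $h\mapsto h^{\bowtie}$ is itself a bijection from $\textsf{bRSA}({\bf A},{\bf B})$ onto $\textsf{SM}({\bf A}^{\bowtie},{\bf B}^{\bowtie})$, as required.

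The only delicate point is the full faithfulness of $S$: faithfulness delivers injectivity of $h\mapsto S(h)$ and fullness delivers surjectivity, and both are subsumed by the equivalence of \cite{GR2}, so no fresh computation is needed here. I expect the one genuinely substantive ingredient to be surjectivity — the assertion that every \textsf{SM}-morphism between twist-product algebras arises from a bRSA-morphism — which the factorization reduces to the fullness of $S$. A direct alternative, avoiding the appeal to \cite{GR2}, would instead recover $h$ from a given $\varphi\colon{\bf A}^{\bowtie}\to{\bf B}^{\bowtie}$ by reading off the action of $\varphi$ on the canonical elements $\la a,a\to f\ra\in A^{\bowtie}$ (which exist for every $a\in A$ by $a\join(a\to f)=t$ and $a\meet(a\to f)\leq f$) and checking that the resulting first-coordinate map is a bRSA-homomorphism with $h^{\bowtie}=\varphi$; this is more laborious and is exactly what the factorization lets us sidestep.
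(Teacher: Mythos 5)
Your proposal is correct and follows essentially the same route as the paper: the paper's proof likewise composes the bijection $\textsf{bRSA}({\bf A},{\bf B})\to\textsf{SM}(S({\bf A}),S({\bf B}))$ coming from the equivalence of \cite{GR2} with the conjugation bijection $\psi\mapsto\delta_{\bf B}^{-1}\circ\psi\circ\delta_{\bf A}$. Your version merely makes explicit two points the paper leaves implicit — the term-equivalence identification of hom-sets and the observation that the composite is exactly $h\mapsto h^{\bowtie}$ via $h^{\bowtie}=\delta_{\bf B}^{-1}\circ S(h)\circ\delta_{\bf A}$ — which is fine.
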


\begin{proof}
Note that the \textsf{bRSA}-morphisms from ${\bf A}$ to ${\bf B}$ are in bijective correspondence with the \textsf{SM}-morphisms from $S({\bf A})$ to $S({\bf B})$. Moreover, given a morphism $h\colon S({\bf A})\to S({\bf B})$, the map $h\mapsto \delta_{\bf B}^{-1}\circ h\circ \delta_{\bf A}$ gives a bijection between the \textsf{SM}-morphisms from $S({\bf A})$ to $S({\bf B})$ and those from ${\bf A}^{\bowtie}$ to ${\bf B}^{\bowtie}$, which proves the result.
\end{proof}

Combining the lemmas above, we obtain

\begin{theorem}

The functors $(-)^{\bowtie}$ and $(-)_{\bowtie}$ witness the equivalence of \textsf{bRSA} and \textsf{SM}.

\end{theorem}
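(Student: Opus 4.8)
The plan is to assemble the theorem directly from the four lemmas immediately preceding it, which together verify the two conditions needed for an equivalence of categories: that the composite functors are naturally isomorphic to the respective identity functors (up to isomorphism on objects), and that $(-)^{\bowtie}$ is fully faithful. Since the functoriality of $(-)^{\bowtie}$ and the fact that $h^{\bowtie}$ is a genuine $\textsf{SM}$-morphism have already been established, what remains is purely the bookkeeping of packaging these ingredients into the standard criterion for an equivalence.

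First I would invoke the two object-level lemmas, namely that ${\bf A}\cong ({\bf A}^{\bowtie})_{\bowtie}$ for every object ${\bf A}$ of \textsf{bRSA} and that ${\bf A}\cong ({\bf A}_{\bowtie})^{\bowtie}$ for every object ${\bf A}$ of \textsf{SM}. These show that the composites $(-)_{\bowtie}\circ(-)^{\bowtie}$ and $(-)^{\bowtie}\circ(-)_{\bowtie}$ are naturally isomorphic to the identity functors on \textsf{bRSA} and \textsf{SM}, respectively; the relevant natural isomorphism on the \textsf{SM} side is witnessed componentwise by $\delta_{{\bf A}_{\bowtie}}$, whose naturality follows from the commuting square relating $S(h)$, $\delta_{\bf A}$, and $\delta_{\bf B}$ established in the proof that $h^{\bowtie}=\delta_{\bf B}^{-1}\circ S(h)\circ\delta_{\bf A}$. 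Next I would cite the final lemma, which exhibits a bijection between $\textsf{bRSA}({\bf A},{\bf B})$ and $\textsf{SM}({\bf A}^{\bowtie},{\bf B}^{\bowtie})$; combined with functoriality this says precisely that $(-)^{\bowtie}$ is full and faithful.

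To finish, I would appeal to the standard characterization: a functor is (one half of) an equivalence of categories if and only if it is full, faithful, and essentially surjective on objects. Fullness and faithfulness come from the hom-set bijection, while essential surjectivity of $(-)^{\bowtie}$ follows from the isomorphism ${\bf A}\cong ({\bf A}_{\bowtie})^{\bowtie}$, which shows every object of \textsf{SM} is isomorphic to one in the image. This establishes that $(-)^{\bowtie}$ is an equivalence, and the object-level isomorphisms identify $(-)_{\bowtie}$ as its (quasi-)inverse, completing the proof.

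I do not anticipate a genuine obstacle here, since all the substantive content has been front-loaded into the preceding lemmas and into Proposition \ref{prop:inv} (which secures that $\delta_{\bf A}$ respects the involution, and hence is an \textsf{SM}-isomorphism rather than a mere lattice isomorphism). The only point requiring mild care is confirming that the various isomorphisms $\delta$ assemble into \emph{natural} transformations rather than merely pointwise ones; this naturality is exactly what the identity $h^{\bowtie}=\delta_{\bf B}^{-1}\circ S(h)\circ\delta_{\bf A}$ encodes, so the square commutes by construction and no separate verification is needed.
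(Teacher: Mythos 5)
Your proposal is correct and follows essentially the same route as the paper, whose proof of this theorem is simply to combine the four preceding lemmas (the object-level isomorphisms ${\bf A}\cong ({\bf A}^{\bowtie})_{\bowtie}$ and ${\bf A}\cong ({\bf A}_{\bowtie})^{\bowtie}$, the hom-set bijection, and functoriality). Your explicit appeal to the full--faithful--essentially-surjective criterion and your observation that naturality is encoded in the identity $h^{\bowtie}=\delta_{\bf B}^{-1}\circ S(h)\circ\delta_{\bf A}$ merely spell out what the paper leaves implicit.
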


A consequence of the above is that $(-)^{\bowtie}$ and $S$ are both adjoints of the functor $(-)_{\bowtie}$, hence that $(-)^{\bowtie}$ and $S$ are isomorphic functors. In light of this result, we may dispense with the functor $S$ entirely, opting instead to express the equivalence in terms of the functor $(-)^{\bowtie}$ and its more familiar involution.

\begin{example}\label{ex:parex2}
Consider the Sugihara monoid ${\bf E} = (E,\meet,\join,\cdot,\to,\la 0,1\ra, \neg)$ of Example \ref{parex}. The enriched negative cone of ${\bf E}$ is given by the bRS-algebra ${\bf E}_{\bowtie}$, where $f=\neg \la 0,1\ra = \la -0,-1\ra = \la 0, -1\ra$, and has Hasse diagram
\begin{center}
\begin{tikzpicture}
    \node[label=\tiny{$t=\la 0,1\ra$}] at (-0.5,-1)  {$\bullet$};
    \node[label=right:\tiny{$f=\la 0,-1\ra$}] at (0,-1.5) {$\bullet$};
    \node[label=left:\tiny{$c=\la -1,1\ra$}] at (-1,-1.5) {$\bullet$};
    \node[label=right:\tiny{$b=\la -1,-1\ra$}] at (-0.5,-2) {$\bullet$};
    \node[label=right:\tiny{$a=\la -2,-2\ra$}] at (-0.5,-2.5) {$\bullet$};

    \draw (-0.5,-1) -- (0,-1.5);
    \draw (-0.5,-1) -- (-1,-1.5);
    \draw (-1,-1.5) -- (-0.5,-2);
    \draw (-0.5,-2) -- (0,-1.5);
    \draw (-0.5,-2) -- (-0.5,-2.5);
\end{tikzpicture}
\end{center}
The nucleus $N\colon {\bf E}_{\bowtie}\to {\bf E}_{\bowtie}$ defined by $Nx=f\to x$ is given by $Nt=Nf=t$, $Nb=Nc=c$, and $Na=a$. Therefore,
\begin{align*}
\Sigma ({\bf E}_{\bowtie}) &= \{\la x,y\ra\in E^-\times E^- : x\join y = t \text{ and } Ny=y\}\\
&= \{\la a, t\ra, \la b, t\ra, \la c, t\ra, \la f, t\ra, \la t, t\ra, \la t, a\ra, \la t,c\ra, \la f, c\ra\}
\end{align*}
On the other hand, representing ${\bf E}$ with the functor $(-)^{\bowtie}$ gives
\begin{align*}
({\bf E}_{\bowtie})^{\bowtie} &= \{\la x,y\ra\in E^-\times E^- : x\join y = t \text{ and } x\meet y\leq f\}\\
&= \{\la a, t\ra, \la t, a\ra, \la b, t\ra, \la t, b\ra, \la t, f\ra, \la f, t\ra, \la f,c\ra, \la c, f\ra\}
\end{align*}
The Hasse diagrams for $S({\bf E}_{\bowtie})$ and $({\bf E}_{\bowtie})^{\bowtie}$ are respectively
\begin{center}
\begin{tikzpicture}
    \node[label=right:\tiny{$\la t,a\ra$}] at (0,0) {$\bullet$};
    \node[label=left:\tiny{$\la t,c\ra$}] at (0,-0.5) {$\bullet$};
    \node[label=left:\tiny{$\la t,t\ra$}] at (-0.5,-1)  {$\bullet$};
    \node[label=right:\tiny{$\la f,c\ra$}] at (0.5,-1) {$\bullet$};
    \node[label=right:\tiny{$\la f,t\ra$}] at (0,-1.5) {$\bullet$};
    \node[label=left:\tiny{$\la c,t\ra$}] at (-1,-1.5) {$\bullet$};
    \node[label=right:\tiny{$\la b,t\ra$}] at (-0.5,-2) {$\bullet$};
    \node[label=right:\tiny{$\la a,t\ra$}] at (-0.5,-2.5) {$\bullet$};

    \draw (0,0) -- (0,-0.5);
    \draw (0,-0.5) -- (-0.5,-1);
    \draw (0,-0.5) -- (0.5,-1);
    \draw (-0.5,-1) -- (0,-1.5);
    \draw (0.5,-1) -- (0,-1.5);
    \draw (-0.5,-1) -- (-1,-1.5);
    \draw (-1,-1.5) -- (-0.5,-2);
    \draw (0.5,-1) -- (-0.5,-2);
    \draw (-0.5,-2) -- (-0.5,-2.5);
\end{tikzpicture}
\begin{tikzpicture}
    \node[label=right:\tiny{$\la t,a\ra$}] at (0,0) {$\bullet$};
    \node[label=left:\tiny{$\la t,b\ra$}] at (0,-0.5) {$\bullet$};
    \node[label=left:\tiny{$\la t,f\ra$}] at (-0.5,-1)  {$\bullet$};
    \node[label=right:\tiny{$\la f,c\ra$}] at (0.5,-1) {$\bullet$};
    \node[label=right:\tiny{$\la f,t\ra$}] at (0,-1.5) {$\bullet$};
    \node[label=left:\tiny{$\la c,f\ra$}] at (-1,-1.5) {$\bullet$};
    \node[label=right:\tiny{$\la b,t\ra$}] at (-0.5,-2) {$\bullet$};
    \node[label=right:\tiny{$\la a,t\ra$}] at (-0.5,-2.5) {$\bullet$};

    \draw (0,0) -- (0,-0.5);
    \draw (0,-0.5) -- (-0.5,-1);
    \draw (0,-0.5) -- (0.5,-1);
    \draw (-0.5,-1) -- (0,-1.5);
    \draw (0.5,-1) -- (0,-1.5);
    \draw (-0.5,-1) -- (-1,-1.5);
    \draw (-1,-1.5) -- (-0.5,-2);
    \draw (0.5,-1) -- (-0.5,-2);
    \draw (-0.5,-2) -- (-0.5,-2.5);
\end{tikzpicture}
\end{center}
Observe that the representations $S({\bf E}_{\bowtie})$ and $({\bf E}_{\bowtie})^{\bowtie}$ differ by only three pairs, including the monoid identity.
\end{example}

\section{Duality for algebras with a Boolean constant}\label{sec:booldual}

As an initial step to producing dualities for the Sugihara monoids and their bounded expansions, we construct dualities for the equivalent categories \textsf{bRSA} and \textsf{bGA}. Because of their close relationship to the category of Heyting algebras, dualities for \textsf{bRSA} and \textsf{bGA} may be obtained as elaborations of the well-known Esakia duality. These elaborations have much in common with Bezhanishvili and Ghilardi's duality for Heyting algebras equipped with nuclei \cite{BezGhi}, and we also explore points of contact with this duality theory. As a preliminary to obtaining dualities for \textsf{bRSA} and \textsf{bGA}, we recall some facts about the Priestley and Esakia dualities.

\subsection{Priestley and Esakia duality} A structure $(X,\leq,\tau)$ is called a \emph{Priestley space} if $(X,\leq)$ is a poset, $(X,\tau)$ is a compact topological space, and for each $x,y\in X$ satisfying $x\not\leq y$ there exists a clopen up-set $U$ with $x\in U$ and $y\notin U$. A Priestley space $(X,\leq,\tau)$ is called an \emph{Esakia space} if for each clopen set $U$ the down-set $\downset U$ is clopen as well. Given binary relational structures $(X,R_1)$ and $(Y,R_2)$, a function $\varphi\colon (X,R_1)\to (Y,R_2)$ is called a \emph{p-morphism} if it satisfies
\begin{enumerate}
\item for all $x,y\in X$, $xR_1y$ implies $\varphi(x)R_2\varphi(y)$, and
\item for all $x\in X$ and $z\in Y$, $\varphi(x)R_2z$ implies there exists $y\in X$ such that $xR_1y$ and $\varphi(y)=z$.
\end{enumerate}
If $(X,\leq_1,\tau_1)$ and $(Y,\leq_2,\tau_2)$ are Esakia spaces, then a continuous p-morphism $\varphi\colon (X,\leq_1)\to (Y,\leq_2)$ is called an \emph{Esakia map} or \emph{Esakia function}. We denote the category of Priestley spaces with continuous isotone maps by \textsf{PS}, and the category of Esakia spaces with Esakia maps by \textsf{ES}. For convenience, we denote also the category of bounded distributive lattices with bounded lattice homomorphisms by \textsf{DL}.

Given a bounded distributive lattice $\mathbb{A} = (A,\meet,\join,\bot,\top)$, we denote by $A_*$ its collection of prime filters. $A_*$ may be endowed with a topology $\tau_{\mathbb{A}}$ that is generated by the subbase $\{\sigma(a) : a\in A\}\cup\{\sigma(a)^\comp : a\in A\}$, where for each $a\in A$ we have $\sigma(a)=\{x\in A_* : a\in x\}$. Ordered by subset inclusion and equipped with this topology, $A_*$ becomes a Priestley space. We denote this Priestley space by $\mathbb{A}_*=(A_*,\subseteq,\tau_\mathbb{A})$. On the other hand, given a Priestley space $\mathbb{X}=(X,\leq,\tau)$, we denote by $X^*$ the collection of clopen up-sets of $\mathbb{X}$. This collection is closed under unions and intersections, and hence $\mathbb{X}^*=(X^*,\cap,\cup,\emptyset,X)$ is a bounded distributive lattice. The maps $(-)_*$ and $(-)^*$ may be extended to functors between \textsf{DL} and \textsf{PS} by defining their action on morphisms as follows. First, if $h\colon\mathbb{A}\to\mathbb{B}$ is a morphism of \textsf{DL}, we define $h_*\colon {\mathbb{B}}_*\to{ \mathbb{A}}_*$ by $h_*(x)=h^{-1}[x]$. Then $h_*$ is a \textsf{PS}-morphism. Likewise, if $\varphi\colon \mathbb{X}\to\mathbb{Y}$ is a morphism of \textsf{PS}, we define $\varphi^*\colon \mathbb{Y}^*\to\mathbb{X}^*$ by $\varphi^*(U)=\varphi^{-1}[U]$. Then $\varphi^*$ is a \textsf{DL}-morphism. Priestley showed in \cite{Pr1,Pr2} that the functors $(-)_*$ and $(-)^*$ witness a dual equivalence of categories between \textsf{DL} and \textsf{PS}.

A Heyting algebra ${\bf H} = (H,\meet,\join,\to,t,\bot)$ is, \emph{inter alia}, a bounded distributive lattice. Its distributive lattice reduct $\mathbb{H}$ therefore has a Priestley dual $\mathbb{H}_*$, and it turns out that $\mathbb{H}_*$ is an Esakia space. On the other hand, given an Esakia space $\mathbb{X} = (X,\leq,\tau)$, we may define a binary operation $\to$ on $X^*$ by
$$U\to V = \{x\in X : \upset x\cap U\subseteq V\}$$
The expansion $(\mathbb{X}^*,\to)$ turns out to be a Heyting algebra. Moreover, when $h$ is an \textsf{HA}-morphism, the dual $h_*$ is an Esakia map. Likewise, when $\varphi$ is an \textsf{ES}-morphism, the dual $h^*$ is a Heyting algebra homomorphism when $\to$ is defined as before. This entails that the restrictions of the functors $(-)_*$ and $(-)^*$ to \textsf{HA} and \textsf{ES} yield a dual equivalence of categories. Esakia discovered this duality independently of Priestley, and first articulated it in \cite{Esa}.

Priestley and Esakia dualities may also be formulated for algebras with a distinguished top element, but lacking a distinguished bottom element, as follows. We say that a structure $(X,\leq,\top,\tau)$ is a \emph{pointed Priestley space} if $(X,\leq,\tau)$ is a Priestley space and $\top$ is the greatest element of $(X,\leq)$, and that $(X,\leq,\top,\tau)$ is a \emph{pointed Esakia space} if it is a pointed Priestley space and $(X,\leq,\tau)$ is an Esakia space. Given pointed Priestley spaces $(X,\leq_1,\top_1,\tau_1)$ and $(Y,\leq_2,\top_2,\tau_2)$, we say that a continuous monotone map $\varphi\colon (X,\leq_1,\top_1,\tau_1)\to (Y,\leq_2,\top_2,\tau_2)$ is a \emph{pointed Priestley map} if $\varphi(\top_1)=\top_2$. We define the notion of \emph{pointed Esakia map} similarly. The category of pointed Priestley spaces with pointed Priestley maps will be denoted \textsf{pPS}, and the category of pointed Esakia spaces with pointed Esakia maps by \textsf{pES}.

Given a top-bounded distributive lattice $\mathbb{A}$ without distinguished bottom, we say that $x\subseteq A$ is a \emph{generalized prime filter} if $x$ is a prime filter or $x=A$. In this situation, we denote by $\mathbb{A}_*$ the pointed Priestley space of generalized prime filters of $\mathbb{A}$. If $\mathbb{X}$ is a pointed Priestley space, we denote by $\mathbb{X}^*$ the top-bounded distributive lattice of \emph{nonempty} clopen up-sets of $\mathbb{A}$. With these modifications, $(-)_*$ and $(-)^*$ give a dual equivalence of categories between the category of top-bounded distributive lattices and \textsf{pPS}. The same modifications witness a dual equivalence of categories between \textsf{Br} and \textsf{pES}. For a detailed treatment of the extension of the Esakia duality to Brouwerian algebras, we refer the reader to \cite{JR}.

For simplicity of notation, we will use $(-)_*$ and $(-)^*$ to denote both the functors witnessing the Priestley duality (with or without bottom elements) and their restrictions witnessing the Esakia duality (for either Heyting algebras or Brouwerian algebras). In the sequel, we will use the same notation for Urquhart's duality for relevant algebras, which is constructed based on Priestley duality. In all of these cases, we rely on context to distinguish between these meanings.

A poset $(P,\leq)$ is called a forest if $\upset x$ is a chain for each $x\in P$. It is well-known (see, e.g., \cite{CP1}) that a Heyting algebra ${\bf A}$ is a G\"odel algebra if and only if $(A_*,\subseteq)$ is a forest. For a relative Stone algebra ${\bf A}$, the addition of a new bottom element $\bot$ to ${\bf A}$ yields a G\"odel algebra with carrier $A\cup\{\bot\}$, and $(A_*,\subseteq)$ is precisely $((A\cup\{\bot\})_*, \subseteq)$. Thus a Brouwerian algebra ${\bf A}$ is a relative Stone algebra if and only if the corresponding pointed Esakia space is a forest with greatest element (i.e., a tree). The dualities discussed above may thus be restricted to obtain dualities for the G\"odel algebras (respectively, relative Stone algebras) by considering only those Esakia spaces whose underlying order is a forest (respectively, pointed Esakia spaces whose underlying order is a tree).

\subsection{Esakia duality for \textsf{bRSA} and \textsf{bGA}} We next extend the Esakia duality for Brouwerian algebras to obtain a dual equivalence of \textsf{bRSA} with the category of structured topological spaces that we define presently.

\begin{definition}\label{def:bRSspace}
A structure $(X,\leq,D,\top,\tau)$ is called a \textsf{bRS}-space if
\begin{enumerate}
\item $(X,\leq,\top,\tau)$ is a pointed Esakia space,
\item $(X,\leq)$ is a forest, and
\item $D$ is a clopen subset of $X$ consisting of designated $\leq$-minimal elements.
\end{enumerate}
Given \textsf{bRS}-spaces $(X,\leq_X,D_X,\top_X,\tau_X)$ and $(Y,\leq_Y,D_Y,\top_Y,\tau_Y)$, a map $\varphi$ from $(X,\leq_X,D_X,\top_X,\tau_X)$ to $(Y,\leq_Y,D_Y,\top_Y,\tau_Y)$ is called a \emph{\textsf{bRSS}-morphism} if
\begin{enumerate}
\item $\varphi$ is a pointed Esakia map from $(X,\leq_X,\top_X,\tau_X)$ to $(Y,\leq_Y,\top_Y,\tau_Y)$,
\item $\varphi[D_X]\subseteq D_Y$, and
\item $\varphi[D_X^\comp]\subseteq D_Y^\comp$.
\end{enumerate}
We denote the category of \textsf{bRS}-spaces with \textsf{bRSS}-morphisms by \textsf{bRSS}.
\end{definition}

The equivalence of \textsf{bRSA} and \textsf{bRSS} is witnessed by augmented versions of the functors $(-)_*$ and $(-)^*$. For an object ${\bf A} = (A,\meet,\join,\to,t,f)$ of \textsf{bRSA}, define ${\bf A}_* = ((A,\meet,\join,\to,t)_*, \sigma(f)^\comp)$. For an object $(X,\leq,D,\top,\tau)$ of \textsf{bRSS}, define $(X,\leq,D,\top,\tau)^* = ((X,\leq,\top,\tau)^*,D^\comp)$. $(-)_*$ and $(-)^*$ are defined for morphisms exactly as in the duality for Brouwerian algebras.

\begin{lemma}\label{lem:dual1}
Let ${\bf A} = (A,\meet,\join,\to,t,f)$ be an object of \textsf{bRSA}. Then ${\bf A}_*$ is an object of $\textsf{bRSS}$.
\end{lemma}

\begin{proof}
${\bf A}_*$ is a pointed Esakia space whose underlying order is a forest by the duality for Brouwerian algebras as applied to relative Stone algebras. It thus suffices to show that $\sigma(f)^\comp$ is a clopen subset of $A_*$ consisting of $\subseteq$-minimal elements. That $\sigma(f)^\comp$ is clopen follows as it is a basic clopen set. To see that $\sigma(f)^\comp$ consists of minimal elements, let $y\in \sigma(f)^\comp$ and suppose that $x\in A_*$ with $x\subseteq y$. Let $a\in y$. Then $(a\to f)\join a = t\in x$, so by the primality of $x$ either $a\in x$ or $a\to f\in x$. If $a\to f\in x$, then $a\to f\in y$. This gives $a\meet (a\to f)\in y$. But $a\meet (a\to f)\leq f$ and $y$ upward-closed gives $f\in y$, which is a contradiction to the choice of $y$. It follows that $a\in x$, so that $y\subseteq x$. Since $x\subseteq y$ as well, this shows that $x=y$ and thus $y$ is $\subseteq$-minimal.
\end{proof}

\begin{lemma}\label{lem:dual2}
Let ${\bf X} = (X,\leq,D,\top,\tau)$ be an object of $\textsf{bRSS}$. Then ${\bf X}^*$ is an object of \textsf{bRSA}.
\end{lemma}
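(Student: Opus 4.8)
The plan is to show that for a \textsf{bRS}-space ${\bf X} = (X,\leq,D,\top,\tau)$, the structure ${\bf X}^* = ((X,\leq,\top,\tau)^*, D^\comp)$ is a \textsf{bRSA}-object, i.e., a relative Stone algebra expanded by a Boolean constant. The underlying work is already done by the Esakia duality for Brouwerian algebras: since $(X,\leq,\top,\tau)$ is a pointed Esakia space whose order is a forest with greatest element (a tree), the lattice $(X,\leq,\top,\tau)^*$ of nonempty clopen up-sets, equipped with the Heyting-style implication $U\to V = \{x\in X : \upset x\cap U\subseteq V\}$, is a relative Stone algebra. So the only genuine content is to verify two things: first, that $D^\comp$ is a legitimate element of this algebra (i.e., a nonempty clopen up-set), and second, that $D^\comp$ satisfies the Boolean-constant condition $U\join (U\to D^\comp)=X$ for all clopen up-sets $U$ (by Proposition \ref{bool}, it is enough to verify this, since that condition characterizes $\upset f$ being Boolean, equivalently makes $f$ a Boolean constant).

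First I would check that $D^\comp$ is an object of the lattice $(X,\leq,\top,\tau)^*$. It is clopen since $D$ is clopen by Definition \ref{def:bRSspace}(3). It is nonempty because $\top \notin D$: the elements of $D$ are $\leq$-minimal, but $\top$ is the greatest element of a tree, so $\top$ is minimal only in the degenerate one-point case, and in general $\top\in D^\comp$. More importantly, I must show $D^\comp$ is an \emph{up-set}. This is exactly where the hypothesis that $D$ consists of minimal elements is used: if $x\in D^\comp$ and $x\leq y$, I claim $y\in D^\comp$, for if $y\in D$ then $y$ is minimal, forcing $x=y\in D$, a contradiction. Hence $D^\comp$ is an up-set, so $D^\comp \in (X,\leq,\top,\tau)^*$.

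Next I would verify the Boolean-constant identity. Let $U$ be any nonempty clopen up-set, identified with an element of the algebra, and write $f = D^\comp$. By Proposition \ref{bool} it suffices to show $U\join (U\to f)=X$, where join is union and $U\to f = \{x : \upset x\cap U\subseteq f\} = \{x : \upset x\cap U\cap D=\emptyset\}$. Fix $x\in X$; I must show $x\in U$ or $x\in U\to f$. If $x\in U$ we are done, so suppose $x\notin U$. Because $(X,\leq)$ is a forest, $\upset x$ is a chain; and the elements of $\upset x\cap U$, if any, form a subchain. The key observation is that along the chain $\upset x$ the set $U$ (being an up-set) is upward-closed, so $\upset x\cap U$ is either empty or has a least element that is \emph{not} minimal in $X$ below it — more precisely, since $x\notin U$ but $U$ is an up-set, any point of $\upset x \cap U$ lies strictly above $x$ and hence is non-minimal, so none of these points lies in $D$ (elements of $D$ being minimal). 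Thus $\upset x\cap U\cap D=\emptyset$, giving $x\in U\to f$. This establishes $U\join(U\to f)=X$, so by Proposition \ref{bool} the constant $f=D^\comp$ makes ${\bf X}^*$ a relative Stone algebra with Boolean constant, completing the proof.

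I expect the main obstacle to be the Boolean-constant computation, specifically pinning down the correct reason that $\upset x \cap U \cap D = \emptyset$ whenever $x \notin U$. The subtlety is that $D$ contains only $\leq$-minimal elements of all of $X$, not merely elements minimal within some subchain, and one must combine this with the forest structure and the fact that $U$ is an up-set to rule out any point of $\upset x \cap U$ lying in $D$. The up-set and clopenness checks for $D^\comp$ are routine by comparison; the real care lies in the order-theoretic argument connecting minimality of $D$-points to the residuation-style definition of $U \to D^\comp$.
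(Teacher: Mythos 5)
Your proof is correct and follows essentially the same route as the paper's: the relative Stone structure comes from the Esakia duality for Brouwerian algebras, $D^\comp$ is a clopen up-set via minimality of the $D$-points, and the identity $U\cup(U\to D^\comp)=X$ holds because any $y\in\upset x\cap U$ with $x\notin U$ lies strictly above $x$, hence is non-minimal, hence $y\notin D$. The forest/chain observation you lean on in the final step is actually unnecessary (strictness alone suffices, exactly as in the paper's proof), while your extra check that $D^\comp$ is nonempty is a detail the paper silently omits.
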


\begin{proof}
${\bf X}^*$ is a relative Stone algebra by the duality for Brouwerian algebras, so we need only show that $D^\comp$ is a clopen up-set of ${\bf X}$ and that for any clopen up-set $U\subseteq X$, $U\cup (U\to D^\comp)=X$. $D$ clopen immediately yields that $D^\comp$ is clopen. To see that $D^\comp$ is an up-set, let $x\in D^\comp$ and $y\in X$ with $x\leq y$. If $y\in D$ held, then the minimality of the elements of $D$ would give $x=y$ and hence $x\in D$, a contradiction. Therefore $y\in D^\comp$, so $D^\comp$ is an up-set.

Now let $U\subseteq X$ be a clopen up-set and let $x\in X$. If $x\notin U$, then we claim that $x\in U\to D^\comp=\{y\in X : \upset y\cap U\subseteq D^\comp\}$, so suppose that $y\in \upset x\cap U$. It suffices to show that $y$ is not minimal. Observe that $x\leq y$ and $y\in U$, so $x\notin U$ gives $x\neq y$. Thus $y$ is not $\leq$-minimal, which gives $x\in U\to D^\comp$. It follows that $x\in U\cup (U\to D^\comp)$, so that $U\cup (U\to D^\comp)=X$, proving the claim.
\end{proof}

\begin{lemma}\label{lem:dual3}
Let $h\colon {\bf A} \to {\bf B}$ be a morphism of \textsf{bRSA}. Then $h_*\colon {\bf B}_*\to {\bf A}_*$ is a morphism of $\textsf{bRSS}$.
\end{lemma}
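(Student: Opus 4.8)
The plan is to split the verification into the three defining conditions of a \textsf{bRSS}-morphism. Condition (1), that $h_*$ is a pointed Esakia map from ${\bf B}_*$ to ${\bf A}_*$, comes for free: since $h$ is in particular a homomorphism of the underlying relative Stone algebras, the Esakia duality for Brouwerian algebras (as restricted to relative Stone algebras) already guarantees that $h_*(x)=h^{-1}[x]$ is a continuous p-morphism sending $\top_{\bf B}=B$ to $\top_{\bf A}=A$. Thus the only real work concerns the two conditions involving the designated clopen sets $D_{\bf B}=\sigma(f^{\bf B})^\comp$ and $D_{\bf A}=\sigma(f^{\bf A})^\comp$, namely $h_*[D_{\bf B}]\subseteq D_{\bf A}$ and $h_*[D_{\bf B}^\comp]\subseteq D_{\bf A}^\comp$.

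The crux is a single preimage computation that exploits the fact that an \textsf{bRSA}-morphism preserves the constant, i.e.\ $h(f^{\bf A})=f^{\bf B}$. For an arbitrary generalized prime filter $x\in B_*$, I would observe the chain of equivalences
\begin{align*}
h_*(x)\in\sigma(f^{\bf A}) &\iff f^{\bf A}\in h^{-1}[x]\\
&\iff h(f^{\bf A})\in x\\
&\iff f^{\bf B}\in x\\
&\iff x\in\sigma(f^{\bf B}),
\end{align*}
where the third equivalence is exactly the preservation of $f$. This shows $h_*^{-1}[\sigma(f^{\bf A})]=\sigma(f^{\bf B})$, and passing to complements gives $h_*^{-1}[D_{\bf A}]=D_{\bf B}$.

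From these two set equalities the required inclusions drop out immediately by the standard fact that $h_*[h_*^{-1}[S]]\subseteq S$. Indeed, $h_*[D_{\bf B}]=h_*[h_*^{-1}[D_{\bf A}]]\subseteq D_{\bf A}$ yields condition (2), and $h_*[D_{\bf B}^\comp]=h_*[h_*^{-1}[\sigma(f^{\bf A})]]\subseteq\sigma(f^{\bf A})=D_{\bf A}^\comp$ yields condition (3). I do not anticipate a genuine obstacle here; the proof is essentially bookkeeping. The only point demanding care is the contravariance of $(-)_*$: since $h_*$ runs from ${\bf B}_*$ to ${\bf A}_*$, one must track that it is the \emph{domain}'s designated set $D_{\bf B}$ that is governed by $f^{\bf B}$ and the \emph{codomain}'s set $D_{\bf A}$ by $f^{\bf A}$, so that the two conditions correspond to forward images of $D_{\bf B}$ and $D_{\bf B}^\comp$ rather than preimages.
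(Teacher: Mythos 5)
Your proposal is correct and takes essentially the same route as the paper: both rest solely on the Esakia duality supplying condition (1) and on the single fact $h(f^{\bf A})=f^{\bf B}$ unwound through $h_*(x)=h^{-1}[x]$. Your packaging of the two inclusions into the one preimage equality $h_*^{-1}[\sigma(f^{\bf A})]=\sigma(f^{\bf B})$ is merely a tidier presentation of the paper's two element-chasing arguments, and your closing caution about contravariance is exactly the bookkeeping the paper's proof implicitly performs.
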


\begin{proof}
The duality for Brouwerian algebras gives that $h_*$ is a morphism of \textsf{pES}. We must show that $h_*[\sigma(f^{\bf B})]\subseteq \sigma(f^{\bf A})$ and $h_*[\sigma(f^{\bf B})^\comp]\subseteq \sigma(f^{\bf A})^\comp$.

Firstly, let $x\in h_*[\sigma(f^{\bf B})]$. Then there exists $y\in\sigma(f^{\bf B})$ such that $x=h_*(y)$. Since $h(f^{\bf A})=f^{\bf B}\in y$, it follows that $f^{\bf A}\in h^{-1}[y]=h_*(y)=x$, so $x\in\sigma(f^{\bf A})$. This gives $h_*[\sigma(f^{\bf B})]\subseteq \sigma(f^{\bf A})$.

Secondly, let $x\in h_*[\sigma(f^{\bf B})^\comp]$. Then there exists $y\in \sigma(f^{\bf B})^\comp$ such that we have $x=h_*(y)=h^{-1}[y]$. Were if the case that $f^{\bf A}\in x$, then $f^{\bf B}=h(f^{\bf A})$ would give that $f^{\bf B}\in y$, contradicting $y\notin\sigma(f^{\bf B})$. Thus $f^{\bf A}\notin x$, and it follows that $h_*[\sigma(f^{\bf B})^\comp]\subseteq \sigma(f^{\bf A})^\comp$.
\end{proof}

\begin{lemma}\label{lem:dual4}
Let $\varphi\colon {\bf X} \to {\bf Y}$ be a morphism of $\textsf{bRSS}$. Then $\varphi^*\colon {\bf Y}^*\to {\bf X}^*$ is a morphism of \textsf{bRSA}.
\end{lemma}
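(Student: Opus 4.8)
The plan is to reduce the statement to a single verification: that $\varphi^*$ preserves the designated constant $f$. By the Esakia duality for Brouwerian algebras, specialized to relative Stone algebras as recalled in the preceding subsection, the underlying map $\varphi^*\colon {\bf Y}^*\to {\bf X}^*$ given by $\varphi^*(U)=\varphi^{-1}[U]$ is already a homomorphism of the Brouwerian reducts, since $\varphi$ is in particular a pointed Esakia map. Thus $\varphi^*$ preserves $\meet$, $\join$, $\to$, and $t$ (and carries nonempty clopen up-sets to nonempty clopen up-sets), so it remains only to check that it respects the new constant.

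Next I would unwind the definition of $(-)^*$ on objects. On the space ${\bf Y}$ the constant is $f^{{\bf Y}^*}=D_Y^\comp$, and on ${\bf X}$ it is $f^{{\bf X}^*}=D_X^\comp$. Hence preservation of $f$ by $\varphi^*$ amounts to the single set identity
$$\varphi^{-1}[D_Y^\comp]=D_X^\comp,$$
and I would establish this by proving the two inclusions separately, each extracted from one of the two closure conditions in the definition of a \textsf{bRSS}-morphism.

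For the inclusion $\varphi^{-1}[D_Y^\comp]\subseteq D_X^\comp$ I would argue contrapositively from the condition $\varphi[D_X]\subseteq D_Y$: if $x\in D_X$ then $\varphi(x)\in D_Y$, so no point of $D_X$ can lie in $\varphi^{-1}[D_Y^\comp]$. For the reverse inclusion $D_X^\comp\subseteq\varphi^{-1}[D_Y^\comp]$ I would use the condition $\varphi[D_X^\comp]\subseteq D_Y^\comp$ directly: if $x\in D_X^\comp$ then $\varphi(x)\in D_Y^\comp$, whence $x\in\varphi^{-1}[D_Y^\comp]$. Combining the two inclusions yields the identity above, and therefore $\varphi^*(f^{{\bf Y}^*})=f^{{\bf X}^*}$, which together with the Brouwerian duality shows $\varphi^*$ is a morphism of \textsf{bRSA}.

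I do not anticipate any genuine obstacle: the two morphism conditions have been set up precisely so that together they are equivalent to $\varphi^{-1}[D_Y^\comp]=D_X^\comp$, the one requirement beyond being a pointed Esakia map that is needed for $\varphi^*$ to respect the extra constant. The only point requiring care is bookkeeping with complements, namely remembering that the constant on duals is $D^\comp$ rather than $D$, so that the condition $\varphi[D_X]\subseteq D_Y$ supplies one inclusion through its contrapositive while $\varphi[D_X^\comp]\subseteq D_Y^\comp$ supplies the other directly.
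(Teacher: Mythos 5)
Your proposal is correct and follows essentially the same route as the paper: invoke the Brouwerian (relative Stone) duality for the reduct operations, then verify $\varphi^{-1}[D_Y^\comp]=D_X^\comp$ by deriving each inclusion from one of the two \textsf{bRSS}-morphism conditions, with $\varphi[D_X]\subseteq D_Y$ giving $\varphi^{-1}[D_Y^\comp]\subseteq D_X^\comp$ (your contrapositive is just the paper's complement-taking) and $\varphi[D_X^\comp]\subseteq D_Y^\comp$ giving the reverse inclusion directly. No gaps.
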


\begin{proof}
$\varphi^*$ is a morphism of \textsf{Br} by the duality for Brouwerian algebras. We must show $\varphi^*(D_Y^\comp)=D_X^\comp$.

Since $\varphi$ is a \textsf{bRSS}-morphism, it follows that $\varphi[D_X]\subseteq D_Y$ and $\varphi[D_X^\comp]\subseteq D_Y^\comp$. From the latter, it follows that $D_X^\comp\subseteq \varphi^{-1}[\varphi[D_X^\comp]]\subseteq\varphi^{-1}[D_Y^\comp]$, so we have $D_X^\comp\subseteq \varphi^*(D_Y^\comp)$.

On the other hand, $D_X\subseteq \varphi^{-1}(\varphi[D_X])\subseteq \varphi^{-1}[D_Y]$ follows from the other condition, so by taking complements
$$D_X^\comp\supseteq X\setminus\varphi^{-1}[D_Y]=\varphi^{-1}[Y]\setminus\varphi^{-1}[D_Y]=\varphi^{-1}[D_Y^\comp]=\varphi^*(D_Y^\comp).$$
The result follows.
\end{proof}

\begin{lemma}\label{lem:dual5}
Let ${\bf A}$ be an object of \textsf{bRSA}. Then $({\bf A}_*)^*\cong {\bf A}$.
\end{lemma}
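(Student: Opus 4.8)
The plan is to establish the isomorphism $({\bf A}_*)^* \cong {\bf A}$ by invoking the underlying Esakia duality for relative Stone algebras and then verifying that the added Boolean-constant structure is transported correctly. By the Esakia duality for Brouwerian algebras (as restricted to relative Stone algebras), the canonical map $\sigma\colon {\bf A} \to ({\bf A}_*)^*$ given by $\sigma(a) = \{x \in A_* : a \in x\}$ is already an isomorphism of the underlying relative Stone algebra reducts. Since the objects of \textsf{bRSA} are term-equivalent to their Brouwerian reducts together with a single designated constant $f$, it suffices to check that $\sigma$ carries $f$ to the designated constant of $({\bf A}_*)^*$, i.e., that $\sigma$ preserves the Boolean constant.

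First I would unwind the definitions. Recall that ${\bf A}_* = ((A,\meet,\join,\to,t)_*, \sigma(f)^\comp)$, so its designated clopen subset is $D = \sigma(f)^\comp$. By the definition of $(-)^*$ on \textsf{bRSS}-objects, the designated constant of $({\bf A}_*)^*$ is $D^\comp = (\sigma(f)^\comp)^\comp = \sigma(f)$. Thus the content of the lemma reduces to the observation that $\sigma(f)$, viewed as an element of the relative Stone algebra $({\bf A}_*)^*$ of nonempty clopen up-sets, is exactly the image of $f$ under the representation map $\sigma$. This is immediate from the definition of $\sigma$ itself.

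Consequently, the one substantive point to verify is that $\sigma(f)$ genuinely is an object of $({\bf A}_*)^*$ — that is, that $\sigma(f)$ is a nonempty clopen up-set whose complement $\sigma(f)^\comp$ consists of $\subseteq$-minimal elements, so that ${\bf A}_*$ is indeed a legitimate \textsf{bRS}-space. But this was already established in Lemma \ref{lem:dual1}, which shows ${\bf A}_*$ is an object of \textsf{bRSS}; in particular $\sigma(f)^\comp$ is clopen and consists of minimal elements, so $({\bf A}_*)^*$ is a well-defined object of \textsf{bRSA} by Lemma \ref{lem:dual2}. Therefore the map $\sigma\colon {\bf A} \to ({\bf A}_*)^*$ is an isomorphism of relative Stone algebras preserving the designated constant, hence an isomorphism in \textsf{bRSA}.

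I do not anticipate a genuine obstacle here, since essentially all the work is inherited from the ambient Esakia duality. The only place requiring care is bookkeeping with complements: one must track that $D = \sigma(f)^\comp$ in the dual space but the designated constant recovered on the algebra side is $D^\comp = \sigma(f)$, so the double complement returns $f$ as desired rather than introducing an off-by-one error in the correspondence. Once this is observed, the result follows directly from the fact that $\sigma$ is the isomorphism furnished by the Esakia duality for relative Stone algebras.
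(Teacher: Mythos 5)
Your proposal is correct and matches the paper's own proof: both invoke the Esakia duality for relative Stone algebras to obtain that $\sigma$ is an isomorphism of the $(\meet,\join,\to,t)$-reducts, and then verify the constant is preserved via the double-complement computation $f^{({\bf A}_*)^*} = A_*\setminus(\sigma(f^{\bf A})^\comp) = \sigma(f^{\bf A})$. The extra remarks about well-definedness via Lemmas \ref{lem:dual1} and \ref{lem:dual2} are harmless bookkeeping the paper leaves implicit.
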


\begin{proof}
By the Esakia duality for relative Stone algebras, $\sigma\colon A\to (A_*)^*$ is an isomorphism between the $(\meet,\join,\to,t)$-reducts of $A$ and $(A_*)^*$. It thus suffices to show that this map preserves the constant $f$. Thus the result follows from observing that $f^{({\bf A}_*)^*} = A_*\setminus (\sigma(f^{\bf A})^\comp) = \sigma(f^{\bf A})$.
\end{proof}

\begin{lemma}\label{lem:bRSSiso}
Let ${\bf X}$ and ${\bf Y}$ be objects of \textsf{bRSS}, and let $\varphi\colon {\bf X}\to {\bf Y}$ be a \textsf{pES}-isomorphism. Then $\varphi$ is an isomorphism of \textsf{bRSS} if and only if $\varphi[D_X]=D_Y$.
\end{lemma}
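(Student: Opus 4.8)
The plan is to unwind the definition of a \textsf{bRSS}-isomorphism into the two assertions that both $\varphi$ and $\varphi^{-1}$ are \textsf{bRSS}-morphisms, and then observe that for a bijection the defining conditions on the designated sets collapse to a single set-equality. Throughout I will use that $\varphi$ is already a \textsf{pES}-isomorphism, so $\varphi$ and $\varphi^{-1}$ are mutually inverse continuous p-morphisms preserving the top element; the only content to check concerns the designated sets $D_X$ and $D_Y$.

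For the forward direction, I would suppose $\varphi$ is an isomorphism of \textsf{bRSS}. Then $\varphi$ is in particular a \textsf{bRSS}-morphism, so $\varphi[D_X]\subseteq D_Y$; and $\varphi^{-1}$ is also a \textsf{bRSS}-morphism, so $\varphi^{-1}[D_Y]\subseteq D_X$. Applying $\varphi$ to the second inclusion and using that $\varphi$ is surjective yields $D_Y=\varphi[\varphi^{-1}[D_Y]]\subseteq\varphi[D_X]$, and combined with the first inclusion this gives $\varphi[D_X]=D_Y$.

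For the converse, I would assume $\varphi$ is a \textsf{pES}-isomorphism with $\varphi[D_X]=D_Y$. The key point is that since $\varphi$ is a bijection, it carries complements to complements, so $\varphi[D_X]=D_Y$ forces $\varphi[D_X^\comp]=\varphi[X\setminus D_X]=Y\setminus\varphi[D_X]=D_Y^\comp$. Hence $\varphi$ satisfies conditions (2) and (3) of the definition of a \textsf{bRSS}-morphism (indeed with equality), so $\varphi$ is a \textsf{bRSS}-morphism. Because $\varphi$ is bijective and $\varphi[D_X]=D_Y$, we also have $\varphi^{-1}[D_Y]=D_X$ and, by the same complement argument, $\varphi^{-1}[D_Y^\comp]=D_X^\comp$; together with the fact that $\varphi^{-1}$ is a \textsf{pES}-morphism, this makes $\varphi^{-1}$ a \textsf{bRSS}-morphism as well. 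Since $\varphi$ and $\varphi^{-1}$ are mutually inverse \textsf{bRSS}-morphisms, $\varphi$ is an isomorphism of \textsf{bRSS}.

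This argument is essentially bookkeeping with images under a bijection, so there is no serious obstacle; the only point requiring a small amount of care is recognizing that a \textsf{bRSS}-isomorphism demands both $\varphi$ \emph{and} $\varphi^{-1}$ satisfy the morphism conditions, which is exactly what lets the single equation $\varphi[D_X]=D_Y$ suffice. I would present the complement computation $\varphi[X\setminus D_X]=Y\setminus\varphi[D_X]$ explicitly, since it is the one place where bijectivity (rather than mere p-morphism structure) is used in an essential way.
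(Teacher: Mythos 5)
Your proof is correct and follows essentially the same route as the paper's: both directions reduce to bookkeeping with images of a bijection, using that bijectivity makes $\varphi$ commute with complements so that the two designated-set conditions collapse to the single equality $\varphi[D_X]=D_Y$. The only cosmetic difference is in the forward direction, where you invoke $\varphi^{-1}[D_Y]\subseteq D_X$ from the inverse morphism while the paper instead takes complements in $\varphi[D_X^\comp]\subseteq D_Y^\comp$; these are interchangeable one-line variants of the same argument.
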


\begin{proof}
Suppose first that $\varphi$ is an isomorphism of \textsf{bRSS}. Then $\varphi$ has an inverse morphism in \textsf{bRSS}. Among other things, that $\varphi$ is an isomorphism in \textsf{pES} entails that $\varphi$ is an isomorphism of posets and hence a bijection. Moreover, $\varphi[D_X]\subseteq D_Y$ and $\varphi[D_X^\comp]\subseteq D_Y^\comp$ hold by definition. Since $\varphi$ is a bijection, taking complements in the latter inclusion gives $D_Y\subseteq\varphi[D_X^\comp]^\comp=\varphi[D_X]$, and thus $\varphi[D_X]=D_Y$.

For the converse, suppose that $\varphi[D_X]=D_Y$. Since $\varphi$ is an isomorphism of \textsf{pES}, $\varphi$ is a bijection and its set-theoretic $\varphi^{-1}$ inverse corresponds with its inverse in \textsf{pES}. The fact that $\varphi$ is a bijection gives $\varphi[D_X^\comp]=\varphi[D_X]^\comp=D_Y^\comp$, and this implies that $\varphi$ is a morphism in $\textsf{bRSS}$. On the other hand, $\varphi[D_X]=[D_Y]$ implies $\varphi^{-1}[D_Y]=D_X$ and $\varphi[D_X^\comp]=D_Y^\comp$ implies $\varphi^{-1}[D_Y^\comp]=D_X^\comp$, so $\varphi^{-1}$ is a morphism in \textsf{bRSS} as well. This gives that $\varphi$ is an isomorphism in \textsf{bRSS} and the claim is proven.
\end{proof}

\begin{lemma}\label{lem:dual6}
Let ${\bf X}$ be an object of \textsf{bRSS}. Then $({\bf X}^*)_*\cong {\bf X}$.
\end{lemma}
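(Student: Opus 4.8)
The plan is to build a \textsf{bRSS}-isomorphism on top of the underlying Esakia duality and then invoke Lemma \ref{lem:bRSSiso} to handle the extra structure. Write ${\bf X} = (X,\leq,D,\top,\tau)$. By the Esakia duality for Brouwerian algebras (as specialized to relative Stone algebras and trees), the evaluation map
$$\varepsilon_{\bf X}\colon X\to (X^*)_*,\qquad \varepsilon_{\bf X}(x)=\{U\in X^* : x\in U\},$$
is an isomorphism of pointed Esakia spaces between $(X,\leq,\top,\tau)$ and the pointed Esakia reduct of $({\bf X}^*)_*$. Since the order underlying a \textsf{bRS}-space is a forest, this already matches the forest structures. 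By Lemma \ref{lem:bRSSiso}, it then suffices to show that $\varepsilon_{\bf X}$ carries the designated set of ${\bf X}$ onto that of $({\bf X}^*)_*$; that is, $\varepsilon_{\bf X}[D]=D'$, where $D'$ denotes the designated subset of $({\bf X}^*)_*$.

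Next I would unwind the two functors to identify $D'$ explicitly. By Lemma \ref{lem:dual2}, $D^\comp$ is a (nonempty) clopen up-set of ${\bf X}$, hence an element of the algebra ${\bf X}^*$, and it serves as the designated constant $f$ of ${\bf X}^*$. Applying $(-)_*$ to ${\bf X}^*$, the designated set of $({\bf X}^*)_*$ is by definition $\sigma(D^\comp)^\comp$, where $\sigma$ is now computed in the algebra ${\bf X}^*$, so that $\sigma(D^\comp)=\{x\in (X^*)_* : D^\comp\in x\}$. This exactly parallels the identification of the designated constant made in Lemma \ref{lem:dual5}.

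The heart of the argument is then a direct membership computation. For $x\in X$, the filter $\varepsilon_{\bf X}(x)$ consists precisely of those $U\in X^*$ with $x\in U$, so $D^\comp\in\varepsilon_{\bf X}(x)$ if and only if $x\in D^\comp$, i.e. if and only if $x\notin D$. Passing to the complement, $\varepsilon_{\bf X}(x)\in\sigma(D^\comp)^\comp$ if and only if $x\in D$. Thus $x\in D$ iff $\varepsilon_{\bf X}(x)\in D'$ for every $x\in X$, and since $\varepsilon_{\bf X}$ is a bijection this yields $\varepsilon_{\bf X}[D]=D'$. By Lemma \ref{lem:bRSSiso}, $\varepsilon_{\bf X}$ is an isomorphism of \textsf{bRSS}, and therefore $({\bf X}^*)_*\cong{\bf X}$.

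I expect no serious obstacle here: the conceptual work has been front-loaded into Lemma \ref{lem:bRSSiso}, which reduces the claim to tracking a single coordinate. The only points requiring care are confirming that $D^\comp$ is genuinely an object of the algebra ${\bf X}^*$ (supplied by Lemma \ref{lem:dual2}) and keeping the complement straight when passing between the designated set $D$ and the designated constant $f=D^\comp$, exactly the bookkeeping that caused the complemented form $\sigma(f)^\comp$ to appear in the definition of $(-)_*$ on \textsf{bRSA}.
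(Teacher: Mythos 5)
Your proposal is correct and takes essentially the same route as the paper: the paper's proof uses the very same evaluation map $\varphi(x)=\{U\in X^* : x\in U\}$, invokes the Esakia duality for relative Stone algebras to get a \textsf{pES}-isomorphism, and reduces via Lemma \ref{lem:bRSSiso} to showing $\varphi[D]=\sigma(D^\comp)^\comp$ with $f^{{\bf X}^*}=D^\comp$. Your single biconditional computation ($x\in D$ iff $D^\comp\notin\varepsilon_{\bf X}(x)$, then surjectivity) is just a slightly tidier packaging of the paper's two-inclusion argument, which appeals to bijectivity in the same way.
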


\begin{proof}

Let $\varphi\colon X\to (X^*)_*$ be defined by $\varphi(x) = \{U\in X^* : x\in U\}$. The Esakia duality for relative Stone algebras gives that $\varphi$ is an isomorphism of \textsf{pES}. We will show that $\varphi$ is also an isomorphism of \textsf{bRSS}, and it suffices to show that $\varphi[D]=\sigma(D^\comp)^\comp=\{p\in X^* : D^\comp\notin p\}$ by Lemma \ref{lem:bRSSiso}.

Suppose first that $p\in\varphi[D]$. Then there exists $x\in D$ such that $p=\varphi(x)$, i.e., $p=\{U\in X^* : x\in U\}$. Since $x\notin D^\comp$, we have $D^\comp\notin p$. Thus $p\in\sigma(D^\comp)^\comp$ and $\varphi[D]\subseteq\sigma(D^\comp)^\comp$.

For the reverse inclusion, let $p\in\sigma(D^\comp)^\comp$. Then $D^\comp\notin p$. If there were $x\in D^\comp$ with $\varphi(x)=p$, it would follow that $D^\comp\in \{U\in X^* : x\in U\} =\varphi(x) = p$, a contradiction. Therefore $p\notin\varphi[D^\comp]$. Since $\varphi$ is a bijection $\varphi[D^\comp]=\varphi[D]^\comp$, so $p\notin \varphi[D]^\comp$. This implies $p\in\varphi[D]$, whence $\sigma(D^\comp)^\comp\subseteq\varphi[D]$. It follows that $\varphi[D]=\sigma(D^\comp)^\comp$, proving the claim.
\end{proof}

\begin{theorem}\label{thm:bRSAdual}
\textsf{bRSA} is dually equivalent to \textsf{bRSS}.
\end{theorem}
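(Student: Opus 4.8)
The plan is to assemble the preceding six lemmas into the standard argument that a pair of contravariant functors, together with natural isomorphisms for both composites, witnesses a dual equivalence. First I would record that $(-)_*$ and $(-)^*$ are well-defined on objects by Lemmas \ref{lem:dual1} and \ref{lem:dual2}, and well-defined on morphisms by Lemmas \ref{lem:dual3} and \ref{lem:dual4}. Since the action of $(-)_*$ and $(-)^*$ on morphisms is defined exactly as in the Esakia duality for Brouwerian algebras, contravariant functoriality (preservation of composites and of identities) is inherited verbatim from that duality, and no fresh verification is needed: the only new content in Lemmas \ref{lem:dual3} and \ref{lem:dual4} is that the duals of \textsf{bRSS}- and \textsf{bRSA}-morphisms respect the extra structure, which has already been checked.

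Next I would invoke Lemmas \ref{lem:dual5} and \ref{lem:dual6} to obtain, for each object ${\bf A}$ of \textsf{bRSA} and each object ${\bf X}$ of \textsf{bRSS}, isomorphisms $({\bf A}_*)^*\cong {\bf A}$ in \textsf{bRSA} and $({\bf X}^*)_*\cong {\bf X}$ in \textsf{bRSS}. The witnessing maps are precisely the map $\sigma$ of Lemma \ref{lem:dual5} and the evaluation map $\varphi(x)=\{U\in X^* : x\in U\}$ of Lemma \ref{lem:dual6}, i.e., the very components that witness the Esakia duality for relative Stone algebras.

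The final ingredient is the naturality of these two families, and here I would argue that it too is inherited from the underlying duality rather than reproved by hand. The forgetful functors from \textsf{bRSA} to \textsf{Br} and from \textsf{bRSS} to \textsf{pES} commute with $(-)_*$ and $(-)^*$, and send \textsf{bRSA}- and \textsf{bRSS}-morphisms to their underlying \textsf{Br}- and \textsf{pES}-morphisms. Consequently each naturality square for the families $\{\sigma_{\bf A}\}$ and $\{\varphi_{\bf X}\}$ projects onto the corresponding naturality square for the Esakia duality for relative Stone algebras, which already commutes; since the components are the same set maps and commutativity is a condition on the underlying maps, the squares for \textsf{bRSA} and \textsf{bRSS} commute as well. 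Combining functoriality with these natural isomorphisms yields the dual equivalence.

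I do not expect a genuine obstacle here, as the substantive work is entirely contained in Lemmas \ref{lem:dual1}--\ref{lem:dual6} and the theorem is their formal assembly. The one point deserving explicit mention is the naturality claim: rather than computing naturality squares directly, I would attribute it to the inherited Esakia duality, noting that the isomorphisms of Lemmas \ref{lem:dual5} and \ref{lem:dual6} are the Brouwerian ones augmented only by compatibility with the constant $f$ and the designated set $D$.
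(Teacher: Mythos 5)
Your proposal is correct and takes essentially the same route as the paper, whose proof likewise assembles Lemmas \ref{lem:dual1}--\ref{lem:dual6} and attributes naturality to the underlying duality between \textsf{Br} and \textsf{pES}. Your explicit observation that the naturality squares commute because the components $\sigma_{\bf A}$ and $\varphi_{\bf X}$ are the very same underlying maps as in the Esakia duality simply spells out the step the paper leaves implicit.
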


\begin{proof}
This follows immediately from Lemmas \ref{lem:dual1}, \ref{lem:dual2}, \ref{lem:dual3}, \ref{lem:dual4}, \ref{lem:dual5}, and \ref{lem:dual6}. Naturality follows from the proof that the functors $(-)_*$ and $(-)^*$ give an equivalence between \textsf{pES} and the \textsf{Br}.
\end{proof}

The duality exhibited above may be easily extended to provide a duality for bG-algebras as well. This extension amounts to dropping the top element from the language of \text{bRSS}.
\begin{definition}\label{def:BGspace}
A structure $(X,\leq,D,\tau)$ is called a \textsf{bG}-space if
\begin{enumerate}
\item $(X,\leq,\tau)$ is an Esakia space,
\item $(X,\leq)$ is a forest, and
\item $D$ is a clopen subset of $X$ consisting of $\leq$-minimal elements.
\end{enumerate}
Given \textsf{bG}-spaces ${\bf X} = (X,\leq_X,D_X,\tau_X)$ and ${\bf Y} = (Y,\leq_Y,D_Y,\tau_Y)$, a map $\varphi$ from ${\bf X}$ to ${\bf Y}$ is called a \emph{\textsf{bGS}-morphism} if
\begin{enumerate}
\item $\varphi$ is an Esakia map from $(X,\leq_X,\tau_X)$ to $(Y,\leq_Y,\tau_Y)$,
\item $\varphi[D_X]\subseteq D_Y$, and
\item $\varphi[D_X^\comp]\subseteq D_Y^\comp$.
\end{enumerate}
We denote the category of \textsf{bG}-spaces with \textsf{bGS}-morphisms by \textsf{bGS}.
\end{definition}

\begin{theorem}\label{thm:bGAdual}
\textsf{bGA} is dually equivalent to \textsf{bGS}.
\end{theorem}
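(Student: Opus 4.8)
The plan is to mirror the proof of Theorem \ref{thm:bRSAdual} almost verbatim, replacing each appeal to the Esakia duality for Brouwerian algebras (restricted to relative Stone algebras and pointed Esakia forests) with the corresponding appeal to the Esakia duality for Heyting algebras (restricted to G\"odel algebras and Esakia forests). Concretely, for a bG-algebra ${\bf A} = (A,\meet,\join,\to,t,\bot,f)$ I would set ${\bf A}_* = ((A,\meet,\join,\to,t,\bot)_*, \sigma(f)^\comp)$, where $(-)_*$ now denotes the Esakia dual built from genuine prime filters, and for a bG-space ${\bf X} = (X,\leq,D,\tau)$ I would set ${\bf X}^* = ((X,\leq,\tau)^*, D^\comp)$, with the empty clopen up-set now supplying the least element of ${\bf X}^*$ automatically through the \textsf{HA}--\textsf{ES} duality. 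Morphisms are sent to their Esakia duals exactly as before. Since dropping the top element from the language is the only structural change between \textsf{bRSS} and \textsf{bGS}, essentially the whole of the preceding subsection transfers.

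Next I would reprove the object-level and morphism-level lemmas as direct analogues of Lemmas \ref{lem:dual1}--\ref{lem:dual4}. The crucial point is that every argument in those proofs concerning the designated set $D$, the constant $f$, or the complement $D^\comp$ is expressed purely in terms of the order, the topology, primality of filters, and the defining relations $a \meet (a\to f) \leq f$ and $a \join (a\to f) = t$, none of which mentions the greatest element of the space or relies on the absence of a least element of the algebra. For instance, the proof that $\sigma(f)^\comp$ consists of $\subseteq$-minimal prime filters (the analogue of Lemma \ref{lem:dual1}) runs word-for-word: given $f \notin y$ and $x \subseteq y$, primality of $x$ applied to $a \join (a \to f) = t$ forces $a \in x$ for every $a \in y$, so $x = y$. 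Likewise the analogue of Lemma \ref{lem:dual2}, that $U \cup (U \to D^\comp) = X$ for every clopen up-set $U$, and the two morphism lemmas require no modification. That the underlying order of ${\bf A}_*$ is a forest follows from the duality fact recalled earlier that a Heyting algebra is a G\"odel algebra exactly when its Esakia dual is a forest.

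To close the equivalence I would prove the analogues of Lemmas \ref{lem:dual5} and \ref{lem:dual6}, namely $({\bf A}_*)^* \cong {\bf A}$ and $({\bf X}^*)_* \cong {\bf X}$. The first is immediate from the Esakia duality for G\"odel algebras together with the identity $f^{({\bf A}_*)^*} = \sigma(f^{\bf A})$. For the second I would first restate Lemma \ref{lem:bRSSiso} in the unpointed setting---that an \textsf{ES}-isomorphism $\varphi \colon {\bf X} \to {\bf Y}$ of bG-spaces is a \textsf{bGS}-isomorphism iff $\varphi[D_X] = D_Y$---whose proof is unchanged, and then carry out the computation $\varphi[D] = \sigma(D^\comp)^\comp$ exactly as in Lemma \ref{lem:dual6}. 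Assembling these as in the proof of Theorem \ref{thm:bRSAdual}, with naturality inherited from the Esakia duality between \textsf{HA} and \textsf{ES}, yields the claim.

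I expect no genuine obstacle here: the content of the theorem is precisely that pointedness played no essential role in the previous development. The only points demanding care are bookkeeping ones---confirming that the switch from generalized prime filters to ordinary prime filters (and correspondingly from trees to forests) leaves every minimality, up-set, and Boolean-complement argument intact, and that the clopen set $D$ and its complement continue to behave identically in the absence of a distinguished greatest point.
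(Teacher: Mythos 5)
Your proposal is correct and takes essentially the same approach as the paper, whose entire proof is the remark that the argument for Theorem \ref{thm:bRSAdual} carries over once every appeal to the Esakia duality for relative Stone algebras is replaced by the Esakia duality for G\"odel algebras. The detailed checks you record (minimality of $\sigma(f)^\comp$, the identity $U\cup(U\to D^\comp)=X$, the isomorphism criterion via $\varphi[D_X]=D_Y$, and the forest characterization of G\"odel duals) are exactly the verifications the paper leaves implicit.
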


\begin{proof}
This follows as in the proof of Theorem \ref{thm:bRSAdual}, replacing any mention of the Esakia duality for relative Stone algebras in the proofs of the relevant lemmas by the Esakia duality for G\"odel algebras.
\end{proof}

\begin{figure}
\begin{center}
\begin{tikzpicture}
    \node[label=left:\tiny{${E_{\bowtie}}_*$}] at (0,0) {$\bullet$};
    \node[label=left:\tiny{$\upset b$}] at (0,-0.5) {$\bullet$};
    \node[label=left:\tiny{$\upset f$}] at (-0.5,-1)  {$\bullet$};
    \node[label=right:\tiny{$\upset c$}] at (0.5,-1) {\textcircled{$\bullet$}};

    \draw (0,0) -- (0,-0.5);
    \draw (0,-0.5) -- (-0.5,-1);
    \draw (0,-0.5) -- (0.5,-1);
\end{tikzpicture}
\end{center}
\caption{Hasse diagram for $({\bf E}_{\bowtie})_*$}
\label{fig:HasseEbrsdual}
\end{figure}

\begin{example}\label{ex:parex3}
The \textsf{bRS}-algebra ${\bf E}_{\bowtie}$ of Example \ref{ex:parex2} has dual space $({{\bf E}_{\bowtie}})_*$, whose Hasse diagram is given in Figure \ref{fig:HasseEbrsdual}. The elements of the designated subset are circled.
\end{example}

\subsection{bG-algebras as Heyting algebras with nuclei}

\textsf{bG}-algebras were originally formulated in \cite{GR2} in the guise of G\"odel algebras equipped nuclei, and the duality articulated here was originally discovered in the setting of Bezhanishvili and Ghilardi's duality for Heyting algebras equipped with nuclei \cite{BezGhi}. The nucleus of a \textsf{bG}-algebra is definable from the designated constant $f$ via the term $Na=f\to a$, but it is natural to ask how the duality presented here compares with that of Bezhanishvili and Ghilardi. We will see that the nucleus of a \textsf{bG}-algebra presents itself in a particularly simple and pleasant fashion on the dual space, and provides a useful perspective for thinking about \textsf{bG}-spaces.

\begin{definition}
An algebra ${\bf A}=(A,\meet,\join,\to,t,\bot,N)$ is called a \emph{nuclear Heyting algebra} if $(A,\meet,\join,\to,t,\bot)$ is a Heyting algebra and $N$ is nucleus on $(A,\meet,\join,\to,t,\bot)$. The category of nuclear Heyting algebras with Heyting algebra homomorphisms that preserve the nucleus is denoted \textsf{nHA}. 
\end{definition}
\begin{definition}\label{def:nES}
A structure $(X,\leq,R,\tau)$ is called a \emph{nuclear Esakia space} if $(X,\leq,\tau)$ is an Esakia space, and $R$ is a binary relation on $X$ satisfying
\begin{enumerate}
\item $xRz$ if and only if $(\exists y\in X)(yRy \text{ and } x\leq y\leq z)$,
\item $R[x]$ is closed for each $x\in X$, and
\item whenever $A\subseteq X$ is clopen, so is $R^{-1}[A]$.
\end{enumerate}
The category of nuclear Esakia spaces with morphisms the continuous $p$-morphisms with respect to both $\leq$ and $R$ is denoted \textsf{nES}.
\end{definition}
If ${\bf A} = (A,\meet,\join,\to,\bot,N)$ is a nuclear Heyting algebra, then define the dual ${\bf A}_* = ((A,\meet,\join,\to,t,\bot)_*, R_{\bf A})$, where $R_{\bf A}$ is the binary relation on $A_*$ defined by $xR_{\bf A}y$ if and only if $N^{-1}[x]\subseteq y$. On the other hand, for a nuclear Esakia space ${\bf X} = (X,\leq,R,\tau)$, define ${\bf X}^*=((X,\leq,\tau)^*, N_{\bf X})$, where $N_{\bf X}\colon X^*\to X^*$ is defined by $N_{\bf X}(U) = X\setminus R^{-1}[X\setminus U]$. For morphisms of \textsf{nHA} and \textsf{nES}, define $(-)_*$ and $(-)^*$ as usual. With these definitions, we have
\begin{theorem}[{{\cite[Theorem 14]{BezGhi}}}]\label{thm:nucdual}
$(-)_*$ and $(-)^*$ witness a dual equivalence of categories between \textsf{nHA} and \textsf{nES}.
\end{theorem}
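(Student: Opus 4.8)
The plan is to realize this as an extension of the Esakia duality between \textsf{HA} and \textsf{ES}, so that the only genuinely new content is the correspondence between the nucleus $N$ on an object of \textsf{nHA} and the relation $R$ on the dual space. Since $(-)_*$ and $(-)^*$ already witness the dual equivalence of the Heyting reducts, I would organize the argument around four tasks: (i) the functors are well-defined on objects; (ii) they are well-defined on morphisms; (iii) the Esakia unit $\sigma$ and counit are isomorphisms of the enriched structures; and (iv) naturality.

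For (i), given ${\bf A}\in\textsf{nHA}$ I would verify that $R_{\bf A}$ satisfies the three clauses of Definition \ref{def:nES}. The key preliminary observation is that, because $N$ is inflationary and monotone, $x\subseteq N^{-1}[x]$ and $N^{-1}$ is monotone, so $xR_{\bf A}y$ forces $x\subseteq y$; moreover $N^{-1}[N^{-1}[x]]=N^{-1}[x]$ by idempotence. Clause (2) is immediate, since $R_{\bf A}[x]=\bigcap_{a\in N^{-1}[x]}\sigma(a)$ is an intersection of clopen, hence closed, sets, and clause (3) follows from the Esakia structure (the behavior of $\downset$ on clopens) together with compactness. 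Dually, for ${\bf X}\in\textsf{nES}$ I would check that $N_{\bf X}$ is a nucleus. The useful reformulation is $N_{\bf X}(U)=\{x:R[x]\subseteq U\}$, so $N_{\bf X}$ is a box-type operator; its preservation of $\meet$ is automatic, while its inflationarity, monotonicity, and especially its idempotence all reduce to clause (1), the fixed-point condition on $R$.

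The main obstacle is the forward direction of clause (1) for $R_{\bf A}$: given $N^{-1}[x]\subseteq z$, one must produce a prime filter $y$ with $x\subseteq y\subseteq z$ and $N^{-1}[y]=y$. I would carry this out inside the Heyting algebra $A_N=N[A]$ of $N$-fixed points, whose $\meet$ and $\to$ are inherited from ${\bf A}$ and whose join is $a\join_N b=N(a\join b)$. The fixed-point prime filters of ${\bf A}$ correspond exactly to the prime filters of $A_N$ via $P\mapsto\{a:Na\in P\}$, and establishing this correspondence uses that a nucleus preserves finite meets. I would then apply the prime filter separation theorem in $A_N$ to the filter $x\cap A_N$ and the ideal generated by $\{Na:a\notin z\}$, checking that the hypothesis $N^{-1}[x]\subseteq z$ makes them disjoint; the resulting prime filter of $A_N$ yields the desired $y$. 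The backward direction of clause (1) is routine once $R_{\bf A}\subseteq{\subseteq}$ is in hand.

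For (ii), I would check that preservation of $N$ by an \textsf{nHA}-morphism $h$ translates into $h_*$ being a $p$-morphism with respect to $R$ (the forth condition is direct, the back condition is the standard $p$-morphism extension argument), and conversely that a $p$-morphism $\varphi$ with respect to $R$ makes $\varphi^*$ preserve $N_{\bf X}$, both via the box-reformulation of $N_{\bf X}$. For (iii), the cleanest computation shows $\sigma(Na)=N_{{\bf A}_*}(\sigma(a))$, using that the intersection of the prime filters containing the filter $N^{-1}[x]$ equals $N^{-1}[x]$ itself; dually, the evaluation homeomorphism preserves $R$ by appeal to clauses (1)--(3). Finally, (iv) is inherited verbatim from the underlying Esakia duality, since the enriching data is preserved by the very same natural transformation components.
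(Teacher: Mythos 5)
The paper offers no proof of this statement to compare against: Theorem \ref{thm:nucdual} is imported verbatim from Bezhanishvili and Ghilardi \cite[Theorem 14]{BezGhi} and used as a black box. Judged against the cited source, your reconstruction is essentially correct and follows its standard architecture: layer the nuclear data over Esakia duality, translate $N$ into the relation $xR_{\bf A}y \iff N^{-1}[x]\subseteq y$ and $R$ into the box operator $N_{\bf X}(U)=\{x : R[x]\subseteq U\}$, and isolate clause (1) of Definition \ref{def:nES} as the crux. Your handling of that crux is sound: $N^{-1}[x]$ is a filter because a nucleus on a Heyting algebra preserves binary meets; the fixed points $N[A]$ form a Heyting algebra with join $N(a\join b)$; prime filters of $N[A]$ correspond to the $R_{\bf A}$-reflexive points via $Q\mapsto\{a : Na\in Q\}$; and separating the filter $x\cap N[A]$ from the ideal generated by $\{Na : a\notin z\}$ is exactly where primality of $z$ and the hypothesis $N^{-1}[x]\subseteq z$ enter (an element of the ideal has the form $N(a_1\join\cdots\join a_k)$ with each $a_i\notin z$, and membership in $x$ would put $a_1\join\cdots\join a_k$ in $N^{-1}[x]\subseteq z$, contradicting primality). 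The unit computation $\sigma(Na)=N_{{\bf A}_*}(\sigma(a))$, the counit argument via closedness of $R[x]$ and Priestley separation, and the two morphism directions are likewise as in the source.

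The one step you leave genuinely underspecified is clause (3) for $R_{\bf A}$: appealing to ``the behavior of $\downset$ on clopens together with compactness'' does not do the work, since $R_{\bf A}$ is not the Priestley order and the Esakia condition on $\downset$ is about $\leq$, not $R_{\bf A}$. What is actually needed is the same prime-filter-theorem mechanism you use elsewhere: for a basic clopen $\sigma(a)\setminus\sigma(b)$, a point $x$ has an $R_{\bf A}$-successor in it iff the filter generated by $N^{-1}[x]\cup\{a\}$ omits $b$, iff $a\to b\notin N^{-1}[x]$, iff $N(a\to b)\notin x$; hence $R_{\bf A}^{-1}[\sigma(a)\setminus\sigma(b)]=\sigma(N(a\to b))^\comp$, and since every clopen is a finite union of such differences and $R_{\bf A}^{-1}$ commutes with unions, clause (3) follows. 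This is a fixable vagueness in an otherwise correct sketch, not a structural flaw; with that computation inserted, your proposal amounts to a faithful reconstruction of the proof in \cite{BezGhi}.
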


For ${\bf A}=(A,\meet,\join,\to,t,\bot,f)$ a \textsf{bG}-algebra, define $N_{\bf A}\colon A\to A$ to be the nucleus given by $N_{\bf A}(a)=f\to a$. Then $(A,\meet,\join,\to,t,\bot,N_{\bf A})$ is nuclear Heyting algebra, and we aim to characterize the relation $R_{\bf A}$ on $A_*$ associated with this algebra. Toward this end, for $x\in A_*$ define $x^{-1}=N_{\bf A}^{-1}[x]$. In this terminology, for $x,y\in A_*$, $xR_{\bf A}y$ if and only if $x^{-1}\subseteq y$. We prove several technical lemmas about the operator $(-)^{-1}$.

\begin{lemma}\label{lem:prime}
Let ${\bf A} = (A,\meet,\join,\to,t,\bot,f)$ be a \textsf{bG}-algebra, and let $x\in A_*$. Then $x^{-1}\in A_*\cup\{A\}$.
\end{lemma}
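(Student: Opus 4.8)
The plan is to show that $x^{-1}=N_{\bf A}^{-1}[x]=\{a\in A : f\to a\in x\}$ is always a filter satisfying the join-primality condition; it is then a (proper) prime filter whenever it is proper, and equals $A$ otherwise. The decisive ingredient is semilinearity, which forces the primality condition to hold automatically rather than depending on the properness of $x^{-1}$.

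First I would check that $x^{-1}$ is a filter. It is nonempty, since $f\to t=t\in x$. It is an up-set: if $a\in x^{-1}$ and $a\leq b$, then Proposition \ref{prop1}(7) gives $f\to a\leq f\to b$, and as $x$ is upward-closed with $f\to a\in x$, we get $f\to b\in x$, i.e. $b\in x^{-1}$. It is closed under meets: if $a,b\in x^{-1}$, then $f\to a, f\to b\in x$, and Proposition \ref{prop1}(3) gives $f\to (a\meet b)=(f\to a)\meet (f\to b)$, which lies in $x$ because $x$ is a filter; hence $a\meet b\in x^{-1}$.

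Next comes the crucial step. Since ${\bf A}$ is a \textsf{bG}-algebra, its Brouwerian reduct is a G\"odel algebra and hence semilinear, so Proposition \ref{prop2}(3) yields $f\to (a\join b)=(f\to a)\join (f\to b)$ for all $a,b\in A$. Thus if $a\join b\in x^{-1}$, then $(f\to a)\join (f\to b)\in x$, and the primality of $x$ forces $f\to a\in x$ or $f\to b\in x$, that is, $a\in x^{-1}$ or $b\in x^{-1}$. Therefore $x^{-1}$ satisfies the primality condition unconditionally.

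Finally I would combine these observations via a dichotomy on properness. If $x^{-1}$ is proper, then the preceding paragraphs exhibit it as a proper filter satisfying the primality condition, so $x^{-1}\in A_*$. If $x^{-1}$ is improper, then $\bot\in x^{-1}$, and since $x^{-1}$ is an up-set containing the least element we conclude $x^{-1}=A$. In either case $x^{-1}\in A_*\cup\{A\}$. I do not expect any genuine obstacle here; the only point deserving care is the appeal to Proposition \ref{prop2}(3), which is exactly what separates the semilinear (G\"odel) setting from general Heyting algebras and is what makes the join-primality of $x^{-1}$ hold outright.
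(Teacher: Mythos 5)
Your proof is correct and takes essentially the same approach as the paper's: establish that $x^{-1}$ is a filter via the meet-preservation of $N_{\bf A}(a)=f\to a$, then obtain the prime-or-improper dichotomy from the semilinear identity $N_{\bf A}(a\join b)=N_{\bf A}(a)\join N_{\bf A}(b)$ (Proposition \ref{prop2}(3)) combined with the primality of $x$. The only cosmetic differences are that you make the final properness dichotomy explicit, where the paper simply concludes ``prime or improper,'' and that you cite Proposition \ref{prop1}(3) for meets where the paper invokes Proposition \ref{prop2} for both lattice operations.
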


\begin{proof}
Note that by Proposition \ref{prop2}, we have that $N_{\bf A}(a\meet b)=N_{\bf A}(a)\meet N_{\bf A}(b)$ and $N_{\bf A}(a\join b)=N_{\bf A}(a)\join N_{\bf A}(b)$ for all $a,b\in A$. Let $x\in A_*$. If $a,b\in x^{-1}$, then $N_{\bf A}(a),N_{\bf A}(b)\in x$ and so $N_{\bf A}(a\meet b)=N_{\bf A}(a)\meet N_{\bf A}(b)\in x$ since $x$ is a filter. This gives $a\meet b\in x^{-1}$. Moreover, if $a\in x^{-1}$ and $a\leq b\in A$, then we have that $N_{\bf A}(a)\in x$ and $N_{\bf A}(a)\leq N_{\bf A}(b)$ by the isotonicity of $N_{\bf A}$. Since $x$ is upward-closed, $N_{\bf A}(b)\in x$ and $b\in x^{-1}$. It follows that $x^{-1}$ is a filter. To see that $x^{-1}$ is either prime or improper, let $a\join b\in x^{-1}$. Then $N_{\bf A}(a)\join N_{\bf A}(b)=N_{\bf A}(a\join b)\in x$, so since $x$ is prime we have $N_{\bf A}(a)\in x$ or $N_{\bf A}(b)\in x$. It follows that $a\in x^{-1}$ or $b\in x^{-1}$.
\end{proof}
\begin{remark}
Lemma 11 of \cite{BezGhi} shows that $(-)^{-1}$ is a closure operator on the lattice of filters of ${\bf A}$, and combined with the previous lemma this shows that $(-)^{-1}$ is a closure operator on the poset $A_*\cup\{A\}$.
\end{remark}
\begin{lemma}\label{lem:tech}
Let ${\bf A} = (A,\meet,\join,\to,t,\bot,f)$ be a \textsf{bG}-algebra. Then for each $x,y\in A_*$,
\begin{enumerate}
\item If $x^{-1}\in A_*$, then $x^{-1}$ is the least $R_{\bf A}$-successor of $x$,
\item $xR_{\bf A}x$ iff $f\in x$,
\item If $x$ is an $R_{\bf A}$-successor, then $xR_{\bf A}x$,
\item If $x\subset y$, then $xR_{\bf A}y$.
\end{enumerate}
\end{lemma}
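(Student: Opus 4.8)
The plan is to isolate a few elementary properties of the operator $(-)^{-1}$ and then read off all four items. Throughout I write $N=N_{\bf A}$ and use that $N$ is a nucleus, hence a closure operator: $a\leq Na$, $N$ is isotone, and $N$ is idempotent. From $a\leq Na$ together with the fact that each $x\in A_*$ is an up-set, I would first record that $x\subseteq x^{-1}$ always holds; and from $Na\in x\subseteq y$ that $(-)^{-1}$ is isotone, i.e. $x\subseteq y$ implies $x^{-1}\subseteq y^{-1}$. Two further observations drive everything: since $Nf=f\to f=t\in x$, we always have $f\in x^{-1}$; and since $f\meet (f\to a)=f\meet a\leq a$ (using the identity $a\meet(a\to b)=a\meet b$), whenever $f\in x$ and $f\to a\in x$ we conclude $a\in x$, so $f\in x$ forces $x^{-1}\subseteq x$.

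With these in hand, items (1)--(3) are almost immediate. For (1), if $x^{-1}\in A_*$ then $x^{-1}\subseteq x^{-1}$ says exactly $x R_{\bf A} x^{-1}$, so $x^{-1}$ is an $R_{\bf A}$-successor of $x$; and by the very definition of $R_{\bf A}$, any successor $y$ satisfies $x^{-1}\subseteq y$, so $x^{-1}$ is the least one. For (2), the inclusion $x\subseteq x^{-1}$ reduces $x R_{\bf A} x$ (i.e. $x^{-1}\subseteq x$) to the equality $x^{-1}=x$; the forward direction then follows from $f\in x^{-1}$, and the converse from the implication $f\in x \Rightarrow x^{-1}\subseteq x$ noted above. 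For (3), if $x$ is an $R_{\bf A}$-successor there is $w\in A_*$ with $w R_{\bf A} x$, that is $w^{-1}\subseteq x$; since $f\in w^{-1}$ this gives $f\in x$, and (2) then yields $x R_{\bf A} x$.

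Item (4) is where the duality does real work, and I expect it to be the main obstacle: unlike the others it is not a pure manipulation of $(-)^{-1}$ but relies on the geometric meaning of $f$. The key point is that $x\subset y$ exhibits $y$ as a non-minimal element of $A_*$. By the minimality characterization established for $\sigma(f)^\comp$ in Lemma \ref{lem:dual1} (whose proof transfers verbatim to the \textsf{bG} setting, cf. Theorem \ref{thm:bGAdual}), every prime filter omitting $f$ is $\subseteq$-minimal; contrapositively, the non-minimal $y$ must contain $f$. Then (2) gives $y^{-1}\subseteq y$, and isotonicity of $(-)^{-1}$ gives $x^{-1}\subseteq y^{-1}\subseteq y$, i.e. $x R_{\bf A} y$. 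The only point requiring care is that $x^{-1}$ may a priori be the improper filter $A$ of Lemma \ref{lem:prime}; but the chain $x^{-1}\subseteq y^{-1}\subseteq y$ lands inside the proper filter $y$, so this causes no difficulty.
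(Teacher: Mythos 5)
Your proof is correct and follows essentially the same route as the paper's: items (1)--(3) are the same manipulations of $(-)^{-1}$, the only cosmetic difference being that the paper derives the key inequality $f\meet N_{\bf A}(a)\leq a$ from the abstract nuclear axioms $N_{\bf A}(N_{\bf A}(a)\to a)=t$ and $N_{\bf A}(a)=t\iff f\leq a$, while you obtain it directly from the Brouwerian identity $a\meet(a\to b)=a\meet b$, which is legitimate since $N_{\bf A}(a)=f\to a$ by definition in this setting. For (4), the paper inlines exactly the primality computation you invoke by citing Lemma \ref{lem:dual1} (choosing $a\in y\setminus x$ and using $a\join(a\to f)=t$ to force $f\in y$, then concluding via part (2) and isotonicity of $(-)^{-1}$), so your contrapositive appeal to that lemma is the same argument, merely referenced rather than repeated, and your closing remark correctly disposes of the possibility $x^{-1}=A$.
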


\begin{proof}
For (1), suppose $x^{-1}\in A_*$. Since $x^{-1}\subseteq x^{-1}$, $xR_Ax^{-1}$ trivially holds. Now suppose that $s\in A_*$ is an $R_A$-successor of $x$. Then $x^{-1}\subseteq s$ by definition, so $x^{-1}$ is the least $R_A$-successor of $x$.

For (2), note that the identity $N_{\bf A}(N_{\bf A}(a)\to a)=t$ together with $N_{\bf A}(a) = t$ if and only if $f\leq a$ implies $f\leq N_{\bf A}(a)\to a$ for all $a\in A$, whence by residuation $f\meet N_{\bf A}(a)\leq a$ for all $a\in A$. If $x$ is a filter and $f\in x$, then $a\in x^{-1}$ implies $N_{\bf A}(a)\in x$, and since $x$ is a filter we have that $f\meet N_{\bf A}(a)\in x$ also. Because $x$ is upward-closed, $f\meet N_{\bf A}(a)\leq a$ yields $a\in x$. Thus $x^{-1}\subseteq x$, which gives $xR_{\bf A}x$. Conversely, if $xR_{\bf A}x$ then $x$ is an $R_{\bf A}$-successor of $x$. Since $x^{-1}$ is the least $R_{\bf A}$-successor of $x$, this gives $x^{-1}\subseteq x$. But $N_{\bf A}(f)=t\in x$, so $f\in x^{-1}$ and hence $f\in x$.

For (3), suppose there exists $p$ with $pR_{\bf A}x$. Then $p^{-1}\subseteq x$. Since $p^{-1}R_Ap^{-1}$, part (b) gives $f\in p^{-1}$ and hence $f\in x$. Therefore $xR_{\bf A}x$ by part (b).

For (4), let $y\in A_*$ with $x\subset y$. Because this containment is proper, there exists $a\in y\setminus x$. By definition $a\join (a\to f)=t$, so since $a\join (a\to f)\in x$ and since $x$ is prime with $a\notin x$ we have $a\to f\in x$. This implies that $a,a\to f\in y$, whence $a\meet (a\to f)\in y$ since $y$ is a filter. But $a\meet (a\to f)\leq f$ so since $y$ is upward-closed we have $f\in y$. It follows from (2) that $yR_{\bf A}y$. Thus $y^{-1} \subseteq y$. Since $y\subseteq y^{-1}$ always, we have $y^{-1}=y$. Since $x\subseteq y$, the isotonicity of $(-)^{-1}$ gives $x^{-1}\subseteq y^{-1}=y$, so $xR_{\bf A} y$ as desired.
\end{proof}

Given an object ${\bf A}$ of \textsf{bGA}, Lemma \ref{lem:tech}(4) gives that the only points of ${\bf A}_*$ that are not $R_{\bf A}$-reflexive are minimal. Definition \ref{def:nES}(1) makes it clear that the accessibility relation of a nuclear Esakia space is determined by the order together with the non-reflexive points, which motivates the following. For a \textsf{bG}-space ${\bf X} = (X,\leq,D,\tau)$, define a binary relation $\lesx$ on $X$ by
$$\lesx = \leq\setminus\{\la x,x\ra\in X\times X : x\in D\}.$$

\begin{proposition}\label{prop:accesschar}
Let ${\bf A} = (A,\meet,\join,\to,t,\bot,f)$ be a \textsf{bG}-algebra. Then $R_{\bf A}$ coincides with $\lesa$.
\end{proposition}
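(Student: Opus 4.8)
The plan is to establish the equality $R_{\bf A}=\lesa$ by proving mutual inclusion, exploiting the four parts of Lemma \ref{lem:tech} together with the extensivity of $(-)^{-1}$. First I would unwind the definition of $\lesa$ on the space ${\bf A}_*$: since the designated set is $D=\sigma(f)^\comp$, a point $x\in A_*$ lies in $D$ precisely when $f\notin x$. Hence $x\lesa y$ holds if and only if $x\subseteq y$ and either $x\neq y$ or $f\in x$. This reformulation makes transparent that $\lesa$ is the containment order with the reflexive loops deleted exactly at those points omitting $f$.

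For the inclusion $\lesa\subseteq R_{\bf A}$, suppose $x\lesa y$. If the containment $x\subseteq y$ is proper, Lemma \ref{lem:tech}(4) immediately gives $xR_{\bf A}y$. Otherwise $x=y$, in which case the reformulation forces $f\in x$, and Lemma \ref{lem:tech}(2) yields $xR_{\bf A}x=xR_{\bf A}y$. For the reverse inclusion $R_{\bf A}\subseteq\lesa$, suppose $xR_{\bf A}y$, i.e., $x^{-1}\subseteq y$. Because $N_{\bf A}(a)=f\to a\geq a$ and prime filters are upward-closed, we have $x\subseteq x^{-1}$ (equivalently, $(-)^{-1}$ is extensive, as noted in the remark following Lemma \ref{lem:prime}), so $x\subseteq x^{-1}\subseteq y$ and in particular $x\subseteq y$. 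It then remains only to rule out the forbidden diagonal case: if $x=y$ then $xR_{\bf A}x$, so Lemma \ref{lem:tech}(2) gives $f\in x$. Thus we never have both $x=y$ and $f\notin x$, which is exactly the condition for $x\lesa y$.

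The argument is essentially bookkeeping once Lemma \ref{lem:tech} is in hand; the only point requiring genuine care is translating the set-theoretic description of $\lesa$ (removal of the diagonal over $D$) into the membership condition $f\in x$, and matching this against the reflexivity criterion of part (2). I expect this diagonal/reflexivity matching to be the crux, since the whole purpose of passing to $\lesa$ is to encode precisely the reflexive points of $R_{\bf A}$: off the diagonal, $R_{\bf A}$ and $\subseteq$ coincide with proper containment, while on the diagonal the loop at $x$ is present in $R_{\bf A}$ if and only if $x\notin D$.
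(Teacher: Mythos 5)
Your proof is correct and follows essentially the same route as the paper's: mutual inclusion, using Lemma \ref{lem:tech}(4) for proper containments and Lemma \ref{lem:tech}(2) to match the diagonal loops against membership of $f$, with extensivity of $(-)^{-1}$ supplying $x\subseteq y$ from $xR_{\bf A}y$. The only cosmetic difference is that the paper invokes Lemma \ref{lem:tech}(3) to obtain $yR_{\bf A}y$ unconditionally, whereas you need reflexivity only in the case $x=y$, where it is immediate from $xR_{\bf A}y$ itself.
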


\begin{proof}
Suppose first that $xR_{\bf A}y$. Then by Lemma \ref{lem:tech}(3) it follows that $yR_{\bf A}y$, and by Lemma \ref{lem:tech}(2) it follows that $f\in y$. Thus $y\in\sigma(f)$, and hence we have that $\la x,y\ra\notin\{\la z,z\ra\in A_*\times A_* : z\in \sigma(f)^\comp\}$. Because $x\subseteq y$ as a consequence of $xR_{\bf A}y$, this yields $x\lesa y$.

On the other hand, suppose that $x\lesa y$. Then $x\subseteq y$, and $\la x,y\ra$ is not in $\{\la z,z\ra : z\in \sigma(f)^\comp\}$. There are two possibilities. First, if $x\neq y$, then by Lemma \ref{lem:tech}(4) we have $xR_{\bf A}y$. Second, if $x=y\notin \sigma(f)^\comp$, then $y\in\sigma(f)$. This gives $f\in y$, and Lemma \ref{lem:tech}(2) gives $yR_{\bf A}y$. But since $x=y$, this gives $xR_{\bf A}y$. It follows that $x\lesa y$ if and only if $xR_{\bf A} y$ as desired.
\end{proof}

Proposition \ref{prop:accesschar} completely characterizes the accessibility relation arising from the nucleus $N_{\bf A}$ for a \textsf{bG}-algebra ${\bf A}$. The fact that $R_{\bf A}$ is definable in terms of the order relation $\subseteq$ and the designated subset $\sigma(f)^\comp$ reflects the fact that $N_{\bf A}$ is term-definable in the underlying \textsf{bG}-algebra. The following further underscores this fact.

\begin{proposition}\label{prop:image}
Let $(X,\leq,D,\tau)$ be a \textsf{bG}-space. Then the image of $X$ under $\lesx$ coincides with $D^\comp$.
\end{proposition}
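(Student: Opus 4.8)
The plan is to prove the two inclusions $\lesx[X]\subseteq D^\comp$ and $D^\comp\subseteq\lesx[X]$ directly from the definition $\lesx\ =\ \leq\setminus\{\la x,x\ra\in X\times X : x\in D\}$, where the image is $\lesx[X]=\{y\in X : (\exists x\in X)\ x\lesx y\}$. The only structural hypothesis I expect to invoke is that the elements of $D$ are $\leq$-minimal; the forest condition and clopenness of $D$ play no role here. Thus $x\lesx y$ means precisely that $x\leq y$ and $\la x,y\ra$ is not one of the deleted diagonal pairs, i.e. it is \emph{not} the case that $x=y\in D$.

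For the forward inclusion, I would take $y\in\lesx[X]$ and fix a witness $x$ with $x\lesx y$, then split into two cases. If $x=y$, then since $\la x,x\ra$ survives in $\lesx$ we must have $x\notin D$, so $y=x\in D^\comp$. If $x\neq y$, then $x<y$; because every element of $D$ is $\leq$-minimal, $y$ cannot lie in $D$, so again $y\in D^\comp$. Either way $y\in D^\comp$, giving $\lesx[X]\subseteq D^\comp$.

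For the reverse inclusion, I would take $y\in D^\comp$, i.e. $y\notin D$. Then $y\leq y$ holds, and $\la y,y\ra$ is not among the removed reflexive pairs $\{\la z,z\ra : z\in D\}$ precisely because $y\notin D$. Hence $y\lesx y$, which exhibits $y$ as its own $\lesx$-predecessor and shows $y\in\lesx[X]$. Combining the two inclusions yields $\lesx[X]=D^\comp$.

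I do not anticipate any real obstacle: the statement reduces to unwinding the definition of $\lesx$ together with the minimality of the points of $D$. The only point requiring care is the case analysis in the forward direction, where one must use that the pairs deleted from $\leq$ are \emph{exactly} the reflexive pairs indexed by $D$ — so that a surviving reflexive witness ($x=y$) forces $y\notin D$, while a strict witness ($x<y$) rules out $y\in D$ by minimality. (This is the purely combinatorial counterpart of the algebraic fact, visible through Proposition \ref{prop:accesschar} and Lemma \ref{lem:tech}(2)--(3), that the range of $R_{\bf A}$ consists exactly of its reflexive points $\sigma(f)=D^\comp$.)
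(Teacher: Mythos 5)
Your proof is correct and takes essentially the same route as the paper's: the same case split on whether the witness satisfies $x=y$ (forcing $x\notin D$) or $x\neq y$ (ruling out $y\in D$ by the $\leq$-minimality of points of $D$), and the same reverse inclusion exhibiting each $y\in D^\comp$ as $\lesx$-reflexive. Nothing is missing, and your observation that the forest condition and clopenness of $D$ are not needed matches the paper's argument.
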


\begin{proof}
Let $y\in\lesx[X]$. Then there exists $x\in X$ with $x\lesx y$. Then $x\leq y$, and either $x\neq y$ or $x=y\notin D$. In the first case, $y$ is not $\leq$-minimal and hence $y\notin D$. In the second case, $y\notin D$ by hypothesis. Hence $y\notin D$ and $\lesx[X]\subseteq D^\comp$.

For the reverse inclusion, let $y\in D^\comp$. Then we have that $y\leq y$, and additionally $\la y,y\ra\notin\{\la x,x\ra : x\in D\}$, so $y\lesx y$. Therefore $y\in\les[X]$ and $D^\comp\subseteq \les[X]$. Equality follows.
\end{proof}

Propositions \ref{prop:accesschar} and \ref{prop:image} allow us to understand the duality articulated here for \textsf{bGA} in the context of the Bezhanishvili-Ghilardi duality for nuclear Heyting algebras, at least on the level of objects. The condition that \textsf{bGA}-morphisms preserve the constant $f$ turns out to be more demanding than merely asking that morphisms commute with the nucleus $Na=f\to a$, so not all \textsf{nES}-morphisms between objects of \textsf{bGS} are \textsf{bGS}-morphisms. However, we obtain the appropriate morphisms if we only consider those \textsf{nES}-morphisms that preserve the designated set $D$.

\begin{proposition}
Let $(X,\leq_X,D_X,\tau_X)$ and $(Y,\leq_Y,D_Y,\tau_Y)$ be \textsf{bG}-spaces and let $\varphi\colon X\to Y$ be a \textsf{bGS}-morphism. Then $\varphi$ is a $p$-morphism with respect to $\les$.
\end{proposition}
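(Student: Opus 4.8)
The plan is to verify the two defining conditions of a $p$-morphism directly for the relations $\lesx$ and $\lesy$, exploiting that $\varphi$ is already a $p$-morphism with respect to the orders $\leq_X$ and $\leq_Y$ (being an Esakia map) together with the two saturation conditions $\varphi[D_X]\subseteq D_Y$ and $\varphi[D_X^\comp]\subseteq D_Y^\comp$. The organizing observation is that a pair $\langle u,v\rangle$ fails to witness $\les$ exactly when $u=v$ and this common point lies in the designated set; hence the whole argument reduces to controlling when image points collapse onto the diagonal and simultaneously land in $D_Y$.

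For the forth condition I would assume $x\lesx y$ and show $\varphi(x)\lesy\varphi(y)$. Monotonicity of $\varphi$ immediately gives $\varphi(x)\leq_Y\varphi(y)$, so it remains to exclude the bad case $\varphi(x)=\varphi(y)\in D_Y$. I argue by contradiction. If $x=y$, then $x\notin D_X$ by the definition of $\lesx$, so $x\in D_X^\comp$ and thus $\varphi(x)\in D_Y^\comp$ by the third morphism condition, contradicting $\varphi(x)\in D_Y$. If instead $x\neq y$, then $x<_X y$ forces $y$ to be non-minimal, so $y\notin D_X$ (as $D_X$ consists of $\leq$-minimal elements); hence $\varphi(y)\in D_Y^\comp$, again contradicting $\varphi(y)=\varphi(x)\in D_Y$. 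Either way the obstruction is ruled out, so $\varphi(x)\lesy\varphi(y)$.

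For the back condition I would assume $\varphi(x)\lesy z$ and produce an appropriate $\lesx$-successor of $x$ that maps onto $z$. Since $\varphi(x)\leq_Y z$ and $\varphi$ is a $p$-morphism for the orders, there is some $y\in X$ with $x\leq_X y$ and $\varphi(y)=z$; I claim this very $y$ works. The only way $x\lesx y$ could fail is $x=y\in D_X$, and I rule this out: in that situation $\varphi(x)=z$ and $\varphi(x)\in D_Y$ by $\varphi[D_X]\subseteq D_Y$, whereas $\varphi(x)\lesy z$ with $\varphi(x)=z$ forces $z\notin D_Y$ by the definition of $\lesy$, a contradiction. Hence $x\lesx y$ and $\varphi(y)=z$, as required.

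I do not expect a serious obstacle; the argument is essentially careful bookkeeping of where the orders $\leq_X,\leq_Y$ differ from their $\sharp$-modifications. The one point demanding genuine (if mild) care is that $\varphi$ may collapse two distinct points to a single image, which is precisely why the forth condition must split into the cases $x=y$ and $x\neq y$ rather than following formally from $\varphi$ being a $\leq$-$p$-morphism; the hypothesis $\varphi[D_X^\comp]\subseteq D_Y^\comp$, combined with the minimality of the elements of $D_X$, is exactly what neutralizes this collapse.
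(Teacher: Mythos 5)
Your proof is correct and follows essentially the same route as the paper's: both verify the two $p$-morphism conditions directly, handling the forth direction via monotonicity and the case split $x=y$ versus $x\neq y$ (using minimality of the elements of $D_X$ together with $\varphi[D_X^\comp]\subseteq D_Y^\comp$), and the back direction via the Esakia back condition together with $\varphi[D_X]\subseteq D_Y$. If anything, your uniform treatment of the back direction---taking an arbitrary $y$ from the Esakia condition and ruling out the single bad scenario $x=y\in D_X$---is slightly tidier than the paper's, which splits on $\varphi(x)\neq z$ versus $\varphi(x)=z\notin D_Y$ and leaves the second (easy) case implicit.
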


\begin{proof}
Suppose first that $\varphi$ is a \textsf{bGS}-morphism. Then $\varphi$ is an Esakia map by definition. We first show that $\varphi$ preserves $\les$. Let $x,y\in X$ with $x\lesx y$. Then $x\leq_X y$, so as $\varphi$ preserves $\leq$ it follows that $\varphi(x)\leq_Y\varphi(y)$. Since we have $\la x, y\ra\notin \{\la z, z\ra : z\in D\}$, either $x\neq y$ or $x=y\notin D$. In the former case, $y\notin D_X$ since $y$ is not minimal, so as $\varphi[D_X^\comp]\subseteq D_Y^\comp$ it follows that $\varphi(y)\notin D_Y$. On the other hand, if $x=y\notin D_X$, then $\varphi(y)\notin D_Y$ as well. In either case, this yields that $\la\varphi(x),\varphi(y)\ra\notin \{\la z,z\ra : z\in D_Y\}$, so $\varphi(x)\lesy\varphi(y)$.

Next, suppose that $x\in X$,  $z\in Y$ with $\varphi(x)\lesy z$. Then we have that $\la\varphi(x),z\ra\notin\{(w,w) : w\in D_Y\}$, so either $\varphi(x)\neq z$ or $\varphi(x)=z\notin D_Y$. In the former case, note that $\varphi(x)\lesy z$ gives $\varphi(x)\leq_Y z$, so since $\varphi$ is an Esakia map we have that there exists $y\in X$ with $x\leq y$ and $\varphi(y)=z$. Since $\varphi(x)\neq z = \varphi(y)$, we have $x\neq y$. Together with $x\leq y$, this gives that $y$ is not minimal, and hence $y\notin D_X$. Thus $x\lesx y$ and $\varphi(y)=z$, which gives that $\varphi$ is a $p$-morphism with respect to $\les$.
\end{proof}

\begin{proposition}
Let $(X,\leq_X,D_X,\tau_X)$ and $(Y,\leq_Y,D_Y,\tau_Y)$ be \textsf{bG}-spaces and let $\varphi\colon X\to Y$ be an Esakia map that is a $p$-morphism with respect to $\les$. Then if $\varphi[D_X]\subseteq D_Y$, $\varphi$ is a \textsf{bG}-morphism.
\end{proposition}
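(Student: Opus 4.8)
The plan is to unwind the definition of a \textsf{bGS}-morphism and check each of its three clauses. Two of them are handed to us by hypothesis: $\varphi$ is assumed to be an Esakia map from $(X,\leq_X,\tau_X)$ to $(Y,\leq_Y,\tau_Y)$, which is clause (1), and the assumption $\varphi[D_X]\subseteq D_Y$ is exactly clause (2). Consequently the entire content of the proposition reduces to establishing the remaining clause, namely that $\varphi[D_X^\comp]\subseteq D_Y^\comp$, using only that $\varphi$ is a $p$-morphism with respect to $\les$ and the containment $\varphi[D_X]\subseteq D_Y$.

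The observation that makes this immediate is that membership in the complement of the designated set is nothing but $\les$-reflexivity. First I would record this: since $\lesx = \leq_X\setminus\{\la z,z\ra : z\in D_X\}$ and $z\leq_X z$ holds for every $z$, the pair $\la z,z\ra$ survives into $\lesx$ precisely when $z\notin D_X$. Thus $z\lesx z$ if and only if $z\in D_X^\comp$, and the identical equivalence $w\lesy w \iff w\in D_Y^\comp$ holds in $Y$. This is the same diagonal phenomenon underlying Proposition \ref{prop:image}, and it converts the set-theoretic containment we must prove into a statement purely about preservation of the relation $\les$.

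With this equivalence in hand the argument is a one-liner. Let $x\in D_X^\comp$; by the observation, $x\lesx x$. Because $\varphi$ is a $p$-morphism with respect to $\les$, it preserves this relation, so $\varphi(x)\lesy\varphi(x)$, and applying the equivalence in $Y$ gives $\varphi(x)\in D_Y^\comp$. Hence $\varphi[D_X^\comp]\subseteq D_Y^\comp$, which is the missing clause (3); together with clauses (1) and (2) already granted, this shows $\varphi$ is a \textsf{bGS}-morphism. There is essentially no obstacle here: the only step requiring any thought is recognizing that $D^\comp$ is exactly the set of $\les$-reflexive points, and it is worth remarking that only the forward (relation-preserving) half of the $p$-morphism condition is used, with the back condition and the hypothesis $\varphi[D_X]\subseteq D_Y$ playing no role in deriving clause (3).
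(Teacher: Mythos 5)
Your proof is correct and follows essentially the same route as the paper: both reduce the claim to the containment $\varphi[D_X^\comp]\subseteq D_Y^\comp$ and derive it from $x\in D_X^\comp \Rightarrow x\lesx x$, preservation of $\les$, and the fact that $\les$-reflexivity in $Y$ forces membership in $D_Y^\comp$. The only cosmetic difference is that the paper cites Proposition \ref{prop:image} (via $\lesy[Y]=D_Y^\comp$) where you verify the diagonal characterization $w\lesy w \iff w\in D_Y^\comp$ directly, which amounts to the same observation.
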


\begin{proof}
It suffices to show that $\varphi[D_X^\comp]\subseteq D_Y^\comp$, so let $y\in\varphi[D_X^\comp]$. Then there exists $x\in D_X^\comp$ such that $\varphi(x)=y$. Since $x\in D_X^\comp$ we have that $x\lesx x$, so $\varphi(x)\lesy\varphi(x)$. Thus $\varphi(x)\lesy y$, which entails that $y\in \lesy[Y] = D_Y^\comp$ as desired.
\end{proof}

\section{Natural dualities and the Davey-Werner duality}\label{sec:natdual}

Because \textsf{SM} is equivalent to \textsf{bRSA}, the duality presented in Section \ref{sec:booldual} also provides a dual equivalence between \textsf{SM} and \textsf{bRSS}. As presented so far, this dual equivalence involves passing between a Sugihara monoid and its dual through the enriched negative cone. We will recast the duality of Section \ref{sec:booldual} in terms more native to the Sugihara monoids by identifying appropriate duals for their $(\meet,\join,\neg)$-reducts. This presentation of the duality rests on the Davey-Werner natural duality for Kleene algebras \cite{DavWer} in much the same way that Esakia duality rests on Priestley duality. Because the fuctor $S$ of \cite{GR2} presents the involution of a Sugihara monoid in a way inextricably linked to the residual operation, it is inadequate for connecting the duality of Section \ref{sec:booldual} to the Davey-Werner duality. However, the simplified presentation of the involution obtained in the algebraic work of Section \ref{sec:twist} reveals the relationship between the duality of the previous section and the Davey-Werner duality. The preliminary work of Section \ref{sec:twist} thus provides an essential ingredient in obtaining the duality for Sugihara monoids. To explicate the duality in full generality, we first develop an analogue of the Davey-Werner duality for algebras without lattice bounds. This treatment requires the review of some basic natural duality theory. Due to the vastness of the subject, our review of natural duality theory is necessarily perfunctory. We draw all background material on natural dualities from \cite{CD}, and refer the reader there for a more thorough exposition.

\subsection{Natural dualities in general}

Let $\underline{{\bf M}}$ be a finite algebra and $\mathbb{ISP}(\underline{{\bf M}})$ be the prevariety it generates. We denote by $\mathcal{A}$ the category whose objects are algebras in $\mathbb{ISP}(\underline{{\bf M}})$ and whose morphisms are algebraic homomorphisms between members of $\mathbb{ISP}(\underline{{\bf M}})$. Consider a structure $\utilde{\bf M} = (M, G, H, R, \tau)$ defined on the same underlying set $M$ as $\underline{\bf M}$, where $G$ is a set of total operations on $M$, $H$ is a set of partial operations on $M$, $R$ is a set of relations on $M$, and $\tau$ is the discrete topology on $M$. We say that $\utilde{\bf M}$ is \emph{algebraic} over $\underline{\bf M}$ if the graph of each total operation in $G$, the graph of each partial operation in $H$, and each relation in $R$ is a subalgebra of the appropriate power of $\underline{\bf M}$. In this situation, there is always an adjunction between $\mathcal{A}$ and the category of $\mathcal{X}$ defined presently. The objects of $\mathcal{X}$ are the enriched topological spaces in $\mathbb{IS}_c\mathbb{P}^+(\utilde{\bf M})$, i.e., isomorphic copies of topologically closed substructures of powers of $\utilde{\bf M}$ (excluding $\utilde{\bf M}^\emptyset$). The morphisms of $\mathcal{X}$ are continuous homomorphisms between such structures. The adjunction between $\mathcal{A}$ and $\mathcal{X}$ is given by hom-functors $E\colon \mathcal{X}\to\mathcal{A}$ and $D\colon\mathcal{A}\to\mathcal{X}$ whose action on objects is defined by
$$E({\bf X}) = \mathcal{X}({\bf X}, \utilde{\bf M})$$
$$D({\bf A}) = \mathcal{A}({\bf A}, \underline{\bf M}),$$
where $\mathcal{X}({\bf X}, \utilde{\bf M})$ is viewed as an object of $\mathcal{A}$ by inheriting structure pointwise from $\underline{\bf M}$, and likewise $\mathcal{A}({\bf A}, \underline{\bf M})$ is viewed as an object of $\mathcal{X}$ by inheriting structure pointwise from $\utilde{\bf M}$. The action of $E$ and $D$ on morphisms is defined by precomposition, i.e., for $h\colon {\bf A}\to {\bf B}$ a morphism of $\mathcal{A}$ and $\varphi\colon {\bf X}\to {\bf Y}$ a morphism of $\mathcal{X}$, we define $D(h)\colon D({\bf B})\to D({\bf A})$ and $E(\varphi)\colon E({\bf Y})\to E({\bf X})$ by
$$D(h)(x)= x\circ h$$
$$E(\varphi)(\alpha) = \alpha \circ \varphi,$$
respectively. The unit of this adjunction is the natural transformation $e$ given by evaluation, i.e., for objects ${\bf A}$ of $\mathcal{A}$, $e_{\bf A}\colon {\bf A}\to ED({\bf A})$ is defined for $a\in A$ by $e_{\bf A}(a)(x) = x(a)$. The counit is likewise defined for objects ${\bf X}$ of $\mathcal{X}$ by $\epsilon_{\bf X}\colon {\bf X}\to DE({\bf X})$ given by $\epsilon_{\bf X}(x)(\alpha)=\alpha(x)$. With the above set-up, whenever each homomorphism $e_{\bf A}$ is an isomorphism, we say that the dual adjunction $(D, E, e, \epsilon)$ is a \emph{natural duality}. We also say that the structure $\utilde{\bf M}$ \emph{dualizes $\underline{\bf M}$}. When each $\epsilon_{\bf X}$ is also an isomorphism, we say that the natural duality $(D, E, e, \epsilon)$ is \emph{full}. A duality is full precisely when it is an equivalence between the categories $\mathcal{A}$ and $\mathcal{X}$. When a natural duality $(D, E, e, \epsilon)$ associates embeddings in $\mathcal{X}$ with surjections in $\mathcal{A}$ (equivalently, embeddings in $\mathcal{A}$ with with surjections in $\mathcal{X}$) we say that the duality is \emph{strong}. Strong dualities are full, but the converse is not in general true.

Priestley duality is an example of a natural duality: The 2-element bounded distributive lattice ${\bf 2}$ plays the role of $\underline{\bf M}$, and the 2-element Priestley space whose underlying order is a chain plays role of $\utilde{\bf M}$. Formulated in these terms, the dual of a bounded distributive lattice $\mathbb{A}$ does not consist of its collection of prime filters, but instead morphisms from ${\bf A}$ into the 2-element bounded distributive lattice. This mismatch is explained by the fact that every prime filter $x$ of ${\bf A}$ may be understood as a homomorphism $h_x\colon {\bf A}\to {\bf 2}$ given by $h_x(a)=1$ if and only if $a\in x$, and conversely that each prime filter of $\mathbb{A}$ may be understood as the preimage of $1$ under some homomorphism ${\bf A}\to{\bf 2}$. Similar remarks apply to the reverse functor.

Although Esakia duality is a restriction of Priestley duality to Heyting algebras, Esakia duality is not a natural duality because there is no finite algebra $\underline{\bf M}$ generating \textsf{HA} as a prevariety. This remains true even when we restrict our attention to G\"odel algebras.

\emph{Mutatis mutandis}, all the preceding remarks apply to versions of the Priestley and Esakia duality for algebras without designated bottom elements.

\subsection{Lattices with involution and Kleene algebras}

A \emph{lattice with involution} (or \emph{$i$-lattice}) is an algebra $(A,\meet,\join,\neg)$, where $(A,\meet,\join)$ is a lattice and $\neg$ is a unary operation satisfying the identities
$$\neg\neg a = a$$
$$\neg (a\join b) =\neg a\meet\neg b$$
$$\neg (a\meet b) = \neg a \join \neg b$$
An $i$-lattice is \emph{normal} if its lattice reduct is distributive and it satisfies the identity $a\meet \neg a \leq b\join \neg b$. Kalman in \cite{Kalman} showed that the variety of normal $i$-lattices is exactly $\mathbb{ISP}({\bf \underline{L}})$, where ${\bf\underline{L}} = (\{-1,0,1\}, \meet, \join, \neg)$ is the $i$-lattice defined by $-1<0<1$, and 
\[ \neg x = \begin{cases} 
      1, & \text{if }x=-1 \\
      0, & \text{if }x=0 \\
      -1, & \text{if }x=1 \\
   \end{cases}\]\\
We denote by \textsf{IL} the category of normal $i$-lattices.

The expansion of a normal $i$-lattice by bounds $\bot$ and $\top$ for the lattice order is called a \emph{Kleene algebra}. In the presence of these bounds, for any $a$ we have that $\neg\bot = \neg (\bot\meet \neg a) = \neg\bot \join a$, whence $\neg\bot = \top$ and $\neg\top = \bot$. Kleene algebras are generated as a prevariety by the Kleene algebra ${\bf \underline{K}} = (\{-1,0,1\}, \meet, \join,\neg,-1,1)$ obtained by expanding the signature for the normal $i$-lattice ${\bf\underline{L}}$ by constant symbols for its least and greatest elements.

The relevance of normal $i$-lattices to the present study is explained by the following proposition.

\begin{proposition}\label{prop:reduct}
Let ${\bf A} = (A,\meet,\join,\cdot,\to,t,\neg)$ be a Sugihara monoid. Then ${\bf A}$ satisfies $a\meet\neg a\leq \neg t\leq t\leq b\join\neg b$, and hence $(A,\meet,\join,\neg)$ is a normal $i$-lattice.
\end{proposition}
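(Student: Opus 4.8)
The plan is to establish the displayed chain $a \meet \neg a \leq \neg t \leq t \leq b \join \neg b$ one link at a time, and then read off both the asserted inequality and the normal $i$-lattice structure. The single fact that drives everything is the standard consequence of the involution axioms that $\neg a = a \to \neg t$ for all $a$, so that $\neg t$ plays the role of a dualizing constant; I would derive this first from $\neg\neg a = a$ together with the contraposition identity, and record that $\neg$ is an order-reversing De Morgan involution. I would also isolate two consequences of idempotency to be used repeatedly: from $a \meet \neg a = (a \meet \neg a)\cdot(a \meet \neg a)$ and the monotonicity of $\cdot$ (Proposition~\ref{prop1}(6)) we obtain $a \meet \neg a \leq a \cdot \neg a$, and from $\neg t \cdot \neg t = \neg t$ together with residuation we obtain $\neg t \leq \neg t \to \neg t$.

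For the lower link, I would substitute $b = \neg t$ into Proposition~\ref{prop1}(1) and use $\neg a = a \to \neg t$ to get $a \cdot \neg a = a\cdot(a \to \neg t) \leq \neg t$; combined with $a \meet \neg a \leq a \cdot \neg a$ this yields $a \meet \neg a \leq \neg t$. The upper link $t \leq b \join \neg b$ then comes for free by dualizing: applying the antitone map $\neg$ to the instance $b \meet \neg b \leq \neg t$ and using the De Morgan law $\neg(b \meet \neg b) = \neg b \join \neg\neg b = b \join \neg b$ together with $\neg\neg t = t$ gives $t \leq b \join \neg b$.

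The one link that cannot be produced by this duality is the middle inequality $\neg t \leq t$, since it is self-dual under $\neg$; here idempotency must be used in an essential way, and this is the step I expect to be the main obstacle. The trick I would use is to evaluate the identity $\neg a = a \to \neg t$ at $a = \neg t$, which gives $\neg t \to \neg t = \neg\neg t = t$. Feeding this into the idempotency consequence $\neg t \leq \neg t \to \neg t$ recorded above immediately yields $\neg t \leq t$.

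Finally, concatenating the three links produces $a \meet \neg a \leq \neg t \leq t \leq b \join \neg b$, and in particular $a \meet \neg a \leq b \join \neg b$, which is exactly the normality condition. Since a Sugihara monoid is by definition distributive and $\neg$ is an involution satisfying the De Morgan laws, it follows that $(A,\meet,\join,\neg)$ is a normal $i$-lattice, completing the argument.
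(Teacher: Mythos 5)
Your proof is correct, but it takes a genuinely different route from the paper's. The paper argues semantically: by Proposition \ref{prop:Sugiharagenerators} the Sugihara monoids are generated as a quasivariety by $\{\Zo,\Ze\}$, and since the displayed condition is an inequality between terms it is preserved under $\mathbb{ISP}$, so it suffices to verify it on the two integer chains, where $n\meet\neg n=-|n|\leq 0\leq|m|=m\join\neg m$ in $\Zo$ and $n\meet\neg n\leq -1\leq 1\leq m\join\neg m$ in $\Ze$ (the step $-1\leq 1$ being exactly your middle link $\neg t\leq t$). Your argument is instead purely equational, and every link checks out: $a\meet\neg a\leq a\cdot\neg a$ by idempotency and Proposition \ref{prop1}(6); $a\cdot(a\to\neg t)\leq\neg t$ by Proposition \ref{prop1}(1); $\neg t\leq\neg t\to\neg t=\neg\neg t=t$ by idempotency, residuation, and your key identity $\neg a=a\to\neg t$; and the upper link by applying the antitone De Morgan involution to the lower one. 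One remark on that key identity: it does follow in one line from the involution axioms when these are taken in the intended form $x\to\neg y=y\to\neg x$ (set $y=t$ and use $t\to c=c$); the form printed in the paper, $\neg x\to y=y\to\neg x$, is evidently a typographical slip, since it fails already in $\Zo$. As for what each approach buys: the paper's proof is a two-line computation once the (nontrivial, cited) generation theorem is granted, whereas yours is self-contained and uses neither semilinearity nor distributivity anywhere except for the distributive-lattice half of the $i$-lattice conclusion --- so your argument in fact establishes the stronger statement that $a\meet\neg a\leq\neg t\leq t\leq b\join\neg b$ holds in every idempotent involutive CRL, linear or not.
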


\begin{proof}
It suffices to check that the identity $a\meet\neg a\leq \neg t\leq t\leq b\join\neg b$ holds in every Sugihara monoid. For this, by Proposition \ref{prop:Sugiharagenerators} it is enough to check that this identity holds on the generating algebras $\Zo$ and $\Ze$. Let $n,m\in \mathbb{Z}$. Then $n\meet \neg n = n\meet -n = -|n|\leq 0$ and $m\join \neg m = m\join -m = |m|\geq 0$, whence $n\meet \neg n \leq 0\leq m \join \neg m$ in $\Zo$. If $n,m\neq 0$, then $n\meet - n\leq -1\leq 1\leq m\join \neg m$ gives the identity for $\Ze$. The result follows.
\end{proof}

We may likewise obtain an analogue for bounded Sugihara monoids.

\begin{corollary}\label{cor:reduct}
Let $(A,\meet,\join,\cdot,\to,t,\neg,\bot,\top)$ be a bounded Sugihara monoid. Then $(A,\meet,\join,\neg,\bot,\top)$ is a Kleene algebra.
\end{corollary}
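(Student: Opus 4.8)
The plan is to derive Corollary~\ref{cor:reduct} directly from Proposition~\ref{prop:reduct} together with the observations already recorded about bounds in Kleene algebras. Recall that a Kleene algebra is defined as the expansion of a normal $i$-lattice by lattice bounds $\bot$ and $\top$, subject to the requirement that these really are the least and greatest elements and that $\neg$ interacts with them correctly (namely $\neg\bot=\top$ and $\neg\top=\bot$). So the task splits into two pieces: first, that the $(\meet,\join,\neg)$-reduct of a bounded Sugihara monoid is a normal $i$-lattice, and second, that adjoining the designated bounds $\bot,\top$ produces exactly the extra structure demanded of a Kleene algebra.

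First I would invoke Proposition~\ref{prop:reduct}, which already establishes that for any Sugihara monoid ${\bf A}$, the reduct $(A,\meet,\join,\neg)$ is a normal $i$-lattice. A bounded Sugihara monoid $(A,\meet,\join,\cdot,\to,t,\neg,\bot,\top)$ has an underlying Sugihara monoid $(A,\meet,\join,\cdot,\to,t,\neg)$, so this reduct is normal $i$-lattice with no further work. This handles the involutive-lattice skeleton.

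Next I would verify the conditions on the bounds. By hypothesis $\bot$ and $\top$ are the least and greatest elements of the lattice $(A,\meet,\join)$, which is precisely what is required of the bounds in a Kleene algebra. It then remains only to confirm that $\neg$ behaves correctly on these bounds, i.e.\ that $\neg\bot=\top$ and $\neg\top=\bot$. This, however, is exactly the computation already carried out in the text when Kleene algebras were introduced: in any normal $i$-lattice with bounds, for any $a$ one has $\neg\bot=\neg(\bot\meet\neg a)=\neg\bot\join a$, forcing $\neg\bot=\top$, and applying $\neg$ and using $\neg\neg a=a$ gives $\neg\top=\bot$. Since the reduct is already known to be a normal $i$-lattice with $\bot,\top$ as bounds, this computation applies verbatim.

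Combining these, $(A,\meet,\join,\neg,\bot,\top)$ is a normal $i$-lattice expanded by genuine lattice bounds on which $\neg$ acts as an order-reversing bijection swapping $\bot$ and $\top$; by definition this is a Kleene algebra. I do not anticipate any genuine obstacle here: the corollary is essentially a packaging of Proposition~\ref{prop:reduct} with the definitional remarks about bounds, and the only thing to be careful about is making explicit that the hypothesis that $\bot,\top$ are bounds of the Sugihara monoid's lattice is the same boundedness condition appearing in the definition of a Kleene algebra, so that nothing beyond the $\neg\bot=\top$, $\neg\top=\bot$ identities needs checking.
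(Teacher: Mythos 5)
Your proposal is correct and matches the paper's (implicit) argument exactly: the corollary follows immediately from Proposition \ref{prop:reduct} together with the definition of a Kleene algebra as a bound-expanded normal $i$-lattice. Your only slight inaccuracy is treating $\neg\bot=\top$ and $\neg\top=\bot$ as part of the definition to be verified, whereas the paper's definition only demands bounds and notes these identities are automatic consequences --- but since you reproduce that very computation, nothing is missing.
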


\subsection{The Davey-Werner duality}

In \cite{DavWer}, Davey and Werner established a natural duality for the variety of Kleene algebras. Under this duality, the alter ego for $\underline{{\bf K}}$ consists of the topological relational structure $\utilde{{\bf K}} = (\{-1,0,1\}, \leq, \Q, K_0, \tau)$, where $\leq$ is the partial order given by $-1<0$ and $1<0$, $\Q$ is the relation of comparability with respect to $\leq$ given by $x\Q y$ iff $\la x,y\ra\notin \{\la -1,1\ra,\la 1,-1\ra\}$, $K_0=\{-1,1\}$ is a set of designated minimal elements, and $\tau$ is the discrete topology on $\{-1,0,1\}$. The concrete category of isomorphic copies of closed substructures of nonempty powers of $\utilde{\bf K}$ form a dual category to the variety Kleene algebras, and may be given the following external characterization (see \cite[p. 107]{CD} and \cite{DavWer}).

\begin{figure}
\begin{center}
\begin{tikzpicture}
    \node[label=left:\tiny{$1$}] at (0,0) {$\bullet$};
    \node[label=left:\tiny{$0$}] at (0,-0.5)  {$\bullet$};
    \node[label=left:\tiny{$-1$}] at (0,-1) {$\bullet$};

    \draw (0,0) -- (0,-0.5);
    \draw (0,-0.5) -- (0,-1);
\end{tikzpicture}\hspace{0.2 in}
\begin{tikzpicture}
    \node[label=left:\tiny{$0$}] at (0,-0.5) {$\bullet$};
    \node[label=left:\tiny{$-1$}] at (-0.5,-1)  {\textcircled{$\bullet$}};
    \node[label=right:\tiny{$1$}] at (0.5,-1) {\textcircled{$\bullet$}};

    \draw (0,-0.5) -- (-0.5,-1);
    \draw (0,-0.5) -- (0.5,-1);
\end{tikzpicture}
\end{center}
\caption{Hasse diagrams for the different personalities of the object ${\bf K}$}
\label{fig:Ktilde}
\end{figure}

\begin{proposition}\label{prop:DWextchar}
$(X,\leq,\Q,D,\tau)$ is an isomorphic copy of a closed substructure of a nonempty power of $\utilde{{\bf K}}$ if and only if:
\begin{enumerate}
\item $(X,\leq,\tau)$ is a Priestley space,
\item $Q$ is a closed binary relation,
\item $D$ is a closed subspace,
\item for all $x\in X$, $x\Q x$,
\item for all $x,y\in X$, $x\Q y \text{ and } x\in D\implies x\leq y$,
\item for all $x,y,z\in X$, $x\Q y \text{ and } y\leq z\implies z\Q x$
\end{enumerate}
\end{proposition}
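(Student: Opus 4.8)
The plan is to separate the topological/order skeleton, which is governed by Priestley duality, from the extra coordinatewise structure supplied by $\Q$ and $D$. Write $\mathcal{X}=\mathbb{IS}_c\mathbb{P}^+(\utilde{\bf K})$. The necessity direction is routine: each of (1)--(6) holds in $\utilde{\bf K}$ and is inherited by nonempty powers and topologically closed substructures. Conditions (4), (5), (6) are purely relational and fall to a finite case check on the three-element structure: reflexivity holds since $\la x,x\ra$ never lies in $\{\la -1,1\ra,\la 1,-1\ra\}$; (5) holds because $-1$ and $1$ are $\Q$-related only within $\{-1,0\}$ and $\{0,1\}$ respectively, each lying above the designated point; and (6) is verified by ruling out the two configurations that would falsify $z\Q x$. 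Since $\leq$, $\Q$, and the designated set are interpreted coordinatewise in products, these universally quantified conditions pass to all powers and substructures. The topological conditions (1)--(3) transfer because Priestley spaces are closed under products and closed substructures, while closed relations and closed subsets remain closed under the same operations (here $\{-1,0,1\}$ is discrete, so $\Q$ and $K_0$ are clopen in $\utilde{\bf K}$).

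The substance is the sufficiency direction. Given $(X,\leq,\Q,D,\tau)$ satisfying (1)--(6), let $I=\mathcal{X}(X,\utilde{\bf K})$ be the set of continuous maps $\alpha\colon X\to\{-1,0,1\}$ that are isotone, preserve $\Q$, and send $D$ into $K_0$, and form the evaluation map $e_X\colon X\to\utilde{\bf K}^I$ given by $e_X(x)(\alpha)=\alpha(x)$. By construction $e_X$ is a continuous structure-preserving morphism, and since $X$ is compact and $\utilde{\bf K}^I$ is Hausdorff its image is automatically closed; thus it remains only to show that $e_X$ is injective and reflects $\leq$, $\Q$, and $D$. This reduces the whole direction to three separation statements: for $x\not\leq y$ there is $\alpha\in I$ with $\alpha(x)\not\leq\alpha(y)$; for $\neg(x\Q y)$ there is $\alpha\in I$ with $\{\alpha(x),\alpha(y)\}=\{-1,1\}$; and for $x\notin D$ there is $\alpha\in I$ with $\alpha(x)=0$. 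Injectivity then follows from the first statement together with antisymmetry of $\leq$.

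To build these morphisms I would exploit the explicit anatomy of a map into $\utilde{\bf K}$: isotonicity forces $\alpha^{-1}(0)$ to be a clopen up-set $U$, preservation of $D$ forces $U\cap D=\emptyset$, and isotonicity together with $\Q$-preservation force the two-colouring of $X\setminus U$ into the fibres over $-1$ and $1$ to be constant on each connected component of the graph on $X\setminus U$ whose edges come from $\leq$ and $\Q$. The cases $x\notin D$ and $x\not\leq y$ with $x\notin D$ are then handled by a single monochromatic trick: since $D$ is a closed down-set of minimal points and $\upset x$ is a closed up-set disjoint from $D$ (and from $\downset y$ when $x\not\leq y$), Priestley separation supplies a clopen up-set $U$ with $x\in U$, $y\notin U$, and $U\cap D=\emptyset$; colouring all of $X\setminus U$ by $-1$ yields a legitimate $\alpha\in I$ with $\alpha(x)=0$. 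The remaining case $x\not\leq y$ with $x\in D$ is absorbed into the $\Q$-separation case, because axiom (5) forces $\neg(x\Q y)$ precisely when $x\in D$ and $x\not\leq y$.

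The main obstacle is the $\Q$-separation statement, which is the genuine content of the Davey--Werner theorem. One must remove a clopen up-set $U$ disjoint from $D$ so that $x$ and $y$ remain outside $U$ and lie in \emph{distinct} clopen connected components of the $(\leq\cup\Q)$-graph on $X\setminus U$; bicolouring those two components oppositely (and the rest arbitrarily) then produces the required $\alpha$. Showing that such a $U$ exists and that the components are clopen is exactly where axioms (4)--(6) do their work: (6) controls how $\Q$ propagates along $\leq$, ensuring the component of $x$ cannot be forced to meet that of $y$, while (4) and (5) pin down the behaviour of $\Q$ at minimal and designated points. I expect this to be the only step needing real care; the remaining verifications---continuity, closedness of the image, and preservation of structure by $e_X$---are routine consequences of compactness and the coordinatewise definitions.
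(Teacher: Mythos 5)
Your necessity direction and your overall reduction are sound: conditions (1)--(6) hold in $\utilde{\bf K}$ (the finite checks you indicate are correct), the relational conditions are universal Horn sentences interpreted coordinatewise and so pass to nonempty powers and closed substructures, and the topological conditions transfer since $Q$ and $K_0$ are clopen in the discrete structure. For sufficiency it is indeed enough that the evaluation $e_X\colon X\to\utilde{\bf K}^I$, with $I=\mathcal{X}(X,\utilde{\bf K})$, be a continuous, injective, structure-reflecting map, compactness of $X$ making the image closed; and your anatomy of a morphism into $\utilde{\bf K}$ (a clopen up-set zero-fibre avoiding $D$, plus a two-colouring of its complement with no $Q$-edges between the colours) is exactly right --- it is the description the paper itself isolates later as $C_{U,V}$ in Lemmas \ref{lem:ilatticemorph1} and \ref{lem:ilatticemorph2}. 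Your treatment of the $D$-reflection and of $x\not\leq y$ with $x\notin D$ (Priestley-separate $\upset x$ from the closed down-set $\downset y\cup D$, colour the complement of the resulting clopen up-set constantly $-1$) is complete, as is the observation that axiom (5) folds the case $x\in D$, $x\not\leq y$ into the $Q$-separation case. For calibration: the paper gives no proof of this proposition at all, citing Clark--Davey (p.~107) and Davey--Werner, so your attempt can only be judged on completeness.

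The genuine gap is that the $Q$-separation statement --- given $\neg(x\Q y)$, produce $\alpha\in I$ with $\{\alpha(x),\alpha(y)\}=\{-1,1\}$ --- is never proved, and as you acknowledge, this is the entire content of the hard direction. Moreover the mechanism you sketch fails as stated: connected components of the $(\leq\cup\Q)$-graph on $X\setminus U$ need not be clopen in a Stone space (there may be infinitely many, and components of a closed relation are in general only closed), so ``bicolour the two components and colour the rest arbitrarily'' is not available; one needs a clopen $Q$-independent partition, which requires a quasi-component/compactness argument running over clopen separations rather than literal components. More fundamentally, the existence of a clopen up-set $U$, disjoint from $D$, after whose removal $x$ and $y$ become $Q$-disconnected is precisely what must be extracted from axioms (4)--(6), and your proposal asserts rather than establishes it. The usable consequences of the axioms at this point are that $Q$ is symmetric (put $z=y$ in (6)), that each $Q[w]$ is a closed up-set (again by (6)), and that $\leq\cup\geq\subseteq Q$ (by (4) and (6)), which incidentally makes your $\leq$-edges redundant in the graph; a correct proof must construct $U$, $A$, $B$ from these facts, or reproduce the Davey--Werner separation argument. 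Until that construction is carried out, what you have is an accurate outline of the theorem, not a proof of it.
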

We call the structured topological spaces described above \emph{Kleene spaces}. We denote the category of Kleene algebras by \textsf{KA}, and the category of Kleene spaces with continuous structure-preserving morphisms by \textsf{KS}.

The methods used to obtain a natural duality for \textsf{KA} may be used with little modification to produce a natural duality for normal $i$-lattices.

\begin{theorem}
The variety of normal $i$-lattices is dualized by the structure $\utilde{\bf L} = (\{-1,0,1\}, \leq, Q, L_0, 0, \tau)$, where $\leq$ is the partial order given by $-1<0$ and $1<0$, $L_0$ is the unary relation $\{-1,1\}$, $Q$ is the binary relation given by $xQy$ iff $\la x,y\ra\notin \{\la -1,1\ra,\la 1,-1\ra\}$, and $0$ is a designated nullary constant symbol for the greatest element with respect to $\leq$. Moreover, this duality is strong.
\end{theorem}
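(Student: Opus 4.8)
The plan is to invoke the machinery of natural duality theory from \cite{CD}, following the template of Davey and Werner's treatment of Kleene algebras and modifying it only to accommodate the absence of lattice bounds. The structural fact that powers the whole argument is that a normal $i$-lattice has a distributive lattice reduct, and hence admits the ternary median term as a near-unanimity term; normal $i$-lattices therefore generate a congruence-distributive variety, so the near-unanimity duality theorem applies. The one genuinely new feature relative to the Kleene case is the nullary constant $0$: because $\{0\}$ is a one-element subuniverse of $\underline{\bf L}$, the constant map $a\mapsto 0$ is an $i$-lattice homomorphism (whereas in the bounded case no analogous constant map survives bound-preservation), and this is precisely where the unbounded duality diverges from its bounded counterpart.

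First I would establish that $\utilde{\bf L}$ is algebraic over $\underline{\bf L}$, securing the dual adjunction $(D,E,e,\epsilon)$ between $\mathcal{A}=\mathbb{ISP}(\underline{\bf L})$ and $\mathcal{X}$. This amounts to the routine verification that each piece of structure is a subuniverse of the appropriate power of $\underline{\bf L}$: that $L_0=\{-1,1\}$ and $\{0\}$ are subuniverses of $\underline{\bf L}$ (the former furnishing the unary relation, the latter legitimizing the nullary operation $0$), and that $\leq$ and $Q$ are subuniverses of $\underline{\bf L}^2$. Each is a finite closure check under the coordinatewise operations $\meet$, $\join$, and $\neg$; for example, one confirms that none of $\meet$, $\join$, $\neg$ can carry a pair of elements of $Q$ to $\la -1,1\ra$ or $\la 1,-1\ra$, and that $\neg$ preserves $\leq$ because it reverses the lattice order while fixing the $\leq$-top $0$.

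Next I would show that $e_{\bf A}$ is an isomorphism for every $i$-lattice ${\bf A}$, i.e., that $\utilde{\bf L}$ dualizes $\underline{\bf L}$. By the near-unanimity duality theorem, the median term (arity $3$) guarantees that the alter ego carrying \emph{all} unary and binary algebraic relations, together with the nullary operations coming from one-element subuniverses, already dualizes $\underline{\bf L}$. It therefore suffices to show that $\{\leq, Q, L_0, 0\}$ entails every remaining compatible relation. This is the combinatorial heart of the argument: one enumerates the subuniverses of $\underline{\bf L}$ and of $\underline{\bf L}^2$ and checks that each is definable from $\leq$, $Q$, $L_0$, and $0$ via the permitted relational constructs. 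Here lies the content of "with little modification": dropping the constants $-1,1$ from the signature enlarges the stock of subuniverses — for instance $\{0\}$ becomes a subuniverse of $\underline{\bf L}$ though not of $\underline{\bf K}$, and correspondingly $\underline{\bf L}^2$ acquires binary subuniverses with no counterpart in the bounded setting — so the Kleene enumeration must be redone and the newly appearing relations shown to be entailed, a task for which the constant $0$ is exactly the needed ingredient.

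Finally I would upgrade the duality to a strong one. Given that $\utilde{\bf L}$ yields a duality, that duality is strong exactly when $\utilde{\bf L}$ is injective in $\mathcal{X}=\mathbb{IS}_c\mathbb{P}^+(\utilde{\bf L})$, so the task is to show that every $\mathcal{X}$-morphism from a closed substructure of a power of $\utilde{\bf L}$ into $\utilde{\bf L}$ extends to the whole power. I expect this extension property to be the main obstacle, and it is the step most sensitive to the removal of bounds: in the Kleene case an element forced into a boundary position is pinned to $-1$ or $1$ by bound-preservation, whereas here the relevant elements must instead be routed through the $\leq$-greatest point $0$, the fixed point of $\neg$. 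Verifying that the single constant $0$ suffices to make all such extensions possible — rather than requiring additional partial operations in the alter ego — is the crux; once it is in hand, strongness, and \emph{a fortiori} fullness (the equivalence of $\mathcal{A}$ and $\mathcal{X}$), follows from the strong duality theorem of \cite{CD}.
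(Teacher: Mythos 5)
Your treatment of the duality itself follows essentially the paper's route: both invoke the natural-duality machinery for an algebra with a majority (median) term, and both reduce the problem to showing that $\leq$, $Q$, $L_0$, and the constant $0$ entail the full stock of binary compatible relations, re-enumerated to account for the missing bounds. Your observation that dropping $-1,1$ from the signature creates new subuniverses (e.g.\ $\{0\}$) is exactly the point of the paper's explicit list of the sixteen subuniverses of $\underline{\bf L}^2$, all of which it checks are entailed by $\leq$, $L_0$, $Q$, and $0$. Up to this point your plan is sound and matches the paper.

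The gap is in your handling of strongness. You propose to prove it by verifying directly that $\utilde{\bf L}$ is injective in $\mathcal{X}=\mathbb{IS}_c\mathbb{P}^+(\utilde{\bf L})$ --- an extension property for morphisms defined on arbitrary closed substructures of arbitrary powers --- and you explicitly defer this verification (``once it is in hand''). That is the one genuinely hard analytic step in your outline, and no argument for it is supplied; as a proof it is incomplete precisely at the crux. The paper never faces this problem: it applies the NU \emph{Strong} Duality Theorem \cite[Theorem 3.8]{CD}, which already yields a strong duality for the alter ego carrying all binary algebraic relations together with all partial and total homomorphisms of arity at most one. What remains is then a finite entailment check rather than an injectivity proof. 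The paper lists the five such homomorphisms $\varphi_0,\dots,\varphi_4$, notes their graphs occur among the sixteen binary subuniverses (so they are relationally entailed), and for the hom-entailment needed for strongness invokes the $\utilde{\bf M}$-Shift Strong Duality Lemma \cite[Lemma 2.8]{CD} to delete the three proper partial maps, since they extend to the two total endomorphisms --- of which the identity $\varphi_4$ is hom-entailed trivially and the constant endomorphism $\varphi_3$ is entailed by the nullary $0$. So the constant $0$ does play the decisive role you anticipated, but via hom-entailment of a single endomorphism, not via a bare-hands extension argument. To repair your proposal, either actually carry out the injectivity argument, or (far shorter, and what the paper does) route strongness through the NU Strong Duality Theorem and the Shift Lemma as above.
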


\begin{proof}
We apply the NU Strong Duality Theorem \cite[Theorem 3.8]{CD} as applied to algebras with a majority term. The universes of subalgebras of $\underline{\bf L}^2$ are exactly $\{0\}$, $\Delta_{L_0}$, $\leq\cap (L_0\times L)$, $\geq\cap (L\times L_0)$, $L_0\times L$, $L\times L_0$, $L^2$, $\Delta_L$, $\leq$, $\geq$, $\Q$, $L_0\times \{0\}$, $\{0\}\times L_0$, $L\times \{0\}$, $\{0\}\times L$, and $L_0^2$. These are readily seen to be entailed by $\leq$, $L_0$, $Q$, and $\top$.

Next, we note that the partial and total homomorphisms of arity at most $1$ are given by 
$$\varphi_0\colon \{0\}\to {\underline{\bf L}} \text{ defined by } \varphi_0(0)=0$$
$$\varphi_1\colon \{-1,1\}\to {\underline{\bf L}} \text{ defined by } \varphi_1(-1)=\varphi_1(1)=0$$
$$\varphi_2\colon \{-1,1\}\to {\underline{\bf L}} \text{ defined by } \varphi_2(-1)=-1\text{ and }\varphi_2(1)=1$$
$$\varphi_3\colon {\underline{\bf L}}\to {\underline{\bf L}} \text{ defined by } \varphi_3(-1)=\varphi_3(0)=\varphi_3(1)=0$$
$$\varphi_4\colon {\underline{\bf L}}\to {\underline{\bf L}} \text{ defined by } \varphi_4(x)=x \text{ for all }x\in\{-1,0,1\}$$
\noindent The graphs of these functions are, respectively,
$$\grph(\varphi_0) = \{(0,0)\}=\{0\}\times\{0\}$$
$$\grph(\varphi_1) = \{(-1,0),(1,0)\}=L_0\times \{0\}$$
$$\grph(\varphi_2) = \{(-1,1),(1,1)\}=\Delta_{L_0}$$
$$\grph(\varphi_3) = \{(-1,0),(0,0),(1,0)\} = L\times \{0\}$$
$$\grph(\varphi_4) = \{(-1,-1),(0,0),(1,1)\} = L\times \{0\}$$
\noindent It follows from this that $\leq$, $0$, $L_0$, $Q$ entails all of the relations and partial operations listed above.

For hom-entailment, note by the $\utilde{\bf M}$-Shift Strong Duality Lemma \cite[Lemma 2.8]{CD}, we may delete $\varphi_0$, $\varphi_1$, and $\varphi_2$ since they have extensions $\varphi_3$ and $\varphi_4$. Observe that $\varphi_4$ is the identity endomorphism and is therefore hom-entailed by any set of partial operations. $\varphi_3$ is the constant endomorphism associated with $0$, and is thus entailed by $0$. The result therefore follows.
\end{proof}

\begin{theorem}\label{thm:pKSchar}
$(X,\leq,\Q,D,\top,\tau)$ is an isomorphic copy of a closed substructure of a nonempty power of $\utilde{{\bf K}}$ if and only if:
\begin{enumerate}
\item $(X,\leq,\top,\tau)$ is a pointed Priestley space,
\item $Q$ is a closed binary relation,
\item $D$ is a closed subspace,
\item for all $x\in X$, $x\Q x$,
\item for all $x,y\in X$, $x\Q y \text{ and } x\in D\implies x\leq y$,
\item for all $x,y,z\in X$, $x\Q y \text{ and } y\leq z\implies z\Q x$
\end{enumerate}
\end{theorem}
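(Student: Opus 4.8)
The plan is to obtain the statement from the Davey--Werner external characterization already recorded in Proposition \ref{prop:DWextchar}, reading the passage from that result to the present one as the addition of a distinguished greatest element $\top$ together with the upgrade of condition (1) from ``Priestley space'' to ``pointed Priestley space''. Throughout I regard the greatest element $0$ of $\utilde{\bf K}$ as distinguished, so that a closed substructure of a nonempty power $\utilde{\bf K}^I$ is tracked together with the all-$0$ element $\mathbf{0}=(0)_{i\in I}$, which is the greatest element of $\utilde{\bf K}^I$ in the product order. Since conditions (2)--(6) are verbatim those of Proposition \ref{prop:DWextchar}, the only genuinely new content is the interaction of the order with $\top$.

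For the forward direction, suppose $(X,\leq,\Q,D,\top,\tau)$ is an isomorphic copy of a closed substructure of $\utilde{\bf K}^I$ carrying the distinguished top $\mathbf{0}$. Applying Proposition \ref{prop:DWextchar} to the reduct $(X,\leq,\Q,D,\tau)$ immediately yields (2)--(6) together with the fact that $(X,\leq,\tau)$ is a Priestley space. It then remains only to note that $\top=\mathbf{0}$ is the greatest element of $X$: as $\mathbf{0}$ is greatest in the ambient power and $X$ is a subposet containing it, $\mathbf{0}$ dominates every element of $X$. Hence $(X,\leq,\top,\tau)$ is a pointed Priestley space and (1) holds.

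For the reverse direction, I would start from a structure satisfying (1)--(6), forget $\top$, and apply Proposition \ref{prop:DWextchar} to the resulting Kleene space to realize it as a closed substructure of a power $\utilde{\bf K}^I$ via the evaluation embedding $\epsilon(x)(\alpha)=\alpha(x)$, where $I$ ranges over the continuous structure-preserving maps $\alpha\colon X\to\utilde{\bf K}$. The one thing to arrange is that this embedding carry $\top$ to $\mathbf{0}$, that is, that $\alpha(\top)=0$ for every index $\alpha$ in play. Here the pointedness of $X$ does the work: since $x\leq\top$ for all $x\in X$ and $\alpha$ preserves $\leq$, any $\alpha$ avoiding the value $0$ would map all of $X$ into the antichain $\{-1,1\}$, whereupon $x\leq\top$ forces $\alpha(x)=\alpha(\top)$ for every $x$, making $\alpha$ constant. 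Thus every non-constant morphism $\alpha$ takes the value $0$, and because $\alpha^{-1}(0)$ is a nonempty up-set it contains the greatest element $\top$, giving $\alpha(\top)=0$. Discarding the (useless) constant morphisms from $I$ therefore leaves a separating family of top-preserving maps, so the restricted evaluation map is still an embedding onto a closed substructure and now satisfies $\epsilon(\top)=\mathbf{0}$.

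The crux of the argument, and the step I expect to require the most care, is precisely this pointedness maneuver in the reverse direction: one must verify that \emph{restricting} the index family to the top-preserving morphisms does not destroy the separation of points, of the order $\leq$, of the relation $\Q$, and of the set $D$ that Proposition \ref{prop:DWextchar} guarantees for the full family. This reduces to the observation above that any morphism witnessing a separation, being non-constant, automatically lands in the top-preserving family, so no separating morphism is lost. Granting this, closedness of the image follows exactly as in Proposition \ref{prop:DWextchar} from compactness of $\utilde{\bf K}^I$ and continuity of the structure maps, and the two directions together establish the claimed characterization.
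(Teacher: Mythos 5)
Your proposal is correct, but it takes a genuinely different route from the paper's. The paper's proof of Theorem~\ref{thm:pKSchar} is a one-line pointer: it declares the argument ``identical to the proof of Proposition~\ref{prop:DWextchar},'' i.e., it asks the reader to re-run the Davey--Werner/Clark--Davey external-characterization argument in the signature enlarged by the constant (note, incidentally, that the powers in the statement should really be powers of $\utilde{\bf L}$ rather than $\utilde{\bf K}$, a slip you implicitly correct by carrying the all-$0$ element along as distinguished). You instead treat Proposition~\ref{prop:DWextchar} as a black box and bootstrap the pointed case from the unpointed one. The crux of your reduction is sound: any morphism $\alpha\colon X\to\utilde{\bf K}$ omitting the value $0$ has image in the antichain $\{-1,1\}$ and, since $x\leq\top$ for all $x$, is constant; hence no morphism witnessing a separation failure of $\leq$, of $\Q$, or of membership in $D$ can be lost by restricting the index family to the top-preserving homs, and since $\alpha^{-1}(0)$ is an up-set, every surviving morphism sends $\top$ to $0$, so the restricted evaluation carries $\top$ to $\mathbf{0}$; compactness and continuity then give closedness of the image exactly as in the unpointed case. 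What your route buys is a self-contained derivation of the pointed characterization that never reopens the natural-duality machinery (the separating family for the unpointed case is available either from the proof of Proposition~\ref{prop:DWextchar} or from fullness of the Davey--Werner duality); what the paper's route buys is brevity, at the cost of asking the reader to re-verify the cited argument with the constant in place. One small repair: the constant map with value $0$ \emph{is} top-preserving and can be genuinely needed --- when $D=\emptyset$ it may be the only morphism reflecting non-membership in $D$ (in the degenerate one-point space it is the only top-preserving morphism at all) --- so you should discard only the constants at $-1$ and $1$, not ``the constant morphisms'' wholesale; your own formulation ``restrict to the top-preserving morphisms'' already does exactly the right thing, so the slip lives only in the parenthetical, and likewise the claim that every separating witness is ``non-constant'' should read that every separating witness is top-preserving.
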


\begin{proof}
Identical to the proof of Proposition \ref{prop:DWextchar}. 
\end{proof}

We call the spaces defined in the previous theorem \emph{pointed Kleene spaces}, and denote the category of pointed Kleene spaces with continuous structure-preserving maps by \textsf{pKS}. The above theorems show that \textsf{IL} is dually equivalent to \textsf{pKS}, and we denote the functors witnessing this equivalence by $(-)_+\colon\textsf{IL}\to\textsf{pKS}$ and $(-)^+\colon\textsf{pKS}\to\textsf{IL}$. The category \textsf{pKS} plays the same role in the duality for Sugihara monoids that \textsf{PS} plays in Esakia duality. Following this analogy, for simplicity we will also use the notation $(-)_+$ and $(-)^+$ for the functors witnessing the equivalence of $\textsf{KA}$ and $\textsf{KS}$, and later for the functors of the duality for Sugihara monoids and their bounded analogues. This agrees with our convention of using $(-)_*$ and $(-)^*$ for the functors associated with the dualities for \textsf{DL}, \textsf{Br}, \textsf{HA}, \textsf{bRSA}, and \textsf{bGA} in Section \ref{sec:booldual}.

\begin{remark}\label{rem:minimal}
Suppose that $(X,\leq,Q,D,\tau)$ is a Kleene space (or, if one wishes, a pointed Kleene space) and let $x\in D$. It then follows from the axioms for Kleene spaces that $x$ is $\leq$-minimal in $X$.
\end{remark}

\section{Esakia duality for Sugihara monoids}\label{sec:sugidual}

Proposition \ref{prop:reduct} shows that each Sugihara monoid ${\bf A}$ may be associated with its normal $i$-lattice reduct via a forgetful functor $U\colon\textsf{SM}\to\textsf{IL}$. On the other hand, the Davey-Werner duality for normal $i$-lattices associates to each such reduct a pointed Kleene space $U({\bf A})_+$. By composing $U$ and $(-)_+$, we obtain a functor that associates to each Sugihara monoid the pointed Kleene space that is dual to its $i$-lattice reduct. For simplicity, we omit explicit mention of the forgetful functor $U$, and simply write the pointed Kleene space obtained in this fashion by ${\bf A}_+$.

We will identify a class of pointed Kleene spaces, which we call \emph{Sugihara spaces}, that contain the spaces arising in the aforementioned way. On the other hand, to each Sugihara space ${\bf X}$ we will associate the normal $i$-lattice ${\bf X}^+$. It turns out that each $i$-lattice arising in this fashion is the reduct of Sugihara monoid, and, moreover, determines a unique such Sugihara monoid. In this way, the functor $(-)^+$ from the Davey-Werner duality may be amended to give a functor to \textsf{SM}. The main result of this section is that the pair $(-)_+$ and $(-)^+$, appropriately modified, witness a dual equivalence of categories between \textsf{SM} and a subcategory of \textsf{pKS}.

\subsection{Sugihara spaces and \textsf{bRS}-spaces}

Before describing the duality for Sugihara monoids in detail, we introduce the pointed Kleene spaces of interest and clarify their connection to the \textsf{bRS}-spaces of Section \ref{sec:booldual}. The following isolates the appropriate class of pointed Kleene spaces for our study.

\begin{definition}
A pointed Kleene space $(X,\leq,Q,D,\top,\tau)$ is called a \emph{Sugihara space} if
\begin{enumerate}
\item $(X,\leq,\top,\tau)$ is a pointed Esakia space,
\item $\Q$ is the relation of comparability with respect to $\leq$, i.e., $\Q=\leq\cup\geq$, and
\item $D$ is open.
\end{enumerate}
Because the relation $\Q$ is understood to be comparability with respect to $\leq$, we sometime omit it and simply say that $(X,\leq,D,\top,\tau)$ is a Sugihara space. Observe that since $D$ is closed in any pointed Kleene space, the above definition entails that $D$ is clopen in a Sugihara space.
\end{definition}

These spaces bear a striking similarity to the \textsf{bRS}-spaces of Section \ref{sec:booldual}, and indeed we have the following.

\begin{lemma}\label{lem:bRStosugihara}
Let $(X,\leq,D,\top,\tau)$ be a \textsf{bRS}-space. Then $(X,\leq,\leq\cup\geq,D,\top,\tau)$ is a Sugihara space.
\end{lemma}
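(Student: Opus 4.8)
The plan is to verify directly that the structure $(X,\leq,\leq\cup\geq,D,\top,\tau)$ satisfies the definition of a Sugihara space, given that $(X,\leq,D,\top,\tau)$ is a \textsf{bRS}-space. By definition, a Sugihara space is a pointed Kleene space satisfying three additional conditions: that $(X,\leq,\top,\tau)$ is a pointed Esakia space, that $\Q$ is comparability with respect to $\leq$, and that $D$ is open. The second condition holds by construction since we have declared $\Q = {\leq}\cup{\geq}$. The first condition is immediate from clause (1) of the definition of a \textsf{bRS}-space. For the third condition, clause (3) of the \textsf{bRS}-space definition states that $D$ is clopen, hence in particular open. So the bulk of the work is to check that $(X,\leq,\leq\cup\geq,D,\top,\tau)$ is genuinely a pointed Kleene space, i.e., that it satisfies the six conditions of Theorem \ref{thm:pKSchar}.

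First I would dispatch the easy conditions of Theorem \ref{thm:pKSchar}. Condition (1), that $(X,\leq,\top,\tau)$ is a pointed Priestley space, follows since every pointed Esakia space is a pointed Priestley space. Condition (3), that $D$ is closed, follows from $D$ being clopen. Condition (4), reflexivity of $\Q$, is clear since $x\leq x$ always, so $x\,\Q\,x$. For condition (2), that $\Q$ is closed, I would use that $\leq$ is a closed subset of $X\times X$ in a Priestley space (the partial order of a Priestley space is always closed in the product topology), so that $\geq$ is closed as its image under the coordinate-swap homeomorphism, and hence $\Q = {\leq}\cup{\geq}$ is closed as a finite union of closed sets. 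Condition (6), stating that $x\,\Q\,y$ and $y\leq z$ imply $z\,\Q\,x$, follows by a short case analysis: if $x\leq y\leq z$ then $x\leq z$, so $z\,\Q\,x$; if $y\leq x$ and $y\leq z$, then $x$ and $z$ are both above $y$, and since $\upset y$ is a chain in a forest (clause (2) of the \textsf{bRS}-space definition), $x$ and $z$ are comparable, giving $z\,\Q\,x$.

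The step I expect to require the most attention is condition (5): for all $x,y\in X$, if $x\,\Q\,y$ and $x\in D$, then $x\leq y$. Since $\Q$ is comparability, $x\,\Q\,y$ means $x\leq y$ or $y\leq x$; the claim is that whenever $x\in D$, the second alternative forces $x\leq y$ anyway. This is exactly where clause (3) of the \textsf{bRS}-space definition enters: the elements of $D$ are $\leq$-minimal. So if $y\leq x$ with $x\in D$, minimality of $x$ yields $y=x$, whence $x\leq y$ trivially; and of course if $x\leq y$ directly, there is nothing to prove. Thus condition (5) holds. Having verified all six conditions of Theorem \ref{thm:pKSchar}, I conclude that $(X,\leq,\leq\cup\geq,D,\top,\tau)$ is a pointed Kleene space, and the three Sugihara-space conditions verified above then complete the proof. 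The only genuine obstacle is making sure the forest structure is invoked correctly in condition (6) and that minimality of $D$ is invoked correctly in condition (5); everything else is routine topology and order theory.
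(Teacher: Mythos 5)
Your proof is correct and takes essentially the same approach as the paper's: both reduce the claim to verifying the six conditions of Theorem \ref{thm:pKSchar}, invoking the minimality of the elements of $D$ (clause (3) of Definition \ref{def:bRSspace}) for condition (5) and the forest property ($\upset y$ a chain) for condition (6), with the remaining conditions being routine. Your verification of condition (2) is in fact marginally more complete than the paper's, which only remarks that $\leq$ is closed in a Priestley space without spelling out that $\geq$ and the union ${\leq}\cup{\geq}$ are then closed as well.
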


\begin{proof}
From the definition of \textsf{bRS}-spaces, $(X,\leq,\top,\tau)$ is a pointed Esakia space and $D$ is clopen. We need only verify the conditions listed in Theorem \ref{thm:pKSchar} to show that $(X,\leq,\leq\cup\geq,D,\top,\tau)$ is a pointed Kleene space. Note that (1) and (3) follow immediately from the preceding comments, and the order relation $\leq$ is closed in $X\times X$ for any Priestley space, and this gives (2). It remains only to show that conditions (4), (5), and (6) are satisfied. Let $\Q=\leq\cup\geq$ be the relation of comparability with respect to $\leq$.

For (4), since each $x\in X$ is comparable to itself, we have $x\Q x$.

For (5), let $x,y\in X$ with $x\Q y$ and $x\in D$. Since $x\Q y$ we have either $x\leq y$ or $y\leq x$. In the former case, $x\leq y$ holds by hypothesis. In the latter case, observe that since $D$ consists of $\leq$-minimal elements by Remark \ref{rem:minimal}, we have that $y\leq x$ and $x\in D$ implies $x=y$. Hence $x\leq y$ in either case.

For (6), let $x,y,z\in X$ with $x\Q y$ and $y\leq z$. Since $x\Q y$ we have either $x\leq y$ or $y\leq x$. In the first case, $x\leq y$ and $y\leq z$ gives $x\leq z$ by transitivity. In the second case, $y\leq x$ and $y\leq z$ gives $x,z\in\upset y$. But $(X,\leq)$ is a forest since it is the underlying poset of a \textsf{bRS}-space, so $\upset y$ is a chain. Hence $x\leq z$ or $z\leq x$, so $z\Q x$ as desired. The result follows.
\end{proof}

A converse to the above lemma also holds.

\begin{lemma}\label{lem:sugiharatobRS}
Let $(X,\leq,\Q,D,\top,\tau)$ be a Sugihara space. Then $(X,\leq,D,\top,\tau)$ is a \textsf{bRS}-space.
\end{lemma}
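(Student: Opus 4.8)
The plan is to verify directly the three defining conditions of a \textsf{bRS}-space (Definition \ref{def:bRSspace}) for the structure $(X,\leq,D,\top,\tau)$ obtained by discarding the relation $\Q$. Two of the three conditions are nearly immediate from the hypotheses, so the substance of the argument is showing that $(X,\leq)$ is a forest.

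First I would dispatch the easy conditions. Condition (1), that $(X,\leq,\top,\tau)$ is a pointed Esakia space, is part of the definition of a Sugihara space. For condition (3), observe that $D$ is open by the definition of a Sugihara space and closed by the pointed Kleene space axiom Theorem \ref{thm:pKSchar}(3), hence clopen; that the elements of $D$ are $\leq$-minimal is exactly Remark \ref{rem:minimal}. Thus only condition (2) requires genuine work.

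The heart of the argument is that $\upset x$ is a chain for every $x\in X$. Here I would exploit the interaction axiom Theorem \ref{thm:pKSchar}(6) together with the fact that in a Sugihara space $\Q$ is comparability, $\Q=\leq\cup\geq$. Fix $x$ and take $y,z\in\upset x$, so that $x\leq y$ and $x\leq z$; the goal is to show $y$ and $z$ are comparable. Since $x\leq y$ gives $y\Q x$ (comparability being symmetric), I apply Theorem \ref{thm:pKSchar}(6) with its three arguments instantiated as $y$, $x$, $z$, so that its hypotheses read $y\Q x$ and $x\leq z$; the axiom then yields $z\Q y$. As $\Q$ is comparability, this means $y\leq z$ or $z\leq y$, so $\upset x$ is totally ordered and $(X,\leq)$ is a forest.

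I do not anticipate any real obstacle: the forest property falls out of the single axiom (6) the moment $\Q$ is read as comparability, which mirrors the way (6) was \emph{verified} from the forest property in the forward direction of Lemma \ref{lem:bRStosugihara}. The only point demanding care is bookkeeping the roles of the three variables when instantiating (6), and remembering to cite Remark \ref{rem:minimal} and the clopenness of $D$ rather than re-deriving them.
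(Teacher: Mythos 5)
Your proof is correct and takes essentially the same route as the paper: the forest property is obtained by exactly the same instantiation of Theorem \ref{thm:pKSchar}(6) (with the roles $y,x,z$) after reading $\Q$ as $\leq$-comparability. The only cosmetic difference is that the paper re-derives the $\leq$-minimality of the points of $D$ directly from the axiom $x\Q y$ and $x\in D\implies x\leq y$, whereas you cite Remark \ref{rem:minimal}, which records the same fact.
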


\begin{proof}
From the definition of Sugihara spaces, $(X,\leq,\top,\tau)$ is a pointed Esakia space and $D$ is clopen. From Definition \ref{def:bRSspace}, it remains only to show that $D$ consists of $\leq$-minimal elements and that $(X,\leq)$ is a forest.

To see that $D$ consists of minimal elements, let $y\in D$ and let $x\leq y$. From $x\leq y$ we have $y\Q x$ since $\Q$ is the relation of $\leq$-comparability. Then $y\Q x$ and $y\in D$ gives $y\leq x$ by Theorem \ref{thm:pKSchar}(2). Since $x\leq y$, antisymmetry yields $x=y$. Hence $D$ consists of minimal elements.

To see that $(X,\leq)$ is a forest, let $x\in X$ and let $y,z\in \upset x$. Note that $x\leq y$ gives $y\Q x$, and $x\leq z$ together with Theorem \ref{thm:pKSchar}(3) gives $z\Q y$. Then $z\leq y$ or $y\leq z$. It follows that $\upset x$ is a chain, and hence that $(X,\leq)$ is a forest.
\end{proof}

In light of Lemmas \ref{lem:bRStosugihara} and \ref{lem:sugiharatobRS}, \textsf{bRS}-spaces and and Sugihara spaces are tantamount to the same objects. However, conceptually they arise from quite different origins: Whereas Sugihara spaces are Davey-Werner duals of some (as yet unidentified) normal $i$-lattices, \textsf{bRS}-spaces are enriched Esakia duals of \textsf{bRS}-algebras. Our proximal goal is to develop this connection more thoroughly. 

To fix some notation, let ${\bf A} = (A,\meet,\join,\cdot,\to,t,\neg)$ be a Sugihara monoid. Define $A_+$ to be the collection of $(\meet,\join,\neg)$-morphisms from ${\bf A}$ to ${\bf \underline{L}}$. We denote by $\leq$ the partial order on $A_+$ inherited pointwise from $\utilde{\bf L}$, denote the designated subset by $A_0 = \{h\in A_+ : (\forall a\in A)(h(a)\in\{-1,1\})\}$, define $\top\colon {\bf A}\to{\bf A}$ by $\top(a)=0$ for all $a\in A$, and define $\Qa$ to to be the binary relation on $A_+$ given by $h\Qa k$ if and only if $h(a)\Q k(a)$ for all $a\in A$. Moreover, we let $\tau_{\bf A}$ be the topology on $A_+$ generated by the subbasis $\{U_{a,l} : a\in A, l\in \{-1,0,1\}\}$, where $U_{a,l} = \{h\in A_+ : h(a)=l\}$. The latter definition is motivated by the following.

\begin{lemma}[{{\cite[Lemma B.6, p. 340]{CD}}}]\label{lem:DWdualtop}
Let $A$ be an index set and consider ${\bf \underline{L}}^A$ as a topological space endowed with the product topology. For each $a\in A$ and\\ $l\in \{-1,0,1\}$, let $U_{a,l}=\{x\in {\bf \underline{L}}^A : x(a)=l\}$. Then
$$\{U_{a,l} : a\in A \text{ and } l\in \{-1,0,1\}\}$$
is a clopen subbasis for the topology on ${\bf \underline{L}}^A$.
\end{lemma}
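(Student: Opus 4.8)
The plan is to recognize $\{U_{a,l}\}$ as precisely the family of preimages of singletons under the coordinate projections, and then to compare the topology it generates with the standard subbasis for the product topology. First I would fix notation: for each $a\in A$ let $\pi_a\colon {\bf \underline{L}}^A\to\{-1,0,1\}$ denote the projection onto the $a$-th coordinate, so that $U_{a,l}=\pi_a^{-1}[\{l\}]$. Since each factor $\{-1,0,1\}$ carries the discrete topology, the standard subbasis for the product topology consists of all sets of the form $\pi_a^{-1}[V]$ with $a\in A$ and $V\subseteq\{-1,0,1\}$.

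Next I would establish clopenness. Each singleton $\{l\}$ is both open and closed in the discrete space $\{-1,0,1\}$, and since $\pi_a$ is continuous, $U_{a,l}=\pi_a^{-1}[\{l\}]$ is open; since preimages commute with complementation and $\pi_a^{-1}[\{-1,0,1\}\setminus\{l\}]$ is again open, $U_{a,l}$ is closed as well. Thus every member of the proposed family is clopen.

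The remaining step is to show that $\{U_{a,l}\}$ generates exactly the product topology. For one containment, each $U_{a,l}=\pi_a^{-1}[\{l\}]$ is itself a standard subbasic open set, so the topology generated by $\{U_{a,l}\}$ is contained in the product topology. For the reverse, it suffices to write each standard subbasic set as a union of members of $\{U_{a,l}\}$: indeed $\pi_a^{-1}[V]=\bigcup_{l\in V}U_{a,l}$, a finite union since $V\subseteq\{-1,0,1\}$ has at most three elements. Hence every standard subbasic set is open in the topology generated by $\{U_{a,l}\}$, and the two topologies coincide, so $\{U_{a,l}\}$ is a clopen subbasis.

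There is no substantive obstacle here; the only point requiring care is that the discreteness of the factor $\{-1,0,1\}$ is precisely what lets us replace arbitrary-subset preimages by singleton preimages, so that the singleton preimages alone suffice to generate the topology. As this is exactly the content recorded in \cite[Lemma B.6]{CD}, I would cite it rather than reproduce the verification in full.
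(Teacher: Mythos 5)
Your proof is correct, and it matches the paper's treatment: the paper gives no argument of its own, simply citing \cite[Lemma B.6]{CD}, whose content is exactly the routine verification you carry out (singleton preimages under projections are clopen, and they generate the product topology since each subbasic preimage $\pi_a^{-1}[V]$ is a finite union of the $U_{a,l}$). Nothing further is needed.
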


Given an $i$-lattice ${\bf A}$, the Davey-Werner dual of ${\bf A}$ has topology induced as a subspace of ${\bf \underline{L}}^A$. Hence from the previous lemma we obtain

\begin{lemma}\label{lem:DWdualtop}
Let ${\bf A} = (A,\meet,\join,\cdot,\to,t,\neg)$ be a Sugihara monoid. Then the sets $U_{a,l}=\{h\in A_+ : h(a)=l\}$, where $l\in \{-1,0,1\}$ and $a\in A$, give a clopen subbasis for the topology on ${\bf A}_+$.
\end{lemma}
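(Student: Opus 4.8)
The plan is to derive this lemma as a direct consequence of the preceding Lemma \ref{lem:DWdualtop} (the general statement about $\underline{\bf L}^A$), observing that ${\bf A}_+$ is by construction a subspace of $\underline{\bf L}^A$. Concretely, recall that $A_+$ is defined as the set of $(\meet,\join,\neg)$-homomorphisms from ${\bf A}$ to $\underline{\bf L}$, so that $A_+ \subseteq \{-1,0,1\}^A = \underline{\bf L}^A$ as a set of functions, and its topology $\tau_{\bf A}$ is exactly the one generated by the subbasis $\{U_{a,l} : a\in A,\ l\in\{-1,0,1\}\}$ with $U_{a,l}=\{h\in A_+ : h(a)=l\}$. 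This matches the subspace topology inherited from the product topology on $\underline{\bf L}^A$, since the Davey-Werner dual is by definition a (topologically closed) substructure of a power of $\utilde{\bf L}$.

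First I would note that by the general theory of natural dualities recalled in Section \ref{sec:natdual}, the topology on ${\bf A}_+$ is precisely the subspace topology it inherits as a closed substructure of $\utilde{\bf L}^A$, whose underlying space is $\underline{\bf L}^A$ with the product topology. Next I would invoke the previous Lemma \ref{lem:DWdualtop}, which asserts that the family $\{U_{a,l}^{\,\mathrm{prod}} : a\in A,\ l\in\{-1,0,1\}\}$, where $U_{a,l}^{\,\mathrm{prod}}=\{x\in\underline{\bf L}^A : x(a)=l\}$, is a clopen subbasis for $\underline{\bf L}^A$. The key step is the elementary topological fact that restricting a (clopen) subbasis of a space to a subspace yields a (relatively clopen) subbasis of that subspace: intersecting each $U_{a,l}^{\,\mathrm{prod}}$ with $A_+$ produces exactly the sets $U_{a,l}=\{h\in A_+ : h(a)=l\}$, and these generate the subspace topology. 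Since each $U_{a,l}^{\,\mathrm{prod}}$ is clopen in $\underline{\bf L}^A$, its intersection $U_{a,l}$ with $A_+$ is clopen in $A_+$.

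There is essentially no genuine obstacle here; the lemma is a formal specialization of the ambient statement, and the only thing worth being careful about is the bookkeeping identification of the abstractly-defined Davey-Werner dual topology with the subspace topology determined by the subbasis in the statement. I would make that identification explicit and then cite Lemma \ref{lem:DWdualtop} together with the subspace-subbasis principle to conclude. In short: the sets $U_{a,l}$ are the restrictions to $A_+$ of a clopen subbasis of $\underline{\bf L}^A$, hence form a clopen subbasis for the subspace topology on ${\bf A}_+$, which is the topology $\tau_{\bf A}$ by definition.
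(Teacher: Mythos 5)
Your proposal is correct and follows essentially the same route as the paper: the paper likewise observes that the Davey-Werner dual ${\bf A}_+$ carries the subspace topology inherited from ${\bf \underline{L}}^A$ and then specializes the cited clopen-subbasis statement for the product topology (Lemma B.6 of \cite{CD}) by restriction. Your explicit remark about identifying $\tau_{\bf A}$ with the subspace topology is exactly the (implicit) bookkeeping step in the paper's one-line derivation.
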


It follows that ${\bf A}_+ = (A_+,\leq,\Qa, A_0, \top, \tau_{\bf A})$ is the Davey-Werner dual of the normal $i$-lattice $(A,\meet,\join,\neg)$ as discussed above.

\begin{lemma}\label{lem:welldeffilter}
Let ${\bf A} = (A,\meet,\join,\cdot,\to,t,\neg)$ be a Sugihara monoid and let $h\in A_+$. Then $h^{-1}[\{0,1\}]\cap A^-$ is a prime filter of the enriched negative cone ${\bf A}_{\bowtie}$.
\end{lemma}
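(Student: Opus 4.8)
The plan is to show that $P:=h^{-1}[\{0,1\}]\cap A^-$ is a prime (lattice) filter of the lattice reduct $\mathbb{A}^-$ of the negative cone, which is all that is required: prime filters of the Brouwerian algebra ${\bf A}_{\bowtie}$ are exactly the prime filters of its lattice reduct, and the constant $f=\neg t$ plays no role in the filter structure. The strategy is to pull back a prime filter of the chain $\underline{\bf L}$ along $h$ and then restrict to $A^-$.

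First I would note that $\{0,1\}=\upset 0$ is a proper prime filter of the chain $\underline{\bf L}=(\{-1,0,1\},\meet,\join)$, since every proper filter of a chain is prime. As $h$ is in particular a lattice homomorphism $A\to\underline{\bf L}$, the preimage $F:=h^{-1}[\{0,1\}]$ is a filter of the lattice reduct of ${\bf A}$: it is an up-set because $h$ is monotone and $\{0,1\}$ is an up-set, and it is closed under $\meet$ because $h(a\meet b)=h(a)\meet h(b)$. It is prime because $h(a)\join h(b)=h(a\join b)\in\{0,1\}$ forces $h(a)\in\{0,1\}$ or $h(b)\in\{0,1\}$ by primality of $\{0,1\}$. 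Next, since $A^-$ is a sublattice of $A$ that is also a down-set, intersecting $F$ with $A^-$ preserves all three properties relative to $A^-$: $P$ is an up-set of $A^-$, is $\meet$-closed, and is prime in $A^-$ (if $a,b\in A^-$ and $a\join b\in P$, then $a\join b\in F$, so $a\in F$ or $b\in F$, whence $a\in P$ or $b\in P$).

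The one step that uses more than the lattice structure is showing that $P$ is nonempty, which I would do by proving $t\in P$, i.e.\ $h(t)\in\{0,1\}$. Here I invoke Proposition \ref{prop:reduct}, which gives $\neg t\leq t$ in every Sugihara monoid. Applying the monotone, $\neg$-preserving map $h$ yields $\neg h(t)=h(\neg t)\leq h(t)$ in $\underline{\bf L}$. In the chain $-1<0<1$ the inequality $\neg x\leq x$ fails only at $x=-1$ (since $\neg(-1)=1\not\leq -1$), so $h(t)\in\{0,1\}$. As $t$ is the greatest element of $A^-$, this gives $t\in P$, so $P$ is a nonempty filter and hence a prime filter.

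I expect this nonemptiness step to be the only genuine obstacle: the filter and primality conditions are routine transfers of the prime-filter structure of $\{0,1\}$ along the lattice homomorphism $h$, whereas the membership $t\in P$ truly requires the normality of the $i$-lattice reduct (Proposition \ref{prop:reduct}) together with the fact that $h$ preserves $\neg$. I would also record the boundary case for later use in the duality: $P$ is proper exactly when $h$ assumes the value $-1$, equivalently when $h\neq\top$; the exceptional map $\top$ (constantly $0$) yields $P=A^-$, corresponding to the top point of the dual space, so in general $P$ is a generalized prime filter and is a proper prime filter whenever $h\neq\top$.
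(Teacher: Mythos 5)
Your proof is correct and takes essentially the same route as the paper's one-line argument: $\{0,1\}$ is a prime filter of ${\bf \underline{L}}$ and $h$ is a lattice homomorphism, so the preimage intersected with $A^-$ is a (generalized) prime filter, with $A^-$ itself arising exactly for the constant map $\top$. Your extra verification that $t\in h^{-1}[\{0,1\}]\cap A^-$ via $\neg t\leq t$ is a sound and worthwhile addition, since the paper's proof glosses over nonemptiness and only records $h(t)\in\{0,1\}$ afterwards, in Lemma \ref{lem:timage}, by the same argument you give.
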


\begin{proof}
This follows immediately since $\{0,1\}$ is a prime filter of $\underline{\bf L}$ and $h$ is a lattice homomorphism.
\end{proof}

For a Sugihara monoid ${\bf A}$, define a map $\xia\colon (A_+,\leq)\to (A_{{\bowtie}*},\subseteq)$ by
$$\xia(h) = h^{-1}[\{0,1\}]\cap A^-.$$
Lemma \ref{lem:welldeffilter} shows that $\xia$ is well-defined.

\begin{lemma}\label{lem:orderpreserving}
Let ${\bf A}$ be a Sugihara monoid. Then $\xia$ is isotone.
\end{lemma}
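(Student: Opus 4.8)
The plan is to unwind the definition of $\xia$ and reduce isotonicity to the single observation that $\{0,1\}$ is an up-set in the three-element poset $\underline{\bf L}$. Recall that $\underline{\bf L}$ carries the order given by $-1<0$ and $1<0$, so that $0$ is the greatest element while $-1$ and $1$ are the two minimal, mutually incomparable, elements. Its subset $\{-1\}$ is therefore a down-set (as $-1$ is minimal), and consequently its complement $\{0,1\}$ is an up-set of $\underline{\bf L}$. This is the key structural fact, and I would record it first.

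With this in hand, I would argue directly from the pointwise order on $A_+$. Suppose $h,k\in A_+$ satisfy $h\leq k$, that is, $h(a)\leq k(a)$ in $\underline{\bf L}$ for every $a\in A$. Take an arbitrary $a\in\xia(h)=h^{-1}[\{0,1\}]\cap A^-$; then $a\in A^-$ and $h(a)\in\{0,1\}$. Since $h(a)\leq k(a)$ and $\{0,1\}$ is up-closed in $\underline{\bf L}$, it follows that $k(a)\in\{0,1\}$, whence $a\in k^{-1}[\{0,1\}]\cap A^-=\xia(k)$. This establishes $\xia(h)\subseteq\xia(k)$, which is precisely the statement that $\xia$ is isotone as a map from $(A_+,\leq)$ to $(A_{{\bowtie}*},\subseteq)$, both posets carrying the orders specified in the definition of $\xia$ (pointwise on the domain, inclusion on the codomain).

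I expect essentially no obstacle here; the lemma is a routine verification once the preceding machinery is in place. The only point genuinely requiring care is the unusual orientation of the order on $\underline{\bf L}$, where $0$ occupies the top rather than the bottom. One must therefore confirm, rather than take for granted, that $\{0,1\}$ is up-closed: concretely, if $h(a)=0$ then $h(a)\leq k(a)$ forces $k(a)=0$, since nothing lies strictly above the top element, and if $h(a)=1$ then $k(a)\in\{1,0\}$, both lying in $\{0,1\}$. In either case $k(a)\in\{0,1\}$, which is the crux of the argument.
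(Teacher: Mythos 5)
Your proof is correct and matches the paper's own argument: the paper likewise unwinds the definition of $\xia$ and uses the fact that $\{0,1\}=\upset 1$ is an up-set in $\underline{\bf L}$ (phrased there as $1\leq h_1(a)\leq h_2(a)$) to conclude $\xia(h_1)\subseteq\xia(h_2)$. Your explicit verification that $\{0,1\}$ is up-closed under the order $-1<0$, $1<0$ is exactly the crux the paper relies on.
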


\begin{proof}
Let $h_1,h_2\in A_+$ with $h_1\leq h_2$. If $a\in \xia(h_1)$, then $a\leq t$. Also, we have $h_1(a)\in\{0,1\}$. Since $h_1\leq h_2$, this gives $1 \leq h_1(a)\leq h_2(a)$. Thus $a\in h_2^{-1}[\{0,1\}]$, giving $a\in\xia(h_2)$. It follows that $\xia(h_1)\subseteq\xia(h_2)$.
\end{proof}

\begin{lemma}\label{lem:timage}
Let $(A,\meet,\join,\cdot,\to,t,\neg)$ be a Sugihara monoid and let $h\in A_+$. Then $h(t)\in\{0,1\}$.
\end{lemma}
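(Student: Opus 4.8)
The plan is to show that $h(t)\in\{0,1\}$ for every $h\in A_+$, where $A_+$ consists of $(\meet,\join,\neg)$-homomorphisms from a Sugihara monoid ${\bf A}$ to $\underline{\bf L}=(\{-1,0,1\},\meet,\join,\neg)$. Since the image of $h$ lies in $\{-1,0,1\}$, the claim amounts to ruling out the possibility $h(t)=-1$. The natural tool is Proposition \ref{prop:reduct}, which tells us that in any Sugihara monoid the identity $a\meet\neg a\leq \neg t\leq t\leq b\join\neg b$ holds. In particular, taking $b=t$ gives $t\leq t\join \neg t$, and taking $a=t$ gives $t\meet \neg t=\neg t\leq \neg t\leq t$. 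The inequality I want to exploit is $\neg t\leq t$, equivalently $t\join\neg t=t$, which holds in ${\bf A}$ as an instance of the displayed identity.

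First I would record that $h$ preserves $\meet$, $\join$, and $\neg$, and hence preserves the order (as a lattice homomorphism) as well as the identity $\neg\neg x=x$. Second, I would apply $h$ to the valid inequality $\neg t\leq t$ in ${\bf A}$, obtaining $\neg h(t)=h(\neg t)\leq h(t)$ in $\underline{\bf L}$. Now I examine the three cases for $h(t)\in\{-1,0,1\}$ directly in the three-element chain $\underline{\bf L}$, where $\neg(-1)=1$, $\neg 0=0$, and $\neg 1=-1$. If $h(t)=-1$, then $\neg h(t)=1$, and the inequality $\neg h(t)\leq h(t)$ reads $1\leq -1$, which is false in $\underline{\bf L}$ since $-1<0$ and $1<0$. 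This contradiction eliminates the case $h(t)=-1$, leaving only $h(t)\in\{0,1\}$, which is exactly the claim.

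I expect no genuine obstacle here: the only subtlety is being careful about the order on $\underline{\bf L}$, which is \emph{not} the usual integer order but rather the Kleene order with $0$ as top and $\{-1,1\}$ as incomparable minimal elements (as specified in the definition of $\utilde{\bf L}$). Under this order the inequality $1\leq -1$ indeed fails, so the argument goes through; one just must avoid importing the linear order $-1<0<1$ used in the \emph{algebra} $\underline{\bf L}$ by mistake. In fact the lattice order on $\underline{\bf L}$ (from $\meet,\join$) \emph{is} the chain $-1<0<1$, and $h$ preserves this lattice order, so the cleaner formulation is: $h(\neg t)\leq h(t)$ in the chain order forces, when combined with $h(\neg t)=\neg h(t)$, that $h(t)\neq -1$, since $h(t)=-1$ would give $\neg h(t)=1\not\leq -1$. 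Either way the case analysis is immediate, so the whole proof is a one-line application of Proposition \ref{prop:reduct} followed by inspection of $\underline{\bf L}$.
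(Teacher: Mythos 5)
Your proof is correct and is essentially identical to the paper's: both apply $h$ to the inequality $\neg t\leq t$ from Proposition \ref{prop:reduct} and rule out $h(t)=-1$ because it would force $1=\neg h(t)=h(\neg t)\leq h(t)=-1$, which fails in $\underline{\bf L}$. Your self-correction in the last paragraph is also the right resolution: since $h$ is a homomorphism of the \emph{algebra} $(\meet,\join,\neg)$, the relevant order is the lattice (chain) order $-1<0<1$ on $\underline{\bf L}$, not the Kleene order of the alter ego $\utilde{\bf L}$, and the contradiction goes through in that chain order exactly as you state.
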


\begin{proof}
By Proposition \ref{prop:reduct}, the identity $\neg t\leq t$ holds in every Sugihara monoid. Were if the case that $h(t)=-1$, we would have $h(\neg t) = \neg h(t) = 1$. But $\neg t \leq t$ gives $h(\neg t)\leq h(t)$, a contradiction. Thus $h(t)\in \{0,1\}$.
\end{proof}

\begin{lemma}\label{lem:orderreflecting}
Let ${\bf A} = (A,\meet,\join,\cdot,\to,t,\neg)$ be a Sugihara monoid. Then $\xia$ is order-reflecting.
\end{lemma}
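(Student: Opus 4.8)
The statement to establish is that $\xia$ reflects the order, i.e., that $\xia(h_1)\subseteq\xia(h_2)$ implies $h_1\le h_2$, where $\le$ on $A_+$ is the pointwise order inherited from $\utilde{\bf L}$ (in which $0$ is the top element and $-1,1$ are incomparable minimal elements). The plan is to reconstruct the value $h(a)$ at an arbitrary $a\in A$ purely from the membership data of two \emph{negative} elements in $\xia(h)$, and then read off the desired inequality from this reconstruction. Throughout one must keep two orders on $\{-1,0,1\}$ distinct: the lattice order $-1<0<1$ of $\underline{\bf L}$, with respect to which each $h$ is a $(\meet,\join,\neg)$-homomorphism, and the Kleene order of $\utilde{\bf L}$ that defines $\le$ on $A_+$.

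The crucial observation is the following pair of equivalences, valid for every $h\in A_+$ and $a\in A$: writing $\ge$ and $\le$ here for the \emph{lattice} order of $\underline{\bf L}$, we have $a\meet t\in\xia(h)$ iff $h(a)\ge 0$, and $\neg a\meet t\in\xia(h)$ iff $h(a)\le 0$. Both $a\meet t$ and $\neg a\meet t$ lie in $A^-$, so these are genuine membership questions for $\xia(h)$. To see the first equivalence, note that $h$ preserves $\meet$, so $h(a\meet t)=h(a)\meet h(t)$; by Lemma \ref{lem:timage} we have $h(t)\in\{0,1\}$, i.e. $h(t)\ge 0$, so the condition $h(a)\meet h(t)\ge 0$ reduces to $h(a)\ge 0$, which is exactly the condition $h(a)\in\{0,1\}$ defining membership in $\xia(h)$. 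The second equivalence then follows from the first applied to $\neg a$, together with $h(\neg a)=\neg h(a)$ and the fact that $\neg$ reverses the lattice order of $\underline{\bf L}$, so that $h(\neg a)\ge 0$ is equivalent to $h(a)\le 0$. This is the heart of the argument, and the point deserving care is that it holds uniformly regardless of whether $h(t)=0$ or $h(t)=1$, precisely because $h(t)\ge 0$ in either case.

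With these equivalences in hand the conclusion is immediate. Assume $\xia(h_1)\subseteq\xia(h_2)$ and fix $a\in A$. From the first equivalence and the containment, $h_1(a)\ge 0$ forces $a\meet t\in\xia(h_1)\subseteq\xia(h_2)$, hence $h_2(a)\ge 0$; symmetrically $h_1(a)\le 0$ forces $h_2(a)\le 0$. A short case analysis on $h_1(a)$ then finishes the proof: if $h_1(a)=0$ then $h_2(a)\ge 0$ and $h_2(a)\le 0$ give $h_2(a)=0$; if $h_1(a)=1$ then $h_2(a)\ge 0$, i.e. $h_2(a)\in\{0,1\}$; and if $h_1(a)=-1$ then $h_2(a)\le 0$, i.e. $h_2(a)\in\{-1,0\}$. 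In each case one checks directly that $h_1(a)\le h_2(a)$ in the Kleene order of $\utilde{\bf L}$ (recalling that $x\le y$ there means $y=0$ or $x=y$). Since $a$ was arbitrary, $h_1\le h_2$, as required. The only real obstacle is isolating the two equivalences above and verifying their uniformity in the value of $h(t)$; everything after that is bookkeeping with the two orders.
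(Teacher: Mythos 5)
Your proof is correct and takes essentially the same route as the paper's: both arguments test membership of $a\meet t$ and $\neg a\meet t$ in $\xia(h_1)\subseteq\xia(h_2)$, relying on Lemma \ref{lem:timage} ($h(t)\in\{0,1\}$), the fact that $h$ preserves $\meet$ and $\neg$, and the negation swap to handle the second half. The only difference is organizational --- the paper argues by contradiction from $h_1(a)\not\leq h_2(a)$ with two cases, while you package the identical computation as two forward equivalences followed by a direct case analysis on $h_1(a)$.
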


\begin{proof}
Let $h_1,h_2\in A_+$ with $\xia(h_1)\subseteq\xia(h_2)$. Let $a\in A$. Were it the case that $h_1(a)\not\leq h_2(a)$, then either $h_2(a)=-1$ and $h_1(a)\neq -1$, or $h_2(a)=1$ and $h_1(a)\neq 1$.

In the first case, $h_1(a)\in\{0,1\}$ and by Lemma \ref{lem:timage} it follows that we have $h_1(a\meet t)=h_1(a)\meet h_1(t)\in \{0,1\}$ as well. Since $a\meet t\in A^-$, it follows that $a\meet t\in \xia(h_1)$. This gives $a\meet t\in\xia(h_2)$. But $h_2(a)=-1$ and $h_2(t)\in\{0,1\}$ gives $h_2(a\meet t) = -1$, a contradiction.

In the second case, $h_1(a)\in\{-1,0\}$ and $h_2(a)=1$. Then $h_1(\neg a)\in\{0,1\}$ and $h_2(\neg a) = -1$. Thus the second case reduces to the first case, and we arrive at a contradiction again. It follows that $h_1(a)\leq h_2(a)$, and hence that $\xia$ is order-reflecting.
\end{proof}

\begin{lemma}\label{lem:orderisomorphism}
Let ${\bf A}=(A,\meet,\join,\cdot,\to,t,\neg)$ be a Sugihara monoid. Then $\xia$ is an order isomorphism.
\end{lemma}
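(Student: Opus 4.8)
The plan is to build on the two preceding lemmas: by Lemma~\ref{lem:orderpreserving} and Lemma~\ref{lem:orderreflecting} the map $\xia$ is isotone and order-reflecting. First I would record the purely order-theoretic observation that these two properties already give most of the conclusion: an order-reflecting map is injective (if $\xia(h_1)=\xia(h_2)$ then $\xia(h_1)\subseteq\xia(h_2)$ and $\xia(h_2)\subseteq\xia(h_1)$ force $h_1\le h_2$ and $h_2\le h_1$, hence $h_1=h_2$), and an isotone, order-reflecting \emph{bijection} is automatically an order isomorphism, since order-reflectivity is exactly the statement that the inverse map is isotone. Thus the entire content of the lemma reduces to showing that $\xia$ is \emph{surjective} onto $A_{\bowtie *}$.

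To prove surjectivity I would construct, from each (generalized) prime filter $P$ of ${\bf A}_{\bowtie}$, an explicit preimage $h\in A_+$. The natural candidate is the map $h\colon A\to\{-1,0,1\}$ defined by
\[
h(a)=\begin{cases}
1 & \text{if } a\meet t\in P \text{ and } \neg a\meet t\notin P,\\
-1 & \text{if } a\meet t\notin P \text{ and } \neg a\meet t\in P,\\
0 & \text{if } a\meet t\in P \text{ and } \neg a\meet t\in P.
\end{cases}
\]
The first thing to check is that $h$ is well-defined, i.e.\ that the remaining case (both $a\meet t\notin P$ and $\neg a\meet t\notin P$) never occurs. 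This follows because $(a\meet t)\join(\neg a\meet t)=(a\join\neg a)\meet t=t\in P$, using Proposition~\ref{prop:reduct} (which gives $a\join\neg a\ge t$) and primality of $P$.

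Next I would verify that $h$ lies in $A_+$, i.e.\ that it is an $i$-lattice homomorphism into $\underline{\bf L}$. Preservation of $\neg$ is immediate, as swapping the roles of $a$ and $\neg a$ in the defining formula negates the value. For $\meet$ I would use $(a\meet b)\meet t=(a\meet t)\meet(b\meet t)$ together with the De Morgan law $\neg(a\meet b)=\neg a\join\neg b$ and distributivity to rewrite $\neg(a\meet b)\meet t=(\neg a\meet t)\join(\neg b\meet t)$; since $P$ is a prime filter, membership of these meets and joins is controlled by the meets and joins of the underlying truth values, and a short finite case analysis over the three admissible ``types'' of a pair $(\,[a\meet t\in P],[\neg a\meet t\in P]\,)$ confirms $h(a\meet b)=h(a)\meet h(b)$. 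The computation for $\join$ is entirely dual. This homomorphism verification is the main obstacle and carries essentially all of the work; everything else is formal.

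Finally I would confirm that $h$ is a preimage of $P$. By definition $\xia(h)=\{a\in A^- : h(a)\in\{0,1\}\}=\{a\in A^- : a\meet t\in P\}$, and since $a\meet t=a$ whenever $a\in A^-$, this set is exactly $P\cap A^-=P$. I would also note that the top element $A^-$ of $A_{\bowtie *}$ is covered by the same recipe: taking $P=A^-$ yields the constant map $\top\colon a\mapsto 0$, whose image under $\xia$ is $A^-$. Combining surjectivity with the injectivity and order-reflectivity noted at the outset shows that $\xia$ is an order isomorphism.
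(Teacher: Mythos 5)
Your proof is correct, and it takes a genuinely different route from the paper's. Both arguments reduce the lemma to surjectivity of $\xia$ exactly as you do (the paper invokes Lemmas \ref{lem:orderpreserving} and \ref{lem:orderreflecting} for the embedding part), and both end with a case-check that the candidate map is an $i$-lattice homomorphism into $\underline{\bf L}$; the difference is how the preimage $h$ of a prime filter is produced. The paper takes the complement $I$ of $x$ in $A^-$, extends it via the prime ideal theorem to a prime ideal $J$ of the whole algebra ${\bf A}$ disjoint from $\upset_{\bf A} x$, and defines $h$ to be $-1$ on $J$, $1$ on $\neg J$, and $0$ elsewhere. You instead give a closed-form, choice-free definition of $h$ directly from $P$, determined by membership of $a\meet t$ and $\neg a\meet t$ in $P$; in effect your formula exhibits the paper's $J$ explicitly as $\{a\in A : a\meet t\notin P\}$, showing that the extension whose existence the paper obtains from the prime ideal theorem is in fact canonical --- as it must be, since the injectivity of $\xia$ forces the preimage to be unique. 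Your version buys constructivity and uniformity (the same recipe handles the improper filter $A^-$, which the paper dispatches separately via the constant map with value $0$), at the price of the encoded case analysis: the first coordinate of your ``type'' pair behaves conjunctively under $\meet$ because $P$ is a filter, while the second behaves disjunctively because $P$ is prime, using $\neg(a\meet b)\meet t=(\neg a\meet t)\join(\neg b\meet t)$ from De Morgan plus distributivity; checking the six value combinations confirms $h(a\meet b)=h(a)\meet h(b)$, and the $\join$ and $\neg$ verifications are dual and immediate, so the step you flag as carrying the work does go through. The paper's route stays closer to the standard Priestley/Stone template of extending ideals, which makes it more recognizable but imports an appeal to choice that your argument shows is unnecessary here.
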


\begin{proof}
It suffices to show that $\xia$ is surjective. Note that the map $h$ given by $h(a)=0$ for all $a\in A$ is a $(\meet,\join,\neg)$-morphism such that $\xia(h)=A^-$. Now let $x$ be a prime filter of ${\bf A}_{\bowtie}$. Then $I=\{a\in A^- : a\notin x\}$ is a prime ideal of ${\bf A}_{\bowtie}$, being the complement of a prime filter. Also, $I$ is an ideal of ${\bf A}$. A trivial argument shows that $F=\upset_{\bf A} x = \{b\in A : a\leq b \text{ for some } a\in x\}$ is a filter of ${\bf A}$, and $F\cap I = \emptyset$. The prime ideal theorem then guarantees that there exists a prime ideal $J$ of ${\bf A}$ with $I\subseteq J$ and $F\cap J=\emptyset$. One may readily show that the set $\neg J = \{\neg a : a\in J\}$ is a prime filter of ${\bf A}$. Define a map $h\colon {\bf A}\to {\bf \underline{L}}$ by
\[ h(a) = \begin{cases} 
      1 & \text{ if } a\in \neg J\\
      0 & \text{ if } a\notin J\cup\neg J\\
     -1 & \text{ if } a\in J\\
   \end{cases}
\]
Notice that if $a,\neg a\in J$, then $J$ being an ideal gives that $a\join\neg a\in J$. Proposition \ref{prop:reduct} gives that $t\leq a\join\neg a$, so $J$ being downward-closed then gives that $t\in J$. But this is impossible since $J\cap x=\emptyset$ and $t\in x$ (as $x$ is a prime filter of ${\bf A}_{\bowtie}$). Hence for each $a\in A$, either $a\notin J$ or $\neg a\notin J$, whence $J\cap \neg J=\emptyset$. This implies that at most one of $a\in \neg J$, $a\in J$, or $a\notin J\cup\neg J$ holds. As at least one of $a\in J$, $a\in \neg J$, or $a\notin J\cup\neg J$ must hold, this yields that $h$ is a well-defined function.

By checking cases, one may verify that $h$ is an $i$-lattice homomorphism, and hence $h\in A_+$. It is easy to show that $\xia(h)=x$. Because Lemmas \ref{lem:orderpreserving} and \ref{lem:orderreflecting} show that $\xia$ is an order embedding, this proves that $\xia$ is an order isomorphism.
\end{proof}

\begin{example}
Recall the algebra ${\bf E}$ introduced in Example \ref{parex} has Hasse diagram
\begin{center}
\begin{tikzpicture}
    \node[label=right:\tiny{$\neg a$}] at (0,0) {$\bullet$};
    \node[label=left:\tiny{$\neg b$}] at (0,-0.5) {$\bullet$};
    \node[label=left:\tiny{$t$}] at (-0.5,-1)  {$\bullet$};
    \node[label=right:\tiny{$\neg c$}] at (0.5,-1) {$\bullet$};
    \node[label=right:\tiny{$f$}] at (0,-1.5) {$\bullet$};
    \node[label=left:\tiny{$c$}] at (-1,-1.5) {$\bullet$};
    \node[label=right:\tiny{$b$}] at (-0.5,-2) {$\bullet$};
    \node[label=right:\tiny{$a$}] at (-0.5,-2.5) {$\bullet$};

    \draw (0,0) -- (0,-0.5);
    \draw (0,-0.5) -- (-0.5,-1);
    \draw (0,-0.5) -- (0.5,-1);
    \draw (-0.5,-1) -- (0,-1.5);
    \draw (0.5,-1) -- (0,-1.5);
    \draw (-0.5,-1) -- (-1,-1.5);
    \draw (-1,-1.5) -- (-0.5,-2);
    \draw (0.5,-1) -- (-0.5,-2);
    \draw (-0.5,-2) -- (-0.5,-2.5);
\end{tikzpicture}
\end{center}
If we consider the filter $x=\{b,c,f,t\}$ of the negative cone, then in the proof of Lemma \ref{lem:orderisomorphism} we have that $I$ is $\{a\}$, $F$ is $A\setminus\{a\}$, $J$ is $\{a\}$, and $\neg J$ is $\{\neg a\}$. If instead $x=\{c,t\}$, then $I$ is $\{a,b,f\}$, $F$ is $\{c,t,\neg b,\neg a\}$, $J$ is $\{a,b,f,\neg c\}$, and $\neg J$ is $\{c,t,\neg b,\neg a\}$. Finally, if $x=\{t,f\}$, then $I$ is $\{a,b,c\}$, $F$ is $\{t,f,\neg b,\neg c,\neg a\}$, $J$ is $\{a,b,c\}$, and $\neg J$ is $\{\neg c,\neg b,\neg a\}$.
\end{example}

The isomorphism described in the foregoing lemmas turns out to provide more than an order-theoretic correspondence, as shown in the following.

\begin{lemma}\label{lem:continuous}
Let ${\bf A}=(A,\meet,\join,\cdot,\to,t,\neg)$ be a Sugihara monoid. Then $\xia$ is continuous.
\end{lemma}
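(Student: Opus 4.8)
The plan is to verify continuity by checking it on a subbasis for the topology of the codomain, which reduces the whole matter to matching up the two generating families. Recall that $A_{{\bowtie}*}$ is the (pointed) Priestley dual of the bRS-algebra ${\bf A}_{\bowtie}$, whose carrier is the negative cone $A^-$. Its topology is generated by the subbasis consisting of the sets $\sigma(b)=\{x\in A_{{\bowtie}*} : b\in x\}$ and their complements $\sigma(b)^\comp$, as $b$ ranges over $A^-$. Since a map into a space with a given subbasis is continuous exactly when the preimage of every subbasic open set is open, it suffices to compute $\xia^{-1}[\sigma(b)]$ and $\xia^{-1}[\sigma(b)^\comp]$ for each $b\in A^-$ and exhibit each as an open subset of $(A_+,\tau_{\bf A})$.

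First I would handle the sets $\sigma(b)$. Fix $b\in A^-$. By definition $\xia(h)=h^{-1}[\{0,1\}]\cap A^-$, and since $b$ already lies in $A^-$, membership $b\in\xia(h)$ is equivalent to $h(b)\in\{0,1\}$. Hence
$$\xia^{-1}[\sigma(b)]=\{h\in A_+ : h(b)\in\{0,1\}\}=U_{b,0}\cup U_{b,1},$$
where $U_{b,l}=\{h\in A_+ : h(b)=l\}$. By Lemma \ref{lem:DWdualtop} each $U_{b,l}$ is subbasic open in $\tau_{\bf A}$, so this union is open.

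For the complementary subbasic sets, the same computation gives
$$\xia^{-1}[\sigma(b)^\comp]=\{h\in A_+ : b\notin\xia(h)\}=\{h\in A_+ : h(b)=-1\}=U_{b,-1},$$
which is again subbasic open by Lemma \ref{lem:DWdualtop}. Since the preimage of every subbasic open set of $A_{{\bowtie}*}$ is thus open in $(A_+,\tau_{\bf A})$, the map $\xia$ is continuous.

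I expect no substantive obstacle here: the argument is essentially bookkeeping, and the heart of it is the observation that the clopen subbasis $\{U_{b,l}\}$ coming from the Davey–Werner (product) topology is precisely adapted to the Priestley subbasis $\{\sigma(b),\sigma(b)^\comp\}$ on the dual of the negative cone, with the value $-1$ playing the role of ``$b$ absent from the filter'' and the values $\{0,1\}$ the role of ``$b$ present.'' The only point requiring mild care is to note that $b$ is taken from $A^-$ (so that $\sigma(b)$ is defined), which is automatic since the carrier of ${\bf A}_{\bowtie}$ is $A^-$; restricting $U_{b,l}$ to negative $b$ causes no difficulty because these sets are subbasic for every $b\in A$ a fortiori.
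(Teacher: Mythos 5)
Your proof is correct and follows essentially the same route as the paper: both reduce continuity to computing preimages of the subbasic sets and identify $\xia^{-1}[\sigma(b)]$ with $U_{b,0}\cup U_{b,1}$ via Lemma \ref{lem:DWdualtop}. The paper only treats the sets $\sigma(a)$ explicitly (the complements being automatic, since preimages commute with complementation and the $U_{a,l}$ are clopen), whereas you also compute $\xia^{-1}[\sigma(b)^\comp]=U_{b,-1}$ directly --- a harmless, slightly more explicit variant of the same argument.
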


\begin{proof}
It suffices to show that the inverse image of each subbasis element is open, so let $a\in A^-$. Then
\begin{align*}
\xia^{-1}[\sigma(a)] &= \xia^{-1}[\{x\in A_{{\bowtie}*} : a\in x\}]\\
&= \{h\in A_+ : a\in\xia(h)\}\\
&= \{h\in A_+ : a\in h^{-1}[\{0,1\}]\cap A^-\}\\
&= \{h\in A_+ : h(a)\in\{0,1\}\}\\
&= \{h\in A_+ : h(a)=0\}\cup\{h\in A_+ : h(a)=1\}\\
&= U_{a,0}\cup U_{a,1}
\end{align*}
Thus $\xia$ is continuous.
\end{proof}

\begin{lemma}\label{lem:priestleyiso}
Let ${\bf A}=(A,\meet,\join,\cdot,\to,t,\neg)$ be a Sugihara monoid. Then ${\bf A}_+$ and ${\bf A}_{{\bowtie}*}$ are isomorphic as Priestley spaces.
\end{lemma}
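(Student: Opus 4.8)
The plan is to verify that the map $\xia$ constructed above is itself the desired isomorphism. By Lemma \ref{lem:orderisomorphism}, $\xia$ is an order isomorphism between the underlying posets $(A_+,\leq)$ and $(A_{{\bowtie}*},\subseteq)$, and by Lemma \ref{lem:continuous} it is continuous. Since an isomorphism of Priestley spaces is precisely a bijection that is simultaneously an order isomorphism and a homeomorphism, it remains only to upgrade continuity to the statement that $\xia$ is a homeomorphism.

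For this, I would invoke the standard topological fact that a continuous bijection from a compact space onto a Hausdorff space is automatically a homeomorphism. The domain $A_+$ is compact: as the Davey--Werner dual of the $i$-lattice reduct of ${\bf A}$, it is by construction a topologically closed substructure of the power $\utilde{\bf L}^A$ of the finite discrete space $\utilde{\bf L}$, and such subspaces are compact by Tychonoff's theorem (equivalently, ${\bf A}_+$ is a pointed Kleene space by Theorem \ref{thm:pKSchar}, hence has a compact pointed Priestley space reduct). The codomain $A_{{\bowtie}*}$ is Hausdorff, being the Priestley dual of the relative Stone algebra ${\bf A}_{\bowtie}$ and hence a Priestley space. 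Thus the continuous bijection $\xia$ is a homeomorphism, and combining this with the order isomorphism of Lemma \ref{lem:orderisomorphism} shows that $\xia$ is an isomorphism of Priestley spaces, giving the result.

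I do not anticipate a genuine obstacle here, since the substantive work has already been carried out in establishing that $\xia$ is an order isomorphism (Lemma \ref{lem:orderisomorphism}) and continuous (Lemma \ref{lem:continuous}). The only point worth flagging is that the compactness--Hausdorff argument lets us sidestep a direct verification that $\xia$ is an open map: such a verification is possible, but requires explicitly computing the images $\xia[U_{a,l}]$ of the subbasic opens under $\xia$ and matching them against basic clopens of $A_{{\bowtie}*}$, which is comparatively laborious. Exploiting the topology of the two spaces instead yields the homeomorphism essentially for free.
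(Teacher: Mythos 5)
Your proposal is correct and follows essentially the same route as the paper: the paper's proof likewise combines Lemma \ref{lem:orderisomorphism} and Lemma \ref{lem:continuous} and then invokes the fact that a continuous bijection between compact Hausdorff spaces is a homeomorphism. Your closing remark is also apt, as the paper's source contains (commented out) exactly the laborious direct verification that $\xia$ is open via the images $\xia[U_{a,l}]$, which the authors evidently discarded in favor of the same compactness argument you give.
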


\begin{proof}
Lemma \ref{lem:orderisomorphism} shows that $\xia$ is an order isomorphism. In particular, this shows that $\xia$ is a bijection. Lemma \ref{lem:continuous} shows that $\xia$ is continuous. Continuous bijections of compact Hausdorff spaces are homeomorphisms, so it follows that $\xia$ is a homeomorphism. Thus $\xia$ is an isomorphism in \textsf{PS}.
\end{proof}

As a consequence of the above, we obtain

\begin{lemma}\label{lem:aplusesakia}
Let ${\bf A} = (A,\meet,\join,\cdot,\to,t,\neg)$  a Sugihara monoid, and denote by ${\bf A}_+ = (A_+, \leq,\Qa,A_0,\top,\tau_{\bf A})$ its Davey-Werner dual. Then $(A_+,\leq,\tau_{\bf A})$ is an Esakia space.
\end{lemma}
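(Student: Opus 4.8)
The plan is to leverage the Priestley-space isomorphism established in Lemma \ref{lem:priestleyiso} and transport the Esakia property from the dual of the enriched negative cone. First I would recall that the enriched negative cone ${\bf A}_{\bowtie} = ({\bf A}^-,\neg t)$ is a bRS-algebra, so its underlying relative Stone algebra reduct $(A^-,\meet,\join,\to,t)$ is in particular a Brouwerian algebra. By the Esakia duality for Brouwerian algebras as specialized to relative Stone algebras in Section \ref{sec:booldual}, the Priestley space $(A_{{\bowtie}*},\subseteq,\tau_{{\bf A}_{\bowtie}})$ underlying ${\bf A}_{{\bowtie}*}$ is a pointed Esakia space; in particular, $\downset U$ is clopen for every clopen $U\subseteq A_{{\bowtie}*}$.

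Next I would invoke Lemma \ref{lem:priestleyiso}, which furnishes a \textsf{PS}-isomorphism $\xia\colon (A_+,\leq,\tau_{\bf A})\to (A_{{\bowtie}*},\subseteq,\tau_{{\bf A}_{\bowtie}})$; that is, $\xia$ is simultaneously an order isomorphism and a homeomorphism. The crux is then to verify that the Esakia condition is preserved under such isomorphisms. Let $U\subseteq A_+$ be clopen. Because $\xia$ is a homeomorphism, $\xia[U]$ is clopen in $A_{{\bowtie}*}$, and hence $\downset\xia[U]$ is clopen by the preceding paragraph. Because $\xia$ is an order isomorphism, it commutes with the formation of down-sets, giving $\xia^{-1}[\downset\xia[U]]=\downset U$; as $\xia$ is a homeomorphism, the left-hand side is clopen, whence $\downset U$ is clopen in $A_+$. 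Since $(A_+,\leq,\tau_{\bf A})$ is a Priestley space (being the reduct of the pointed Kleene space ${\bf A}_+$, cf. Theorem \ref{thm:pKSchar}) and every clopen set has clopen down-set, it is an Esakia space.

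The only step requiring genuine care is the compatibility of $\xia$ with down-sets, i.e. the identity $\xia^{-1}[\downset\xia[U]]=\downset U$; this is immediate from $\xia$ being an order isomorphism, since $h\leq k$ holds in $A_+$ if and only if $\xia(h)\subseteq\xia(k)$. Everything else rests on results already in hand, so I anticipate no serious obstacle: the substantive work has already been discharged in establishing that $\xia$ is an order isomorphism (Lemma \ref{lem:orderisomorphism}) and a homeomorphism (Lemma \ref{lem:priestleyiso}), and the present statement merely harvests the consequence that the Esakia structure crosses the isomorphism.
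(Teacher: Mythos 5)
Your proposal is correct and follows essentially the same route as the paper: the paper's proof simply observes that any Priestley space \textsf{PS}-isomorphic to an Esakia space is itself an Esakia space and cites Lemma \ref{lem:priestleyiso}, which is exactly the transport argument you carry out. You merely make explicit the details the paper leaves implicit, namely that ${\bf A}_{{\bowtie}*}$ is a (pointed) Esakia space by the Esakia duality for relative Stone algebras and that the clopen-down-set condition crosses the isomorphism via $\xia^{-1}[\downset\xia[U]]=\downset U$.
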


\begin{proof}
Every Priestley space that is \textsf{PS}-isomorphic to an Esakia space is itself an Esakia space, so the result follows from Lemma \ref{lem:priestleyiso}.
\end{proof}

\begin{lemma}\label{lem:aplusbRS}
Let ${\bf A} = (A,\meet,\join,\cdot,\to,t,\neg)$ be a Sugihara monoid and denote by ${\bf A}_+ = (A_+, \leq,Q_{\bf A},A_0,\top,\tau_{\bf A})$ its Davey-Werner dual. Then $(A_+,\leq,A_0,\top,\tau_{\bf A})$ is a \textsf{bRS}-space.
\end{lemma}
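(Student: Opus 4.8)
The plan is to obtain this by transporting the \textsf{bRS}-space structure already available on ${\bf A}_{{\bowtie}*}$ across the isomorphism $\xia$. Since ${\bf A}_{\bowtie} = ({\bf A}^-,\neg t)$ is a bRS-algebra with Boolean constant $f=\neg t$, Lemma \ref{lem:dual1} tells us that ${\bf A}_{{\bowtie}*} = (A_{{\bowtie}*},\subseteq,\sigma(f)^\comp,A^-,\tau)$ is a \textsf{bRS}-space. By Lemma \ref{lem:priestleyiso}, $\xia$ is an isomorphism of Priestley spaces from $(A_+,\leq,\tau_{\bf A})$ to $(A_{{\bowtie}*},\subseteq,\tau)$, and it carries the greatest element $\top$ (the constant-$0$ map) to the greatest element $A^-$ of $A_{{\bowtie}*}$. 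Consequently the properties of being a pointed Esakia space (cf.\ Lemma \ref{lem:aplusesakia}) and of having forest-ordered underlying poset transport automatically to $(A_+,\leq,\top,\tau_{\bf A})$, so conditions (1) and (2) of Definition \ref{def:bRSspace} hold. Since $\xia$ is a homeomorphism carrying minimal elements to minimal elements, the entire argument reduces to verifying that $\xia[A_0]=\sigma(f)^\comp$; this one identity forces $A_0$ to be clopen and $\leq$-minimal, yielding condition (3).

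To establish the identity, I would first prove the key claim that for $h\in A_+$ one has $h\in A_0$ if and only if $h(t)=1$. The forward direction is immediate from Lemma \ref{lem:timage}: that lemma forces $h(t)\in\{0,1\}$, so if additionally $h(t)\in\{-1,1\}$ then $h(t)=1$. For the converse, suppose $h(t)=1$ but $h(a)=0$ for some $a\in A$. Then $h(\neg a)=\neg 0=0$, so $h(a\meet\neg a)=0\meet 0=0$; but Proposition \ref{prop:reduct} gives $a\meet\neg a\leq\neg t$, whence $h(a\meet\neg a)\leq h(\neg t)=\neg h(t)=-1$. This forces $0\leq -1$ in $\underline{\bf L}$, contradicting the fact that $0$ is the greatest element of $\underline{\bf L}$. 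Hence $h(a)\neq 0$ for all $a$, i.e.\ $h\in A_0$.

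With the claim in hand, the identity follows by unwinding the definitions. Since $\neg t\leq t$ we have $f=\neg t\in A^-$, and using $h(\neg t)=\neg h(t)$ together with $h(t)\in\{0,1\}$ we see that $h\in A_0$ iff $h(t)=1$ iff $h(\neg t)=-1$ iff $\neg t\notin h^{-1}[\{0,1\}]$ iff $f\notin\xia(h)$ iff $\xia(h)\in\sigma(f)^\comp$. This shows $\xia[A_0]=\sigma(f)^\comp$, completing the verification that $(A_+,\leq,A_0,\top,\tau_{\bf A})$ is a \textsf{bRS}-space.

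The step I expect to be the crux is the converse direction of the claim $h\in A_0\iff h(t)=1$. It is natural to suspect that membership in the designated set $A_0$---a global condition on the values of $h$---should be unrelated to the single element $t$. What makes the argument go through is precisely the Sugihara identity $a\meet\neg a\leq\neg t$ of Proposition \ref{prop:reduct}, which ties the value of $h$ at every $a$ to its value at $t$ and rules out the value $0$ globally once $h(t)=1$. Everything else is routine transport of structure across the Priestley isomorphism $\xia$.
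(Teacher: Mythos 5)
Your proposal is correct and takes essentially the same route as the paper: both arguments transport the \textsf{bRS}-space structure across the Priestley isomorphism $\xia$ and reduce everything to the single identity $\xia[A_0]=\sigma(\neg t)^\comp$, the only (immaterial) difference being that you rule out $h(a)=0$ via the half $a\meet\neg a\leq\neg t$ of Proposition \ref{prop:reduct} while the paper uses the dual half $t\leq a\join\neg a$, and you obtain minimality of $A_0$ by transport from Lemma \ref{lem:dual1} rather than from the pointed Kleene space axioms. One cosmetic slip: the contradiction $0\leq -1$ fails because $-1<0$ in the lattice order of $\underline{\bf L}$, whose greatest element is $1$ (the element $0$ is top only in the alter ego $\utilde{\bf L}$), but this does not affect the validity of your argument.
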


\begin{proof}
$(A_+,\leq,\tau_{\bf A})$ is an Esakia space by Lemma \ref{lem:aplusesakia}, and the fact that $(A_+,\leq)$ is a forest follows since $\xia$ is an order isomorphism and $(A_*,\subseteq)$ is a forest. It remains only to show that $A_0$ is a clopen collection of $\leq$-minimal elements. That $A_0$ consists of minimal elements holds because ${\bf A}_+$ is pointed Kleene space. To see that $A_0$ is clopen, let $x=\xia(h)=h^{-1}[\{0,1\}]\cap A^-$. Then for all $a\in x$, we have $h(a)\in\{0,1\}$. Observe that
\begin{align*}
x\in\sigma(\neg t) &\iff \neg t\in x\\
&\iff h(\neg t)\in \{0,1\}\\
&\iff h(t)\in \{0,-1\}
\end{align*}
From Lemma \ref{lem:timage}, the above shows that $x\in\sigma(\neg t)$ if and only if $h(t)=0$. Now if $h\in A_0$, then $h(a)\in\{-1,1\}$ for all $a\in A$ and thus $\xia(h)\notin\sigma(\neg t)$ by the above, so $\xia[A_0]\subseteq \sigma(\neg t)^\comp$. On the other hand, suppose that $x\in\sigma(\neg t)^\comp$. Then the above shows that $h(t)\notin\{0,-1\}$, whence $h(t)=1$. Were it the case that $h(a)=0$ for some $a\in A$, we would have $h(\neg a)=0$ and hence $h(a\join\neg a)=0$. But this is impossible since $t\leq a\join\neg a$ and $h$ is isotone, so it follows that the image of $h$ is contained in $\{-1,1\}$. This implies that $\sigma(\neg t)\subseteq \xia[A_0]$, so $\sigma(\neg t)=\xia[A_0]$. Because $\xia$ is a homeomorphism and $\sigma(\neg t)$ is clopen, it follows that $A_0$ is clopen. This proves the lemma.
\end{proof}

\begin{lemma}\label{lem:bRSisomorphism}
Let ${\bf A} = (A,\meet,\join,\cdot,\to,t,\neg)$ be a Sugihara monoid. Then $\xia$ is an isomorphism of \textsf{bRS}-spaces.
\end{lemma}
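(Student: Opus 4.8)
The plan is to reduce the claim to Lemma~\ref{lem:bRSSiso} by assembling the facts already established about $\xia$. Both ${\bf A}_+$ and ${\bf A}_{{\bowtie}*}$ are \textsf{bRS}-spaces: the former by Lemma~\ref{lem:aplusbRS}, the latter by Lemma~\ref{lem:dual1} applied to the enriched negative cone ${\bf A}_{\bowtie}$. According to Lemma~\ref{lem:bRSSiso}, to show that $\xia$ is an isomorphism of \textsf{bRS}-spaces it suffices to verify two things: that $\xia$ is an isomorphism in \textsf{pES}, and that $\xia$ carries the designated subset $A_0$ of ${\bf A}_+$ onto the designated subset of ${\bf A}_{{\bowtie}*}$.

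For the first point, I would invoke Lemma~\ref{lem:priestleyiso}, which already gives that $\xia$ is an isomorphism of Priestley spaces, i.e., simultaneously an order isomorphism (Lemma~\ref{lem:orderisomorphism}) and a homeomorphism (Lemma~\ref{lem:continuous}). The key observation is that any order isomorphism between posets is automatically a $p$-morphism: monotonicity is immediate, while the back condition follows from surjectivity together with order-reflection, both of which $\xia$ enjoys. The same applies to the set-theoretic inverse $\xia^{-1}$, which is again an order isomorphism and, being the inverse of a homeomorphism, continuous. Since $(A_+,\leq,\tau_{\bf A})$ and the Priestley space underlying ${\bf A}_{{\bowtie}*}$ are both Esakia spaces, it follows that $\xia$ and its inverse are Esakia maps; and because $\xia$ is an order isomorphism it sends the top element $\top$ to the greatest element of ${\bf A}_{{\bowtie}*}$, so $\xia$ is an isomorphism of \textsf{pES}.

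For the second point, recall that the designated subset of ${\bf A}_{{\bowtie}*}$ is $\sigma(f)^\comp$ with $f=\neg t$ the Boolean constant of ${\bf A}_{\bowtie}$, by the definition of the functor $(-)_*$ on \textsf{bRSA}. The computation carried out in the proof of Lemma~\ref{lem:aplusbRS} shows precisely that $\xia[A_0]=\sigma(\neg t)^\comp$, so $\xia$ maps $A_0$ exactly onto the designated subset of ${\bf A}_{{\bowtie}*}$. With both points in hand, Lemma~\ref{lem:bRSSiso} delivers the conclusion.

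I do not anticipate a genuine obstacle here, since all the substantive work has been done in the preceding lemmas; the present statement merely packages the order isomorphism, the homeomorphism, and the agreement of designated sets into a single isomorphism of \textsf{bRS}-spaces. The only point requiring care is the passage from ``Priestley isomorphism between Esakia spaces'' to ``\textsf{pES}-isomorphism,'' which rests on the routine fact that an order isomorphism is a $p$-morphism in both directions.
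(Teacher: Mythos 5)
Your proof is correct and takes essentially the same route as the paper's: both rest on the Priestley isomorphism of Lemma \ref{lem:priestleyiso} upgraded to a \textsf{pES}-isomorphism, together with the identification $\xia[A_0]=\sigma(\neg t)^\comp$ of the designated subsets. The only differences are organizational: you invoke Lemma \ref{lem:bRSSiso} explicitly and reuse the computation already carried out in the proof of Lemma \ref{lem:aplusbRS}, whereas the paper repeats that computation inline, verifies $\xia(\top)=A^-$ by direct calculation rather than via the order isomorphism, and handles the complement of the designated set by taking complements under the bijection.
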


\begin{proof}
Lemma \ref{lem:priestleyiso} shows that $\xia$ is an isomorphism of Priestley spaces, and hence an Esakia function. It thus suffices to show that $\xia$ preserves the top element, the designated subset, and its complement. The greatest element of ${\bf A}_+$ is the morphism $\top\colon {\bf A}\to {\bf\underline{L}}$ defined by $\top(a)=0$. Observe that we have $\xia(\top) = \top^{-1}[\{0,1\}]\cap A^- = A^-$, which is the $\subseteq$-greatest element of ${\bf A}_{{\bowtie}*}$.

Next, we show that
$$\xia[\{h\in A_+ : (\forall a\in A)(h(a)\in\{-1,1\}\})]=\sigma(\neg t)^\comp.$$
For the forward inclusion, let $h\in A_+$ with image contained in $\{-1,1\}$. Since $h(t)\in\{0,1\}$ always holds, this gives $h(t)=1$ and hence $h(\neg t) = -1$. If $\xia(h)\in\sigma(\neg t)$, this gives $\neg t\in h^{-1}[\{0,1\}]$, which contradicts $h(\neg t) = -1$. Thus $\xia(h)\in\sigma(\neg t)^\comp$.

For the reverse inclusion, let $x\in\sigma(\neg t)^\comp$. Then $\neg t\notin x$. By the surjectivity of $\xia$, there exists $h\in A_+$ such that $\xia(h)=x$. Suppose that there exists $a\in A$ such that $h(a)=0$. By Proposition \ref{prop:reduct}, the identity $x\meet\neg x\leq\neg t\leq t\leq y\join\neg y$ holds in ${\bf A}$. In particular, this gives $a\meet\neg a\leq \neg t\leq t\leq a\join \neg a$. Since $h(\neg a)=\neg h(a) = 0$, the isotonicity of $h$ gives
$$0 = h(a\meet \neg a)\leq\neg t\leq t\leq h(a\join\neg a)=0,$$
so $h(\neg t) = h(t) = 0$. It follows that $\neg t\in h^{-1}[\{0,1\}]\cap A^- = x$, contradicting $\neg t\notin x$. Thus $h(a)\in\{-1,1\}$ for all $a\in A$, and we obtain the reverse containment.

It remains only to show that
$$\xia[\{h\in A_+ : (\exists a\in A)(h(a)=0)\}] = \sigma(\neg t).$$
But this follows immediately by taking complements since $\xia$ is a bijection.
\end{proof}

\subsection{Esakia duality for Sugihara monoids}

For a Sugihara monoid ${\bf A}$, the previous section provides an extremely close connection between ${\bf A}_+$ and the \textsf{bRS}-algebra ${\bf A}_{\bowtie*}$. At the same time, there is a close connection between \textsf{bRS}-spaces and Sugihara spaces. We exploit these connections to show that the Davey-Werner dual ${\bf A}_+$ is actually a Sugihara space. 

Given a universal set $X$ and $U,V\subseteq X$ with $U\cup V = X$, define a map $C_{U,V}\colon X\to \{-1,0,1\}$ by
\[ C_{U,V}(x) = \begin{cases} 
      1, & \text{if }x\notin V \\
      0, & \text{if }x\in U\cap V \\
      -1, & \text{if }x\notin U \\
   \end{cases}
\]\\
Observe that the well-definedness of $C_{U,V}$ hinges on $U\cup V=X$.

\begin{lemma}\label{lem:ilatticemorph1}
Let ${\bf X} = (X,\leq,\Q,D,\top,\tau)$ be a pointed Kleene space and let $U,V\subseteq X$ with $U\cup V=X$. Then $C_{U,V}$ is a pointed Kleene space morphism from ${\bf X}$ to $\utilde{\bf L}$ if and only if $U,V$ are clopen up-sets with $(X\setminus U\times X\setminus V)\cap Q =\emptyset$ and $U\cap V\subseteq D^\comp$.
\end{lemma}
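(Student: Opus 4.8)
The plan is to unwind the definition of a pointed Kleene space morphism $\varphi\colon {\bf X}\to\utilde{\bf L}$ into its constituent requirements and match each one against a condition on the pair $(U,V)$. Such a $\varphi$ must be (i) continuous, (ii) isotone, (iii) $Q$-preserving, (iv) a map carrying the designated subset $D$ into $L_0=\{-1,1\}$, and (v) a map sending $\top$ to $0$. The key bookkeeping observation is that the fibers of $C_{U,V}$ are exactly $C_{U,V}^{-1}(1)=X\setminus V$, $C_{U,V}^{-1}(0)=U\cap V$, and $C_{U,V}^{-1}(-1)=X\setminus U$, and that $U=C_{U,V}^{-1}(\{0,1\})=C_{U,V}^{-1}(\upset 1)$ and $V=C_{U,V}^{-1}(\{-1,0\})=C_{U,V}^{-1}(\upset(-1))$ are precisely the preimages of the two nontrivial principal up-sets of $\utilde{\bf L}$. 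I would carry out the five translations in turn.

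For (i) and (ii): since $\utilde{\bf L}$ carries the discrete topology, $C_{U,V}$ is continuous iff each of its three fibers is clopen, which (as the fibers partition $X$) is equivalent to $U$ and $V$ being clopen. For isotonicity, $C_{U,V}$ is isotone iff the preimage of every up-set of $\utilde{\bf L}$ is an up-set of $X$; since the up-sets of $\utilde{\bf L}$ are $\emptyset$, $\upset 1$, $\upset(-1)$, $\{0\}=\upset 1\cap\upset(-1)$, and the whole set, this reduces to requiring that $U$ and $V$ be up-sets. Together, (i) and (ii) amount to $U,V$ being clopen up-sets.

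For (iii), the relation $Q$ of $\utilde{\bf L}$ fails only on the pairs $\la -1,1\ra$ and $\la 1,-1\ra$, so $C_{U,V}$ fails to preserve $Q$ exactly when some $Q$-related pair $\la x,y\ra$ satisfies $\{C_{U,V}(x),C_{U,V}(y)\}=\{-1,1\}$, i.e.\ one coordinate lies in $X\setminus V$ (value $1$) and the other in $X\setminus U$ (value $-1$). Thus $Q$-preservation is equivalent to $Q$ being disjoint from $((X\setminus U)\times(X\setminus V))\cup((X\setminus V)\times(X\setminus U))$. Here is the one genuinely load-bearing step: I would first observe that $Q$ is \emph{symmetric}, which is immediate from axiom (6) of Theorem \ref{thm:pKSchar} upon setting $z=y$ (so $x\Q y$ and $y\leq y$ force $y\Q x$). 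Symmetry identifies the two product sets above under transposition, collapsing the condition to the single stated requirement $(X\setminus U\times X\setminus V)\cap Q=\emptyset$.

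For (iv), $C_{U,V}(x)\in L_0$ iff $x$ avoids the $0$-fiber $U\cap V$, so carrying $D$ into $L_0$ is exactly $U\cap V\subseteq D^\comp$. Finally, for (v), $C_{U,V}(\top)=0$ means $\top\in U\cap V$; since in a pointed space clopen up-sets are nonempty and every nonempty up-set contains the top, condition (v) is automatic once $U,V$ are clopen up-sets, while conversely preservation of $\top$ forces $U,V$ nonempty. Assembling the four equivalences yields the lemma. I expect the symmetry argument for $Q$ to be the crux, since it is what allows a single asymmetric-looking condition to encode a symmetric constraint; the top-preservation clause requires only the mild care of invoking the nonemptiness convention for clopen up-sets in pointed spaces.
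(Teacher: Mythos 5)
Your proof is correct and follows essentially the same route as the paper's: both unwind the morphism conditions for $C_{U,V}$ one at a time, using the fibers $C_{U,V}^{-1}(1)=X\setminus V$, $C_{U,V}^{-1}(0)=U\cap V$, $C_{U,V}^{-1}(-1)=X\setminus U$ to translate continuity and isotonicity into $U,V$ being clopen up-sets, $\Q$-preservation into the stated disjointness condition, preservation of $D$ into $U\cap V\subseteq D^\comp$, and preservation of $\top$ into $\top\in U\cap V$. If anything, you are more careful than the paper at two tacit points: the paper's converse checks only the failure pattern $C_{U,V}(x)=-1$, $C_{U,V}(y)=1$ and silently leaves the transposed pattern $C_{U,V}(x)=1$, $C_{U,V}(y)=-1$ to the symmetry of $\Q$, which you derive explicitly from condition (6) of Theorem \ref{thm:pKSchar} by setting $z=y$; and the paper infers $\top\in U\cap V$ directly from $U,V$ being up-sets, which presupposes exactly the nonemptiness convention for clopen up-sets in pointed spaces that you invoke explicitly.
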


\begin{proof}
Suppose first that $C_{U,V}\colon X \to \utilde{\bf L}$ is a pointed Kleene space morphism. Since $C^{-1}_{U,V}(\{0,1\})=U$ and $C^{-1}_{U,V}(\{-1,0\})=V$, both $U$ and $V$ are clopen up-sets. Suppose that $x,y\in X$ with $x\notin U$ and $y\notin V$. Then $C_{U,V}(x)=-1$ and $C_{U,V}(y)=1$, so $x\Q y$ cannot hold and $(X\setminus U\times X\setminus V)\cap Q =\emptyset$. Finally, suppose that $x\in U\cap V$. Then $C_{U,V}(x)=0\notin K_0$, so $x\notin D$. It follows that $x\in D^\comp$ and $U\cap V\subseteq D^\comp$.

For the converse, suppose that $U$ and $V$ are clopen up-sets satisfying the condition $(X\setminus U\times X\setminus V)\cap Q =\emptyset$ and $U\cap V\subseteq D^\comp$. We claim that $C_{U,V}$ is a pointed Kleene space morphism. To see that $C_{U,V}$ preserves $\leq$, suppose that $x,y\in X$ with $x\leq y$. If $C_{U,V}(y)=0$ then $C_{U,V}(x)\leq C_{U,V}(y)$ obviously holds. If $C_{U,V}(y)=1$, then $y\notin V$ and, since $V$ is an up-set, $x\notin V$ as well. This shows $C_{U,V}(x)=1$. Similarly, if $C_{U,V}(y)=-1$ then $C_{U,V}(x)=-1$. The monotonicity of $C_{U,V}$ follows.

To see that $C_{U,V}$ preserves $\Q$, let $x,y\in X$ satisying $C_{U,V}(y)=1$ and $C_{U,V}(x)=-1$. Then $y\notin V$ and $x\notin U$, whence $(x,y)\in X\setminus U\times X\setminus V$. This yields $(x,y)\notin Q$. It follows that $x\Q y$ implies $C_{U,V}(x)\Q C_{U,V}(y)$.

To see that $D$ is preserved, let $x\in D$. Then $x\notin U\cap V\subseteq D^\comp$, so $C_{U,V}(x)=-1$ or $C_{U,V}(x)=1$.

Finally, to see that $\top$ is preserved, observe that since $U,V$ are up-sets we have $\top\in U\cap V$. Then $C_{U,V}(\top) = 0$, which is the greatest element of $\utilde{\bf L}$. This proves the result.
\end{proof}

\begin{lemma}\label{lem:ilatticemorph2}
Let $\varphi\colon (X,\leq,\Q,D,\top,\tau)\to \utilde{\bf L}$ be a morphism of \textsf{pKS}. Then there exist clopen up-sets $U,V\subseteq X$ such that $\varphi=C_{U,V}$.
\end{lemma}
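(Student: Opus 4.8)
The plan is to recognize this statement as the precise converse of Lemma~\ref{lem:ilatticemorph1}: whereas that lemma characterizes which pairs $(U,V)$ yield a $\textsf{pKS}$-morphism $C_{U,V}$, here I must show that \emph{every} such morphism $\varphi$ is realized in this form. The candidates for $U$ and $V$ are forced by the shape of $C_{U,V}$, since any map of this form satisfies $C_{U,V}^{-1}[\{0,1\}]=U$ and $C_{U,V}^{-1}[\{-1,0\}]=V$. Accordingly, I would simply \emph{define}
$$U=\varphi^{-1}[\{0,1\}], \qquad V=\varphi^{-1}[\{-1,0\}],$$
verify that these are clopen up-sets with $U\cup V=X$ (so that $C_{U,V}$ is a well-defined map), and then confirm $C_{U,V}=\varphi$ pointwise.

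The three required properties of $U$ and $V$ each follow from a single feature of $\varphi$. Because $\utilde{\bf L}$ carries the discrete topology, the subsets $\{0,1\}$ and $\{-1,0\}$ are clopen, so continuity of $\varphi$ makes $U$ and $V$ clopen. Because the order of $\utilde{\bf L}$ has $0$ as greatest element and $-1,1$ as its two minimal elements, both $\{0,1\}$ and $\{-1,0\}$ are up-sets of $\utilde{\bf L}$; as $\varphi$ preserves $\leq$, the preimages $U$ and $V$ are up-sets of $X$. Finally, since $\{0,1\}\cup\{-1,0\}=\{-1,0,1\}$ is the entire codomain, we get $U\cup V=\varphi^{-1}[\{-1,0,1\}]=X$, which is exactly the condition guaranteeing that $C_{U,V}$ is well defined.

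It then remains to check $C_{U,V}(x)=\varphi(x)$ for all $x\in X$, which is a direct case analysis on the three clauses of $C_{U,V}$; these are mutually exclusive and jointly exhaustive precisely because $U\cup V=X$. If $x\notin V$, then $x\in U$ and $\varphi(x)\in\{0,1\}\setminus\{-1,0\}=\{1\}$, matching $C_{U,V}(x)=1$; if $x\in U\cap V$, then $\varphi(x)=0=C_{U,V}(x)$; and if $x\notin U$, then $x\in V$ and $\varphi(x)=-1=C_{U,V}(x)$. There is essentially no obstacle here: the only points demanding any care are correctly orienting $\{0,1\}$ and $\{-1,0\}$ as up-sets of $\utilde{\bf L}$ and confirming the exhaustiveness of the clauses, both of which hinge on $U\cup V=X$. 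It is worth noting that, unlike Lemma~\ref{lem:ilatticemorph1}, this direction uses only the continuity and $\leq$-monotonicity of $\varphi$; its preservation of $\Q$ and of $D$ is not needed, as those properties correspond to the constraints on $(U,V)$ appearing on the side of Lemma~\ref{lem:ilatticemorph1} rather than here.
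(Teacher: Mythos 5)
Your proposal is correct and is essentially the paper's own proof: the paper likewise sets $U=\varphi^{-1}[\{0,1\}]$ and $V=\varphi^{-1}[\{-1,0\}]$, notes these are clopen up-sets as preimages of clopen up-sets of $\utilde{\bf L}$, and concludes $C_{U,V}=\varphi$ pointwise. Your additional remarks (verifying $U\cup V=X$ for well-definedness and observing that only continuity and $\leq$-monotonicity are used) merely make explicit what the paper leaves tacit.
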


\begin{proof}
Put $U=\varphi^{-1}(\{0,1\})$ and $V=\varphi^{-1}(\{-1,0\})$. Then $U,V$ are clopen up-sets since they are the inverse images of clopen up-sets, and $C_{U,V}(x)=\varphi(x)$ for all $x\in X$.
\end{proof}

\begin{lemma}\label{lem:muhomomorphism}
Let $\varphi_1,\varphi_2\colon {\bf X}\to \utilde{\bf L}$ be pointed Kleene space morphisms with $\varphi_1 =C_{U_1,V_1}$ and $\varphi_2=C_{U_2,V_2}$. Then
\begin{enumerate}
\item $\neg\varphi_1 = C_{V_1,U_1}$.
\item $\varphi_1\meet\varphi_2 = C_{U_1\cap U_2, V_1\cup V_2}$, and
\item $\varphi_2\join\varphi_2 = C_{U_1\cup U_2, V_1\cap V_2}$,
\end{enumerate}
\end{lemma}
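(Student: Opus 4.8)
The plan is to reduce everything to the observation that the map $C_{U,V}$ is completely determined by the two sets $U$ and $V$ via the identities $C_{U,V}^{-1}[\{0,1\}] = U$ and $C_{U,V}^{-1}[\{-1,0\}] = V$. Both are immediate from the definition: the value $C_{U,V}(x)$ lies in $\{0,1\}$ precisely when $x\in U$, and in $\{-1,0\}$ precisely when $x\in V$. Conversely, exactly as in the proof of Lemma \ref{lem:ilatticemorph2}, any function $\psi\colon X\to\{-1,0,1\}$ satisfies $\psi = C_{\psi^{-1}[\{0,1\}],\,\psi^{-1}[\{-1,0\}]}$. Hence to establish an equation of the form $\varphi = C_{U',V'}$ it suffices to check that $\varphi^{-1}[\{0,1\}] = U'$ and $\varphi^{-1}[\{-1,0\}] = V'$. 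Since $\{0,1\}\cup\{-1,0\} = \{-1,0,1\}$, these two preimages always cover $X$, so the well-definedness of the right-hand side $C$-maps (the condition that their subscripts union to $X$) is automatic once the preimages are identified.

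Next I would record how the operations of $\utilde{\bf L}$ interact with the up-set $\{0,1\}$ and the down-set $\{-1,0\}$ of the chain $-1<0<1$. Since $\meet$ and $\join$ are the meet and join of this chain and $\neg$ is order-reversing, a trivial case check gives, for all $a,b\in\{-1,0,1\}$, that $a\meet b\in\{0,1\}$ iff $a,b\in\{0,1\}$ and $a\meet b\in\{-1,0\}$ iff $a\in\{-1,0\}$ or $b\in\{-1,0\}$; dually $a\join b\in\{0,1\}$ iff $a\in\{0,1\}$ or $b\in\{0,1\}$ and $a\join b\in\{-1,0\}$ iff $a,b\in\{-1,0\}$; and finally $\neg a\in\{0,1\}$ iff $a\in\{-1,0\}$, while $\neg a\in\{-1,0\}$ iff $a\in\{0,1\}$.

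Then, recalling that the operations on the morphisms into $\utilde{\bf L}$ are computed pointwise and that $\varphi_i^{-1}[\{0,1\}]=U_i$ and $\varphi_i^{-1}[\{-1,0\}]=V_i$, I would read off each preimage. For statement (2), $(\varphi_1\meet\varphi_2)^{-1}[\{0,1\}] = \varphi_1^{-1}[\{0,1\}]\cap\varphi_2^{-1}[\{0,1\}] = U_1\cap U_2$ and, dually, $(\varphi_1\meet\varphi_2)^{-1}[\{-1,0\}] = V_1\cup V_2$, whence $\varphi_1\meet\varphi_2 = C_{U_1\cap U_2,\,V_1\cup V_2}$. Statement (3) follows by the same computation with unions and intersections interchanged, yielding $\varphi_1\join\varphi_2 = C_{U_1\cup U_2,\,V_1\cap V_2}$. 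Statement (1) follows since $(\neg\varphi_1)^{-1}[\{0,1\}] = \varphi_1^{-1}[\{-1,0\}] = V_1$ and $(\neg\varphi_1)^{-1}[\{-1,0\}] = U_1$, giving $\neg\varphi_1 = C_{V_1,U_1}$.

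I do not expect a substantial obstacle: the entire argument is a routine pointwise verification once the bijective correspondence between maps $C_{U,V}$ and pairs of preimages is in hand. The only point demanding any care is the well-definedness of the $C$-maps appearing on the right-hand sides, and this is resolved for free by the observation that the preimages of $\{0,1\}$ and of $\{-1,0\}$ jointly exhaust $X$ for any $\{-1,0,1\}$-valued function.
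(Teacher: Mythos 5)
Your proposal is correct and is in substance the same argument as the paper's: the paper also verifies the identities pointwise, checking that the value is $1$ exactly when it should be and $-1$ exactly when it should be (which are precisely the complemented forms of your two preimage computations $\varphi^{-1}[\{-1,0\}]$ and $\varphi^{-1}[\{0,1\}]$) and then inferring the $0$ case, with (3) dispatched as an analogous computation. Your only cosmetic difference is packaging the case check behind the reconstruction identity $\psi = C_{\psi^{-1}[\{0,1\}],\,\psi^{-1}[\{-1,0\}]}$, which is exactly the observation underlying the paper's Lemma \ref{lem:ilatticemorph2}.
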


\begin{proof}
For (1), for $x\in X$ note that
\begin{align*}
\varphi_1(x)=1 &\iff C_{U_1,V_1}(x)=1\\
&\iff x\notin V_1\\
&\iff C_{V_1,U_1}(x)=-1
\end{align*}
Likewise, $\varphi_1(x)=-1$ if and only if $C_{V_1,U_1}(x)=1$. It follows from this that $\varphi_1(x)=0$ if and only if $C_{V_1,U_1}(x)=0$, and hence that $\neg\varphi_1 = C_{V_1,U_1}$.

For (2), observe that in $\utilde{\bf L}$ we have $a\meet b=1$ if and only if $a=1$ and $b=1$, and also $a\meet b = -1$ if and only if $a=-1$ or $b=-1$. Now if $x\in X$ then we have
\begin{align*}
\varphi_1(x)\meet\varphi_2(x) = 1 &\iff \varphi_1(x)=1 \text{ and } \varphi_2(x)=1\\
&\iff C_{U_1,V_1}(x)=1 \text{ and } C_{U_2,V_2}(x)=1\\
&\iff x\notin V_1 \text{ and } x\notin V_2\\
&\iff x\notin V_1\cup V_2\\
&\iff C_{U_1\cap U_2,V_1\cup V_2}(x) = 1
\end{align*}
Likewise,
\begin{align*}
\varphi_1(x)\meet\varphi_2(x) = -1 &\iff \varphi_1(x)=-1 \text{ or } \varphi_2(x)=-1\\
&\iff C_{U_1,V_1}(x)=-1 \text{ or } C_{U_2,V_2}(x)=-1\\
&\iff x\notin U_1 \text{ or } x\notin U_2\\
&\iff x\notin U_1\cap U_2\\
&\iff C_{U_1\cap U_2,V_1\cup V_2}(x) = -1
\end{align*}
It follows also that $\varphi_1(x)\meet\varphi_2(x) = 0$ if and only if $C_{U_1\cap U_2,V_1\cup V_2}(x) = 0$, which gives $\varphi_1\meet\varphi_2=C_{U_1\cap U_2,V_1\cup V_2}$.

(3) follows by an analogous argument.
\end{proof}

For a \textsf{bRS}-space ${\bf X}$, we define a function $\mu_{\bf X}\colon X^{*\bowtie}\to (X,\leq\cup\geq)^+$ by $\mu_{\bf X}(U,V) = C_{U,V}$. Provided that $\la U, V\ra\in X^{*\bowtie}$, it follows that $U\cup V = X$ and $U\cap V\subseteq D^\comp$. Moreover, if $(x,y)\in X\setminus U \times X\setminus V$, then $x\notin U$ and $y\notin V$. Since $U\cup V = X$, this gives that $y\in U$ and $x\in V$. Were it the case that $x\leq y$, then $V$ being upward-closed would give $y\in V$, a contradiction. Likewise, if $y\leq x$, then $U$ being upward-closed would give $x\in U$, another contradiction. It follows that $(X\setminus U\times X\setminus V)\cap (\leq \cup\geq) =\emptyset$, and Lemma \ref{lem:ilatticemorph1} thus gives that $\mu_{\bf X}$ is well-defined. 

\begin{lemma}\label{lem:sugiharaspaceiso}
Let ${\bf A}$ be a \textsf{bRS}-algebra. Then $({\bf A}_*,\subseteq\cup\supseteq)^+$ is isomorphic as an $i$-lattice to ${\bf A}^{\bowtie}$.
\end{lemma}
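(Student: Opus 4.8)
The plan is to exhibit the isomorphism as the map $\mux$ introduced immediately above the lemma, specialized to ${\bf X} = {\bf A}_*$, and then to transport the conclusion across Esakia duality. Setting ${\bf X} = {\bf A}_*$, the domain of $\mux$ is $X^{*\bowtie} = (({\bf A}_*)^*)^{\bowtie}$ and its codomain is exactly $({\bf A}_*, \subseteq\cup\supseteq)^+$. I would first argue that $\mux$ is an $i$-lattice isomorphism, and then invoke the \textsf{bRSA}-isomorphism $\sigma\colon {\bf A}\to ({\bf A}_*)^*$ supplied by Lemma \ref{lem:dual5}: applying the functor $(-)^{\bowtie}$ to $\sigma$ yields a \textsf{SM}-isomorphism ${\bf A}^{\bowtie}\cong (({\bf A}_*)^*)^{\bowtie}$, which in particular restricts to an $i$-lattice isomorphism. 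Composing with $\mux$ then delivers ${\bf A}^{\bowtie}\cong ({\bf A}_*, \subseteq\cup\supseteq)^+$ as required.

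The homomorphism property of $\mux$ is immediate from Lemma \ref{lem:muhomomorphism}. Writing a typical element of $X^{*\bowtie}$ as a pair $\la U,V\ra$ of clopen up-sets with $U\cup V = X$ and $U\cap V\subseteq D^\comp$, parts (1)--(3) of that lemma give $\mux(\inv\la U,V\ra) = C_{V,U} = \neg C_{U,V} = \neg\mux\la U,V\ra$, then $\mux(\la U,V\ra\twm\la U',V'\ra) = C_{U\cap U',V\cup V'} = \mux\la U,V\ra\meet\mux\la U',V'\ra$, and the analogous identity for $\twj$, so $\mux$ preserves $\neg$, $\meet$, and $\join$. For bijectivity I would lean on Lemma \ref{lem:ilatticemorph2}: injectivity follows because $U$ and $V$ are recovered from $C_{U,V}$ as the preimages $C_{U,V}^{-1}[\{0,1\}]$ and $C_{U,V}^{-1}[\{-1,0\}]$, so that $C_{U,V} = C_{U',V'}$ forces $\la U,V\ra = \la U',V'\ra$; surjectivity follows because any \textsf{pKS}-morphism $\varphi\colon ({\bf A}_*,\subseteq\cup\supseteq)\to\utilde{\bf L}$ equals $C_{U,V}$ for clopen up-sets $U,V$, these automatically satisfy $U\cup V = X$ since $\varphi$ is total, and Lemma \ref{lem:ilatticemorph1} forces $U\cap V\subseteq D^\comp$, whence $\la U,V\ra\in X^{*\bowtie}$ and $\mux\la U,V\ra = \varphi$.

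The genuine content has been front-loaded into Lemmas \ref{lem:ilatticemorph1}, \ref{lem:ilatticemorph2}, and \ref{lem:muhomomorphism}, so I expect this lemma to be an assembly step rather than a source of difficulty. The one point meriting care is the bookkeeping that identifies $X^{*\bowtie}$ with ${\bf A}^{\bowtie}$ through Theorem \ref{thm:bRSAdual}: one must confirm that the twist-product operations $\twm$, $\twj$, and $\inv$ are transported faithfully along $\sigma$ via the functoriality already established for $(-)^{\bowtie}$, so that the $i$-lattice reduct of $(({\bf A}_*)^*)^{\bowtie}$ is genuinely isomorphic to that of ${\bf A}^{\bowtie}$. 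This is routine given the earlier work, and I anticipate no real obstacle beyond checking that these structural identifications commute as claimed.
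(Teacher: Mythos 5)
Your proposal is correct and follows essentially the same route as the paper: reduce via the \textsf{bRSA}-isomorphism ${\bf A}\cong({\bf A}_*)^*$ of Lemma \ref{lem:dual5} (transported through $(-)^{\bowtie}$) to showing $({\bf A}_*)^{*\bowtie}\cong({\bf A}_*,\subseteq\cup\supseteq)^+$, then verify that $\mu_{{\bf A}_*}$ is an $i$-lattice isomorphism using Lemma \ref{lem:muhomomorphism} for the homomorphism property, Lemmas \ref{lem:ilatticemorph1} and \ref{lem:ilatticemorph2} for surjectivity, and recovery of $U$ and $V$ as preimages under $C_{U,V}$ for injectivity, exactly as in the paper. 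Your explicit check that the pair $\la U,V\ra$ produced in the surjectivity step lands in $X^{*\bowtie}$ is a point the paper leaves implicit, but it is the same argument.
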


\begin{proof}
Lemma \ref{lem:bRStosugihara} asserts that $({\bf A}_*,\subseteq\cup\supseteq)$ is a pointed Kleene space, and therefore $({\bf A}_*,\subseteq\cup\supseteq)^+$ is a normal $i$-lattice.
By Lemma \ref{lem:dual5}, $({\bf A}_*)^*\cong {\bf A}$. It thus suffices to show that $({\bf A}_*,\subseteq\cup\supseteq)^+$ is isomorphic as an $i$-lattice to $({\bf A}_*)^{*\bowtie}$. Let $\mu = \mu_{{\bf A}_*}$. 

Lemma \ref{lem:muhomomorphism} shows that $\mu$ is an $i$-lattice homomorphism from $({\bf A}_*)^{*\bowtie}$ to $({\bf A}_*,\subseteq\cup\supseteq)^+$, and Lemma \ref{lem:ilatticemorph2} gives that $\mu$ is surjective. It remains only to show that $\mu$ is one-to-one, so suppose that $\la U_1,V_1\ra, \la U_2,V_2\ra\in ({\bf A}_*)^{*\bowtie}$ with $\mu(U_1,V_1)=\mu(U_2,V_2)$. Then $C_{U_1,V_1}=C_{U_2,V_2}$, so for all $x\in X$ we have
\begin{align*}
x\in U_1 &\iff C_{U_1,V_1}(x)\neq -1\\
&\iff C_{U_2,V_2}(x)\neq -1\\
&\iff x\in U_2
\end{align*}
Thus $U_1=U_2$. A similar argument shows that $V_1=V_2$, so $\la U_1,V_1\ra=\la U_2, V_2\ra$. This gives that $\mu$ is an $i$-lattice isomorphism.
\end{proof}

The stage is finally set to describe the duality for Sugihara monoids.

\begin{definition}
Given Sugihara spaces ${\bf X} = (X,\leq_X,\leq_X\cup\geq_X,D_X,\top_X,\tau_X)$ and ${\bf Y} = (Y,\leq_Y,\leq_Y\cup\geq_Y,D_Y,\top_Y,\tau_Y)$, a \textsf{bRSS}-morphism $\varphi$ from the \textsf{bRS}-space $(X,\leq_X,D,\top_X,\tau_X)$ to the \textsf{bRS}-space $(Y,\leq_Y,D_Y,\top_Y,\tau_Y)$ is called a \emph{Sugihara space morphism}. We denote the category of Sugihara spaces with Sugihara space morphisms by \textsf{pSS}, keeping with our earlier convention of naming categories of top-bounded spaces to make it clear that they are pointed.
\end{definition}

\begin{remark}
Observe that a morphism of \textsf{pSS} is automatically a morphism of \textsf{pKS} even though the preservation of the relation $\leq\cup\geq$ is not stipulated. A morphism always preserves the latter relation when it preserves $\leq$.
\end{remark}

Consider variants $(-)_+\colon\textsf{SM}\to\textsf{pSS}$ and $(-)^+\colon\textsf{pSS}\to\textsf{SM}$ of the functors from the Davey-Werner duality defined as follows. For an object ${\bf A}$ of \textsf{SM}, let ${\bf A}_+$ be the Davey-Werner dual of the $i$-lattice reduct of ${\bf A}$ as previously discussed. For a morphism $h\colon {\bf A}\to {\bf B}$ of \textsf{SM}, define $h_+\colon {\bf B}_+\to {\bf A}_+$ by precomposition $h_+(x) = x\circ h$ as usual.

On the other hand, for a Sugihara space ${\bf X} = (X,\leq,D,\top,\tau)$, let $X^+$ be the collection of pointed Kleene space morphisms from ${\bf X}$ to $\utilde{\bf L}$. Letting $\meet$, $\join$, and $\neg$ be the operations on $X^+$ inherited pointwise from the operations on $\underline{\bf L}$, the $i$-lattice $(X^+,\meet,\join,\neg)$ is the the Davey-Werner dual of ${\bf X}$. Define binary operations $\cdot$ and $\to$ for $\varphi_1=C_{U_1,V_1}$ and $\varphi_2=C_{U_2,V_2}$ maps in $X^+$ by
$$\varphi_1\cdot\varphi_2 = C_{\la U_1,V_1\ra \ntwt \la U_2,V_2\ra}$$
$$\varphi_1\to\varphi_2 = C_{\la U_1,V_1\ra \ntwi \la U_2,V_2\ra},$$
where $\ntwt$ and $\ntwi$ are the operations on the Sugihara monoid ${\bf X}^{*\bowtie}$ defined in Section \ref{sec:twist}. Then define ${\bf X}^+=(X^+,\meet,\join,\cdot,\to,C_{X,D^\comp},\neg)$. For a morphism $\varphi\colon {\bf X}\to {\bf Y}$ of \textsf{pSS}, define $\varphi_+\colon {\bf Y}^+\to{\bf X}^+$ by $\varphi(\alpha) = \alpha\circ\varphi$ as before.

\begin{remark}\label{rem:muiso}
With the above definitions, the map $\mux$ is actually a Sugihara monoid isomorphism. It is an $i$-lattice isomorphism by the proof of Lemma \ref{lem:sugiharaspaceiso}, and $\mux$ is a homomorphism with respect to $\cdot$, $\to$, and the monoid identity by the definition above.
\end{remark}

\begin{lemma}\label{lem:sugdual1}
Let ${\bf A} = (A,\meet,\join,\cdot,\to,t,\neg)$ be a Sugihara monoid. Then ${\bf A}_+$ is a Sugihara space.
\end{lemma}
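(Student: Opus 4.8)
The plan is to check the three defining clauses of a Sugihara space against the structure $\mathbf{A}_+ = (A_+,\leq,\Qa,A_0,\top,\tau_{\mathbf{A}})$, which is already known to be a pointed Kleene space as the Davey--Werner dual of the $i$-lattice reduct of $\mathbf{A}$ (Theorem \ref{thm:pKSchar}). Two of the clauses are immediate from earlier work: Lemma \ref{lem:aplusesakia} gives that $(A_+,\leq,\tau_{\mathbf{A}})$ is an Esakia space, and together with the fact that $\top$ is the greatest element of a pointed Kleene space this yields clause (1) that $(A_+,\leq,\top,\tau_{\mathbf{A}})$ is a pointed Esakia space; Lemma \ref{lem:aplusbRS} gives that $A_0$ is clopen, hence in particular open, which is clause (3). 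Indeed, Lemma \ref{lem:aplusbRS} shows that $(A_+,\leq,A_0,\top,\tau_{\mathbf{A}})$ is a \textsf{bRS}-space, so by Lemma \ref{lem:bRStosugihara} the structure $(A_+,\leq,\leq\cup\geq,A_0,\top,\tau_{\mathbf{A}})$ is a Sugihara space. Thus the entire content of the lemma reduces to clause (2): that the relation $\Qa$ supplied by the Davey--Werner duality coincides with $\leq$-comparability, i.e.\ $\Qa = \mathord{\leq}\cup\mathord{\geq}$.

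One inclusion is routine. The order on $A_+$ is inherited pointwise from $\utilde{\mathbf{L}}$, whose order has $0$ as greatest element, and on $\utilde{\mathbf{L}}$ the relation $\Q$ is exactly comparability. Hence if $h\leq k$ (or $k\leq h$) pointwise, then $h(a)$ and $k(a)$ are $\Q$-related for every $a\in A$, so $h\Qa k$; this gives $\mathord{\leq}\cup\mathord{\geq}\subseteq\Qa$.

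The reverse inclusion $\Qa\subseteq\mathord{\leq}\cup\mathord{\geq}$ is the heart of the matter, and I would prove its contrapositive: if $h,k\in A_+$ are $\leq$-incomparable, then there is $a\in A$ with $\{h(a),k(a)\}=\{-1,1\}$, so that $h\Qa k$ fails. Using the order isomorphism $\xia$ of Lemma \ref{lem:orderisomorphism}, incomparability produces negative witnesses $c,d\leq t$ separating $\xia(h)$ and $\xia(k)$; assuming $h\Qa k$ forces $h(c)=-1,\,k(c)=0,\,h(d)=0,\,k(d)=-1$, and then Proposition \ref{prop:reduct} applied to $t\leq x\join\neg x$ forces $h(t)=k(t)=0$. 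I would then identify each hom with the prime \emph{implicative} filter $G_h = h^{-1}(\{0,1\})$: using idempotency together with the semilinear inequalities $x\meet y\leq x\cdot y$ and $x\to y\leq \neg x\join y$, one checks $G_h$ is closed under $\cdot$ and modus ponens, hence is a prime deductive filter containing $t$, and $h$ is recovered from $G_h$ via $h(x)=-1\iff x\notin G_h$ and $h(x)=0\iff x,\neg x\in G_h$. Under this correspondence $\leq$-comparability of $h,k$ becomes inclusion between $G_h$ and $G_k$, while $h\Qa k$ becomes $F_h\subseteq G_k$ and $F_k\subseteq G_h$ (with $F_h=\{x\in G_h:\neg x\notin G_h\}$); prelinearity (Proposition \ref{prop2}) and primality then force $c\to d\in G_h\setminus G_k$ and $d\to c\in G_k\setminus G_h$, and the root-system structure of the prime deductive filters of the semilinear algebra $\mathbf{A}$ at the branch point of $G_h,G_k$ yields an element of opposite sign, the desired contradiction.

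The main obstacle is precisely this last inclusion. It cannot follow from the pointed Kleene space axioms and the forest condition alone, nor from the $i$-lattice reduct: the subalgebra $S\leq\utilde{\mathbf{L}}^2$ consisting of the sign-compatible pairs is a normal $i$-lattice whose projections are incomparable yet $\Q$-related, so some genuine use of the residuated (semilinear, idempotent) structure of the Sugihara monoid $\mathbf{A}$ — and not merely its involutive-lattice reduct — is indispensable. Capturing this via the prime implicative filters, and in particular making the branch-point/root-system argument precise enough to exhibit the opposite-sign element, is where the real work lies.
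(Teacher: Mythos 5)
Your reduction is exactly the paper's: Lemmas \ref{lem:aplusbRS} and \ref{lem:bRStosugihara} reduce everything to showing $\Qa=\mathord{\leq}\cup\mathord{\geq}$, and your easy inclusion, the derivation $h(t)=k(t)=0$, the dictionary $h\mapsto G_h=h^{-1}[\{0,1\}]$ (closure under fusion via $a\meet b\leq a\cdot b$, the translation of $\Qa$ into $F_h\subseteq G_k$ and $F_k\subseteq G_h$), and the prelinearity-plus-primality step are all correct. But the proof stops precisely where you say the real work lies, and the mechanism you gesture at cannot supply it. The forest/root-system structure of the prime filters, prelinearity, and primality only ever regenerate pairs of the same shape: starting from elements where $(h,k)$ takes the values $(0,-1)$ or $(-1,0)$, every $(\meet,\join,\neg)$-term and every monotonicity constraint from a valid Sugihara inequality again yields $Q$-related values --- your own observation that the $Q$-compatible pairs form a subalgebra $S\leq\underline{\bf L}^2$ proves this, and it cuts against your plan as much as against a purely reduct-based one. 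Indeed your prelinearity step illustrates the problem: it produces $c\to d\in G_h\setminus G_k$ and $d\to c\in G_k\setminus G_h$, i.e.\ another pair of the same $(0,-1)/(-1,0)$ type, and no iteration of such steps terminates in a sign clash. No order-geometric property of $I({\bf A})$ can close this, because the clash element is not a lattice combination of the witnesses: it is a new element that must be \emph{realized}.

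What actually closes the gap is the surjectivity of the twist representation from Section \ref{sec:twist}. Concretely: from $c\in\xia(h)\setminus\xia(k)$ and $d\in\xia(k)\setminus\xia(h)$, with $f=\neg t\in\xia(h)\cap\xia(k)$ (available since $h(t)=k(t)=0$), set $a=d\to c$ and $b=(c\to d)\meet(a\to f)$ in ${\bf A}_{\bowtie}$; then $a\join b=t$, $a\meet b\leq f$, $a\in\xia(h)\setminus\xia(k)$, and $b\in\xia(k)\setminus\xia(h)$, so the element of $({\bf A}_{\bowtie})^{\bowtie}$ corresponding to the pair $\la a,b\ra$ satisfies $h(e)=1$ and $k(e)=-1$ --- but the existence of such an $e$ in ${\bf A}$ is exactly the isomorphism ${\bf A}\cong({\bf A}_{\bowtie})^{\bowtie}$, which is the residuated input your sketch needs but never invokes. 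The paper packages this same input differently and thereby avoids all filter manipulation: it uses Lemma \ref{lem:sugiharaspaceiso} together with the fullness of the Davey--Werner duality to get a \textsf{pKS}-isomorphism ${\bf A}_+\cong({\bf A}_{\bowtie*},\subseteq\cup\supseteq)$, and in the latter space $Q$ \emph{is} comparability by construction, so $\Qa$ is transported to comparability in one stroke. Your argument is completable, but only by replacing the branch-point heuristic with this realization step (or with the strongness of the natural duality), at which point it essentially rejoins the paper's proof.
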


\begin{proof}
Since $(A_+, \leq,A_0,\top,\tau_{\bf A})$ is a \textsf{bRS}-space by Lemma \ref{lem:aplusbRS}, by Lemma \ref{lem:bRStosugihara} is suffices to show that $Q_{\bf A}$ coincides with $\leq$-comparability.

By the Davey-Werner duality, $({\bf A}_+)^+$ is isomorphic to ${\bf A}$ as an $i$-lattice. Moreover, by the categorical equivalence developed in Section \ref{sec:twist}, $({\bf A}_{\bowtie})^{\bowtie}$ is isomorphic to ${\bf A}$ as a Sugihara monoid, hence in particular as an $i$-lattice. By Lemma \ref{lem:sugiharaspaceiso}, $({\bf A}_{\bowtie})^{\bowtie}$ is isomorphic as an $i$-lattice to $({\bf A}_{\bowtie*}, \subseteq\cup\supseteq)^+$. Thus ${\bf A}$ is isomorphic as an $i$-lattice to both $({\bf A}_*,\subseteq\cup\supseteq)^+$ and $({\bf A}_+)^+$. It follows that
$$({\bf A}_{\bowtie*},\subseteq\cup\supseteq)\cong (({\bf A}_*,\subseteq\cup\supseteq)^+)_+ \cong (({\bf A}_+)^+)_+\cong {\bf A}_+$$
as pointed Kleene spaces. Let $\varphi\colon {\bf A}_+\to (A_{\bowtie*},\subseteq\cup\supseteq)$ be a \textsf{pKS}-isomorphism. Then for $h,k\in A_+$,
\begin{align*}
hQ_{\bf A}k &\iff \varphi(h) \text{ and } \varphi(k) \text{ are } \text{$\subseteq$-comparable}\\
&\iff \varphi(h)\subseteq\varphi(k) \text{ or } \varphi(k)\subseteq\varphi(h)\\
&\iff h\leq k \text{ or } k\leq h\\
&\iff h \text{ and } k \text{ are $\leq$-comparable.}
\end{align*}
This proves that $Q_{\bf A}$ is the relation of $\leq$-comparability, and the result follows.
\end{proof}

\begin{lemma}\label{lem:sugdual2}
Let ${\bf X} = (X,\leq,D,\leq\cup\geq,\top,\tau)$ be a Sugihara space. Then ${\bf X}^+$ is a Sugihara monoid.
\end{lemma}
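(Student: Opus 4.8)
The plan is to prove the lemma by transport of structure, exploiting the fact that the Sugihara monoid operations on $X^+$ were defined precisely so that the map $\mux$ becomes an isomorphism onto a Sugihara monoid that is already known to exist. First I would observe that, forgetting the comparability relation, the Sugihara space ${\bf X}$ is a \textsf{bRS}-space by Lemma \ref{lem:sugiharatobRS}. Consequently its Esakia-style dual ${\bf X}^*$ is a bRS-algebra by Lemma \ref{lem:dual2}, and hence the twist product ${\bf X}^{*\bowtie} = ({\bf X}^*)^{\bowtie}$ is a Sugihara monoid by the construction of the functor $(-)^{\bowtie}$ (Proposition \ref{resilat} supplies the residuated multiplication, and Proposition \ref{prop:inv} supplies the involution $\inv$, making $(A^{\bowtie},\twm,\twj,\ntwt,\ntwi,\la t,f\ra,\inv)$ a Sugihara monoid).

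Next I would show that $\mux\colon X^{*\bowtie}\to X^+$, given by $\mux\la U,V\ra = C_{U,V}$, is a Sugihara monoid isomorphism from ${\bf X}^{*\bowtie}$ onto ${\bf X}^+$. That $\mux$ is a bijection preserving the $i$-lattice operations $\twm$, $\twj$, and $\inv$ — where on $X^+$ these are the operations inherited pointwise from $\utilde{\bf L}$ — is exactly the content of the proof of Lemma \ref{lem:sugiharaspaceiso}, which rests in turn on Lemmas \ref{lem:muhomomorphism} and \ref{lem:ilatticemorph2}. It then remains to check that $\mux$ preserves the multiplication $\cdot$, the residual $\to$, and the monoid identity. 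The first two hold by the very definition of $\cdot$ and $\to$ on $X^+$, namely $\varphi_1\cdot\varphi_2 = C_{\la U_1,V_1\ra\ntwt\la U_2,V_2\ra}$ and $\varphi_1\to\varphi_2 = C_{\la U_1,V_1\ra\ntwi\la U_2,V_2\ra}$; the last holds because the monoid identity of ${\bf X}^{*\bowtie}$ is $\la X,D^\comp\ra$ (here $X$ is the top clopen up-set of ${\bf X}^*$ and $D^\comp$ is its designated constant $f$), and $\mux\la X,D^\comp\ra = C_{X,D^\comp}$ is precisely the designated identity of ${\bf X}^+$.

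Having established that $\mux$ is a bijection carrying each operation of ${\bf X}^{*\bowtie}$ to the corresponding operation of ${\bf X}^+$, the conclusion follows at once: since the Sugihara monoids form a variety and ${\bf X}^{*\bowtie}$ is one, its isomorphic image ${\bf X}^+$ is a Sugihara monoid as well. This is essentially the assertion recorded in Remark \ref{rem:muiso}, and the present lemma may simply invoke it.

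The only delicate point, and the step I expect to require the most care, is confirming the \emph{compatibility} of the two descriptions of the operations on $X^+$: the $i$-lattice operations are defined pointwise, whereas $\cdot$, $\to$, and the identity are defined by transport along the correspondence $\la U,V\ra\mapsto C_{U,V}$. One must be sure that these two routes endow the single underlying set $X^+$ with operations that cohere into one algebra — equivalently, that the pointwise $i$-lattice reduct genuinely agrees with the $i$-lattice reduct of ${\bf X}^{*\bowtie}$ transported through $\mux$. This agreement is exactly what the proof of Lemma \ref{lem:sugiharaspaceiso} furnishes, so no direct manipulation of the explicit twist-product formulas for $\ntwt$ and $\ntwi$ is required.
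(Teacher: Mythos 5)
Your proof is correct and takes essentially the same approach as the paper: both arguments identify ${\bf X}^+$ with the Sugihara monoid ${\bf X}^{*\bowtie}$ as $i$-lattices and conclude by transport of structure, since $\cdot$, $\to$, and the identity $C_{X,D^\comp}$ on $X^+$ are defined precisely through the correspondence $\la U,V\ra\mapsto C_{U,V}$. The only cosmetic difference is that you apply $\mux$ directly to ${\bf X}$ (as Remark \ref{rem:muiso} licenses, the proof of Lemma \ref{lem:sugiharaspaceiso} working verbatim for any \textsf{bRS}-space), whereas the paper detours through the double dual $((({\bf X}^*)_*,\subseteq\cup\supseteq))^+$ so as to invoke Lemma \ref{lem:sugiharaspaceiso} exactly as stated.
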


\begin{proof}
$(X,\leq,D,\top,\tau)$ is \textsf{bRS}-space by Lemma \ref{lem:sugiharatobRS}, and hence $(X,\leq,D,\top,\tau)^*$ is a \textsf{bRS}-algebra by the duality of Section \ref{sec:booldual}. It follows from Lemma \ref{lem:sugiharaspaceiso} that $(((X,\leq,D,\top,\tau)^*)_*,\subseteq\cup\supseteq)^+$ is isomorphic as an $i$-lattice to $(X,\leq,D,\top,\tau)^{*\bowtie}$. But $((X,\leq,D,\top,\tau)^*)_*\cong (X,\leq,D,\top,\tau)$ as a \textsf{bRS}-space, so it follows that $((X,\leq,D,\top,\tau),\leq\cup\geq)^+$ is isomorphic to $(X,\leq,D,\top,\tau)^{*\bowtie}$ as an $i$-lattice. Since the former structure is identical to the $i$-lattice reduct of ${\bf X}^+$, it follows that ${\bf X}^+$ is isomorphic as an $i$-lattice to the Sugihara monoid $(X,\leq,D,\top,\tau)^{*\bowtie}$. The definition of the operations $\to$ and $\cdot$ therefore makes the $i$-lattice reduct of ${\bf X}^+$ into a Sugihara monoid by transport of structure.
\end{proof}

\begin{lemma}\label{lem:hpluscomp}
Let ${\bf A}$ and ${\bf B}$ be Sugihara monoids and let $h\colon{\bf A}\to{\bf B}$ be a morphism in \textsf{SM}. Then $h_+ = \xia^{-1}\circ h_{\bowtie*}\circ\xib$.
\end{lemma}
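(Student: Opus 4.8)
The plan is to reduce the claimed identity to a statement about prime filters by using that $\xia$ is a bijection. Since Lemma~\ref{lem:orderisomorphism} shows $\xia$ is an order isomorphism, in particular a bijection, the identity $h_+ = \xia^{-1}\circ h_{\bowtie*}\circ\xib$ is equivalent to $\xia\circ h_+ = h_{\bowtie*}\circ\xib$ as maps from $B_+$ to $A_{\bowtie*}$. I would fix a point $x\in B_+$ and evaluate each composite, aiming to identify both images with the same subset of $A^-$.

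First I would expand the left-hand composite. By definition $h_+(x)=x\circ h$, so by the definition of $\xia$ we obtain $\xia(h_+(x)) = (x\circ h)^{-1}[\{0,1\}]\cap A^- = \{a\in A^- : x(h(a))\in\{0,1\}\}$. Next I would expand the right-hand composite. Here $\xib(x)=x^{-1}[\{0,1\}]\cap B^-$, and since $h_{\bowtie}$ is the restriction $h\restriction_{A^-}$, the dual map acts by $h_{\bowtie*}(\xib(x)) = (h\restriction_{A^-})^{-1}[\xib(x)] = \{a\in A^- : h(a)\in x^{-1}[\{0,1\}]\cap B^-\}$.

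The crux is to verify that these two sets coincide. The inclusion of the right-hand set into the left-hand one is immediate. For the reverse inclusion, the only possible obstruction is the conjunct $h(a)\in B^-$ concealed in $\xib(x)$; I would dispatch this by noting that $h$, being a Sugihara monoid homomorphism, preserves $t$ and is order-preserving, so $a\leq t$ forces $h(a)\leq h(t)=t$ and hence $h(a)\in B^-$ automatically for every $a\in A^-$. With this observation both displayed sets equal $\{a\in A^- : x(h(a))\in\{0,1\}\}$, which yields $\xia\circ h_+ = h_{\bowtie*}\circ\xib$ and therefore, upon composing with $\xia^{-1}$, the claim.

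The main (and rather modest) obstacle is purely bookkeeping: correctly tracking the handedness of the two contravariant functors $(-)_+$ and $(-)_*$ across the composite, and confirming that intersecting with $B^-$ in the definition of $\xib$ discards no information. The latter is exactly the point settled by the preservation of negativity, so no genuinely new input beyond the definitions of the functors and the isomorphism $\xia$ is required.
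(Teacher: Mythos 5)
Your proof is correct and takes essentially the same route as the paper's: both reduce the claim to $\xia\circ h_+ = h_{\bowtie*}\circ\xib$ via the bijectivity of $\xia$ and verify this pointwise by unwinding the definitions of $\xia$, $\xib$, $h_+$, and $h_{\bowtie*}$. The only step the paper records without comment, namely $h\restriction_{A^-}^{-1}[B^-]=A^-$, is exactly the negativity-preservation fact you justify explicitly from $h(t)=t$ and isotonicity, so there is no difference of substance.
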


\begin{proof}
Let $x\in B_+$ and $a\in A$. If $a\in A^-$, then $h_{\bowtie}(a) = h\restriction_{A^-}(a) = h(a)$ holds. Moreover, $h\restriction_{A^-}^{-1}[B^-]=A^-$. These facts give
\begin{align*}
a\in (\xia\circ h_+)(x) &\iff a\in \xia(x\circ h)\\
&\iff a\in (x\circ h)^{-1}[\{0,1\}]\cap A^-\\
&\iff (x\circ h)(a)\in\{0,1\}\text{ and }a\in A^-\\
&\iff (x\circ h_{\bowtie})(a)\in\{0,1\}\text{ and }a\in A^-\\
&\iff x(h\restriction_{A^-}(a))\in\{0,1\}\text{ and }a\in A^-\\
&\iff a\in h\restriction_{A^-}^{-1}[x^{-1}[\{0,1\}]]\cap A^-\\
&\iff a\in h\restriction_{A^-}^{-1}[x^{-1}[\{0,1\}]\cap B^-]\\
&\iff a\in h_{\bowtie*}(x^{-1}[\{0,1\}]\cap B^-)\\
&\iff a\in h_{\bowtie*}(\xib(x))\\
&\iff a\in (h_{\bowtie*}\circ\xib)(x)
\end{align*}
This shows that $\xia\circ h_+ \ h_{\bowtie*}\circ \xib$. Since $\xia$ is an isomorphism of \textsf{bRS}-spaces by Lemma \ref{lem:bRSisomorphism}, it has an inverse $\xia^{-1}$, and this yields $h_+=\xia^{-1}\circ h_{\bowtie*}\circ \xib$.
\end{proof}

\begin{corollary}\label{cor:sugdual3}
Let ${\bf A}$ and ${\bf B}$ be Sugihara monoids and let $h\colon{\bf A}\to{\bf B}$ be a morphism in \textsf{SM}. Then $h_+$ is a morphism of \textsf{pSS}.
\end{corollary}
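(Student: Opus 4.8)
The plan is to read the result directly off the factorization established in Lemma~\ref{lem:hpluscomp}. First I would recall that, by Lemmas~\ref{lem:bRStosugihara} and~\ref{lem:sugiharatobRS}, Sugihara spaces and \textsf{bRS}-spaces are the same underlying objects, and that by the definition of \textsf{pSS} a Sugihara space morphism is nothing but a \textsf{bRSS}-morphism between the associated \textsf{bRS}-spaces. Hence it suffices to show that $h_+\colon {\bf B}_+\to {\bf A}_+$ is a \textsf{bRSS}-morphism.

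Next I would invoke Lemma~\ref{lem:hpluscomp}, which gives the factorization $h_+ = \xia^{-1}\circ h_{\bowtie*}\circ\xib$, and argue that each of the three factors is a \textsf{bRSS}-morphism. The maps $\xia$ and $\xib$ are isomorphisms of \textsf{bRS}-spaces by Lemma~\ref{lem:bRSisomorphism}, so $\xib$ and the inverse $\xia^{-1}$ are \textsf{bRSS}-morphisms. For the middle factor, since $(-)_{\bowtie}\colon \textsf{SM}\to\textsf{bRSA}$ is a functor, $h_{\bowtie}$ is a morphism of \textsf{bRSA}, and Lemma~\ref{lem:dual3} then yields that its dual $h_{\bowtie*}\colon {\bf B}_{\bowtie*}\to {\bf A}_{\bowtie*}$ is a morphism of \textsf{bRSS}. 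Because \textsf{bRSS} is a category and hence closed under composition, the composite $\xia^{-1}\circ h_{\bowtie*}\circ\xib = h_+$ is a \textsf{bRSS}-morphism, and therefore a morphism of \textsf{pSS}.

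There is essentially no serious obstacle remaining, since the substantive content was already discharged in Lemma~\ref{lem:hpluscomp}: the corollary is a one-line deduction from that factorization together with closure of \textsf{bRSS} under composition. The only points requiring a moment's care are the bookkeeping identifying a \textsf{pSS}-morphism with a \textsf{bRSS}-morphism between the corresponding \textsf{bRS}-spaces, and checking that the domains and codomains of the three factors compose correctly in the contravariant direction, from ${\bf B}_+$ through ${\bf B}_{\bowtie*}$ and ${\bf A}_{\bowtie*}$ to ${\bf A}_+$.
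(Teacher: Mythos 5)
Your proof is correct and follows the paper's own argument exactly: the paper likewise deduces the corollary in one line from the factorization $h_+ = \xi_{\bf A}^{-1}\circ h_{\bowtie*}\circ\xi_{\bf B}$ of Lemma~\ref{lem:hpluscomp}, observing that $h_+$ is thereby a composition of \textsf{bRSS}-morphisms. Your additional bookkeeping (identifying \textsf{pSS}-morphisms with \textsf{bRSS}-morphisms, citing Lemma~\ref{lem:bRSisomorphism} for the $\xi$ maps and Lemma~\ref{lem:dual3} for $h_{\bowtie*}$) simply makes explicit what the paper leaves implicit.
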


\begin{proof}
Lemma \ref{lem:hpluscomp} shows that $h_+$ is the composition of \textsf{bRSS}-morphisms, which immediately gives the result.
\end{proof}

\begin{lemma}\label{lem:varphipluscomp}
Let ${\bf X}$ and ${\bf Y}$ be Sugihara spaces and let $\varphi\colon{\bf X}\to{\bf Y}$ be a morphism in \textsf{pSS}. Then $\varphi^+=\mux\circ\varphi^{*\bowtie}\circ\muy$.
\end{lemma}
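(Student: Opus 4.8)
The plan is to imitate the proof of Lemma~\ref{lem:hpluscomp}: rather than manipulate the three-fold composite at once, I would first establish the commuting identity
\[ \varphi^+\circ\muy = \mux\circ\varphi^{*\bowtie} \]
of maps $Y^{*\bowtie}\to X^+$, and then invoke the fact that $\muy$ is a Sugihara monoid isomorphism (Remark~\ref{rem:muiso}) to solve for $\varphi^+$. Because $\mux\colon X^{*\bowtie}\to X^+$ runs in the direction opposite to $\xia\colon A_+\to A_{{\bowtie}*}$, the isomorphism that must be inverted sits on the input side; this is the one directional subtlety to watch, and it is the analogue of the appearance of $\xia^{-1}$ in Lemma~\ref{lem:hpluscomp}.

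To prove the identity I would evaluate both sides on an arbitrary $\la U,V\ra\in Y^{*\bowtie}$. On the left, $\muy\la U,V\ra = C_{U,V}$ and $\varphi^+$ acts by precomposition, so the left-hand side is $C_{U,V}\circ\varphi$. On the right, $\varphi^*$ is the \textsf{bRSA}-dual of $\varphi$ furnished by Lemma~\ref{lem:dual4}, acting by preimage, and $(-)^{\bowtie}$ acts coordinatewise, so that $\varphi^{*\bowtie}\la U,V\ra = \la\varphi^{-1}[U],\varphi^{-1}[V]\ra$ and $\mux$ sends this to $C_{\varphi^{-1}[U],\varphi^{-1}[V]}$. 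The entire lemma thus collapses to the pointwise equality
\[ C_{U,V}\circ\varphi = C_{\varphi^{-1}[U],\varphi^{-1}[V]}, \]
which I would verify at a point $x\in X$: one has $C_{\varphi^{-1}[U],\varphi^{-1}[V]}(x)=1$ iff $x\notin\varphi^{-1}[V]$ iff $\varphi(x)\notin V$ iff $C_{U,V}(\varphi(x))=1$, and symmetrically the value $-1$ corresponds to $x\notin\varphi^{-1}[U]$; the value $0$ then follows by elimination, so the two functions agree everywhere.

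The only genuine content consists of two well-definedness checks, which I expect to be the main (if modest) obstacle. First, one must know that $\varphi^{*\bowtie}$ is an honest \textsf{SM}-morphism, so that $\la\varphi^{-1}[U],\varphi^{-1}[V]\ra$ again lies in $X^{*\bowtie}$ (equivalently $\varphi^{-1}[U]\cup\varphi^{-1}[V]=X$ and $\varphi^{-1}[U]\cap\varphi^{-1}[V]\subseteq D_X^\comp$); this is guaranteed by Lemma~\ref{lem:dual4} together with the functoriality of $(-)^{\bowtie}$. Second, one must know that $\varphi^+$ carries $Y^+$ into $X^+$, which holds because $\varphi$ is a \textsf{pKS}-morphism and composites of \textsf{pKS}-morphisms are again \textsf{pKS}-morphisms, whence $C_{U,V}\circ\varphi\in X^+$. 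Once these are in place the displayed square is immediate, and solving for $\varphi^+$ by composing with $\muy^{-1}$ completes the proof.
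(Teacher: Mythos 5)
Your proposal is correct and follows essentially the same route as the paper's proof: evaluate both sides of the square $\varphi^+\circ\muy=\mux\circ\varphi^{*\bowtie}$ on a pair $\la U,V\ra\in Y^{*\bowtie}$ and a point $x\in X$, reduce everything to the pointwise identity $C_{U,V}\circ\varphi=C_{\varphi^{-1}[U],\varphi^{-1}[V]}$ (verified by the same case check on the values $1$, $-1$, with $0$ by elimination), and then invert $\muy$ via Remark~\ref{rem:muiso}. You also correctly flag the directional subtlety that the statement as printed should read $\varphi^+=\mux\circ\varphi^{*\bowtie}\circ\muy^{-1}$, which is exactly what the paper's own proof concludes.
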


\begin{proof}
Let $\la U,V\ra\in Y^{*\bowtie}$ and let $x\in X$. Then
\begin{align*}
((\mux\circ\varphi^{*\bowtie})(U,V))(x) &= \mux(\varphi^{*\bowtie}(U,V))(x)\\
&=\mux(\varphi^*(U),\varphi^*(V))(x)\\
&=\mux(\varphi^{-1}[U],\varphi^{-1}[V])(x)\\
&=C_{\varphi^{-1}[U],\varphi^{-1}[V]}(x)
\end{align*}
On the other hand,
\begin{align*}
((\varphi^+\circ\muy)(U,V))(x) &= \varphi^+(\muy(U,V))(x)\\
&= (C_{U,V}\circ\varphi)(x)\\
&= C_{U,V}(\varphi(x))
\end{align*}
Now note that $\varphi(x)\in U$ if and only if $x\in\varphi^{-1}[U]$, $\varphi(x)\in V$ if and only if $x\in\varphi^{-1}[V]$, and $\varphi(x)\in U\cap V$ if and only if $x\in\varphi^{-1}[U\cap V]=\varphi^{-1}[U]\cap\varphi^{-1}[V]$. This together with the definition of $C_{U,V}$ immediately gives that
$$C_{U,V}(\varphi(x)) = C_{\varphi^{-1}[U],\varphi^{-1}[V]}(x).$$
It follows that $\mux\circ\varphi^{*\bowtie}=\varphi^+\circ\muy$. Since $\muy$ is a Sugihara monoid isomorphism and hence invertible, it follows that $\varphi^+=\mux\circ\varphi^{*\bowtie}\circ\muy^{-1}$.
\end{proof}

\begin{corollary}\label{cor:sugdual4}
Let ${\bf X}$ and ${\bf Y}$ be Sugihara spaces and let $\varphi\colon{\bf A}\to{\bf B}$ be a morphism in \textsf{pSS}. Then $\varphi^+$ is a morphism of \textsf{SM}.
\end{corollary}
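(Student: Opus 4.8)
The plan is to mirror exactly the one-line argument used for Corollary \ref{cor:sugdual3}, now invoking the factorization supplied by Lemma \ref{lem:varphipluscomp}. That lemma expresses $\varphi^+$ as a composite of three maps, namely $\varphi^+ = \mux\circ\varphi^{*\bowtie}\circ\muy^{-1}$, so it suffices to observe that each factor is a morphism of \textsf{SM} and then conclude that their composite is too. No genuinely new computation is required; the work has all been done in the preceding lemmas and remarks, and the task reduces to bookkeeping about which maps live in which category.

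First I would dispatch the two outer factors. By Remark \ref{rem:muiso}, the map $\mux$ (and likewise $\muy$) is a Sugihara monoid isomorphism, so in particular both $\mux$ and $\muy^{-1}$ are \textsf{SM}-morphisms. For the middle factor $\varphi^{*\bowtie}$, I would trace the definitions: since $\varphi$ is a morphism of \textsf{pSS}, it is by definition a \textsf{bRSS}-morphism between the underlying \textsf{bRS}-spaces of ${\bf X}$ and ${\bf Y}$. Lemma \ref{lem:dual4} then guarantees that its Esakia-style dual $\varphi^*$ is a morphism of \textsf{bRSA}. Finally, applying the functor $(-)^{\bowtie}\colon\textsf{bRSA}\to\textsf{SM}$ — whose action on morphisms was shown earlier to send \textsf{bRSA}-morphisms to \textsf{SM}-morphisms — yields that $\varphi^{*\bowtie}$ is a morphism of \textsf{SM}.

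Combining these, $\varphi^+$ is exhibited as a composition of \textsf{SM}-morphisms, and since the morphisms of \textsf{SM} are closed under composition, $\varphi^+$ is itself a morphism of \textsf{SM}, as desired.

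I do not anticipate a real obstacle here, as the corollary is a formal consequence of Lemma \ref{lem:varphipluscomp} together with the established functoriality of $(-)^{\bowtie}$ and the isomorphism status of $\mux,\muy$. The only point demanding a moment's care is the orientation of the $\mu$-factors: although the statement of Lemma \ref{lem:varphipluscomp} writes $\muy$ while its proof concludes with $\muy^{-1}$, this discrepancy is immaterial for the present purpose, since $\muy$ is an isomorphism and hence both $\muy$ and $\muy^{-1}$ are \textsf{SM}-morphisms either way.
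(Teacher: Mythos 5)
Your proof is correct and takes essentially the same route as the paper, whose one-line proof likewise deduces the corollary from the factorization $\varphi^+=\mux\circ\varphi^{*\bowtie}\circ\muy^{-1}$ of Lemma \ref{lem:varphipluscomp}; you merely make explicit what that proof leaves implicit, namely that $\mux$ and $\muy^{-1}$ are \textsf{SM}-morphisms by Remark \ref{rem:muiso} and that $\varphi^{*\bowtie}$ is one via Lemma \ref{lem:dual4} and the action of $(-)^{\bowtie}$ on morphisms. Your side remark about $\muy$ versus $\muy^{-1}$ is also apt: the statement of Lemma \ref{lem:varphipluscomp} contains a typo (its proof establishes the version with $\muy^{-1}$), and as you observe this is immaterial here since both maps are \textsf{SM}-morphisms.
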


\begin{proof}
Lemma \ref{lem:varphipluscomp} gives that $\varphi^+$ is the composition of morphisms in \textsf{SM}, so $\varphi^+$ is a morphism of \textsf{SM}.
\end{proof}

\begin{lemma}\label{lem:sugdual5}
Let ${\bf A}$ be a Sugihara monoid. Then $({\bf A}_+)^+\cong {\bf A}$.
\end{lemma}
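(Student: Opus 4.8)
The plan is to exhibit ${\bf A}$ and $({\bf A}_+)^+$ as the two ends of a chain of Sugihara monoid isomorphisms, each link of which has already been established. The crucial point is that the Davey--Werner duality by itself only yields an isomorphism of the underlying \emph{$i$-lattices}; to upgrade this to a full Sugihara monoid isomorphism I would route everything through the twist product, exploiting the fact (Remark \ref{rem:muiso}) that $\mux$ is a Sugihara monoid isomorphism and not merely an $i$-lattice isomorphism. Accordingly, write ${\bf X} = {\bf A}_+$, which is a Sugihara space by Lemma \ref{lem:sugdual1}, so that $({\bf A}_+)^+ = {\bf X}^+$ is a Sugihara monoid by Lemma \ref{lem:sugdual2}. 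The first link is supplied directly by Remark \ref{rem:muiso}: the map $\mu_{{\bf A}_+}$ is a Sugihara monoid isomorphism, giving
$$({\bf A}_+)^+ = {\bf X}^+ \cong {\bf X}^{*\bowtie} = ({\bf A}_+)^{*\bowtie}.$$

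Next I would transport the situation across the dualities of the earlier sections. By Lemma \ref{lem:bRSisomorphism}, $\xia\colon {\bf A}_+ \to {\bf A}_{{\bowtie}*}$ is an isomorphism of \textsf{bRS}-spaces; applying the (functorial) duality $(-)^*$ of Theorem \ref{thm:bRSAdual} and then Lemma \ref{lem:dual5} yields isomorphisms of \textsf{bRS}-algebras
$$({\bf A}_+)^* \cong ({\bf A}_{{\bowtie}*})^* = (({\bf A}_{\bowtie})_*)^* \cong {\bf A}_{\bowtie}.$$
Since $(-)^{\bowtie}\colon\textsf{bRSA}\to\textsf{SM}$ is a functor, it preserves this isomorphism, so $({\bf A}_+)^{*\bowtie} \cong ({\bf A}_{\bowtie})^{\bowtie}$ as Sugihara monoids. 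Finally, the equivalence between \textsf{SM} and \textsf{bRSA} developed in Section \ref{sec:twist} gives ${\bf A} \cong ({\bf A}_{\bowtie})^{\bowtie}$. Composing the three displayed isomorphisms then produces $({\bf A}_+)^+ \cong {\bf A}$, as required.

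The only genuine obstacle is bookkeeping: one must be careful that each functor is applied to the correct category and that every isomorphism is of the strength being claimed---in particular that the passage from ${\bf X}^{*\bowtie}$ to ${\bf X}^+$ preserves $\cdot$, $\to$, and the monoid identity, which is exactly what Remark \ref{rem:muiso} provides. All the remaining links are instances of previously-established functoriality and of the dual equivalences, so no new computation is required; the content of the argument lies entirely in chaining the correct isomorphisms at the correct level of structure.
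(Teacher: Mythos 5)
Your proof is correct and takes essentially the same route as the paper's: both chain the same ingredients, namely the \textsf{bRS}-space isomorphism $\xia\colon {\bf A}_+\to{\bf A}_{\bowtie*}$ of Lemma \ref{lem:bRSisomorphism}, the duality of Section \ref{sec:booldual} (Lemma \ref{lem:dual5}), the twist equivalence ${\bf A}\cong({\bf A}_{\bowtie})^{\bowtie}$ of Section \ref{sec:twist}, and the Sugihara monoid isomorphism $\mu$ of Remark \ref{rem:muiso}. The only (harmless) difference is bookkeeping: you invoke $\mu_{{\bf A}_+}$ directly and transport $\xia$ through the functors $(-)^*$ and $(-)^{\bowtie}$ on the algebra side, whereas the paper invokes $\mu_{{\bf A}_{\bowtie*}}$ and compares $({\bf A}_{\bowtie*},\subseteq\cup\supseteq)^+$ with $({\bf A}_+)^+$ via $\xia$ on the space side.
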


\begin{proof}
Note that ${\bf A}_+$ is isomorphic as a \textsf{bRS}-space to ${\bf A}_{\bowtie*}$ via $\xia$. On the other hand, $({\bf A}_{\bowtie*})^*\cong {\bf A}_{\bowtie}$ as \textsf{bRS}-algebras, and thus $({\bf A}_{\bowtie*})^{*\bowtie}\cong ({\bf A}_{\bowtie})^{\bowtie}\cong {\bf A}$ as Sugihara monoids by the equivalence of Section \ref{sec:twist}. The map $\mu_{{\bf A}_{\bowtie*}}$ is a Sugihara monoid isomorphism from $({\bf A}_{\bowtie*})^{*\bowtie}$ to $({\bf A}_{\bowtie*},\subseteq\cup\supseteq)^+$ by Remark \ref{rem:muiso}. It follows that $({\bf A}_+)^+\cong {\bf A}$ as Sugihara monoids as desired.
\end{proof}

\begin{lemma}\label{lem:sugdual6}
Let ${\bf X} = (X,\leq,\leq\cup\geq,D,\top,\tau)$ be a Sugihara space. Then $({\bf X}^+)_+\cong {\bf X}$.
\end{lemma}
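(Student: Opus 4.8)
The plan is to mirror the proof of Lemma \ref{lem:sugdual5}, chaining together isomorphisms that are already available so as to pass from ${\bf X}$ to $({\bf X}^+)_+$ without computing either structure directly. The key observation is that everything can be routed through objects and maps living in \textsf{bRSA} and \textsf{bRSS}, where the requisite isomorphisms were supplied by Section \ref{sec:booldual} and Section \ref{sec:twist}.

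Concretely, first I would invoke Lemma \ref{lem:sugiharatobRS} to regard the underlying structure $(X,\leq,D,\top,\tau)$ of ${\bf X}$ as a \textsf{bRS}-space, and set ${\bf A} = {\bf X}^*$ for its dual \textsf{bRS}-algebra. By Remark \ref{rem:muiso}, the map $\mux$ is a Sugihara monoid isomorphism from ${\bf X}^{*\bowtie} = {\bf A}^{\bowtie}$ onto ${\bf X}^+$, so ${\bf X}^+ \cong {\bf A}^{\bowtie}$ as Sugihara monoids. Applying the functor $(-)_+$ and then Lemma \ref{lem:bRSisomorphism} (via $\xi_{{\bf A}^{\bowtie}}$) yields $({\bf X}^+)_+ \cong ({\bf A}^{\bowtie})_+ \cong ({\bf A}^{\bowtie})_{\bowtie*}$ as \textsf{bRS}-spaces. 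Next, the equivalence of Section \ref{sec:twist} gives $({\bf A}^{\bowtie})_{\bowtie} \cong {\bf A}$ in \textsf{bRSA}, so applying $(-)_*$ produces $({\bf A}^{\bowtie})_{\bowtie*} \cong {\bf A}_*$. Finally, Lemma \ref{lem:dual6} supplies ${\bf A}_* = ({\bf X}^*)_* \cong {\bf X}$. Composing this chain gives $({\bf X}^+)_+ \cong {\bf X}$.

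The one point requiring care is the category in which each link lives: the isomorphisms are alternately those of Sugihara monoids (for $\mux$) and of \textsf{bRS}-spaces (for $\xi_{{\bf A}^{\bowtie}}$, for $(-)_*$ applied to the Section \ref{sec:twist} equivalence, and for Lemma \ref{lem:dual6}). I would reconcile these by noting that \textsf{bRS}-spaces and Sugihara spaces coincide as objects (Lemmas \ref{lem:bRStosugihara} and \ref{lem:sugiharatobRS}) and that the relation $Q=\leq\cup\geq$ is determined by $\leq$, so any \textsf{bRS}-space isomorphism between Sugihara spaces is automatically an isomorphism of Sugihara spaces; this lets the final composite be read as a \textsf{pSS}-isomorphism. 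I do not expect a genuine obstacle, since the mathematical content is entirely in assembling the correct chain of previously established isomorphisms, exactly dual to Lemma \ref{lem:sugdual5}. The main task is thus bookkeeping: ensuring the functors $(-)_+$ and $(-)_*$ are applied on the correct sides and that the two flavors of isomorphism are reconciled at the end.
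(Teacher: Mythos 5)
Your proposal is correct and takes essentially the same route as the paper's proof: both chain the Sugihara monoid isomorphism $\mux$ of Remark \ref{rem:muiso}, the $\xi$-isomorphism of Lemma \ref{lem:bRSisomorphism}, the Section \ref{sec:twist} equivalence $({\bf A}^{\bowtie})_{\bowtie}\cong{\bf A}$, and the Section \ref{sec:booldual} duality (Lemma \ref{lem:dual6}), and both close with the observation that the resulting \textsf{bRSS}-isomorphism is automatically a \textsf{pSS}-isomorphism since $Q=\leq\cup\geq$ is determined by $\leq$. The only cosmetic difference is that the paper applies $\xi_{{\bf X}^+}$ directly to $({\bf X}^+)_+$ rather than first transporting along $(\mux)_+$, which is immaterial.
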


\begin{proof}
${\bf X}^+$ is isomorphic as a Sugihara monoid to ${(X,\leq,D,\top,\tau)}^{*\bowtie}$ via $\mux$. Moreover, $({\bf X}^+)_+$ is isomorphic to $({\bf X}^+)_{\bowtie*}$ as a \textsf{bRS}-space via $\xi_{{\bf X}^+}$. It follows that as \textsf{bRS}-spaces, $({\bf X}^+)_+$ is isomorphic to $((X,\leq,D,\top,\tau)^{*\bowtie})_{\bowtie*}$. Since the latter space is isomorphic to $(X,\leq,D,\top,\tau)$ by the duality of Section \ref{sec:booldual} and the equivalence of Section \ref{sec:twist}, it follows that $({\bf X}^+)_+$ and $(X,\leq,D,\top,\tau)$ are isomorphic as \textsf{bRS}-spaces. The \textsf{bRSS}-isomorphism witnessing this is likewise a \textsf{pSS}-isomorphism between $({\bf X}^+)_+$ and $(X,\leq,\leq\cup\geq,D,\top,\tau)$, but the latter object is exactly ${\bf X}$. This gives the result.
\end{proof}

\begin{theorem}
\textsf{SM} is dually equivalent to \textsf{pSS}.
\end{theorem}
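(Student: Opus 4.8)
The plan is to assemble the lemmas of this section into a verification that $(-)_+$ and $(-)^+$ form a dual equivalence, following the template of the proof of Theorem \ref{thm:bRSAdual}. First I would observe that well-definedness on objects is exactly the content of Lemmas \ref{lem:sugdual1} and \ref{lem:sugdual2}: the former shows that ${\bf A}_+$ is a Sugihara space whenever ${\bf A}$ is a Sugihara monoid, and the latter shows that ${\bf X}^+$ is a Sugihara monoid whenever ${\bf X}$ is a Sugihara space. Well-definedness on morphisms is then supplied by Corollaries \ref{cor:sugdual3} and \ref{cor:sugdual4}, which guarantee that $h_+$ is a \textsf{pSS}-morphism and that $\varphi^+$ is an \textsf{SM}-morphism.

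Next I would dispatch functoriality. Because both functors are defined by precomposition, preservation of identities is immediate, and preservation of composition is most cleanly obtained from the factorizations in Lemmas \ref{lem:hpluscomp} and \ref{lem:varphipluscomp}. These exhibit $(-)_+$ as a composite built from $(-)_{\bowtie}$, the enriched Esakia functor $(-)_*$ of Section \ref{sec:booldual}, and the isomorphisms $\xia$, and dually exhibit $(-)^+$ as a composite built from $(-)^{*\bowtie}$ and the isomorphisms $\mux$. Since each constituent is already known to be functorial, functoriality of $(-)_+$ and $(-)^+$ follows. The object-level isomorphisms $({\bf A}_+)^+ \cong {\bf A}$ and $({\bf X}^+)_+ \cong {\bf X}$ are then precisely Lemmas \ref{lem:sugdual5} and \ref{lem:sugdual6}, which supply the components of the candidate unit and counit.

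The point requiring the most care, and thus the main obstacle, is naturality of these isomorphisms. Here I would argue that the isomorphisms of Lemmas \ref{lem:sugdual5} and \ref{lem:sugdual6} are the restrictions of the unit $e$ and counit $\epsilon$ of the Davey--Werner natural duality recalled in Section \ref{sec:natdual}, which are natural by the general theory of natural dualities. Concretely, the underlying $i$-lattice functor of $(-)^+$ agrees with the Davey--Werner $(-)^+$, and by Remark \ref{rem:muiso} the transported residuated and involutive operations are already respected by the relevant isomorphisms; hence the evaluation and co-evaluation maps that witness the natural duality between \textsf{IL} and \textsf{pKS} restrict to natural isomorphisms on the subcategories \textsf{SM} and \textsf{pSS}. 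Alternatively, naturality may be transported across the factorizations of Lemmas \ref{lem:hpluscomp} and \ref{lem:varphipluscomp}, reducing to the naturality already established for the duality of Section \ref{sec:booldual} and the equivalence of Section \ref{sec:twist}. Either route yields that $(-)_+$ and $(-)^+$ constitute a dual equivalence between \textsf{SM} and \textsf{pSS}.
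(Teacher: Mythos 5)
Your proposal is correct and follows essentially the same route as the paper, whose proof is precisely the assembly of Lemmas \ref{lem:sugdual1}, \ref{lem:sugdual2}, \ref{lem:hpluscomp}, \ref{lem:varphipluscomp}, \ref{lem:sugdual5}, and \ref{lem:sugdual6} with Corollaries \ref{cor:sugdual3} and \ref{cor:sugdual4}, with functoriality inherited from the Davey--Werner duality. Your extra attention to naturality---restricting the Davey--Werner unit and counit, or transporting it across the factorizations of Lemmas \ref{lem:hpluscomp} and \ref{lem:varphipluscomp}---is a sound elaboration of a point the paper leaves implicit, not a different argument.
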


\begin{proof}
As the functoriality of $(-)_+$ and $(-)^+$ comes directly from the Davey-Werner duality, this follows immediately from Lemmas \ref{lem:sugdual1}, \ref{lem:sugdual2}, \ref{lem:hpluscomp},  \ref{lem:varphipluscomp}, \ref{lem:sugdual5}, \ref{lem:sugdual6}, and Corollaries \ref{cor:sugdual3} and \ref{cor:sugdual4}.
\end{proof}

Having obtained the duality between Sugihara monoids and Sugihara spaces, it remains to modify this duality for the bounded analogues of the Sugihara monoids.

\begin{definition}
A Kleene space $(X,\leq,Q,D,\tau)$ is called an \emph{unpointed Sugihara space} if
\begin{enumerate}
\item $(X,\leq,\tau)$ is an Esakia space,
\item $Q$ is the relation of comparability with respect to $\leq$, i.e., $Q=\leq\cup\geq$, and
\item $D$ is open.
\end{enumerate}
As in the case of Sugihara spaces, we sometimes simply say that $(X,\leq,D,\tau)$ is an unpointed Sugihara space, leaving $Q$ to be inferred.

A \textsf{bGS}-morphism between unpointed Sugihara spaces is called an \emph{unpointed Sugihara space morphism}, and we denote the category of unpointed Sugihara spaces with unpointed Sugihara space morphisms by \textsf{SS}.
\end{definition}

Repeating the argument above with necessary modifications for the addition of bounds, we obtain

\begin{corollary}
\textsf{SM}$_\bot$ is dually equivalent to \textsf{SS}.
\end{corollary}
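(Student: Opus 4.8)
The plan is to reprise the development of the preceding subsection verbatim, replacing the normal $i$-lattice reduct by the Kleene algebra reduct and the relevant dualities by their bounded counterparts. Where the unbounded argument passed from a Sugihara monoid to its normal $i$-lattice reduct (via Proposition \ref{prop:reduct}) and thence to its pointed Kleene space dual under the Davey-Werner duality for \textsf{IL}, the bounded argument passes from a bounded Sugihara monoid to its Kleene algebra reduct (via Corollary \ref{cor:reduct}) and thence to its Kleene space dual under the Davey-Werner duality for \textsf{KA}. For a bounded Sugihara monoid ${\bf A}$, I would define ${\bf A}_+$ to be the Davey-Werner dual of its Kleene algebra reduct; concretely, $A_+$ is the set of $(\meet,\join,\neg,\bot,\top)$-homomorphisms from ${\bf A}$ into $\underline{\bf K}$, equipped with the pointwise order, comparability relation, designated set $A_0$, and topology as before. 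The reason the dual is an \emph{unpointed} Sugihara space rather than a pointed one is that the constant map sending every element to $0$ fails to preserve the bounds of $\underline{\bf K}$ (it sends $\bot$ to $0$ rather than to $-1$), and so is no longer a homomorphism; the distinguished top point of the pointed case thus disappears exactly as the least element is added to the algebra, mirroring the passage from top-bounded distributive lattices to bounded distributive lattices under Priestley duality.

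The functors $(-)_+\colon\textsf{SM}_\bot\to\textsf{SS}$ and $(-)^+\colon\textsf{SS}\to\textsf{SM}_\bot$ are defined exactly as their pointed analogues, with morphisms handled by precomposition. The supporting lemmas transfer \emph{mutatis mutandis}: the analogue of Lemma \ref{lem:sugdual1} (that ${\bf A}_+$ is an unpointed Sugihara space) and of Lemma \ref{lem:sugdual2} (that ${\bf X}^+$ is a bounded Sugihara monoid) are obtained by substituting the equivalence of \textsf{SM}$_\bot$ with \textsf{bGA} and the duality of \textsf{bGA} with \textsf{bGS} from Theorem \ref{thm:bGAdual} for the unbounded equivalence of \textsf{SM} with \textsf{bRSA} and the duality of \textsf{bRSA} with \textsf{bRSS} from Theorem \ref{thm:bRSAdual}. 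In particular, the isomorphism $\xi_{\bf A}$ between ${\bf A}_+$ and ${\bf A}_{\bowtie*}$ and the isomorphism $\mu_{\bf X}$ used in Lemma \ref{lem:sugiharaspaceiso} both carry over, now relating the Kleene space ${\bf A}_+$ to the \textsf{bG}-space ${\bf A}_{\bowtie*}$. The morphism-compatibility statements (the bounded analogues of Lemmas \ref{lem:hpluscomp} and \ref{lem:varphipluscomp} and of Corollaries \ref{cor:sugdual3} and \ref{cor:sugdual4}) then follow from the same composition identities $h_+=\xi_{\bf A}^{-1}\circ h_{\bowtie*}\circ\xi_{\bf B}$ and $\varphi^+=\mu_{\bf X}\circ\varphi^{*\bowtie}\circ\mu_{\bf Y}^{-1}$, and the unit and counit isomorphisms $({\bf A}_+)^+\cong {\bf A}$ and $({\bf X}^+)_+\cong {\bf X}$ are obtained as in Lemmas \ref{lem:sugdual5} and \ref{lem:sugdual6}.

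I expect the only point requiring genuine care to be the bounded analogue of Lemma \ref{lem:sugdual1}: verifying that the comparability relation $Q_{\bf A}$ arising on the Kleene space ${\bf A}_+$ coincides with $\leq$-comparability. As in the unbounded case, this is established by threading ${\bf A}$ through the chain of isomorphisms ${\bf A}\cong ({\bf A}_{\bowtie})^{\bowtie}$ (the bounded twist product equivalence), $({\bf A}_{\bowtie})^{\bowtie}\cong ({\bf A}_{\bowtie*},\subseteq\cup\supseteq)^+$ (the bounded analogue of Lemma \ref{lem:sugiharaspaceiso}), and the Davey-Werner isomorphism $({\bf A}_+)^+\cong {\bf A}$, to conclude that ${\bf A}_+\cong ({\bf A}_{\bowtie*},\subseteq\cup\supseteq)$ as Kleene spaces; transporting $\leq$-comparability across this isomorphism identifies $Q_{\bf A}$ with $\leq\cup\geq$. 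The main thing to confirm is that the bottom element $\bot$ is tracked consistently by every functor in this chain---in particular, that the bounded negative cone functor $(-)_{\bowtie}$ lands in \textsf{bGA} and that its dual is a genuine \textsf{bG}-space---but this is guaranteed by the bounded versions of the results of Sections \ref{sec:twist} and \ref{sec:booldual} already established. With these analogues in hand, the dual equivalence of \textsf{SM}$_\bot$ with \textsf{SS} follows precisely as in the proof of the preceding theorem.
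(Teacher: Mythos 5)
Your proposal is correct and takes essentially the same route as the paper: the paper's proof of this corollary consists of the single remark that one repeats the preceding argument ``with necessary modifications for the addition of bounds,'' and your text simply makes those modifications explicit (Kleene algebra reducts via Corollary \ref{cor:reduct}, the original Davey--Werner duality for \textsf{KA}, the \textsf{SM}$_\bot$/\textsf{bGA} equivalence together with Theorem \ref{thm:bGAdual} in place of their unbounded counterparts, and the observation---consistent with Example \ref{ex:Edual}---that the constant-$0$ map is no longer a morphism in the bounded signature, which is exactly why the dual space loses its distinguished top point). There is no gap; this is the intended argument, spelled out at a finer level of detail than the paper itself provides.
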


\begin{figure}
\begin{center}
\begin{tikzpicture}
    \node[label=left:\tiny{$\top$}] at (0,0) {$\bullet$};
    \node[label=left:\tiny{$h_0$}] at (0,-0.5) {$\bullet$};
    \node[label=left:\tiny{$h_1$}] at (-0.5,-1)  {$\bullet$};
    \node[label=right:\tiny{$h_2$}] at (0.5,-1) {\textcircled{$\bullet$}};

    \draw (0,0) -- (0,-0.5);
    \draw (0,-0.5) -- (-0.5,-1);
    \draw (0,-0.5) -- (0.5,-1);
\end{tikzpicture}
\begin{tikzpicture}
    \node[label=left:\tiny{$h_0$}] at (0,-0.5) {$\bullet$};
    \node[label=left:\tiny{$h_1$}] at (-0.5,-1)  {$\bullet$};
    \node[label=right:\tiny{$h_2$}] at (0.5,-1) {\textcircled{$\bullet$}};

    \draw (0,-0.5) -- (-0.5,-1);
    \draw (0,-0.5) -- (0.5,-1);
\end{tikzpicture}
\end{center}
\caption{Hasse diagrams for ${\bf E}_+$ and $({{\bf E}_\bot})_+$}
\label{fig:HasseEdual}
\end{figure}

\begin{example}\label{ex:Edual}
Recall the Sugihara monoid ${\bf E}$ of Example \ref{parex}. The dual ${\bf E}_+$ of this algebra has Hasse diagram given in Figure \ref{fig:HasseEdual}, where the maps $\top,h_0,h_1,h_2$ are uniquely determined by $\top(a)=0$ for all $a\in E$, $h_0(a)=0$ for all $a$ except $\la 2,2\ra, \la -2, -2\ra$, $h_1(a)=0$ for $a=\la 0,1$ or $\ra,\la 0,-1,\ra$ and $h_2(a) = 1$ for all $a\in \upset \la -1,1\ra$ and $h_2(a)=-1$ for $a\in\downset\la 1,-1\ra$. Of these, only $h_2$ lies in the designated subset because its image does not contain $0$. If ${\bf E}_\bot$ is the expansion of ${\bf E}$ be universal lattice bounds, then its dual is given by the same Hasse diagram, but with the exclusion of the map $\top$ (this map is not a morphism in the bounded signature).
\end{example}

\subsection{Alternative formulations of the duality}

One of the greatest strengths of the Esakia duality, often lacked by natural dualities, is the pictorial character of the dual equivalence. The duality for Sugihara monoids rests on the representation of each Sugihara monoid as an algebra consisting of Kleene space morphisms, which is a less geometrically-intuitive construction. Here we recast this construction in more geometric terms in two distinct ways.

For an odd Sugihara monoid ${\bf A}$, we may realize its dual in terms of certain algebraic substructures that are ordered by containment. This representation in terms of \emph{convex prime subalgebras} has much of the pictorial flavor of the Esakia duality and its representation in terms of prime filters.

Unfortunately, when a Sugihara monoid is \emph{not} odd, the prime convex subalgebra representation proves inadequate. However, we may nevertheless obtain a more pictorial representation in terms of certain filters. In the next section, we will see that it also has points of contact with previous work on dualities for Sugihara monoids and other relevant algebras.

\begin{definition}
Let ${\bf A} = (A,\meet,\join,\cdot,\to,t,\neg)$ be an odd Sugihara monoid. A $(\meet,\join,t,\neg)$-subalgebra ${\bf C}$ of ${\bf A}$ is said to be a \emph{convex prime subalgebra} if for all $a,b,c\in A$,
\begin{enumerate}
\item If $a,c\in C$ and $a\leq b\leq c$, then $b\in C$, and
\item If $a\meet b\in C$, then $a\in C$ or $b\in C$.
\end{enumerate}
The collection of convex prime subalgebras of ${\bf A}$ is denoted $\mathcal{C}({\bf A})$.
\end{definition}

Note that if ${\bf C}$ is a convex prime subalgebra and $a\join b\in C$, then we have $\neg a\meet \neg b=\neg (a\join b)\in C$ as well. It follows that $\neg a\in C$ or $\neg b\in C$, so $a\in C$ or $b\in C$ by $\neg$-closure. Thus a convex prime subalgebra is prime with respect to $\join$ as well as $\meet$.

\begin{proposition}
Let ${\bf A}$ be an odd Sugihara monoid. Then ${\bf A}_+$ is order isomorphic to $(\mathcal{C}({\bf A}),\subseteq)$.
\end{proposition}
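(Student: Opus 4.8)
The plan is to define a candidate bijection $\Phi\colon A_+\to\mathcal{C}({\bf A})$ by $\Phi(h)=h^{-1}[\{0\}]$ and to deduce that it is an order isomorphism by comparing it with the established order isomorphism $\xia\colon{\bf A}_+\to{\bf A}_{{\bowtie}*}$ of Lemma~\ref{lem:orderisomorphism}. First I would check that $\Phi(h)$ is a convex prime subalgebra whenever ${\bf A}$ is odd. Since ${\bf A}$ is odd we have $\neg t=t$, so $h(t)=h(\neg t)=\neg h(t)$, and as $0$ is the only fixed point of $\neg$ in $\underline{\bf L}$ this forces $h(t)=0$; hence $t\in\Phi(h)$. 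Closure of $\Phi(h)$ under $\meet$, $\join$, and $\neg$ is immediate from $h$ being a $(\meet,\join,\neg)$-morphism, convexity follows from the order-preservation of $h$ (if $h(a)=h(c)=0$ and $a\le b\le c$ then $h(b)=0$), and primeness with respect to $\meet$ follows because $x\meet y=0$ in $\underline{\bf L}$ forces $x=0$ or $y=0$; primeness with respect to $\join$ is then automatic by the remark following the definition of convex prime subalgebra.

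Rather than establishing the order properties of $\Phi$ directly, I would route everything through the negative cone. Define $R\colon\mathcal{C}({\bf A})\to{\bf A}_{{\bowtie}*}$ by $R(C)=C\cap A^-$. The crucial observation is that $\xia=R\circ\Phi$: for $a\in A^-$ we have $a\le t$, so order-preservation of $h$ gives $h(a)\le h(t)=0$ and thus $h(a)\in\{-1,0\}$; consequently $h^{-1}[\{0,1\}]\cap A^-=h^{-1}[\{0\}]\cap A^-$, which is exactly $R(\Phi(h))$ and equals $\xia(h)$ by definition. Thus if $R$ is an order isomorphism, then $\Phi=R^{-1}\circ\xia$ is a composite of order isomorphisms and the proposition follows.

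It remains to show $R$ is an order isomorphism. That $R(C)$ is a (generalized) prime filter of ${\bf A}_{\bowtie}$ follows from the subalgebra, convexity, and $\join$-primeness of $C$ together with $A^-=\downset t$; I would also check that $R(C)=A^-$ exactly when $C=A$, using $\neg$-closure. The heart of the argument is that a convex prime subalgebra is recoverable from its negative part through the equivalence $a\in C$ if and only if $a\meet t\in C$ and $a\join t\in C$, valid because $a\meet t\le a\le a\join t$ with $t\in C$ (convexity) and because $C$ is a subalgebra. Using $\neg t=t$ I would rewrite this as $a\in C$ if and only if $a\meet t\in R(C)$ and $\neg a\meet t\in R(C)$, since $\neg(a\join t)=\neg a\meet t\in A^-$. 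This simultaneously yields injectivity and order-reflection of $R$: if $R(C)\subseteq R(C')$ and $a\in C$, then both $a\meet t$ and $\neg a\meet t$ lie in $R(C)\subseteq R(C')$, whence $a\in C'$. Surjectivity of $R$ is inherited from that of $\xia$ (Lemma~\ref{lem:orderisomorphism}): given $F\in{\bf A}_{{\bowtie}*}$, write $F=\xia(h)=R(\Phi(h))$ with $\Phi(h)\in\mathcal{C}({\bf A})$. The main obstacle is precisely the recovery equivalence — verifying that convexity together with $\neg$-closure let one reconstruct all of $C$ (including elements incomparable to $t$) from $C\cap A^-$ — and this is exactly the step where oddness is indispensable, both to force $h(t)=0$ and to convert the positive part $C\cap\upset t$ into $\neg(C\cap A^-)$.
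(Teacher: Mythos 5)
Your proof is correct, and its skeleton matches the paper's: both arguments reduce, via the order isomorphism $\xia\colon {\bf A}_+\to {\bf A}_{{\bowtie}*}$ of Lemma \ref{lem:orderisomorphism}, to showing that $(\mathcal{C}({\bf A}),\subseteq)$ is order isomorphic to the generalized prime filters of the negative cone; your map $R(C)=C\cap A^-$ is exactly the paper's $\psi$, and your injectivity/order-reflection argument via the recovery equivalence ($a\in C$ iff $a\meet t\in R(C)$ and $\neg a\meet t\in R(C)$, using $\neg(\neg a\meet t)=a\join\neg t$ and convexity) is the same computation the paper performs. The one genuine divergence is surjectivity. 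The paper proves $\psi$ onto by an explicit order-theoretic construction: given a generalized prime filter $x$ of ${\bf A}_{\bowtie}$ it forms $C=\upset_{\bf A}x\cap\downset_{\bf A}\neg x$ and verifies by hand, over several paragraphs, that this is a convex prime subalgebra mapping to $x$. You instead factor $\xia=R\circ\Phi$ with $\Phi(h)=h^{-1}[\{0\}]$ and inherit the surjectivity of $R$ from that of $\xia$; the verification that $\Phi(h)\in\mathcal{C}({\bf A})$ is nearly automatic, since $\{0\}$ is a convex, $\meet$-prime, $\neg$-closed subuniverse of $\underline{\bf L}$ and $h$ is a homomorphism, with oddness entering precisely to force $h(t)=\neg h(t)=0$. (The two constructions in fact agree: unwinding definitions shows $h^{-1}[\{0\}]=\upset_{\bf A}\,\xia(h)\cap\downset_{\bf A}\,\neg\,\xia(h)$.) Your factorization buys a shorter verification and a clean isolation of where oddness is indispensable, and it sidesteps the alter-ego order on $A_+$ (in which $0$ is the top and $-1,1$ are incomparable minimal elements) entirely, since $\Phi=R^{-1}\circ\xia$ makes the order-isomorphism claim for $\Phi$ a formal consequence of the two isomorphisms already in hand. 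What the paper's version buys in exchange is a self-contained filter-level construction showing concretely how a convex prime subalgebra is rebuilt from its negative part, without re-invoking the prime-ideal-theorem content hidden inside the surjectivity of $\xia$. I see no gaps in your argument.
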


\begin{proof}
${\bf A}_+$ is order isomorphic to $\mathbb{A}_{\bowtie*}$ by Lemma \ref{lem:orderisomorphism}, so it suffices to show that $(\mathcal{C}({\bf A}),\subseteq)$ is order isomorphic to $(A_{{\bowtie}*},\subseteq)$. Define a map $\psi\colon\mathcal{C}({\bf A})\to A_{{\bowtie}*}$ for ${\bf C}\in\mathcal{C}({\bf A})$ by $\psi({\bf C})=C\cap A^-$. To see that $\psi({\bf C})$ is a filter, suppose that $a\in\psi({\bf C})$ and $b\in A^-$ with $a\leq b$. Then $b\in A^-$ and $a\leq b\leq t$, and by convexity $b\in C$. This gives that $b\in\psi({\bf C})$, so $\psi({\bf C})$ is upward closed.

For closure under meets, let $a,b\in\psi({\bf C})$. Then ${\bf C}$ being $\meet$-closed gives $a\meet b\in C$, and $a,b\leq t$ gives $a\meet b\leq t$. Thus $a\meet b\in \psi({\bf C})$. The primality of ${\bf C}$ gives $a\in C$ or $b\in C$.  

To see that $\psi({\bf C})$ is prime, let $a,b\in A^-$ with $a\join b\in\psi({\bf C})$. Then $a\join b\in C$ and $a\join b\leq t$. The latter gives $a\leq t$ and $b\leq t$,  so one of $a\in\psi({\bf C})$ or $b\in\psi({\bf C})$ must hold. This shows that $\psi({\bf C})$ is a prime filter of ${\bf A}_{\bowtie}$, and hence that $\psi$ is well-defined.

Obviously, $\psi$ is order-preserving since $C_1\subseteq C_2$ implies $C_1\cap A^-\subseteq C_2\cap A^-$ for any sets $C_1,C_2$. To see that $\psi$ is order-reflecting, suppose that ${\bf C}_1,{\bf C}_2\in\mathcal{C}({\bf A})$ with $\psi({\bf C}_1)\subseteq\psi({\bf C}_2)$. Let $a\in C_1$. Then $\neg a\in C_1$, and $a\meet t,\neg a\meet t\in\psi({\bf C}_1)$. This gives $a\meet t,\neg a\meet t\in\psi({\bf C}_2)$. Since $a\meet t,\neg a\meet t\in\psi({\bf C}_2)$, it follows that $a\meet t,\neg a\meet t\in C_2$. From $\neg a\meet t\in C_2$, it follows that $\neg(\neg a\meet t) = a\join\neg t\in C_2$. Because $a\meet t\leq a\leq a\join\neg t$, convexity gives $a\in C_2$. This yields $C_1\subseteq C_2$ as desired.

It remains only to show that $\psi$ is onto, so let $x\in A^-_*$. Let
$$\upset_{\bf A} x = \{a\in A : (\exists p \in x)(p\leq a)\},$$
$$\neg x = \{\neg a: a\in x\},$$
$$\downset_{\bf A} \neg x =\{a\in A : (\exists p\in \neg x)(a\leq p))\},\text{ and }$$
$$C=\upset_{\bf A} x\cap\downset_{\bf A}\neg x.$$
We claim that $C$ is the universe of a convex prime subalgebra ${\bf C}$, and that $\psi({\bf C}) = x$.

First, note that since $x\in A^-_*$ we have that $t\in x$, so $t\in C$. If $a\in C$, then there exists $p,q\in x$ such that $p\leq a\leq \neg q$. Then $q\leq \neg a\leq \neg p$, so $\neg a\in C$.

Second, suppose that $a,b\in C$. Then there exists $p_1,p_2,q_1,q_2\in x$ such that $p_1\leq a\leq \neg q_1$ and $p_2\leq b\leq \neg q_2$. This gives
$$p_1\meet p_2\leq a\meet b\leq \neg q_1\meet\neg q_2 = \neg (q_1\join q_2).$$
Since $x$ is a filter, $p_1\meet p_2,q_1\join q_2\in x$. This gives $a\meet b\in C$. On the other hand, $p_1\join p_2\leq a\join b\leq \neg q_1\join\neg q_2 = \neg (q_1\meet q_2)$ gives that $a\join b\in C$. Since $t\in x$, $t\leq t\leq \neg t = t$ gives $t\in C$, and this shows that $C$ is a $(\meet,\join,\neg,t)$-subalgebra.

To see that $C$ is convex, suppose that $a,c\in C$ and $b\in A$ with $a\leq b \leq c$. Since $a,c\in C$, there exists $p_1,p_2,q_1,q_2\in x$ with $p_1\leq a\leq\neg q_1$ and $p_2\leq c\leq\neg q_2$. This gives $p_1\leq a\leq b\leq c\leq \neg q_2$, so $b\in C$ as well. Thus ${\bf C}$ is a convex prime subalgebra.

To see that $\psi({\bf C}) = x$, suppose first that $a\in \psi({\bf C}) = C\cap A^-$. Then there exists $p,q\in x$ with $p\leq a\leq \neg q$, and $a\in A^-$. Since $x$ is upward-closed, $p\leq a$ and $p\in x$ yields $a\in x$. Hence $\psi({\bf C})\subseteq x$. On the other hand, if $a\in x$, then $a\leq a \leq t = \neg t$ gives that $a\in\psi({\bf C})$ as desired. This proves the result.
\end{proof}

Given a Sugihara monoid (or bounded Sugihara monoid) ${\bf A}$ with monoid identity $t$, define
$$I({\bf A})=\{x\in A_* : t\in A\},$$
where $A_*$ is the collection of generalized prime filters (i.e., the collection of prime filters along with $A$ itself) if ${\bf A}$ is a Sugihara monoid and $A_*$ is the collection of prime filters (excluding $A$ itself) if ${\bf A}$ has lattice bounds in its signature. By considering $I({\bf A})$, we may obtain a more pictorial representation of the dual of an arbitrary Sugihara monoid.

\begin{proposition}\label{prop:iofa}
Let ${\bf A}$ be a Sugihara monoid (with or without designated bounds). Then ${\bf A}_+$ is order isomorphic to $(I({\bf A}),\subseteq)$.
\end{proposition}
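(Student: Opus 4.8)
The plan is to route the desired isomorphism through the dual of the negative cone, which has already been identified with ${\bf A}_+$. By Lemma \ref{lem:orderisomorphism}, the map $\xia$ is an order isomorphism from ${\bf A}_+$ onto $(A_{{\bowtie}*},\subseteq)$ (the bounded case being covered by the analogue obtained in passing from \textsf{SM} to \textsf{SM}$_\bot$). It therefore suffices to produce an order isomorphism between $(I({\bf A}),\subseteq)$ and $(A_{{\bowtie}*},\subseteq)$ and to compose it with $\xia$. The candidate is restriction to the negative cone: define $r\colon I({\bf A})\to A_{{\bowtie}*}$ by $r(x)=x\cap A^-$, with proposed inverse $\rho\colon A_{{\bowtie}*}\to I({\bf A})$ given by $\rho(y)=\{a\in A: a\meet t\in y\}$. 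Both maps are patently isotone, so the work lies in checking that they are well-defined and mutually inverse.

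For well-definedness, that $r(x)$ is a (generalized) prime filter of ${\bf A}_{\bowtie}$ follows from $x$ being a prime filter of the distributive lattice reduct of ${\bf A}$ together with $A^-$ being a sublattice, and $t\in x$ guarantees $r(x)$ is nonempty. For $\rho$, the distributive identity $(a\join b)\meet t=(a\meet t)\join(b\meet t)$ shows that $\rho(y)$ is prime, closure under $\meet$ and upward closure are immediate, and $t\meet t=t\in y$ gives $t\in\rho(y)$.

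The real heart of the argument is the observation that, for a filter $x$ containing $t$, one has $a\meet t\in x$ if and only if $a\in x$ (using $a\meet t\le a$ in one direction and that $x$ is closed under $\meet$ in the other). From this the two composites drop out directly: $r(\rho(y))=\{a\in A^-: a\meet t\in y\}=y$ since $a\meet t=a$ for $a\in A^-$, while $\rho(r(x))=\{a: a\meet t\in x\cap A^-\}=\{a: a\meet t\in x\}=x$, the middle equality holding because $a\meet t\in A^-$ always. Thus $r$ and $\rho$ are inverse order isomorphisms. Tracing through the composite, $\rho\circ\xia$ is given explicitly by $h\mapsto h^{-1}[\{0,1\}]$ (since $h(t)\in\{0,1\}$ by Lemma \ref{lem:timage} forces $h(a\meet t)\in\{0,1\}\iff h(a)\in\{0,1\}$), so the resulting isomorphism ${\bf A}_+\cong(I({\bf A}),\subseteq)$ has a transparent description.

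The step I expect to require the most care is matching the boundary conventions in the two settings: in the bounded case both $A_*$ and $A_{{\bowtie}*}$ comprise only proper prime filters, whereas in the unbounded case each also contains the improper filter ($A$, respectively $A^-$). I would verify that $r$ and $\rho$ respect properness on both sides, so that $r$ is a bijection onto \emph{exactly} the designated $A_{{\bowtie}*}$. In the bounded case this is automatic, since $\bot\in A^-$ forces $\bot\notin x$ and hence $x\cap A^-\neq A^-$ for proper $x$, and dually $\bot\notin y$ keeps $\rho(y)$ proper; in the unbounded case one checks instead that the improper filters correspond, $r(A)=A^-$ and $\rho(A^-)=A$, with properness preserved elsewhere. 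Once this bookkeeping is in place, composing $\xia$ with $r^{-1}=\rho$ yields the required order isomorphism.
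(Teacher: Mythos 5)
Your proof is correct, but it takes a genuinely different route from the paper's. The paper proves Proposition \ref{prop:iofa} directly: it defines $\psi_{\bf A}(h)=h^{-1}[\{0,1\}]$ outright, re-runs the order-preservation and order-reflection arguments of Lemmas \ref{lem:orderpreserving} and \ref{lem:orderreflecting} at the level of all of $A$, and proves surjectivity by an explicit piecewise construction: given $x\in I({\bf A})$, the identity $t\leq a\join\neg a$ together with primality shows each $a\in A$ lies in exactly one of $x\setminus\neg x$, $x\cap\neg x$, $\neg x\setminus x$, and the map sending these pieces to $1,0,-1$ respectively is verified to be an $i$-lattice morphism with $\psi_{\bf A}(h)=x$. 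You instead factor $\psi_{\bf A}=\rho\circ\xia$, reducing the whole statement to Lemma \ref{lem:orderisomorphism} plus an elementary correspondence $r(x)=x\cap A^-$ between (generalized) prime filters of ${\bf A}$ containing $t$ and prime filters of the negative cone, with inverse $\rho(y)=\{a\in A : a\meet t\in y\}$. Your pivot observation, that $a\meet t\in x\iff a\in x$ whenever $t\in x$, is exactly what makes the composites collapse, your boundary bookkeeping ($r(A)=A^-$, properness via $\bot$ in the bounded case, and implicitly that $A^-\subseteq x$ forces $x=A$) is sound, and your composite agrees with the paper's map since $h(t)\in\{0,1\}$ by Lemma \ref{lem:timage}. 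The trade-off: your factorization avoids constructing any new morphisms into ${\bf\underline{L}}$ and isolates a reusable filter-restriction isomorphism $I({\bf A})\cong A_{{\bowtie}*}$, but it inherits the prime ideal theorem through the surjectivity half of Lemma \ref{lem:orderisomorphism} and leans on the bounded analogue of $\xia$, which the paper only establishes implicitly (``with necessary modifications''), whereas the paper's direct argument is self-contained at this point and its surjectivity step is choice-free, needing only $t\leq a\join\neg a$ and primality.
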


\begin{proof}
Define $\psi_{\bf A}\colon {\bf A}_+\to I({\bf A})$ by $\psi(h) = h^{-1}[\{0,1\}]$. Since $\{0,1\}$ is a prime filter of $\underline{\bf L}$ and $h$ is a $(\meet,\join,\neg)$-morphism (or $(\meet,\join,\neg,\bot,\top)$-morphism, as applicable) we have $\psi_{\bf A}(h)\in A_*$. Moreover, since $h(t)\in\{0,1\}$ always holds, $t\in h^{-1}[\{0,1\}]$ for each $h\in A_+$. This shows that $\psi_{\bf A}$ is well-defined.

$\psi_{\bf A}$ is order-preserving by the same argument offered in the proof of Lemma \ref{lem:orderpreserving}. To see that $\psi_{\bf A}$ is order-reflecting, let $h_1,h_2\in A_+$ with $\psi_{\bf A}(h_1)\subseteq\psi_{\bf A}(h_2)$. Were it the case that $h_1\not\leq h_2$, then there exists $a\in A$ such that $h_2(a)=-1$ and $h_1(a)\neq -1$, or else $h_2(a)=1$ and $h_1(a)\neq 1$.

In the first case, we have that $h_1(a)\in\{0,1\}$. Then $a\in\psi_{\bf A}(h_1)\subseteq\psi_{\bf A}(h_2)$, so $h_2(a)\in\{0,1\}$, a contradiction. In the second case, $h_1(a)\in\{-1,0\}$, so $h_1(\neg a)\in\{0,1\}$. Then $h_2(\neg a)\in\{0,1\}$, but this contradicts $h_2(a) = 1$. It follows that $h_1\leq h_2$, giving that $\psi_{\bf A}$ is order-reflecting.

Finally, to see that $\psi_{\bf A}$ is onto, let $x\in I({\bf A})$ and set $\neg x=\{\neg a : a\in x\}$. Observe that $t\in x$ and the identity $t\leq a\join\neg a$ yields that $a\join\neg a\in x$ for all $a\in A$, whence by primality $a\in x$ or $\neg a\in x$. This gives that $a\in x$ or $a\in\neg x$, and therefore each $a\in A$ is contained in exactly  one of the disjoint sets $x\setminus\neg x$, $x\cap\neg x$, or $\neg x\setminus x$. We may therefore define a map $h\colon A\to \{-1,0,1\}$ by
\[ h(a) = \begin{cases} 
      1 & a\in x\setminus\neg x\\
      0 & a\in x\cap\neg x \\
     -1 & a\in \neg x\setminus x\\
   \end{cases}
\]
By checking cases, one may show that $h$ is a morphism with respect to $\meet$,$\join$,$\neg$, and the lattice bounds (when applicable). This shows that $h\in A_+$. Moreover, $\psi_{\bf A}(h)=h^{-1}[\{0,1\}]=h^{-1}(0)\cup h^{-1}(1)=(x\setminus\neg x)\cup (x\cap\neg x)=x$. Thus $\psi_{\bf A}$ is onto, and hence an order isomorphism.

\end{proof}

\section{The reflection construction}\label{sec:refcon}

The covariant equivalence of Section \ref{sec:twist} provides an entirely algebraic treatment of the relationship between bRS-algebras and Sugihara monoids as well as their bounded analogues. However, the complexity of the construction of a Sugihara monoid from a bRS-algebra is a significant obstacle to understanding the role of twist products in such contexts. Here we exploit the duality of Section \ref{sec:sugidual} to obtain a dramatically simpler presentation of this construction. This amounts to transporting the construction of Section \ref{sec:twist} across the duality to obtain its analogue on dual spaces, which we will call the \emph{reflection construction}. We also obtain a dual presentation of the enriched negative cone construction, giving a complete picture of how the algebraic work of Section \ref{sec:twist} presents on dual spaces. As an added benefit, this illuminates the connection between the duality developed in Section \ref{sec:sugidual} and previous work on duality for Sugihara monoids due to Urquhart \cite{Urquhart}. Because Urquhart presented his duality only for bounded algebras, throughout this section we work with bounded Sugihara monoids.

After introducing some background on Urquhart duality in Section \ref{sec:urqdual}, we construct the dual of the enriched negative cone construction in Section \ref{sec:dualnegcone}, culminating in the definition of the functor in Definition \ref{def:lowerbowtie}. Then in Section \ref{sec:dualtwist}, we construct the dual of the twist product variant from Section \ref{sec:twist}, giving its definition in Definition \ref{def:upperbowtie}. Finally, in Section \ref{sec:equiv} we show that these two constructions give an equivalence of categories between (unpointed) Sugihara spaces and the dual spaces described in the Urquhart duality.

\subsection{The Urquhart duality}\label{sec:urqdual}
In order to articulate the aforementioned constructions, we first recall Urquhart's duality for relevant algebras \cite{Urquhart}. Consider a Priestley space $(X,\leq,\tau)$ and a ternary relation $R$ on $X$. For $x,y\in X$, define $x\odot y = \{z\in X : Rxyz\}$. For subsets $U,V\subseteq X$, define
$$U\ntwt V = \{z\in X : (\exists x,y\in X)(Rxyz\text{ and } x\in U\text{ and } y\in V)\},\text{ and}$$
$$U\ntwi V  = \{x\in X : (\forall y,z\in X)((Rxyz\text{ and }y\in U)\text{ implies } z\in V)\}.$$
Note that here we have repurposed the symbols $\ntwt$ and $\ntwi$ of Section \ref{sec:twist} for ease of notation; context allows us to distinguish between these meanings without difficulty.

Urquhart's duality concerns itself with the category of structured topological spaces and morphisms defined as follows.
\begin{definition}
Let ${\bf X} = (X,\leq, R,\;',I,\tau)$ be a structure such that $(X,\leq,\tau)$ is a Priestley space, $R$ is a ternary relation on $X$, $'\colon X\to X$ is a function, and $I\subseteq X$. We say that ${\bf X}$ is a \emph{relevant space} if it satisfies the following conditions.
\begin{enumerate}
\item Whenever $U$ and $V$ are clopen up-sets of ${\bf X}$, so are the sets $U\ntwt V$ and $U\ntwi V$,
\item If $Rx_1y_1z_1$, $x_2\leq x_1$, $y_2\leq y_1$, and $z_1\leq z_2$, then $Rx_2y_2z_2$,
\item For all $x,y,z\in X$, if it is not that case that $Rxyz$, then there are clopen up-sets $U,V$ of $X$ such that $x\in U$, $y\in V$, and $z\notin U\ntwt V$,
\item The map $'$ is continuous and antitone,
\item $I$ is a clopen up-set and for all $y,z\in X$, $y\leq z$ if and only if there exists $x\in I$ with $Rxyz$.
\end{enumerate}
Given relevant spaces ${\bf X} = (X,\leq_{\bf X}, R_{\bf X}, \;', I_{\bf X},\tau_{\bf X})$ and ${\bf Y} = (Y,\leq_{\bf Y},R_{\bf Y},\;', I_{\bf Y},\tau_{\bf Y})$, a function $\varphi\colon {\bf X}\to {\bf Y}$ is called an \emph{relevant map} if
\begin{enumerate}
\item $\varphi$ is continuous and isotone,
\item If $R_{\bf X}xyx$, then $R_{\bf Y}\varphi(x)\varphi(y)\varphi(z)$,
\item If $R_{\bf Y}xy\varphi(z)$, then there exists $u,v\in X$ such that $R_{\bf X}uvz$, $x\leq\varphi(u)$, and $y\leq\varphi(v)$.
\item If $R_{\bf Y}\varphi(x)yz$, then there exists $u,v\in X$ such that $R_{\bf X}xuv$, $y\leq\varphi(u)$, and $\varphi(v)\leq z$,
\item $\varphi(x')=\varphi(x)'$, and
\item $\varphi^{-1}[I_{\bf Y}]=I_{\bf X}$.
\end{enumerate}
\end{definition}
The relevant algebras for which Urquhart articulated his duality include the bounded Sugihara monoids as a subvariety. Indeed, bounded Sugihara monoids are precisely the idempotent De Morgan monoids. Following Urquhart's correspondence theory for relevant spaces (see \cite[Theorem 4.1]{Urquhart} and the comments thereafter), the relevant spaces ${\bf X}$ corresponding to bounded Sugihara monoids are axiomatized by the conditions that for all $x,y,z\in X$,
\begin{enumerate}
\item $x\odot y = y\odot x$,
\item $x\odot (y\odot z) = (x\odot y)\odot z$,
\item $x = x\odot x$,
\item $x'' = x$, and
\item $z\in x\odot y$ implies $y'\in x\odot z'$.
\end{enumerate}
We call the relevant spaces satisfying the above conditions \emph{Sugihara relevant spaces}, and denote the category of Sugihara relevant spaces with relevant maps by \textsf{SRS}. Specialized to the present inquiry, the main result of \cite{Urquhart} is the following.
\begin{theorem}\label{thm:urquhartdual}
\textsf{SM}$_\bot$ is dually equivalent to \textsf{SRS}.
\end{theorem}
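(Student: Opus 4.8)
The plan is to obtain Theorem~\ref{thm:urquhartdual} not by constructing a duality from scratch, but as a specialization of Urquhart's general dual equivalence between relevant algebras and relevant spaces. First I would recall Urquhart's theorem in full generality: the functors $(-)_*$ and $(-)^*$, built on Priestley duality for the bounded distributive lattice reduct and enriched with the ternary relation $R$ encoding the fusion $\cdot$ and the map $'$ encoding the De Morgan involution $\neg$, witness a dual equivalence between the category of relevant algebras and the category of relevant spaces. Since the bounded Sugihara monoids are precisely the idempotent De Morgan monoids, they form a subvariety of the relevant algebras, so \textsf{SM}$_\bot$ is a full subcategory of Urquhart's algebraic category, and it suffices to show that the restriction of Urquhart's duality to \textsf{SM}$_\bot$ lands precisely in \textsf{SRS}.

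The heart of the argument is therefore a correspondence-theoretic computation matching, for each defining identity separating bounded Sugihara monoids from general De Morgan monoids, exactly one of the frame conditions (1)--(5) defining Sugihara relevant spaces. I would verify that commutativity and associativity of $\cdot$ correspond respectively to $x\odot y = y\odot x$ and $x\odot(y\odot z)=(x\odot y)\odot z$; that idempotence $x\cdot x=x$ corresponds to $x=x\odot x$; that $\neg\neg x=x$ corresponds to $x''=x$; and that the residuation--involution interaction $\neg x\to y = y\to\neg x$ corresponds to $z\in x\odot y\implies y'\in x\odot z'$. Each correspondence follows the standard pattern: translate the first-order frame condition into an identity on clopen up-sets using the defining clauses for $\ntwt$ and $\ntwi$, and conversely recover the frame condition from the algebraic identity by invoking the Priestley separation axiom for relevant spaces together with condition (3) in the definition of relevant space, which furnishes enough clopen up-sets to separate triples not related by $R$. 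This establishes that ${\bf A}_*$ is a Sugihara relevant space exactly when ${\bf A}$ is a bounded Sugihara monoid, and dually that ${\bf X}^*$ is a bounded Sugihara monoid exactly when ${\bf X}$ is a Sugihara relevant space.

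Once the objects are matched, the morphisms require no separate treatment: the relevant maps of \textsf{SRS} are by definition the relevant maps of Urquhart's general category restricted to objects that are Sugihara relevant spaces, so $(-)_*$ and $(-)^*$ restrict without modification, and the unit and counit of the general duality restrict to natural isomorphisms on the subcategories. A dual equivalence restricted to full subcategories whose object-classes correspond under its functors is again a dual equivalence, which yields the result. The main obstacle I anticipate is the correspondence computation of the second paragraph, since it demands a careful unwinding of the definitions of $\ntwt$, $\ntwi$, and $'$ on clopen up-sets and, crucially, an argument that these frame conditions are \emph{first-order definable by the corresponding equations on every relevant space}, not merely on canonical frames. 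I would discharge this by appeal to \cite[Theorem~4.1]{Urquhart} and the correspondence results immediately following it, quoting them directly wherever the underlying computations are routine.
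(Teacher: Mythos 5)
Your proposal follows exactly the route the paper itself takes: the paper offers no independent proof of this theorem, but obtains it precisely as you do, by observing that bounded Sugihara monoids are the idempotent De Morgan monoids and invoking Urquhart's general duality together with his correspondence theory (\cite[Theorem 4.1]{Urquhart} and the comments thereafter) to axiomatize the corresponding relevant spaces by the five frame conditions defining \textsf{SRS}. Your sketch simply makes explicit the correspondence computations and the restriction-of-functors argument that the paper leaves implicit in its citation, so the approaches coincide.
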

Given a bounded Sugihara monoid ${\bf A} = (A,\meet,\join,\cdot,\to,t,\neg,\bot,\top)$, define for $x,y\in A_*\cup\{A\}$ the complex product
$$x\cdot y = \{c\in A : (\exists a\in x,\exists b\in y)(a\cdot b\leq c)\}.$$
Let $R$ be the ternary relation on $A_*$ given by $Rxyz$ if and only if $x\cdot y\subseteq z$, let $x' = \{a\in A : \neg a\notin x\}$, and let $I({\bf A}) = \{x\in A^* : t\in x\}$ as in Section \ref{sec:sugidual}. Then we denote by ${\bf A}_*$ the Sugihara relevant space $((A,\meet,\join,\bot,\top)_*, R, \;', I({\bf A}))$.

On the other hand, for a Sugihara relevant space ${\bf X} = (X,\leq,R,\;',I,\tau)$, let ${\bf X}^*$ be the bounded Sugihara monoid $((X,\leq,\tau)^*, \ntwt,\ntwi,I,\neg)$, where $\neg$ is given by $\neg U = \{x\in X : x'\notin U\}$. When extended to morphisms in the familiar way, the functors $(-)_*$ and $(-)^*$ witness the equivalence between \textsf{SM}$_{\bot}$ and \textsf{SRS} of the Urquhart duality.

In the next three sections, we introduce functors $(-)_{\bowtie}\colon\textsf{SRS}\to\textsf{SS}$ and $(-)^{\bowtie}\colon\textsf{SS}\to\textsf{SRS}$, named in analogy to their duals in Section \ref{sec:twist}, that give an equivalence of categories between \textsf{SRS} and \textsf{SS}. The construction of each of these functors requires some technical results, which we turn to presently. We start with the functor $(-)_{\bowtie}$.

\subsection{Dual enriched negative cones}\label{sec:dualnegcone}

For a bounded Sugihara monoid ${\bf A}$, recall that $I({\bf A}) = \{x\in A_* : t\in x\}$. Recall also that $\psi_{\bf A}\colon {\bf A_+}\to I({\bf A})$ defined by $\psi_{\bf A}(h) = h^{-1}[\{0,1\}]$ is an order isomorphism between ${\bf A}_+$ and $(I({\bf A}),\subseteq)$ from the proof of Proposition \ref{prop:iofa}. We show that $\psi_{\bf A}$ preserves much more structure.
\begin{lemma}
When $I({\bf A})$ is endowed with the topology inherited as a subspace of ${\bf A}_*$, $\psi_{\bf A}$ is continuous.
\end{lemma}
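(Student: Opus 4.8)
The plan is to verify continuity by checking it on a subbasis for the subspace topology on $I({\bf A})$, in direct analogy with the computation carried out for $\xia$ in Lemma \ref{lem:continuous}. Recall that the topology on ${\bf A}_*$ is the Priestley topology, generated by the subbasis $\{\sigma(a) : a\in A\}\cup\{\sigma(a)^\comp : a\in A\}$. Consequently the subspace topology that $I({\bf A})$ inherits from ${\bf A}_*$ has subbasis $\{\sigma(a)\cap I({\bf A}) : a\in A\}\cup\{\sigma(a)^\comp\cap I({\bf A}) : a\in A\}$. Since a map into a space is continuous as soon as the preimage of every subbasic open set is open, it suffices to show that $\psi_{\bf A}^{-1}[\sigma(a)\cap I({\bf A})]$ and $\psi_{\bf A}^{-1}[\sigma(a)^\comp\cap I({\bf A})]$ are open in $(A_+,\tau_{\bf A})$ for each $a\in A$.

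The key observation is that, because $\psi_{\bf A}$ takes all of its values in $I({\bf A})$, intersecting with $I({\bf A})$ in the codomain is harmless, so that $\psi_{\bf A}^{-1}[\sigma(a)\cap I({\bf A})]=\psi_{\bf A}^{-1}[\sigma(a)]$ and likewise for the complementary sets. One then computes directly, unwinding the definition $\psi_{\bf A}(h)=h^{-1}[\{0,1\}]$:
\[
\psi_{\bf A}^{-1}[\sigma(a)]=\{h\in A_+ : a\in h^{-1}[\{0,1\}]\}=\{h\in A_+ : h(a)\in\{0,1\}\}=U_{a,0}\cup U_{a,1},
\]
which is a union of subbasic clopen sets of $\tau_{\bf A}$ by Lemma \ref{lem:DWdualtop} and hence open. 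Symmetrically,
\[
\psi_{\bf A}^{-1}[\sigma(a)^\comp]=\{h\in A_+ : h(a)=-1\}=U_{a,-1},
\]
which is again a subbasic clopen set and therefore open.

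Having shown that the preimage of every subbasic open set of $I({\bf A})$ is open, I would conclude that $\psi_{\bf A}$ is continuous. I do not anticipate a genuine obstacle here: the entire argument hinges on recognizing that the defining sets $U_{a,l}$ of the Davey--Werner topology $\tau_{\bf A}$ are precisely the $\psi_{\bf A}$-pullbacks of the Priestley subbasic sets, so continuity is immediate from the construction. The only points that require a moment of care are the reduction $\psi_{\bf A}^{-1}[\sigma(a)\cap I({\bf A})]=\psi_{\bf A}^{-1}[\sigma(a)]$ (justified by the image of $\psi_{\bf A}$ lying in $I({\bf A})$) and the fact that we must now treat both $\sigma(a)$ and $\sigma(a)^\comp$ for every $a\in A$, rather than only the sets $\sigma(a)$ with $a\in A^-$ as in Lemma \ref{lem:continuous}.
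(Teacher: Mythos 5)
Your proof is correct and takes essentially the same route as the paper's: both check preimages of the subbasic sets $\sigma(a)$ and $\sigma(a)^{\comp}$ and identify them as $U_{a,0}\cup U_{a,1}$ and $U_{a,-1}$, respectively, hence open in $\tau_{\bf A}$. The only difference is that you make explicit the harmless reduction from $\sigma(a)\cap I({\bf A})$ to $\sigma(a)$ (justified since $\psi_{\bf A}$ takes values in $I({\bf A})$), a point the paper's proof passes over silently.
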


\begin{proof}
It suffices to check that the inverse image of a basis element is open. Let $a\in A$. The basis elements of $I({\bf A})$ are of the form $\sigma(a) = \{x\in I({\bf A}) : a\in x\}$ and $\sigma(a)^\comp = \{x\in I({\bf A}) : a\notin x\}$. Observe that
\begin{align*}
\psi_{\bf A}^{-1}[\sigma(a)] &= \{h\in A_+ : \psi_{\bf A}(h)\in\sigma(a)\}\\
&=\{h\in A_+ : a\in h^{-1}[\{0,1\}]\}\\
&=\{h\in A_+ : h(a)\in\{0,1\}\}\\
&=\{h\in A_+ : h(a)=0\}\cup \{h\in A_+ : h(a)=1\}
\end{align*}
The above are basis elements of ${\bf A}_+$. Moreover,
\begin{align*}
\psi_{\bf A}^{-1}[\sigma(a)^\comp] &= \{h\in A_+ : \psi_{\bf A}(h)\in\sigma(a)^\comp\}\\
&=\{h\in A_+ : a\notin h^{-1}[\{0,1\}]\}\\
&=\{h\in A_+ : h(a)\notin\{0,1\}\}\\
&=\{h\in A_+ : h(a)=-1\}
\end{align*}
The above is also a basis element, so this gives the result.
\end{proof}

\begin{lemma}
$\psi_{\bf A}$ is a homeomorphism.
\end{lemma}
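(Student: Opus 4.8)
The plan is to invoke the standard topological principle that a continuous bijection from a compact space to a Hausdorff space is a homeomorphism, exactly as was done in the proof of Lemma \ref{lem:priestleyiso}. First I would recall that $\psi_{\bf A}$ is a bijection: this is precisely the content of Proposition \ref{prop:iofa}, which establishes that $\psi_{\bf A}$ is an order isomorphism between ${\bf A}_+$ and $(I({\bf A}),\subseteq)$, and hence in particular one-to-one and onto. The immediately preceding lemma supplies continuity of $\psi_{\bf A}$ with respect to the subspace topology that $I({\bf A})$ inherits from ${\bf A}_*$. Thus $\psi_{\bf A}$ is already known to be a continuous bijection, and it remains only to upgrade this to a homeomorphism.

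Next I would verify the two topological hypotheses needed to apply the principle. The domain ${\bf A}_+$ is the Davey-Werner dual of the $i$-lattice reduct of ${\bf A}$, hence a (pointed) Kleene space; in particular its underlying space is a Priestley space and so is compact. The codomain $I({\bf A})$ carries the topology inherited as a subspace of ${\bf A}_*$, and since ${\bf A}_*$ is a Priestley space it is Hausdorff; therefore its subspace $I({\bf A})$ is Hausdorff as well. With the domain compact and the codomain Hausdorff, the continuous bijection $\psi_{\bf A}$ is automatically a homeomorphism, and its inverse $\psi_{\bf A}^{-1}$ is continuous.

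There is essentially no genuine obstacle here beyond assembling the facts already in hand; the compactness argument is entirely routine once the two hypotheses are in place. The only point requiring any care is to ensure that the topologies being compared are precisely the ones making the compact-to-Hausdorff argument applicable, namely the Kleene-space (Priestley) topology $\tau_{\bf A}$ on ${\bf A}_+$ and the subspace topology on $I({\bf A})$ stipulated in the statement of the preceding lemma. Once this is confirmed, the conclusion that $\psi_{\bf A}$ is a homeomorphism follows at once.
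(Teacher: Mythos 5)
Your proposal is correct and matches the paper's proof essentially verbatim: the paper likewise notes that $I({\bf A})$ is Hausdorff as a subspace of the Hausdorff space ${\bf A}_*$, that ${\bf A}_+$ is compact as a Priestley space, and then concludes via the standard fact that a continuous bijection from a compact space to a Hausdorff space is a homeomorphism. Your additional bookkeeping (citing Proposition \ref{prop:iofa} for bijectivity and the preceding lemma for continuity) is exactly the implicit scaffolding the paper relies on.
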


\begin{proof}
$I({\bf A})$ is a subspace of a Hausdorff space, hence is Hausdorff. ${\bf A}_+$ is compact since it is a Priestley space. This gives that $\psi_{\bf A}$ is a continuous bijection from a compact space to a Hausdorff space, hence a homeomorphism.
\end{proof}

The fact that $\psi_{\bf A}$ is an order isomorphism and a homeomorphism allows us to obtain the following results.

\begin{lemma}\label{lem:IAPriestley}
$I({\bf A})$ is a Priestley space.
\end{lemma}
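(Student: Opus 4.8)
The plan is to transfer the Priestley space structure from ${\bf A}_+$ to $I({\bf A})$ along the map $\psi_{\bf A}$, exploiting that $\psi_{\bf A}$ is simultaneously an order isomorphism and a homeomorphism. The key observation is that the property of being a Priestley space is invariant under any bijection that preserves both the order and the topology, so no intrinsic work on $I({\bf A})$ is required beyond checking that the relevant structure corresponds.

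First I would recall that the underlying structure $(A_+,\leq,\tau_{\bf A})$ of the Davey–Werner dual ${\bf A}_+$ is a Priestley space: since ${\bf A}_+$ is a pointed Kleene space, Theorem \ref{thm:pKSchar}(1) guarantees that $(A_+,\leq,\top,\tau_{\bf A})$ is a pointed Priestley space, and in particular $(A_+,\leq,\tau_{\bf A})$ is a Priestley space. (Alternatively this is immediate from Lemma \ref{lem:priestleyiso}.) By Proposition \ref{prop:iofa} the map $\psi_{\bf A}$ is an order isomorphism from $(A_+,\leq)$ onto $(I({\bf A}),\subseteq)$, and by the preceding lemma it is a homeomorphism from $(A_+,\tau_{\bf A})$ onto $I({\bf A})$ with its subspace topology.

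With these facts in hand I would verify the three defining conditions for $I({\bf A})$ to be a Priestley space. That $(I({\bf A}),\subseteq)$ is a poset is immediate, as it is order-isomorphic to a poset. Compactness of $I({\bf A})$ follows from compactness of $A_+$, since $\psi_{\bf A}$ is a homeomorphism. For the Priestley separation axiom, note that because $\psi_{\bf A}$ is \emph{both} an order isomorphism and a homeomorphism, it carries clopen up-sets of $A_+$ bijectively onto clopen up-sets of $I({\bf A})$; thus given $x,y\in I({\bf A})$ with $x\not\subseteq y$, one transports to $A_+$, separates $\psi_{\bf A}^{-1}(x)$ and $\psi_{\bf A}^{-1}(y)$ by a clopen up-set there, and pushes that set forward along $\psi_{\bf A}$ to obtain a clopen up-set of $I({\bf A})$ separating $x$ from $y$.

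There is essentially no obstacle: the whole content is the transport of a structural property along a map that is at once an order isomorphism and a homeomorphism. The only point deserving care is the bookkeeping that clopen up-sets correspond under $\psi_{\bf A}$, and this follows immediately from combining the order-isomorphism and homeomorphism properties established in the two preceding lemmas.
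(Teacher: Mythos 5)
Your proposal is correct and matches the paper's own proof essentially verbatim: the paper likewise transports the Priestley structure along $\psi_{\bf A}$, deducing compactness from the homeomorphism and obtaining the separation axiom by pulling a pair $x\not\subseteq y$ back to ${\bf A}_+$, separating there by a clopen up-set $U$, and pushing $\psi_{\bf A}[U]$ forward. Your added remark that clopen up-sets correspond under a map that is simultaneously an order isomorphism and a homeomorphism is exactly the (implicit) justification the paper relies on.
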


\begin{proof}
$I({\bf A})$ is compact since $\psi_{\bf A}$ is a homeomorphism. Let $x,y\in I({\bf A})$ with $x\not\subseteq y$. Then since $\psi_{\bf A}$ is an order isomorphism we have $\psi^{-1}_{\bf A}(x)\not\leq\psi^{-1}_{\bf A}(y)$, and since ${\bf A}_+$ is a Priestley space there exists a clopen up-set $U\subseteq A_+$ with $\psi^{-1}_{\bf A}(x)\in U$ and $\psi^{-1}_{\bf A}(y)\notin U$. Then $\psi_{\bf A}[U]$ is a clopen up-set of $I({\bf A})$ and $x\in \psi_{\bf A}[U]$ and $y\notin \psi_{\bf A}[U]$, showing that $I({\bf A})$ is a Priestley space.
\end{proof}

\begin{lemma}\label{lem:IAEsakia}
$I({\bf A})$ is an Esakia space. 
\end{lemma}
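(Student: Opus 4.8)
The plan is to obtain the Esakia property for $I({\bf A})$ by transport of structure along the map $\psi_{\bf A}$, which the preceding work has already shown to be extremely well-behaved. By Proposition \ref{prop:iofa} the map $\psi_{\bf A}\colon {\bf A}_+\to I({\bf A})$ is an order isomorphism, and by the immediately preceding lemma it is a homeomorphism; together these make $\psi_{\bf A}$ an isomorphism in \textsf{PS} between ${\bf A}_+$ and $I({\bf A})$. The underlying Priestley space of ${\bf A}_+$ is an Esakia space: in the unbounded case this is Lemma \ref{lem:aplusesakia}, and in the bounded setting of the present section it follows from the fact that ${\bf A}_+$ is an (unpointed) Sugihara space, whose underlying space is Esakia by definition. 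Since, as noted in the proof of Lemma \ref{lem:aplusesakia}, every Priestley space that is \textsf{PS}-isomorphic to an Esakia space is itself an Esakia space, we conclude at once that $I({\bf A})$ is an Esakia space.

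For completeness I would spell out the one-line transport argument underlying this principle. Given any clopen $U\subseteq I({\bf A})$, continuity of $\psi_{\bf A}$ gives that $\psi_{\bf A}^{-1}[U]$ is clopen in ${\bf A}_+$, so $\downset\psi_{\bf A}^{-1}[U]$ is clopen because ${\bf A}_+$ is Esakia. Applying $\psi_{\bf A}$, which is an open map (being a homeomorphism) and preserves down-sets (being an order isomorphism, so that $\psi_{\bf A}[\downset S]=\downset\psi_{\bf A}[S]$), we find that $\downset U=\psi_{\bf A}[\downset\psi_{\bf A}^{-1}[U]]$ is clopen. Thus down-sets of clopen sets are clopen in $I({\bf A})$, which is the defining Esakia condition.

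There is no substantive obstacle here: all the content is already packed into the facts that $\psi_{\bf A}$ is a \textsf{PS}-isomorphism and that ${\bf A}_+$ is an Esakia space, and the remainder is pure transport. The only point meriting mild care is confirming that the Esakia property of ${\bf A}_+$ is genuinely available in the bounded setting in which this section operates, which is secured by the dual equivalence of \textsf{SM}$_\bot$ and \textsf{SS} together with the definition of unpointed Sugihara spaces.
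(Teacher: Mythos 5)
Your proof is correct and matches the paper's argument exactly: both establish that $\psi_{\bf A}$ is an order isomorphism and homeomorphism, hence a \textsf{PS}-isomorphism, and then transfer the Esakia property from ${\bf A}_+$ to $I({\bf A})$ along it. Your explicit spelling-out of the down-set transport and the remark on the bounded setting are harmless elaborations of what the paper leaves implicit.
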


\begin{proof}
$\psi_{\bf A}$ is an order isomorphism and a homeomorphism, hence an isomorphism of Priestley spaces. Since $I({\bf A})$ is a Priestley space that is isomorphic to the Esakia space ${\bf A}_+$, it follows that $I({\bf A})$ is an Esakia space too.
\end{proof}
The following collects some information about the operation $'$ of the Urquhart dual of a bounded Sugihara monoid, and is fundamental to the constructions that follow.
\begin{lemma}\label{lem:orderprime}
Let ${\bf A}$ be a bounded Sugihara monoid. Then for all $x\in A_*$,
\begin{enumerate}
\item $x\in I({\bf A})$ or $x'\in I({\bf A})$.
\item $x\subseteq x'$ or $x'\subseteq x$.
\item The larger of $x$ and $x'$ lies in $I({\bf A})$.
\item The following are equivalent.
\begin{enumerate}
\item $x=x'$,
\item $t\in x$ and $\neg t\notin x$,
\item $x,x'\in I({\bf A})$.
\end{enumerate}
\end{enumerate}
\end{lemma}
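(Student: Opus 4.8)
The plan is to translate every containment and membership claim into a statement about the prime filter $x$ alone, using that $a\in x'$ means exactly $\neg a\notin x$, and then to feed in the three inequalities packaged in Proposition~\ref{prop:reduct}, namely $a\meet\neg a\leq\neg t\leq t\leq a\join\neg a$. Two translations drive everything. First, $x\subseteq x'$ says precisely that no $a$ has $a,\neg a\in x$ simultaneously; moreover, if such an $a$ existed, then $a\meet\neg a\in x$ together with $a\meet\neg a\leq\neg t$ and upward closure of $x$ would force $\neg t\in x$. Second, $x'\subseteq x$ says precisely that every $a$ satisfies $a\in x$ or $\neg a\in x$; and if some $a$ failed this, primality of $x$ would give $a\join\neg a\notin x$, whence $t\leq a\join\neg a$ and upward closure force $t\notin x$. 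I would also record the basic consequence of $\neg t\leq t$: since $x$ is upward closed, $\neg t\in x$ implies $t\in x$ (equivalently, $t\notin x$ implies $\neg t\notin x$).

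With these in hand the three containment statements fall out quickly. For (1), if $x\notin I({\bf A})$ then $t\notin x$, hence $\neg t\notin x$ by the recorded consequence, so $t\in x'$ and $x'\in I({\bf A})$. For (2), were both containments to fail, the first translation would yield $\neg t\in x$ (hence $t\in x$) while the second would yield $t\notin x$, a contradiction; so at least one containment holds. For (3) I would split on which containment holds. If $x\subseteq x'$, then no $a$ has both $a,\neg a\in x$; taking $a=\neg t$ rules out $\neg t\in x$, since otherwise $t=\neg\neg t\in x$ too, so $\neg t\notin x$ and $x'\in I({\bf A})$. If instead $x'\subseteq x$, then every $a$ lies with its negation in $x$; taking $a=t$ gives $t\in x$ or $\neg t\in x$, and either way $t\in x$, so $x\in I({\bf A})$. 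In both cases the larger filter lands in $I({\bf A})$.

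Part (4) is then a short cycle reusing the above. For (b)$\Rightarrow$(a), assuming $t\in x$ and $\neg t\notin x$, the first translation gives $x\subseteq x'$ (otherwise $\neg t\in x$) and the second gives $x'\subseteq x$ (otherwise $t\notin x$), so $x=x'$. For (a)$\Rightarrow$(c), part (3) applied to the equality $x=x'$ places this common filter in $I({\bf A})$, so both $x$ and $x'$ lie there. For (c)$\Rightarrow$(b), $x\in I({\bf A})$ is exactly $t\in x$ and $x'\in I({\bf A})$ is exactly $t\in x'$, i.e. $\neg t\notin x$; this closes the loop and establishes the equivalence.

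The only subtlety, and the step I would treat most carefully, is getting the two membership translations exactly right and invoking primality (not merely upward closure) of $x$ in the second one; everything else is bookkeeping with the involution and the single order fact $\neg t\leq t$. Throughout I rely on $x'\in A_*$, which is built into the definition of the Urquhart dual, but beyond that I argue directly from the defining set $x'=\{a:\neg a\notin x\}$ and need no finer properties of $x'$.
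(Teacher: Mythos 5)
Your proof is correct and follows essentially the same route as the paper's: both argue directly from the definition $x'=\{a:\neg a\notin x\}$, the inequalities $a\meet\neg a\leq\neg t\leq t\leq a\join\neg a$ of Proposition~\ref{prop:reduct}, and the primality and upward closure of $x$. The only differences are cosmetic reorganizations — you prove (2) by contradiction rather than the paper's WLOG containment argument, handle (3) by direct case analysis where the paper deduces it from (1) and (2), and close the cycle in (4) as (b)$\Rightarrow$(a)$\Rightarrow$(c)$\Rightarrow$(b) rather than (a)$\Rightarrow$(b)$\Rightarrow$(c)$\Rightarrow$(a).
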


\begin{proof}
For (1), suppose $t\notin x$. Then as $\neg t\leq t$, it follows that $\neg t\notin x$ as well. Thus $t\in x'$.

For (2), by (1) we may suppose without loss of generality that $t\in x'$. Let $a\in x$. If $a\notin x'$, then $\neg(\neg a)\notin x'$ and hence $\neg a\in x$. Then $a,\neg a \in x$, so $a\meet\neg a\leq t$ gives $t\in x$, a contradiction. It follows that $x\subseteq x'$.

Note that (3) follows immediately from (1) and (2).

For (4), suppose first that $x=x'$. If $t\notin x$, then $t=\neg\neg t\notin x$, so $\neg t\in x'$. Then $\neg t\in x$. But $\neg t\leq t$ gives $t\in x$, so this is impossible. It follows that $t\in x$. Then $t\in x'$ as well. If $\neg t\in x$, then $\neg t\in x'$ as well and this would give $\neg\neg t\notin x$. But this contradicts $t\in x$. Hence $t\in x$ and $\neg t\notin x$.

Next, suppose that $t\in x$ and $\neg t\notin x$. The latter gives that $t\in x'$, so it follows immediately that $x,x'\in I({\bf A})$.

Finally, suppose that $x,x'\in I({\bf A})$. Then $t\in x,x'$, so $t\in x$ and $\neg t\notin x$. Let $a\in x$. If $\neg a\in x$, then $a,\neg a\in x$ implies $a\meet\neg a\leq \neg t\in x$, a contradiction. Hence $\neg a\notin x$, so $a\in x'$ and $x\subseteq x'$. On the other hand, let $a\in x'$. Then $\neg a\notin x$. But $a\join\neg a\geq t$ and $t\in x$ gives $a\join\neg a\in x$, so $a\in x$ by primality. Thus $x'\subseteq x$. This shows that $x=x'$, which completes the proof of the equivalence.
\end{proof}

Let ${\bf A}$ be an unbounded Sugihara monoid, and ${\bf A}_* = (A_*, \subseteq, R, \;', I({\bf A}), \tau)$ its Urquhart dual. Set $D=\{x\in A_* : x = x'\}$, and $\tau_{\bowtie}$ the topology on $I({\bf A})$ induced as a subspace of ${\bf A}_*$. We then obtain the following.

\begin{lemma}\label{lem:IASugi}
$(I({\bf A}),\subseteq, D, \tau_{\bowtie})$ is an unpointed Sugihara space.
\end{lemma}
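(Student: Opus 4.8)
The plan is to transport the unpointed Sugihara space structure of the Davey--Werner dual ${\bf A}_+$ onto $I({\bf A})$ along the order isomorphism $\psi_{\bf A}\colon {\bf A}_+\to I({\bf A})$ of Proposition \ref{prop:iofa}. Since ${\bf A}$ carries lattice bounds, ${\bf A}_+$ is an unpointed Sugihara space, with underlying Esakia space $(A_+,\leq,\tau_{\bf A})$, forest order, and designated clopen set $A_0=\{h\in A_+:(\forall a)(h(a)\in\{-1,1\})\}$ consisting of $\leq$-minimal points (Remark \ref{rem:minimal}). Proposition \ref{prop:iofa} together with the subsequent lemmas shows that $\psi_{\bf A}$ is simultaneously an order isomorphism and a homeomorphism, hence an isomorphism of Esakia spaces. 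Consequently almost all of the defining data of a Sugihara space transport automatically, and the one genuinely new thing to check is that $\psi_{\bf A}$ carries $A_0$ exactly onto $D=\{x\in A_*:x=x'\}$.

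First I would dispatch the order-theoretic and topological conditions. By Lemma \ref{lem:IAEsakia}, $(I({\bf A}),\subseteq,\tau_{\bowtie})$ is already known to be an Esakia space; and since $\psi_{\bf A}$ is an order isomorphism and $(A_+,\leq)$ is a forest, $(I({\bf A}),\subseteq)$ is a forest as well. Recalling that the comparability relation $Q=\subseteq\cup\supseteq$ on $I({\bf A})$ is left implicit and holds by convention, the only remaining requirements for an unpointed Sugihara space, via the unpointed analogue of Lemma \ref{lem:bRStosugihara}, are that $D$ be a clopen subset of $I({\bf A})$ consisting of $\subseteq$-minimal elements.

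The crux is therefore the identity $\psi_{\bf A}[A_0]=D$, and here I would exploit the explicit description of $\psi_{\bf A}$ from the proof of Proposition \ref{prop:iofa}. Writing $x=\psi_{\bf A}(h)=h^{-1}[\{0,1\}]$ and $\neg x=\{\neg a:a\in x\}$, the morphism $h$ satisfies $h(a)=0$ exactly when $a\in x\cap\neg x$, so that $h\in A_0$ (i.e.\ $0\notin\mathrm{im}(h)$) if and only if $x\cap\neg x=\emptyset$. Since $t\in x$ for every $x\in I({\bf A})$, primality together with the identity $t\leq a\join\neg a$ of Proposition \ref{prop:reduct} forces $a\in x$ or $\neg a\in x$ for each $a$; combined with $x\cap\neg x=\emptyset$ this says precisely that $x=x'$. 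Thus $h\in A_0$ iff $\psi_{\bf A}(h)\in D$. Lemma \ref{lem:orderprime}(4) makes this bookkeeping transparent: it records that $x=x'$ is equivalent to $t\in x$ and $\neg t\notin x$, and that in this case $x\in I({\bf A})$, which also confirms $D\subseteq I({\bf A})$ and hence that the statement is well-posed.

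With $\psi_{\bf A}[A_0]=D$ in hand the conclusion is immediate: as $A_0$ is clopen and $\psi_{\bf A}$ is a homeomorphism, $D$ is clopen in $I({\bf A})$; and as $A_0$ consists of $\leq$-minimal points and $\psi_{\bf A}$ is an order isomorphism, $D$ consists of $\subseteq$-minimal points. Hence $(I({\bf A}),\subseteq,D,\tau_{\bowtie})$ is a \textsf{bG}-space, and the unpointed analogue of Lemma \ref{lem:bRStosugihara} promotes it to an unpointed Sugihara space; equivalently, $\psi_{\bf A}$ is then an isomorphism of unpointed Sugihara spaces, so the target inherits the property from ${\bf A}_+$. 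I expect the sole nontrivial obstacle to be the verification $\psi_{\bf A}[A_0]=D$; everything else is transport of structure along an isomorphism already established.
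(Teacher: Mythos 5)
Your proof is correct, but it routes the two substantive verifications differently than the paper does. The paper's proof of Lemma \ref{lem:IASugi} handles the forest and Esakia-space conditions by transport along $\psi_{\bf A}$ exactly as you do, but it then verifies the conditions on $D$ \emph{directly} on the filter side: minimality via antitonicity of $'$ (if $y\subseteq x=x'$ with $y\in I({\bf A})$, then $x'\subseteq y'$ forces $t\in y,y'$, hence $y=y'$ and $y=x$ by Lemma \ref{lem:orderprime}(4)), and clopenness via the explicit identity $D=\sigma(t)\cap\sigma(\neg t)^\comp$. You instead prove the preservation identity $\psi_{\bf A}[A_0]=D$ and transport both conditions at once; your argument for that identity is sound (the equivalence $h(a)=0\iff a\in x\cap\neg x$, the dichotomy $a\in x$ or $\neg a\in x$ from $t\le a\join\neg a$ plus primality, and Lemma \ref{lem:orderprime}(4) for $D\subseteq I({\bf A})$), and it is in fact precisely the content the paper establishes \emph{afterwards} in Remark \ref{rem:IAiso} in order to upgrade $\psi_{\bf A}$ to an isomorphism of unpointed Sugihara spaces. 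So you have effectively inverted the order of Lemma \ref{lem:IASugi} and Remark \ref{rem:IAiso}: your route is more economical, since one identity yields both statements simultaneously, and there is no circularity because $\psi_{\bf A}[A_0]=D$ is proved set-theoretically before any transport. What the paper's route buys in exchange: its clopenness computation exhibits $D$ as clopen in the ambient space ${\bf A}_*$ (not merely in the subspace $I({\bf A})$), a form quietly reused later (e.g., in the continuity argument for $\Gamma_{\bf A}$), and its direct verifications avoid leaning on the bounded analogue of Lemmas \ref{lem:aplusbRS} and \ref{lem:sugdual1} --- i.e., on ${\bf A}_+$ being an unpointed Sugihara space with $A_0$ clopen and minimal --- which the paper only asserts via ``repeating the argument with necessary modifications.'' Your dependence on that bounded analogue is a fair citation but a genuine extra hypothesis at this point in the development; note also that your transported clopenness in $I({\bf A})$ is equivalent to the paper's ambient statement only because $I({\bf A})=\sigma(t)$ is itself clopen in ${\bf A}_*$, a point worth making explicit.
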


\begin{proof}
Lemma \ref{lem:IAEsakia} shows that $(I({\bf A}),\subseteq,\tau_{\bowtie})$ is an Esakia space. It thus suffices to show that $(I({\bf A}),\subseteq)$ is a forest and $D$ is a clopen subset of $\subseteq$-minimal elements. The former condition is clear since $\psi_{\bf A}$ is an order isomorphism and ${\bf A}_+$ is a forest. That $D\subseteq I({\bf A})$ follows from Lemma \ref{lem:orderprime}.

To show that each $x\in D$ is minimal, let $y\in I({\bf A})$ with $y\subseteq x=x'$. Then $t\in y$, and $'$ being antitone gives $x=x'\subseteq y'$, so $t\in y'$ as well. It follows that $t\in y,y'$, so $y=y'$ by Lemma \ref{lem:orderprime}. But this gives $x\subseteq y\subseteq x$, so $x=y$. It follows that $D$ is a collection of minimal elements in $I({\bf A})$.

To see that $D$ is clopen, note that $x\in D$ iff $x=x'$ iff $t\in x$ and $\neg t\notin x$ iff $x\in\sigma(t)\cap\sigma(\neg t)^\comp$, so $D=\sigma(t)\cap\sigma(\neg t)^\comp$ is a clopen subset of ${\bf A}_*$, and so too of the subspace $I({\bf A})$.
\end{proof}

\begin{remark}\label{rem:IAiso}
An easy argument shows that if $h\in A_+$ has its image contained in $\{-1,1\}$, then setting $x=\psi_{\bf A}(h)$ gives $x=x'$. On the other hand, if $x=x'\in A_*$, then by $\psi_{\bf A}$ being onto there exists $h\in A_+$ such that $x=\psi_{\bf A}(h)$. Were there $a\in A$ with $h(a)=0$, we would have $h(\neg a)=0$ as well. Moreover, this would give that $a,\neg a\in \psi_{\bf A}(h)=x=x'$. But $a\in x'$ implies that $\neg a\notin x$, a contradiction. This shows that the image of $h$ must lie in $\{-1,1\}$ and hence
$$\psi_{\bf A}[\{h\in A_+ : (\forall a\in A)(h(a)\in\{-1,1\}\}]=\{x\in A_* : x=x'\}$$
Thus $\psi_{\bf A}$ preserves the designated subset $D$, and because $\psi_{\bf A}$ is a bijection this is sufficient to guarantee that it is actually an isomorphism in the category of unpointed Sugihara spaces.
\end{remark}

We may finally describe the dual enriched negative cone functor $(-)_{\bowtie}$.

\begin{definition}\label{def:lowerbowtie}
For a Sugihara relevant space ${\bf X} = (X,\leq,R,\;^\prime,I,\tau)$, let $X_{\bowtie}=I$, $D=\{x\in X : x=x'\}$, and $\tau_{\bowtie}$ be the topology on $X_{\bowtie}$ inherited as a subspace of ${\bf X}$. Define ${\bf X}_{\bowtie}=(X_{\bowtie},\leq,D,\tau_{\bowtie})$. For a morphism $\varphi\colon {\bf X}\to {\bf Y}$ of \textsf{SRS}, define $\varphi_{\bowtie}=\varphi\restriction_{X_{\bowtie}}$.
\end{definition}

The following shows that this definition makes sense on the level of objects. We put off verifying that the definition makes sense for morphisms until Section \ref{sec:equiv}.

\begin{lemma}
Let ${\bf X} = (X,\leq,R,\;',I,\tau)$ be a Sugihara relevant space. Then ${\bf X}_{\bowtie}$ is an unpointed Sugihara space.
\end{lemma}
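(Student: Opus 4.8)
The plan is to show that the structure $\mathbf{X}_{\bowtie} = (X_{\bowtie}, \leq, D, \tau_{\bowtie})$ satisfies the three defining conditions of an unpointed Sugihara space: that $(X_{\bowtie}, \leq, \tau_{\bowtie})$ is an Esakia space, that $(X_{\bowtie}, \leq)$ is a forest, and that $D$ is a clopen set of $\leq$-minimal elements (equivalently, an open set, since $D$ is automatically closed). The entire argument should run in close parallel to Lemmas \ref{lem:IAEsakia} and \ref{lem:IASugi}, which already establish exactly these facts in the special case where $\mathbf{X}$ arises as the Urquhart dual $\mathbf{A}_*$ of a bounded Sugihara monoid $\mathbf{A}$. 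The key observation is that everything needed is a consequence of the abstract axioms of a Sugihara relevant space (Conditions (1)--(5) in the definition of \textsf{SRS}, together with the relevant-space axioms), so no appeal to a specific $\mathbf{A}$ is required.

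First I would establish that $(X_{\bowtie}, \leq, \tau_{\bowtie}) = (I, \leq, \tau_{\bowtie})$ is an Esakia space. Since $I$ is a clopen up-set of the Priestley space $(X, \leq, \tau)$ by Condition (5) of the definition of a relevant space, it inherits the structure of a Priestley space as a clopen subspace; and because clopen subspaces of Esakia spaces are again Esakia spaces (a clopen up-set is itself an Esakia space when the ambient space is), $I$ is an Esakia space. The relevant-space axioms guarantee $\mathbf{X}$ is built on an Esakia space in the Sugihara setting, so this step is essentially topological bookkeeping. Next I would verify the forest condition. The natural route is to transport the relevant-space structure into the purely order-theoretic facts recorded in Lemma \ref{lem:orderprime}: those facts --- that $x \in I$ or $x' \in I$, that $x$ and $x'$ are $\leq$-comparable, that the larger lies in $I$, and the characterization of $x = x'$ --- were proved using only the Sugihara relevant space axioms (idempotency $x = x \odot x$, $x'' = x$, the De Morgan-style condition, and Condition (5)), hence hold verbatim for an abstract $\mathbf{X}$. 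Using these, the forest property of $(I, \leq)$ follows exactly as in the first part of Lemma \ref{lem:IASugi}.

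The remaining step is to show $D = \{x \in X : x = x'\}$, restricted to $I$, is a clopen collection of $\leq$-minimal elements. That $D \subseteq I$ is immediate from Lemma \ref{lem:orderprime}(4), since $x = x'$ forces $x, x' \in I$. For minimality, I would repeat the argument of Lemma \ref{lem:IASugi}: if $y \in I$ with $y \subseteq x = x'$, then antitonicity of $'$ (Condition (4) of relevant maps' ambient axioms) gives $x = x' \subseteq y'$, and since $t$-membership is expressible via $I$, both $y$ and $y'$ lie in $I$, whence $y = y'$ by Lemma \ref{lem:orderprime}(4), forcing $x = y$. For clopenness, the cleanest approach is to express $D$ as $\{x : x = x'\}$ and observe that this set is clopen: it is the set where the continuous involution $'$ agrees with the identity, and in a Priestley space the equalizer of a continuous map with the identity is closed, while its complement's openness follows from the Priestley separation property combined with clopenness of $I$. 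Alternatively, and more in the spirit of the preceding lemma, one shows $D = I \cap \{x : x \leq x'\} \cap \{x : x' \leq x\}^{\text{appropriate witness}}$ using clopen up-sets separating incomparable points.

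I expect the clopenness of $D$ to be the main obstacle. In Lemma \ref{lem:IASugi} this was handled concretely by writing $D = \sigma(t) \cap \sigma(\neg t)^{\comp}$, which used the specific elements $t$ and $\neg t$ of the algebra $\mathbf{A}$; in the abstract setting no such distinguished clopen sets are given a priori, so I must instead argue topologically from the continuity and antitonicity of $'$ together with the Priestley structure. The expected resolution is that $\{x : x = x'\}$ is closed as the equalizer of the continuous maps $'$ and $\mathrm{id}$ into the Priestley (hence Hausdorff) space $X$, and open because its complement $\{x : x \neq x'\} = \{x : x < x'\} \cup \{x : x' < x\}$ decomposes into sets that are open by Priestley separation applied to the comparable-but-unequal pairs. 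Everything else is routine transport of the already-proven Lemma \ref{lem:orderprime} and a rerun of the relevant portions of Lemmas \ref{lem:IAEsakia} and \ref{lem:IASugi}.
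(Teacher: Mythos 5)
There is a genuine gap, and it lies in the central premise of your plan. You assert that Lemma \ref{lem:orderprime} and the arguments of Lemmas \ref{lem:IAEsakia} and \ref{lem:IASugi} ``were proved using only the Sugihara relevant space axioms'' and hence ``hold verbatim for an abstract ${\bf X}$.'' That is not so: all of those proofs live on the algebraic side. Lemma \ref{lem:orderprime} is proved for prime filters of a concrete bounded Sugihara monoid ${\bf A}$, using the elements $t$ and $\neg t$, primality, and identities such as $a\meet\neg a\leq\neg t\leq t\leq a\join\neg a$; Lemma \ref{lem:IAEsakia} obtains the Esakia property of $I({\bf A})$ from the homeomorphism $\psi_{\bf A}$ with ${\bf A}_+$ (itself Esakia only via $\xi_{\bf A}$ and the \textsf{bRSA}-duality); and the forest property in Lemma \ref{lem:IASugi} is likewise imported through $\psi_{\bf A}$. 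None of these facts is derived anywhere in the paper from the abstract axioms of a Sugihara relevant space, and deriving them axiomatically would be substantial new work: the definition of a relevant space only makes $(X,\leq,\tau)$ a Priestley space, not an Esakia space, so your ``clopen subspace of an Esakia space'' step has no foundation; and the forest property of $I$ is the spatial shadow of Dunn's semilinearity theorem for Sugihara monoids, for which no direct topological argument is given or sketched. In short, the facts you want to ``transport'' are true of abstract Sugihara relevant spaces only \emph{because} of the representation theorem --- every such space is isomorphic to some ${\bf A}_*$ --- which is precisely the appeal you declare unnecessary.

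The paper's proof closes this gap in the obvious way: it invokes Theorem \ref{thm:urquhartdual} to produce a bounded Sugihara monoid ${\bf A}$ and a relevant-space isomorphism $\varphi\colon {\bf A}_*\to{\bf X}$, notes that $\varphi$ restricts to an order isomorphism and homeomorphism of $I({\bf A})$ onto $I$ and carries $\{x\in A_*: x=x'\}$ onto $D$ (since $\varphi$ preserves $'$ and is injective), and then transfers Lemmas \ref{lem:IAPriestley}--\ref{lem:IASugi} across $\varphi$. To your credit, one piece of your direct strategy is sound and even a little slicker than needed: $D=\{x : x=x'\}$ is closed as the equalizer of the continuous map $'$ with the identity in a Hausdorff space, and its complement is open by continuity of $'$ plus Priestley separation (given $x\neq x'$, say $x'\not\leq x$, choose a clopen up-set $U$ with $x'\in U$, $x\notin U$; then $U^\comp\cap(')^{-1}[U]$ is a clopen neighborhood of $x$ avoiding $D$ --- note this does not even require comparability of $y$ with $y'$). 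But clopenness of $D$ was the one point you could handle abstractly; the remaining conditions ($D\subseteq I$, minimality, the forest property, and the Esakia property of $I$) all still rest on the representation, so the proposal as written does not constitute a proof.
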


\begin{proof}
By the Urquhart duality, there exists a bounded Sugihara monoid ${\bf A}$ such that ${\bf A}_*\cong {\bf X}$ as relevant spaces. There hence exists a relevant space isomorphism $\varphi\colon {\bf A}_*\to {\bf X}$. In particular, $\varphi[I({\bf A})]=I$, and the restriction $\varphi\restriction_{I({\bf A})}$ is a continuous order isomorphism. Since $I$ is a subspace of a Hausdorff space, it is itself Hausdorff. Since $I({\bf A})$ is compact by Lemma \ref{lem:IAPriestley}, this gives that $\varphi\restriction_{I({\bf A})}$ is a homeomorphism as well. It follows as before that $I$ is a Priestley space isomorphic to $I({\bf A})$, and hence an Esakia space. That $(I,\leq)$ is a forest also follows from this order isomorphism and the fact that $(I({\bf A}),\subseteq)$ is a forest by Lemma \ref{lem:IASugi}.

It remains only to show that $D\subseteq I$ and that $D$ is a clopen collection of minimal elements. To this end, let $y\in D$. Then since $\varphi$ is a bijection, there exists $x\in A_*$ such that $\varphi(x)=y$. Since $y\in D$, by definition we have $y=y'$. This yields $y'=\varphi(x)$, and as $\varphi$ preserves $'$ this shows that $y=\varphi(x')=\varphi(x)$. It follows from the injectivity of $\varphi$ that $x'=x$, whence $D\subseteq\varphi[\{x\in A_* : x=x'\}]$. Because $\{x\in A_* : x=x'\}\subseteq I({\bf A})$ by Lemma \ref{lem:orderprime}(4), we obtain $D\subseteq I$ as $\varphi[I({\bf A})]=I$. Also, if $x=x'$ in $A_*$, then $\varphi(x)=\varphi(x')=\varphi(x)'$ gives that $\varphi(x)\in D$. This gives that $\varphi[\{x\in A_* : x=x'\}]\subseteq D$, whence that $\varphi[\{x\in A_* : x=x'\}] = D$. As $\{x\in A_* : x=x'\}$ is clopen collection of minimal elements by Lemma \ref{lem:IASugi}, we obtain that $D$ is a clopen collection of minimal elements of $I$ since $\varphi$ is an order isomorphism and homeomorphism. It follows that ${\bf X}_{\bowtie} = (I,\leq,D,\tau_{\bowtie})$ is an unpointed Sugihara space as desired.
\end{proof}

\subsection{Dual twist products}\label{sec:dualtwist}

We next turn our attention to the functor $(-)^{\bowtie}$. Recall that if ${\bf A}$ is a bounded Sugihara monoid and $x,y\in A_*\cup\{A\}$, we defined
$$x\cdot y = \{c\in A : (\exists a\in x,\exists b\in y)(a\cdot b\leq c)\}$$
With this definition, we have the following.

\begin{lemma}
Let ${\bf A}$ be a bounded Sugihara monoid, and let $x,y\in A_*\cup\{A\}$. Then $x\cdot y\in A_*\cup\{A\}$.
\end{lemma}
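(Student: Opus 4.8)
The plan is to show that the complex product $x\cdot y$ is a filter, and then that it is either prime or all of $A$. The statement asserts $x\cdot y\in A_*\cup\{A\}$, where $A_*$ is the collection of generalized prime filters (prime filters together with $A$ itself), so I must verify these two properties. Throughout I will use that $x,y$ are filters and that multiplication is order-preserving and commutative, together with the idempotency and distributivity available in Sugihara monoids.

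First I would show that $x\cdot y$ is an upset. This is essentially immediate from the definition: if $c\in x\cdot y$ via $a\cdot b\leq c$ with $a\in x$, $b\in y$, and $c\leq c'$, then $a\cdot b\leq c'$ as well, so $c'\in x\cdot y$. Next I would show closure under meets. Suppose $c_1,c_2\in x\cdot y$, witnessed by $a_1\cdot b_1\leq c_1$ and $a_2\cdot b_2\leq c_2$ with $a_i\in x$, $b_i\in y$. Put $a=a_1\meet a_2\in x$ and $b=b_1\meet b_2\in y$ (using that $x,y$ are filters). The key computational point is that $a\cdot b\leq c_1\meet c_2$. Here I would exploit that in a Sugihara monoid multiplication distributes over meets on the relevant side, or more directly that $a\cdot b=(a_1\meet a_2)\cdot(b_1\meet b_2)\leq a_1\cdot b_1\leq c_1$ and symmetrically $\leq c_2$, using Proposition \ref{prop1}(6) (monotonicity of $\cdot$). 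Thus $a\cdot b\leq c_1\meet c_2$, giving $c_1\meet c_2\in x\cdot y$. This establishes that $x\cdot y$ is a filter.

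The main obstacle, and the step requiring the most care, is primality: assuming $x\cdot y\neq A$, I must show that $c_1\join c_2\in x\cdot y$ implies $c_1\in x\cdot y$ or $c_2\in x\cdot y$. Suppose $c_1\join c_2\in x\cdot y$, so there are $a\in x$, $b\in y$ with $a\cdot b\leq c_1\join c_2$. The natural move is to use idempotency and distributivity to split the witness. Since $\cdot$ distributes over $\join$ by Proposition \ref{prop1}(2), one can write $a\cdot b\meet(c_1\join c_2)=(a\cdot b\meet c_1)\join(a\cdot b\meet c_2)$, but the cleaner route is residuation combined with the primality of $x$ and $y$. Concretely, I would argue contrapositively: if neither $c_1\in x\cdot y$ nor $c_2\in x\cdot y$, I aim to derive $c_1\join c_2\notin x\cdot y$. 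Using residuation, $a\cdot b\leq c_i$ is equivalent to $a\leq b\to c_i$, so I would analyze membership of the elements $b\to c_i$ in $x$. Since $x$ is prime and in a Sugihara monoid the relevant semilinearity identities of Proposition \ref{prop2} hold (in particular $a\to(b\join c)=(a\to b)\join(a\to c)$ and the prelinearity $t\leq(u\to v)\join(v\to u)$), I can compare $b\to c_1$ and $b\to c_2$ along a chain and use primality of $x$ to locate the join in one coordinate. This is where the totally-ordered structure underlying semilinearity does the real work.

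Alternatively, and perhaps more transparently, I would appeal to the representation of Proposition \ref{prop:Sugiharagenerators} and the semilinearity of Sugihara monoids: it suffices to establish the primality claim working with prime filters, where the quotient behaves like a chain, so that $c_1\join c_2$ equals whichever of $c_1,c_2$ is larger modulo the filter, reducing the disjunction to a single case. I expect the delicate bookkeeping to be ensuring that the witnesses $a,b$ can be chosen uniformly, which the filter (meet-closure) property just established guarantees. Once primality is secured under the hypothesis $x\cdot y\neq A$, the dichotomy $x\cdot y\in A_*\cup\{A\}$ follows, completing the argument.
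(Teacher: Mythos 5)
Your main line is essentially the paper's proof, and it does work once straightened out. The paper simply cites \cite[Lemma 2.1]{Urquhart} for the filter part (your direct verification via monotonicity of $\cdot$, Proposition \ref{prop1}(6), is a fine substitute), and for primality it runs exactly the argument you gesture at, but directly rather than contrapositively and with the roles of the two filters swapped: from $a\cdot b\leq c_1\join c_2$ with $a\in x$, $b\in y$, residuation gives $a\leq b\to(c_1\join c_2)=(b\to c_1)\join(b\to c_2)$ by Proposition \ref{prop2}(3); since $x$ is upward-closed this join lies in $x$, primality of $x$ (alone --- primality of $y$ is never needed in your orientation) puts $b\to c_i$ in $x$ for some $i$, and then $(b\to c_i)\cdot b\in x\cdot y$ together with $(b\to c_i)\cdot b\leq c_i$ (Proposition \ref{prop1}(1)) and upward-closure of $x\cdot y$ yields $c_i\in x\cdot y$. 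This last step --- converting $b\to c_i\in x$ back into membership of $c_i$ in the complex product --- is the one link you leave unstated, and it is what your sketch needs to close.

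Two cautions about the detours you offer. First, the plan to ``compare $b\to c_1$ and $b\to c_2$ along a chain'' via prelinearity is a trap: Proposition \ref{prop2}(1) gives $t\leq (u\to v)\join(v\to u)$, but to conclude that one of the implications lies in $x$ you would need $t\in x$, and prime filters of a Sugihara monoid need not contain $t$ (e.g., $\upset 2$ in ${\bf S}_4$). The direct application of primality to the join $(b\to c_1)\join(b\to c_2)\in x$ avoids this entirely, so drop the chain comparison. Second, the ``alternative'' via Proposition \ref{prop:Sugiharagenerators} and subdirect decomposition does not go through as stated: primality of $x\cdot y$ is a claim about two specific prime filters of ${\bf A}$, and passing to totally ordered quotients or generating algebras does not track a given pair of filters or their complex product; nothing like ``the quotient behaves like a chain modulo the filter'' is available without substantial extra work. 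So treat your first route as the proof and discard the alternative.
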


\begin{proof}
That $x\cdot y$ is a filter is proven in \cite[Lemma 2.1]{Urquhart}, so it suffices to show that $x\cdot y$ is prime or improper. Let $a,b\in A$ with $a\join b\in x\cdot y$. Then there exists $c\in x$, $d\in y$ such that $cd\leq a\join b$. By residuation, we obtain that $d\leq c\to (a\join b)$. But as ${\bf A}$ is semilinear, Proposition \ref{prop2}(3) gives $c\to (a\join b)=(c\to a)\join (c\to b)$. Hence $d\leq (c\to a)\join (c\to b)$, and as $y$ is upward-closed this yields $(c\to a)\join (c\to b)\in y$. Since $y$ is a prime or improper, this shows that $c\to a\in y$ or $c\to b\in y$, whence either $c(c\to a)\in x\cdot y$ or $c(c\to b)\in x\cdot y$. But $c(c\to a)\leq a$ and $c(c\to b)\leq b$, so $x\cdot y$ being upward-closed gives that in either case one of $a\in x\cdot y$ or $b\in x\cdot y$ holds, proving the lemma.
\end{proof}

The above shows that $\cdot$ is a \emph{bona fide} operation on $A_*\cup\{A\}$. A thorough understanding of this operation proves essential to our construction of $(-)^{\bowtie}$, and toward this purpose we prove several technical claims about this operation.

\begin{lemma}\label{lem:timestech}
Let ${\bf A}$ be a bounded Sugihara monoid and let $x,y,z\in A_*\cup \{A\}$. Then the following hold.
\begin{enumerate}
\item $x\cdot y = y\cdot x$.
\item $y\in I({\bf A})$ implies $x\subseteq x\cdot y$.
\item $x\cdot x = x$.
\item $ab\in x$ implies $a\in x$ or $b\in x$.
\item $x\subseteq y$ implies $x\cdot z\subseteq y\cdot z$.
\item $a,b\in x$ implies $ab\in x$.
\end{enumerate}
\end{lemma}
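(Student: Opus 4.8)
The plan is to reduce all six items to a single structural fact about the monoid operation of a Sugihara monoid, together with routine manipulations of the complex product. The workhorse is the sandwich inequality
$$a\meet b\leq a\cdot b\leq a\join b \qquad\text{for all } a,b\in A,$$
which I would establish first. For the lower bound, note $a\meet b\leq a$ and $a\meet b\leq b$, so two applications of the monotonicity of $\cdot$ (Proposition \ref{prop1}(6)) give $(a\meet b)\cdot(a\meet b)\leq a\cdot b$, and idempotency collapses the left-hand side to $a\meet b$. The upper bound is dual: from $a,b\leq a\join b$ and monotonicity we get $a\cdot b\leq (a\join b)\cdot(a\join b)=a\join b$, again using idempotency. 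These two inequalities, combined with the fact that the members of $A_*\cup\{A\}$ are prime filters (or $A$ itself), carry the entire argument. Since there is no serious obstacle here, the real crux is simply to isolate this inequality once at the outset rather than rederiving monotone/idempotent estimates inside each item.

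With this in hand, items (1), (2), and (5) are immediate from the definition of the complex product. For (1), commutativity of $\cdot$ in the monoid reduct gives $a\cdot b\leq c$ iff $b\cdot a\leq c$, so the witnessing pairs for $x\cdot y$ and $y\cdot x$ coincide. For (2), if $y\in I({\bf A})$ then $t\in y$, and for any $c\in x$ the pair $a=c\in x$, $b=t\in y$ witnesses $c=c\cdot t\leq c$, whence $c\in x\cdot y$ and thus $x\subseteq x\cdot y$. For (5), if $c\in x\cdot z$ is witnessed by $a\in x$, $b\in z$ with $a\cdot b\leq c$, then $x\subseteq y$ gives $a\in y$, so the same pair witnesses $c\in y\cdot z$.

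The remaining items (3), (4), and (6) invoke the sandwich inequality together with the filter structure of the elements of $A_*\cup\{A\}$. For (6), if $a,b\in x$ then $a\meet b\in x$ since $x$ is a filter, and $a\meet b\leq a\cdot b$ together with upward-closure yields $a\cdot b\in x$. For (3), the inclusion $x\subseteq x\cdot x$ follows by taking $a=b=c$ and using idempotency $c\cdot c=c$, while the reverse inclusion proceeds as in (6): any witness $a\cdot b\leq c$ with $a,b\in x$ gives $a\meet b\leq a\cdot b\leq c$ with $a\meet b\in x$, forcing $c\in x$. Finally, (4) is the one place where primality is genuinely used: if $a\cdot b\in x$, then $a\cdot b\leq a\join b$ and upward-closure give $a\join b\in x$, and primality of $x$ then forces $a\in x$ or $b\in x$ (the case $x=A$ being trivial). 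I expect the only point demanding any care to be bookkeeping the two directions of the sandwich inequality correctly—lower for (3) and (6), upper for (4)—which is why I would prove both bounds uniformly before treating the individual items.
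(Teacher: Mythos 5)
Your proposal is correct, and all six items go through as you describe; the difference from the paper lies in how items (3) and (5) are handled. For (1), (2), (4), and (6) your argument is essentially the paper's: commutativity of the monoid operation for (1), the witness $a = a\cdot t$ for (2), the inequality $a\cdot b\leq a\join b$ plus primality for (4) (the paper is silent on the improper case $x=A$, which you rightly flag as trivial), and $a\meet b\leq a\cdot b$ plus upward closure for (6). But where you prove the sandwich inequality $a\meet b\leq a\cdot b\leq a\join b$ once at the outset from idempotency and monotonicity and then run (3) and (5) purely through lattice order and the filter structure, the paper instead detours through residuation in both items: for (3) it deduces $a\leq b\to c$ from $a\cdot b\leq c$, pushes $b\to c$ into $x$ by upward closure, and concludes via $b\meet (b\to c)\leq b(b\to c)\leq c$; for (5) it runs a similar residuation argument, even though — as your proof shows — the monotonicity claim is immediate, since any witnessing pair $a\in x\subseteq y$, $b\in z$ with $a\cdot b\leq c$ already witnesses $c\in y\cdot z$. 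Your route is thus strictly more elementary: it never invokes $\to$, so the lemma as you prove it holds for the complex product over any commutative idempotent lattice-ordered monoid with monotone multiplication, residuated or not, while the paper's version is tied to the residuated signature it uses throughout the surrounding computations. The only point worth making explicit in a final write-up is the one you already anticipate: the lower bound of the sandwich inequality drives (3) and (6), the upper bound drives (4), and both bounds rest on the idempotency $x\cdot x=x$ together with Proposition \ref{prop1}(6) applied in each coordinate via commutativity.
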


\begin{proof}
For (1), let $c\in x\cdot y$. Then there are $a\in x$, $b\in y$ with $ab\leq c$. But then $ba\leq c$ gives $c\in y\cdot x$. The reverse inclusion follows in the same way.

For (2), let $a\in x$. Then $a=at\in x\cdot y$, so $x\subseteq x\cdot y$.

For (3), let $a\in x$. Then $a=a\cdot a\in x\cdot x$, so $x\subseteq x\cdot x$. On the other hand, if $c\in x\cdot x$ then there exist $a,b\in x$ with $ab\leq c$. Then $a\leq b\to c$ gives $b\to c\in x$ by upward closure, so $b\meet (b\to c)\leq b(b\to c)\leq c$ gives $c\in x$.

For (4), this follows from the primality of $x$ and the fact that $ab\leq a\join b$ holds in every Sugihara monoid.

For (5), let $c\in x\cdot z$. Then there exists $a\in x$, $b\in z$ with $ab\leq c$, so $a\leq b\to c$ gives $b\to c\in x$ by upward closure. Then $b\to c\in y$, so $b(b\to c)\in z\cdot y$ gives $c\in y\cdot z$. Hence $x\cdot z\subseteq y\cdot z$.

For (6), this follows from the identity $a\meet b\leq ab$, which holds in every Sugihara monoid.
\end{proof}

\begin{lemma}\label{lem:xprimemeet}
Let $x\in A_*\cup\{A\}$. Then $x\meet x'$ exists, and $x\meet x'=x\cdot x'$.
\end{lemma}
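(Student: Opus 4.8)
The plan is to reduce the statement to one concrete inclusion and then isolate a single nontrivial computation. By Lemma \ref{lem:orderprime}(2) the filters $x$ and $x'$ are comparable, so their meet in $(A_*\cup\{A\},\subseteq)$ automatically exists and is the smaller of the two, namely $x\cap x'$. Using commutativity of $\cdot$ (Lemma \ref{lem:timestech}(1)) I may assume $x\subseteq x'$, in which case Lemma \ref{lem:orderprime}(3) gives $x'\in I({\bf A})$ and the claim becomes $x\cdot x'=x$. If $x=x'$ this is immediate from idempotence (Lemma \ref{lem:timestech}(3)), so I assume $x\subsetneq x'$; then $t\notin x$ by Lemma \ref{lem:orderprime}(4).

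One inclusion is easy: since $x'\in I({\bf A})$, Lemma \ref{lem:timestech}(2) yields $x\subseteq x\cdot x'$. For the reverse inclusion take $c\in x\cdot x'$, so that $ab\le c$ for some $a\in x$ and $b\in x'$; as $x$ is upward closed it suffices to prove $ab\in x$. If $b\in x$ this is immediate, because $a\meet b\le ab$ (which holds in every Sugihara monoid) and $a\meet b\in x$. The whole difficulty is therefore concentrated in the case $b\in x'\setminus x$, which is equivalent to $b\notin x$ and $\neg b\notin x$ (the latter because $b\in x'$).

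To attack this case I would first record the inequality $a\le ab\join a\neg b$: indeed $t\le b\join\neg b$ and monotonicity of $\cdot$ give $a=a\cdot t\le a\cdot(b\join\neg b)$, and Proposition \ref{prop1}(2) rewrites the right-hand side as $ab\join a\neg b$. Since $a\in x$ and $x$ is prime, this forces $ab\in x$ or $a\neg b\in x$. Unfortunately the two alternatives are symmetric in $b$ and $\neg b$, so primality alone does not single out $ab$, and this is precisely the main obstacle. To overcome it I would exploit semilinearity. The set $Z=\{u\in A:u\notin x\text{ and }\neg u\notin x\}$ is a convex subalgebra of ${\bf A}$ containing $t$: closure under $\meet,\join,\neg$ follows from primality of $x$, convexity from $x$ being an up-set, closure under $\cdot$ from $u\meet v\le uv\le u\join v$ together with $x$ being an up-set, and $t\in Z$ from $t\notin x$ and $\neg t\le t\notin x$. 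Passing to the quotient ${\bf A}/\theta_Z$ by the congruence determined by $Z$, Proposition \ref{prop2}(1) guarantees that this quotient is totally ordered; since $b\in Z$ collapses to the monoid identity while $a$ maps into the image of $x$, the product formula for Sugihara chains gives that $ab$ maps to the image of $a$, whence $ab\in x$. The delicate point — and the step I expect to require the most care — is verifying that $\theta_Z$ is a genuine congruence with $x$ a union of its classes and that the quotient is linearly ordered; once this reduction to the totally ordered case is secured, the conclusion is transparent.
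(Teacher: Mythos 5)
Your framing, normalization, and the two easy steps (the meet exists and is the smaller filter; $x\subseteq x\cdot x'$ via Lemma \ref{lem:timestech}(2); the case $b\in x$ via $a\meet b\le ab$) are all correct, but the central step of your hard case contains a genuine gap, in two respects. First, your justification that ${\bf A}/\theta_Z$ is totally ordered is wrong as stated: Proposition \ref{prop2}(1) is an identity, so it holds in every Sugihara monoid and every quotient thereof, and prelinearity yields only semilinearity, never linearity. (The quotient \emph{is} in fact a chain here, but this needs an argument exploiting $t\notin x$: since $t\notin x$, both $u=(a\to b)\meet t$ and $v=(b\to a)\meet t$ lie outside $x$ automatically, so $u\in Z$ reduces to $\neg u\notin x$; if neither $u$ nor $v$ were in $Z$, then $\neg u,\neg v\in x$, and by De Morgan, distributivity, and prelinearity $\neg u\meet\neg v=\neg((u\join v))=\neg t$, forcing $\neg t\in x$ and hence $t\in x$, a contradiction.) Second, and more seriously, the step you defer as ``delicate'' --- that $x$ is a union of $\theta_Z$-classes --- is not a routine verification; it is essentially the whole content of the lemma, and proving it requires precisely the involutive contraposition your write-up never uses: if $a\in x$, $a\mathrel{\theta_Z}b$, and $b\notin x$, then $b\notin Z$ (its class equals $[a]\neq[t]=Z$, since $a\in x$ forces $a\notin Z$), so $\neg b\in x$, whence $a\cdot\neg b\in x$ by Lemma \ref{lem:timestech}(6); but $w=(a\to b)\meet(b\to a)\meet t\in Z$ gives $wa\le b$, hence $a\cdot\neg b\le\neg w$ by contraposition, so $\neg w\in x$, contradicting $w\in Z$. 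Note also that linearity is a red herring: once $[b]=[t]$, the computation $[ab]=[a][t]=[a]$ is immediate without any ``product formula for chains,'' so congruence-plus-saturation is all you would need. (You would additionally have to check that $Z$ is closed under $\to$, which follows from $\neg u\meet v\le u\to v\le\neg u\join v$ in semilinear Sugihara monoids together with convexity.)

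The paper's proof shows none of this machinery is necessary. It normalizes the other way, $x'\subseteq x$, so that the \emph{larger} filter $x$ lies in $I({\bf A})$; then $x'\subseteq x'\cdot x$ comes from Lemma \ref{lem:timestech}(2), and for the reverse inclusion it applies the contraposition directly: given $a\in x'$, $b\in x$ with $ab\le c$, one has $ab\le c$ iff $b\cdot\neg c\le\neg a$; if $\neg c\in x$, then $b\cdot\neg c\in x$ and so $\neg a\in x$, contradicting $a\in x'$; hence $\neg c\notin x$, i.e.\ $c\in x'$, and $x'\cdot x=x'=x\meet x'$. The equivalence $ab\le c\iff b\cdot\neg c\le\neg a$ is the single missing idea in your proposal: with it, your hard case $b\in x'\setminus x$ dissolves in one line (assume $c\notin x$, note $\neg c\in x'$ and that $x'$ is itself a product-closed prime filter, deduce $\neg a\in x'$, i.e.\ $a\notin x$, a contradiction), and the excursion through convex subalgebras and quotients becomes unnecessary.
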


\begin{proof}
Either $x\subseteq x'$ or $x'\subseteq x$ by Lemma \ref{lem:orderprime}(2), so the meet of $x$ and $x'$ certainly exists and without loss of generality we assume $x'\subseteq x$. Then $t\in x$, so $x'\subseteq x'\cdot x$ by Lemma \ref{lem:timestech}(2). On the other hand, let $c\in x'\cdot x$. Then there exists $a\in x'$ and $b\in x$ with $ab\leq c$. This holds iff $a\cdot\neg c\leq\neg b$. If $\neg c\in x$, then $b\cdot\neg c\leq\neg a$ would give $\neg a\in x$, a contradiction to $a\in x'$. Hence $\neg c\notin x$, so $c\in x'$. Thus $x'\cdot x\subseteq x'$, giving $x\cdot x'=x'=x\meet x'$.
\end{proof}

\begin{lemma}\label{lem:IofAtimes}
If $x,y\in I({\bf A})$, then $x\join y$ exists and $x\join y=x\cdot y$.
\end{lemma}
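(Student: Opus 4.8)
The plan is to show directly that the complex product $x \cdot y$ is the least upper bound of $x$ and $y$ in the poset $(I({\bf A}),\subseteq)$; establishing this simultaneously proves that the join exists and that it coincides with $x \cdot y$.

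First I would verify that $x \cdot y$ actually lies in $I({\bf A})$. The preceding lemma guarantees $x \cdot y \in A_* \cup \{A\}$, so it remains only to check that $t \in x \cdot y$. Since $x,y \in I({\bf A})$ we have $t \in x$ and $t \in y$, and idempotence of the monoid gives $t \cdot t = t \leq t$; hence $t \in x \cdot y$ by the definition of the complex product, so $x \cdot y \in I({\bf A})$.

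Next I would show that $x \cdot y$ is an upper bound of $x$ and $y$. Because $y \in I({\bf A})$, Lemma \ref{lem:timestech}(2) yields $x \subseteq x \cdot y$. Using commutativity (Lemma \ref{lem:timestech}(1)) together with $x \in I({\bf A})$, the same lemma gives $y \subseteq y \cdot x = x \cdot y$. Thus $x \cdot y$ dominates both $x$ and $y$. Finally, to see that $x \cdot y$ is the \emph{least} such upper bound, let $z$ be any filter with $x \subseteq z$ and $y \subseteq z$, and take $c \in x \cdot y$; then there are $a \in x$ and $b \in y$ with $ab \leq c$. Since $a,b \in z$, Lemma \ref{lem:timestech}(6) gives $ab \in z$, and upward closure of $z$ yields $c \in z$. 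Hence $x \cdot y \subseteq z$, so $x \cdot y$ lies below every upper bound of $x$ and $y$ and is therefore their join.

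The calculations here are entirely routine, and no step presents a genuine obstacle. The only conceptual point worth keeping in view is that joins need not exist in the forest $(I({\bf A}),\subseteq)$ in general; the content of the lemma is precisely that the hypothesis $x,y \in I({\bf A})$ (equivalently, $t \in x$ and $t \in y$) is exactly what forces a common upper bound to exist and to be computed by the complex product.
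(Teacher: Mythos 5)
Your proof is correct and follows essentially the same route as the paper: the upper-bound step is identical (via Lemma \ref{lem:timestech}(1)--(2), i.e., $t\in x,y$), and your minimality step merely unpacks what the paper does in one line with monotonicity and idempotence of $\cdot$ on filters (Lemma \ref{lem:timestech}(3),(5)), replacing it with the equivalent element-level argument through Lemma \ref{lem:timestech}(6) and upward closure. Your extra observation that $t\in x\cdot y$, so that the join also lives in $I({\bf A})$, is harmless and not needed for the statement as given.
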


\begin{proof}
$t\in x,y$ implies $x,y\subseteq x\cdot y$. On the other hand, let $z\in A_*\cup\{A\}$ with $x,y\subseteq z$. Then by monotonicity $x\cdot y\subseteq z\cdot z = z$, so $x\cdot y=x\join y$.
\end{proof}

\begin{lemma}\label{lem:incomptimes}
If $x\mathbin{\|} y$, then $x\join y$ exists and $x\join y=x\cdot y$.
\end{lemma}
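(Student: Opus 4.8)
The plan is to follow the template of Lemmas \ref{lem:xprimemeet} and \ref{lem:IofAtimes}: the inclusion $x\cdot y\subseteq z$ holds for \emph{every} common upper bound $z\in A_*\cup\{A\}$ of $x,y$, since $x\subseteq z$ and $y\subseteq z$ give $x\cdot y\subseteq z\cdot z=z$ by Lemma \ref{lem:timestech}(3),(5). Thus once $x\cdot y$ is shown to be an upper bound of $\{x,y\}$, it is automatically the least one, and $x\join y=x\cdot y$. So everything reduces to proving $x\subseteq x\cdot y$ and $y\subseteq x\cdot y$. I first note that these two inclusions follow from a \emph{single} hypothesis: I will show that $y\not\subseteq x'$ already yields $x\subseteq x\cdot y$, and since $x\subseteq y'\iff y\subseteq x'$ (by contraposition through the involution), the symmetric condition $x\not\subseteq y'$ is equivalent to $y\not\subseteq x'$ and yields $y\subseteq y\cdot x=x\cdot y$ by Lemma \ref{lem:timestech}(1).

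For the key step I would use the residuation reformulation $c\in x\cdot y\iff(\exists b\in y)(b\to c\in x)$, which is valid because $b\cdot(b\to c)\le c$ and, conversely, $ab\le c$ forces $b\to c\in x$ by upward closure. Suppose $y\not\subseteq x'$ and fix $b\in y\setminus x'$; by the definition of $x'$ this means $\neg b\in x$. For any $c\in x$ we then have $\neg b\meet c\in x$, and since $b\to c=\neg(b\cdot\neg c)\ge\neg(b\join\neg c)=\neg b\meet c$ (using $b\cdot\neg c\le b\join\neg c$), upward closure gives $b\to c\in x$. Hence every $c\in x$ lies in $x\cdot y$, i.e.\ $x\subseteq x\cdot y$, as desired.

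It remains to verify the hypothesis, namely that $x\mathbin{\|}y$ implies $x\not\subseteq y'$ (equivalently $y\not\subseteq x'$). I would argue by contradiction, assuming $x\subseteq y'$ and hence $y\subseteq x'$, and split using Lemma \ref{lem:orderprime}(2). If $x'\subseteq x$ then $y\subseteq x'\subseteq x$, and if $y'\subseteq y$ then $x\subseteq y'\subseteq y$; either way $x,y$ are comparable, a contradiction. This leaves the case $x\subseteq x'$ and $y\subseteq y'$, where Lemma \ref{lem:orderprime}(3),(4) make $x',y'\in I({\bf A})$ the larger members of the complementary pairs, with $(x')'=x$ and $(y')'=y$. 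Here comparability of $x'$ and $y'$ is already fatal: since $'$ is an order-reversing involution, $x'\subsetneq y'$ would give $y=(y')'\subsetneq(x')'=x$, the relation $y'\subsetneq x'$ would give $x\subsetneq y$, and $x'=y'$ would give $x=y$, each contradicting $x\mathbin{\|}y$. Thus in this final case $x'$ and $y'$ must be incomparable, while $x\subseteq x'\cap y'$ exhibits a common lower bound $x\in A_*\setminus I({\bf A})$ of the two incomparable elements $x',y'\in I({\bf A})$.

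The main obstacle is exactly ruling out this configuration: I must show that two incomparable points of the forest $I({\bf A})$ can never possess a common lower bound lying outside $I({\bf A})$. I expect this to follow from the forest structure of $(I({\bf A}),\subseteq)$ (Lemma \ref{lem:IASugi}) together with the order-reversing reflection correspondence of Lemma \ref{lem:orderprime}, which identifies $A_*\setminus I({\bf A})$ with an order-dual copy of $I({\bf A})\setminus D$: concretely, for $x\notin I({\bf A})$ every member of $I({\bf A})$ above $x$ should be comparable to $\bar x=x'$, so that $x',y'\supseteq x$ forces $x'$ and $y'$ into a single $\subseteq$-chain of the forest. Establishing this comparability fact cleanly---rather than by the unrevealing element-chase that incomparability of $x',y'$ alone permits---is where the real work lies, and it is the step I would expect to require the most care.
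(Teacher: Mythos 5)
Your first two paragraphs are sound: the least-upper-bound argument via $x\cdot y\subseteq z\cdot z=z$ matches the paper, and the derivation of $x\subseteq x\cdot y$ from $y\not\subseteq x'$ is correct (indeed $b\in y$ with $\neg b\in x$ gives $b\to c=\neg(b\cdot\neg c)\geq\neg(b\join\neg c)=\neg b\meet c\in x$ for every $c\in x$). The case analysis in your third paragraph, reducing to the configuration $x\subsetneq x'$, $y\subsetneq y'$, $x'\mathbin{\|}y'$, $x\subseteq x'\cap y'$, is also correct. But the final step is a genuine gap, and your proposed route to close it cannot work: the general claim that two incomparable points of the forest $I({\bf A})$ can never possess a common lower bound outside $I({\bf A})$ is \emph{false}. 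The paper's own running example refutes it: for ${\bf E}_\bot$, in the dual space of Figure \ref{fig:HasseEreflection} the points $h_1$ and $h_2$ of $I({\bf A})$ are incomparable, yet $-h_0\notin I({\bf A})$ lies below both (since $h_0$ is comparable to each of $h_1,h_2$). What is true is only the special case in which the common lower bound is the mirror image of one of the two incomparable points---i.e.\ exactly the statement that $x\subseteq y'$ with $x\notin I({\bf A})$ forces $x'$ comparable to $y'$. That statement is an instance of Lemma \ref{lem:compar}, but Lemma \ref{lem:compar} is proved in the paper \emph{after} and \emph{by means of} Lemma \ref{lem:incomptimes} (as is the order isomorphism $\Gamma_{\bf A}$ of Lemma \ref{lem:gamorderiso}, which encodes the same fact), so invoking either here would be circular. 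In short, the "comparability fact" you defer to the end is not a routine forest argument; it carries essentially the same content as the multiplicative comparability theory the paper develops on top of this very lemma.

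The paper's proof sidesteps any comparability claim entirely. From witnesses $a\in x\setminus y$ and $b\in y\setminus x$ it gets $\neg a\in y'$ and $\neg b\in x'$, hence $a\cdot\neg a\in x\cdot y'$ and $b\cdot\neg b\in x'\cdot y$; since $a\cdot\neg a=a\cdot(a\to\neg t)\leq\neg t\leq t$, both $x\cdot y'$ and $x'\cdot y$ contain $t$. A short case split on membership of $x,y$ in $I({\bf A})$ then extracts $x,y\subseteq x\cdot y$ in every case, using $x\cdot x'=x\meet x'$ (Lemma \ref{lem:xprimemeet}) and Lemma \ref{lem:timestech}(2): for instance, when $x,y\notin I({\bf A})$, from $t\in x\cdot y'$ one gets $y\subseteq (x\cdot y')\cdot y=x\cdot(y'\cdot y)=x\cdot y$. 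If you wish to salvage your residuation route, you would have to establish your needed implication $x\mathbin{\|}y\implies y\not\subseteq x'$ by some such multiplicative computation anyway, at which point the paper's argument is the more direct path.
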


\begin{proof}
Let $a\in x\setminus y$ and $b\in y\setminus x$. Then $a\notin y$ gives $\neg a\in y'$, and $b\notin x$ gives $\neg b\in x'$. This yields $a\cdot\neg a\in x\cdot y'$ and $b\cdot\neg b\in y\cdot x' = x'\cdot y$. Note that $a\cdot\neg a = a\cdot (a\to\neg t)\leq\neg t\leq t$, and likewise $b\cdot \neg b\leq \neg t\leq t$. By upward closure, we therefore have $\neg t,t\in x\cdot y',x'\cdot y$. We consider some cases.

For the first case, suppose $x,y\notin I({\bf A})$. Then $x\subseteq x'$ and $y\subseteq y'$ by Lemma \ref{lem:orderprime}, so $x\cdot x'=x$ and $y\cdot y'=y$ by Lemma \ref{lem:xprimemeet}.  From $t\in x\cdot y'$ and Lemma \ref{lem:timestech}(2) we have $y\subseteq x\cdot y'\cdot y$. Since $y'\cdot y=y$ by Lemma \ref{lem:xprimemeet}, this gives $y\subseteq x\cdot y$. By the same token, $t\in x'\cdot y$ gives $x\subseteq x\cdot y$. Thus $x,y\subseteq x\cdot y$. If $x,y\subseteq z$, then $x\cdot y\subseteq z$ follows by monotonicity and idempotence, so $x\cdot y=x\join y$.

For the second case, suppose $x\notin I({\bf A})$ and $y\in I({\bf A})$. Then $x\subseteq x'$ and $y'\subseteq y$. Therefore $x\cdot y'\subseteq x\cdot y$. Since $t\in x\cdot y'$, $t\in x\cdot y$ too. Then $x,y\subseteq x\cdot y$, and $x\cdot y$ must be the least among upper bounds for the same reason as before.

The case where $y\notin I({\bf A})$ and $x\in I({\bf A})$ follows by symmetry, and we already knew the case where $x,y\in I({\bf A})$ from Lemma \ref{lem:IofAtimes}.
\end{proof}

\begin{lemma}\label{lem:sandwichtimes}
If $x\subseteq y\subseteq x'$, then $x\cdot y = x$.
\end{lemma}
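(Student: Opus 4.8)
The plan is to establish the two inclusions $x\subseteq x\cdot y$ and $x\cdot y\subseteq x$ separately, relying entirely on the arithmetic of the complex product already developed in Lemmas \ref{lem:timestech}, \ref{lem:orderprime}, and \ref{lem:xprimemeet}. The key preliminary observation is that the hypothesis $x\subseteq y\subseteq x'$ gives $x\subseteq x'$ by transitivity; this lets us sidestep any case split on the relative position of $x$ and $x'$ (cf.\ Lemma \ref{lem:orderprime}(2)) and simply compute that the meet $x\meet x'$ collapses to $x$ throughout.

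For the forward inclusion $x\subseteq x\cdot y$, I would apply monotonicity of $\cdot$ in each argument. Since $x\subseteq y$, Lemma \ref{lem:timestech}(5) gives $x\cdot x\subseteq y\cdot x$, and then commutativity (Lemma \ref{lem:timestech}(1)) together with idempotence (Lemma \ref{lem:timestech}(3)) yields $x=x\cdot x\subseteq y\cdot x=x\cdot y$.

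For the reverse inclusion $x\cdot y\subseteq x$, I would again invoke monotonicity: from $y\subseteq x'$, Lemma \ref{lem:timestech}(5) together with commutativity gives $x\cdot y\subseteq x\cdot x'$. Lemma \ref{lem:xprimemeet} identifies $x\cdot x'$ with the meet $x\meet x'$, and since $x\subseteq x'$ we have $x\meet x'=x$. Chaining these, $x\cdot y\subseteq x\cdot x'=x\meet x'=x$. Combined with the forward inclusion, this yields $x\cdot y=x$ as desired.

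There is no genuine obstacle here; the argument is a short chase through the established properties of $\cdot$, with no induction, topology, or primality reasoning required beyond what the cited lemmas provide. The only point meriting a moment's care is to notice at the outset that the hypothesis already forces $x\subseteq x'$, so that $x\meet x'$ reduces to $x$ and the otherwise-tempting case analysis (whether $x\subseteq x'$ or $x'\subseteq x$) becomes unnecessary.
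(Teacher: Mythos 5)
Your proof is correct and takes essentially the same route as the paper's: both arguments sandwich $x\cdot y$ between $x = x\cdot x$ and $x\cdot x'$ using the monotonicity, commutativity, and idempotence of $\cdot$ from Lemma \ref{lem:timestech}, and then collapse $x\cdot x' = x\meet x' = x$ via Lemma \ref{lem:xprimemeet} together with the observation that the hypothesis forces $x\subseteq x'$. The only cosmetic difference is that you present the two inclusions separately, whereas the paper writes them as a single chain $x\subseteq x\cdot y\subseteq x\cdot x'=x$.
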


\begin{proof}
By monotonicity, $x\cdot x\subseteq x\cdot y\subseteq x\cdot x'$. Since $\cdot$ is idempotent, this implies that $x\subseteq x\cdot y\subseteq x\cdot x'$. But $x\cdot x'=x\meet x'=x$ by Lemma \ref{lem:xprimemeet}, so $x\cdot y=x$.
\end{proof}

\begin{lemma}\label{lem:compar}
Let $x,y\in A_*\cup\{A\}$. If $x$ and $y'$ are comparable, then $x$ and $y$ are comparable.
\end{lemma}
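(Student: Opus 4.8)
The plan is to prove the statement by a case analysis driven by Lemma \ref{lem:orderprime} together with the fact that $'$ is an antitone involution, i.e. $u\subseteq v\iff v'\subseteq u'$ and $u''=u$ (both immediate from $x'=\{a:\neg a\notin x\}$ and $\neg\neg a=a$). First I record a consequence of Lemma \ref{lem:orderprime}(2),(3): for any $z\in A_*$ we have $z\subseteq z'$ or $z'\subseteq z$, the larger lying in $I({\bf A})$; hence $z\in I({\bf A})\Rightarrow z'\subseteq z$, while $z\notin I({\bf A})\Rightarrow z\subseteq z'$ with $z'\in I({\bf A})$.

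Assuming $x$ and $y'$ comparable, I would apply Lemma \ref{lem:orderprime}(2) to $y$ and split into $y'\subseteq y$ (so $y\in I({\bf A})$) and $y\subseteq y'$ (so $y'\in I({\bf A})$); crossing this with the two ways $x,y'$ can be comparable gives four subcases. Two chain immediately: $x\subseteq y'\subseteq y$ gives $x\subseteq y$, and $y\subseteq y'\subseteq x$ gives $y\subseteq x$. Of the other two, if $y'\subseteq x$ with $y\in I({\bf A})$ and $x\notin I({\bf A})$, then $x\subseteq x'$ while $y'\subseteq x$ yields $x'\subseteq y$, whence $x\subseteq x'\subseteq y$; and if $x\subseteq y'$ with $y'\in I({\bf A})$ and $x\in I({\bf A})$, then $x'\subseteq x$ while $x\subseteq y'$ yields $y\subseteq x'$, whence $y\subseteq x'\subseteq x$. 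What remains is the subcase $x,y\in I({\bf A})$ with $y'\subseteq x$ (equivalently $x'\subseteq y$), together with its mirror $x,y\notin I({\bf A})$, $x\subseteq y'$; the mirror reduces to the former, since then $x',y'\in I({\bf A})$ and $x\subseteq y'$ gives $y\subseteq x'$, so the core claim below applied to the pair $x',y'$ makes $x',y'$ comparable and hence (applying $'$) $x,y$ comparable. Everything thus reduces to the \emph{core claim}: if $x,y\in I({\bf A})$ and $y'\subseteq x$, then $x$ and $y$ are comparable.

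To prove the core claim I would argue its contrapositive and pass to the Davey--Werner side. Proposition \ref{prop:iofa} supplies an order isomorphism $\psi_{\bf A}\colon{\bf A}_+\to (I({\bf A}),\subseteq)$ with $\psi_{\bf A}(h)=h^{-1}[\{0,1\}]$; let $h,g\in{\bf A}_+$ satisfy $\psi_{\bf A}(h)=x$ and $\psi_{\bf A}(g)=y$. From $z'=\{a:\neg a\notin z\}$ and $g(\neg a)=-g(a)$ one computes $y'=\{a:g(a)=1\}$, while $x=\{a:h(a)\in\{0,1\}\}$. Suppose $x\mathbin{\|}y$; then $h,g$ are $\leq$-incomparable in ${\bf A}_+$. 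Because ${\bf A}_+$ is a Sugihara space (the bounded form of Lemma \ref{lem:sugdual1}), its Davey--Werner relation $Q_{\bf A}$ is exactly $\leq$-comparability; and since $Q_{\bf A}$ is computed coordinatewise from the relation $Q$ on $\underline{\bf L}$, incomparability of $h,g$ forces some $a\in A$ with $\{h(a),g(a)\}=\{-1,1\}$. If $g(a)=1,h(a)=-1$ then $a\in y'\setminus x$, so $y'\not\subseteq x$; if $h(a)=1,g(a)=-1$ then $a\in x'\setminus y$, so $x'\not\subseteq y$, i.e. again $y'\not\subseteq x$. This is the desired contrapositive.

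The main obstacle is precisely this core claim. Naive prime-filter chasing fails: for incomparable $x,y\in I({\bf A})$ one can always choose $c\in x\setminus y$ and $d\in y\setminus x$, but when both $x$ and $y$ contain $\neg t$ (as happens already for $i$-lattice reducts of algebras like ${\bf S}\times{\bf S}$) these witnesses satisfy $\neg c\in x$ and $\neg d\in y$, so neither $c$ nor $d$ lands in $y'\setminus x$ or $x'\setminus y$, and no combination of $c,d$ under $\meet,\join,\neg$ produces a coordinate realizing the pair $(-1,1)$. The existence of a genuinely discordant element is exactly the substantive content of the identity $Q_{\bf A}={\leq}\cup{\geq}$ in the Sugihara space ${\bf A}_+$; transporting the problem through $\psi_{\bf A}$ so as to invoke Lemma \ref{lem:sugdual1} is therefore the heart of the matter, with the surrounding case analysis being routine bookkeeping with the antitone involution $'$ and Lemma \ref{lem:orderprime}.
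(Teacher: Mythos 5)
Your proof is correct, but it settles the crux of the lemma by a genuinely different route than the paper. Both arguments dispatch the easy configurations the same way, by chaining inclusions with Lemma \ref{lem:orderprime} and the antitone involution; the difference lies in the irreducible case. The paper, after the reduction (WLOG $x\subseteq y'$, via $x=(x')'$), isolates the case $x,y\notin I({\bf A})$ with $x\subset y'$ and $y\subset x'$, and resolves it using the filter product: monotonicity and idempotence give $x\cdot y\subseteq x',y'$, and then either $x\cdot y\in I({\bf A})$ and the forest structure of $I({\bf A})$ (Lemma \ref{lem:IASugi}) forces $x'$ and $y'$ to be comparable, or $x\cdot y\notin I({\bf A})$ and, assuming incomparability so that $x\cdot y=x\join y$ by Lemma \ref{lem:incomptimes}, the dual-forest structure of the $'$-image of $I({\bf A})$ yields a contradiction. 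You instead reduce all cases to the core claim $x,y\in I({\bf A})$, $y'\subseteq x$, transport it through the order isomorphism $\psi_{\bf A}$ of Proposition \ref{prop:iofa} to the Davey--Werner dual, and invoke $\Qa={\leq}\cup{\geq}$ from the bounded form of Lemma \ref{lem:sugdual1}: incomparability of the corresponding morphisms $h,g$ produces, because $\Qa$ is computed coordinatewise, some $a$ with $\{h(a),g(a)\}=\{-1,1\}$, which lands in $y'\setminus x$ or in $x'\setminus y$, and either contradicts $y'\subseteq x$ (the second via $x'\subseteq y\iff y'\subseteq x$). Your route buys a very short hard case and avoids the product machinery of Lemmas \ref{lem:xprimemeet}--\ref{lem:notcontainstimes} entirely, at the cost of importing the duality apparatus of Section \ref{sec:sugidual} (including the bounded analogue of Lemma \ref{lem:sugdual1}, which the paper only sketches) into a lemma the paper keeps self-contained within Section \ref{sec:refcon}; there is no circularity, since Lemma \ref{lem:sugdual1} precedes and is independent of Section \ref{sec:refcon}, and in fact both proofs ultimately rest on the forest property of $I({\bf A})$ --- the paper directly, you implicitly through the proof of Lemma \ref{lem:sugdual1}. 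Two harmless quibbles: your ``mirror'' subcase is labelled $x,y\notin I({\bf A})$, but your reduction uses only $x\notin I({\bf A})$ and $y'\in I({\bf A})$ (and works as written even if $y\in I({\bf A})$, where $y=y'$ settles the case at once); and, like the paper, you silently ignore the trivial improper-filter cases $x=A$ and $y=A$.
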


\begin{proof}
Suppose that $x$ and $y'$ are comparable. Without loss of generality we may assume that $x\subseteq y'$, since the case where $y'\subseteq x$ follows from swapping the roles of $x$ and $y$ and the fact that $x=(x')'$. We consider cases.

First, suppose that $x\in I({\bf A})$. Then by Lemma \ref{lem:orderprime}(3) we must have $x'\subseteq x$. Thus $x'\subseteq x\subseteq y'$, so $y\subseteq x$ and $x$ and $y$ are comparable. 

Second, suppose that $y'\notin I({\bf A})$. Then Lemma \ref{lem:orderprime}(3) gives that $y'\subseteq y$, and thus $x\subseteq y'$ gives $x\subseteq y$. Hence $x$ and $y$ are again comparable.

In the only remaining case, $x\notin I({\bf A})$ and $y'\in I({\bf A})$. If $y\in I({\bf A})$, then $y,y'\in I({\bf A})$ gives $y=y'$ by Lemma \ref{lem:orderprime}(4), whence $x\subseteq y$ follows immediately. We may therefore assume further that $y\notin I({\bf A})$. In this situation, we have that $x\subset x'$ and $y\subset y'$, and moreover $x\subset y'$ and $y\subset x'$ hold by hypothesis. By the monotonicity and idempotence of $\cdot$, we therefore obtain that $x\cdot y\subseteq x',y'$. Were it the case that $x\cdot y\in I({\bf A})$, this would yield that $x',y'\in\upset x\cdot y$ in $I({\bf A})$, which would give that $x'$ and $y'$ are comparable since $I({\bf A})$ is a forest. This immediately yields that $x$ and $y$ are comparable as well. On the other hand, if $x\cdot y\notin I({\bf A})$, then we argue by contradiction. If $x$ and $y$ are incomparable, then Lemma \ref{lem:incomptimes} gives that $x\join y$ exists and $x\cdot y=x\join y$. Then $x,y\subseteq x\cdot y$, and if $x\cdot y\notin I({\bf A})$ we have that $x,y\subseteq\downset x\cdot y$ in the image under $'$ of $I({\bf A})$. Since $'$ is a dual order isomorphism of $I({\bf A})$ and $\{z' : z\in I({\bf A})\}$, the latter set is a dual forest, and this is a contradiction. It follows that $x$ and $y$ must be comparable as desired.
\end{proof}

The lemma above has significant consequences, one of which is captured in the following.

\begin{corollary}\label{cor:abstotorder}
Let $x,y\in A_*\cup\{A\}$ with $x$ and $y$ comparable. Then the set $\{x,y,x',y'\}$ is a chain under $\subseteq$.
\end{corollary}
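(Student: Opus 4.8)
The plan is to show that every one of the six pairs drawn from $\{x,y,x',y'\}$ is $\subseteq$-comparable; since a partial order in which all pairs are comparable is a chain, this suffices. Three of the six comparabilities come for free: $x$ is comparable with $x'$ and $y$ is comparable with $y'$ by Lemma~\ref{lem:orderprime}(2), and $x$ is comparable with $y$ by hypothesis. It therefore remains only to handle the three ``cross'' pairs $\{x,y'\}$, $\{x',y\}$, and $\{x',y'\}$.

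The key observation is that Lemma~\ref{lem:compar} becomes a two-way tool once it is combined with the involutivity of $'$, i.e.\ the identity $(z')'=z$ (immediate from the definition $z'=\{a : \neg a\notin z\}$ together with $\neg\neg a = a$, and recorded as axiom~(4) for Sugihara relevant spaces). First I would derive that $x$ is comparable with $y'$: applying Lemma~\ref{lem:compar} with $y'$ in the role of its ``$y$'', the hypothesis becomes comparability of $x$ with $(y')'=y$, which holds by assumption, and the conclusion is exactly that $x$ is comparable with $y'$. Interchanging the roles of $x$ and $y$ and arguing identically gives that $x'$ is comparable with $y$. Finally, to obtain comparability of $x'$ with $y'$, I would invoke Lemma~\ref{lem:compar} once more, now with $x'$ in the role of ``$x$'' and $y'$ in the role of ``$y$''; its hypothesis is comparability of $x'$ with $(y')'=y$, which was just established, so its conclusion yields that $x'$ is comparable with $y'$.

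Having verified all six comparabilities, I would conclude that $\subseteq$ restricts to a total order on $\{x,y,x',y'\}$, so this set is a chain, as claimed. There is essentially no obstacle beyond careful bookkeeping of the substitutions fed into Lemma~\ref{lem:compar}: all of the genuine work — the case analysis on membership in $I({\bf A})$ and the appeals to Lemmas~\ref{lem:orderprime}, \ref{lem:xprimemeet}, and \ref{lem:incomptimes} — has already been absorbed into the proof of that lemma, so the corollary reduces to three applications of it together with the involution identity.
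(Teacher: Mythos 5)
Your proof is correct and matches the paper's argument essentially step for step: the paper likewise gets $x$ comparable with $x'$ and $y$ with $y'$ from Lemma~\ref{lem:orderprime}(2), gets the cross pairs $\{x,y'\}$ and $\{x',y\}$ from Lemma~\ref{lem:compar} via the substitution $y\mapsto y'$ using $(y')'=y$, exactly as you do. The only cosmetic difference is the final pair: the paper deduces comparability of $x'$ and $y'$ directly from the antitonicity of $'$ applied to $x$ and $y$, whereas you apply Lemma~\ref{lem:compar} a third time, which is equally valid.
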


\begin{proof}
Lemma \ref{lem:compar} gives that $x$ and $y'$ are comparable, and likewise that $x'$ and $y$ are comparable. Because any $p\in A_*\cup\{A\}$ is comparable to $p'$ by Lemma \ref{lem:orderprime}(2), we have also that $x'$ and $x$ are comparable and $y'$ and $y$ are comparable. Since $x$ and $y$ being comparable implies that $x'$ and $y'$ are comparable as well, this shows that each of $x,y,x',y'$ are pairwise comparable, which gives the result.
\end{proof}

\begin{lemma}\label{lem:notcontainstimes}
If $x\notin I({\bf A})$, $y\in I({\bf A})$, $x\subseteq y$, and $y\not\subseteq x'$, then $x\cdot y=y$.
\end{lemma}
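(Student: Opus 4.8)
The plan is to prove the two inclusions $x\cdot y\subseteq y$ and $y\subseteq x\cdot y$ separately. The first is immediate: from the hypothesis $x\subseteq y$, monotonicity of the complex product (Lemma \ref{lem:timestech}(5)) gives $x\cdot y\subseteq y\cdot y$, and idempotence (Lemma \ref{lem:timestech}(3)) yields $y\cdot y=y$, so $x\cdot y\subseteq y$. The work lies entirely in the reverse inclusion.

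For $y\subseteq x\cdot y$, I would first extract a witness from the hypothesis $y\not\subseteq x'$: there is some $d\in y$ with $d\notin x'$. Since $x'=\{a\in A:\neg a\notin x\}$, the condition $d\notin x'$ says exactly that $\neg d\in x$. I would then fix this $d$ and show that every $c\in y$ lies in $x\cdot y$ using $\neg d\in x$ as the left factor. The natural candidate for the right factor is $b:=\neg d\to c$. By Proposition \ref{prop1}(1) we have $\neg d\cdot(\neg d\to c)\le c$, so once $b\in y$ is established, the triple $\neg d\in x$, $b\in y$, $\neg d\cdot b\le c$ witnesses $c\in x\cdot y$ by the definition of the complex product.

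The crux is therefore to show $\neg d\to c\in y$. Since $y$ is a filter containing both $d$ and $c$, it contains $d\meet c$, so by upward closure it suffices to prove the inequality
$$d\meet c\le \neg d\to c,$$
equivalently (by residuation) $\neg d\cdot(d\meet c)\le c$. This is the main obstacle, and I expect it to be the only genuinely computational point. I would prove it as an inequality valid in every Sugihara monoid: by Proposition \ref{prop:Sugiharagenerators} and semilinearity it is enough to verify it in a totally ordered Sugihara monoid, where $\cdot$, $\meet$, and $\neg$ take the explicit forms of Example \ref{oddex}. A short case analysis comparing $|\neg d|=|d|$ with $|d\meet c|$ then settles it. One must resist the temptation to instead invoke $c\le \neg d\to c$: the inequality $b\le a\to b$ fails in Sugihara monoids, which is precisely why the meet with $d$ is needed.

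Assembling these, $d\meet c\in y$ forces $\neg d\to c\in y$, hence $c\in x\cdot y$ for arbitrary $c\in y$, giving $y\subseteq x\cdot y$. Together with $x\cdot y\subseteq y$ this yields $x\cdot y=y$. I note that this argument uses only $x\subseteq y$ and $y\not\subseteq x'$; the remaining hypotheses $x\notin I({\bf A})$ and $y\in I({\bf A})$ (which via Lemma \ref{lem:orderprime} and Lemma \ref{lem:compar} would force $x\subsetneq x'\subseteq y$) are not strictly needed for this conclusion, though they situate the lemma within the case division of the surrounding development.
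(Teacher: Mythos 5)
Your proof is correct, but it takes a genuinely different route from the paper's. Both arguments share the same germ---an element $a\in x$ with $\neg a\in y$, which you extract directly from $y\not\subseteq x'$ (your $d\in y\setminus x'$, $a=\neg d$)---but they diverge in how this witness is exploited. The paper first invokes Lemma \ref{lem:compar} to get $x'$ and $y$ comparable, concludes $x'\subset y$ and hence $y'\subset x$, picks $a\in x\setminus y'$, and uses the single inequality $a\cdot\neg a\leq \neg t\leq t$ to land $t\in x\cdot y$; the containment $y\subseteq x\cdot y$ then follows in one stroke from Lemma \ref{lem:timestech}(2) together with associativity and idempotence of the complex product ($y\subseteq y\cdot(x\cdot y)=x\cdot y$). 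You instead produce, for each $c\in y$ separately, explicit witnesses $\neg d\in x$ and $\neg d\to c\in y$ with $\neg d\cdot(\neg d\to c)\leq c$, where membership $\neg d\to c\in y$ rests on the inequality $d\meet c\leq\neg d\to c$; this inequality is indeed valid (in a chain, $\neg d\to c$ equals $d\join c$ or $d\meet c$ according as $-d\leq c$ or not, so it dominates $d\meet c$ in both cases, and semilinearity transfers this to all Sugihara monoids), and your caution about $b\leq a\to b$ failing in the non-integral setting is well placed. What each approach buys: the paper's proof is shorter given the surrounding lemmas and stays at the level of filter arithmetic, but it leans on the comparability/forest machinery of Lemma \ref{lem:compar} and uses the hypotheses $x\notin I({\bf A})$, $y\in I({\bf A})$ (via $x\subseteq x'$) in steps you bypass; your proof is more elementary and local, requires verifying one new equational fact on the generating chains, and---as you correctly observe---establishes the slightly stronger statement that $x\subseteq y$ and $y\not\subseteq x'$ alone force $x\cdot y=y$, which is consistent with the case analysis later codified in Lemma \ref{lem:filtermult}.
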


\begin{proof}
$x''=x\subseteq y$ gives that $x'$ and $y$ are comparable by Lemma \ref{lem:compar}. The fact that $y\not\subseteq x'$ gives that $x'\subset y$. Then $x\subseteq x'\subseteq y$, and by monotonicity and idempotence $x\cdot y\subseteq x'\cdot y\subseteq y$. Since $x'\subset y$, we have also $y'\subset x$. Let $a\in x$ with $a\notin y'$. The latter implies that $\neg a\in y$, so $a\cdot \neg a\in x\cdot y$. Then $t\in x\cdot y$, giving $y\subseteq x\cdot y\cdot y = x\cdot y$. Thus $x\cdot y = y$.
\end{proof}

For a bounded Sugihara monoid ${\bf A}$, define the \emph{absolute value} of $x\in A_*$ by $|x|=x\join x'$. By Lemma \ref{lem:orderprime}, for each $x\in A_*$ we have that this join exists, that $|x|=x$ or $|x|=x'$, and that $|x|\in I({\bf A})$.

\begin{lemma}\label{lem:absmult1}
If $|x|\subset |y|$ and $x\subseteq y$, then $x\cdot y=y$.
\end{lemma}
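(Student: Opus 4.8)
The plan is to reduce the computation of $x\cdot y$ to two multiplication formulas already in hand, namely Lemma~\ref{lem:IofAtimes} (which computes $\cdot$ as $\join$ when both arguments lie in $I({\bf A})$) and Lemma~\ref{lem:notcontainstimes}, after first determining the position of $x$ and $y$ relative to $I({\bf A})$. The one piece of bookkeeping I would set up first is the characterization, immediate from Lemma~\ref{lem:orderprime}(2),(3),(4), that $|x|$ is the $\subseteq$-larger of $x$ and $x'$ and always lies in $I({\bf A})$, and that $x\in I({\bf A})$ if and only if $x=|x|$ (equivalently $x'\subseteq x$). Indeed, if $x\in I({\bf A})$ and $x\subsetneq x'$ held, then both $x,x'\in I({\bf A})$ would force $x=x'$ by Lemma~\ref{lem:orderprime}(4), a contradiction; the converse is clear since $|x|\in I({\bf A})$.

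The crucial step, and the one I expect to be the main obstacle, is to show that the two hypotheses together force $y\in I({\bf A})$. I would argue by contradiction: if $y\notin I({\bf A})$, then $|y|=y'$ with $y\subsetneq y'$. Since $x\subseteq y$, membership $x\in I({\bf A})$ would give $t\in x\subseteq y$ and hence $y\in I({\bf A})$, so in fact $x\notin I({\bf A})$ as well, whence $|x|=x'$. The hypothesis $|x|\subset|y|$ then reads $x'\subsetneq y'$, while antitonicity of $'$ applied to $x\subseteq y$ yields $y'\subseteq x'$. These two are incompatible, so $y\in I({\bf A})$. This is the heart of the argument, as it is the only place where the absolute-value hypothesis and the order hypothesis interact, mediated by the antitonicity of $'$ from the definition of a relevant space.

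With $y=|y|\in I({\bf A})$ established, I would finish by splitting on $x$. If $x\in I({\bf A})$, then $x=|x|$ and both $x,y\in I({\bf A})$ with $x\subseteq y$, so Lemma~\ref{lem:IofAtimes} gives $x\cdot y=x\join y=y$. If instead $x\notin I({\bf A})$, then $|x|=x'$ and the hypothesis $|x|\subset|y|=y$ gives $x'\subsetneq y$; consequently $y\not\subseteq x'$, since $y\subseteq x'$ would combine with $x'\subseteq y$ to force $x'=y$. Thus $x\notin I({\bf A})$, $y\in I({\bf A})$, $x\subseteq y$, and $y\not\subseteq x'$, and Lemma~\ref{lem:notcontainstimes} applies directly to conclude $x\cdot y=y$. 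As both cases yield $x\cdot y=y$, the lemma follows.
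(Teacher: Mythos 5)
Your proof is correct and takes essentially the same route as the paper's: both arguments first rule out $|y|=y'$ (you phrase this as showing $y\in I({\bf A})$ by contradiction via antitonicity of $'$, while the paper derives $y\subset x$ directly from $x'\subseteq |x|\subset y'$), and then split on whether $x\in I({\bf A})$, finishing with Lemma~\ref{lem:IofAtimes} in the first case and Lemma~\ref{lem:notcontainstimes} in the second. The only differences are presentational.
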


\begin{proof}
We consider cases. Observe at the outset that $|y|=y'$ cannot occur. If this were the case, then $|x|\subset |y|$ would give that $x'\subseteq |x|\subset y'$, whence that $y\subset x$. This contradicts $x\subseteq y$, and is hence impossible. Thus $|y|=y$. There are two possible cases.

First, suppose that $|x|=x$. Then $x,y\in I({\bf A})$, and Lemma \ref{lem:IofAtimes} gives that $x\cdot y=x\join y=y$.

Second, suppose that $|x|=x'$. If $x=x'$, then the previous case applies, so assume further that $x\neq x'$. Then $x\notin I({\bf A})$ by Lemma \ref{lem:orderprime}(4). Since $|y|=y$, we have also that $y\in I({\bf A})$. Because $x'\subset y$ by hypothesis, we have also that $y\not\subseteq x'$. Thus $x\notin I({\bf A})$, $y\in I({\bf A})$, $x\subseteq y$, and $y\not\subseteq x'$. It hence follows from Lemma \ref{lem:notcontainstimes} that $x\cdot y = y$ as desired.
\end{proof}

\begin{lemma}\label{lem:absmult2}
If $|x|\subset |y|$ and $y\subseteq x$, then $x\cdot y=y$.
\end{lemma}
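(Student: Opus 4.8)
The plan is to mirror the structure of the proof of Lemma \ref{lem:absmult1}, but to reduce the conclusion to Lemma \ref{lem:sandwichtimes} rather than to Lemma \ref{lem:notcontainstimes}. The key first step is to pin down which of $y$ and $y'$ realizes the absolute value $|y|$: I claim that $|y|=y'$, equivalently $y\subseteq y'$. Before that, I record an auxiliary observation I will use twice: if $p\in I({\bf A})$, then $|p|=p$. Indeed, by Lemma \ref{lem:orderprime}(2) either $p'\subseteq p$, in which case $|p|=p$ outright, or $p\subseteq p'$; in the latter case $t\in p\subseteq p'$ forces $p'\in I({\bf A})$, so Lemma \ref{lem:orderprime}(4) gives $p=p'$ and again $|p|=p$.

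To establish $|y|=y'$, I would argue by contradiction, supposing $y\not\subseteq y'$. Then Lemma \ref{lem:orderprime}(2) forces $y'\subseteq y$ with $y\neq y'$, so $|y|=y$. Since $|y|\in I({\bf A})$, this gives $y\in I({\bf A})$, hence $t\in y$; as $y\subseteq x$ this yields $t\in x$ and so $x\in I({\bf A})$. The auxiliary observation then gives $|x|=x$, so the hypothesis $|x|\subset|y|$ becomes $x\subset y$, contradicting $y\subseteq x$. Therefore $y\subseteq y'$ and $|y|=y'$.

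Finally, since $x\subseteq|x|$ always holds and $|x|\subset|y|=y'$, I obtain $x\subseteq y'$; combined with the hypothesis $y\subseteq x$ this gives the chain $y\subseteq x\subseteq y'$. Applying Lemma \ref{lem:sandwichtimes} with $y$ and $x$ in the roles of $x$ and $y$ yields $y\cdot x=y$, and commutativity of $\cdot$ (Lemma \ref{lem:timestech}(1)) then gives $x\cdot y=y\cdot x=y$, as desired.

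The only step requiring genuine care is the determination that $|y|=y'$, which rests on the interplay between $(-)'$, membership in $I({\bf A})$, and the strictness of $|x|\subset|y|$. Once the sandwich $y\subseteq x\subseteq y'$ is in hand the conclusion is immediate, so I do not anticipate a real obstacle beyond the bookkeeping with Lemma \ref{lem:orderprime}; in particular, the degenerate possibility $y=A$ is automatically excluded, since $y\subseteq x$ together with $|x|\subset|y|$ cannot hold when $y$ is the improper filter.
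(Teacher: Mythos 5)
Your proof is correct and follows essentially the same route as the paper's: both pin down $|y|=y'$, derive the sandwich $y\subseteq x\subseteq y'$, and conclude via Lemma \ref{lem:sandwichtimes} together with commutativity of $\cdot$ (Lemma \ref{lem:timestech}(1), which the paper leaves implicit). The only difference is local and cosmetic: where you rule out $|y|=y$ through $I({\bf A})$-membership and Lemma \ref{lem:orderprime}(3)--(4), the paper obtains the same contradiction in one line from the chain $y\subseteq x\subseteq x\join x'=|x|\subset|y|$.
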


\begin{proof}
Note that it cannot occur that $|y|=y$ since $y\subseteq x$ would then contradict $x\join x' = |x|\subset |y|$, so we have that $|y|=y'$. Then by hypothesis
$$y\subseteq x\subseteq x\join x'=|x|\subset |y|=y'.$$
It follows by Lemma \ref{lem:sandwichtimes} we obtain $x\cdot y = y$.
\end{proof}

\begin{lemma}\label{lem:absmulteq}
If $|x|=|y|$ and $x\subseteq y$, then $x\cdot y=x=x\meet y$.
\end{lemma}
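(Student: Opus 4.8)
The plan is to dispatch the two asserted equalities separately. The equality $x = x\meet y$ is immediate: since $x\subseteq y$ by hypothesis, $x$ is already the meet of $x$ and $y$. All the content therefore lies in proving $x\cdot y = x$. My strategy here is to reduce to the sandwich Lemma \ref{lem:sandwichtimes}, which states that $x\subseteq y\subseteq x'$ forces $x\cdot y = x$. Since $x\subseteq y$ is given, it suffices to establish $y\subseteq x'$ (and to handle a degenerate case where this reduction is unavailable). The key structural fact I will lean on is that, for any $x\in A_*$, the absolute value $|x| = x\join x'$ equals either $x$ or $x'$ and always lies in $I({\bf A})$, as recorded in the discussion following Lemma \ref{lem:orderprime}; combined with the trivial containment $y\subseteq y\join y' = |y|$, this is what will locate $y$ beneath $x'$.

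Accordingly, I would split on the two possibilities for $|x|$. First, suppose $|x| = x'$, so that $x\subseteq x'$. Then the hypothesis $|x| = |y|$ gives $|y| = x'$, and since $y\subseteq |y|$ always holds, we obtain $y\subseteq x'$. Together with the given $x\subseteq y$, this yields the chain $x\subseteq y\subseteq x'$, and Lemma \ref{lem:sandwichtimes} delivers $x\cdot y = x$. Second, suppose instead $|x| = x$, so that $x'\subseteq x$. Now $|y| = |x| = x$ and $y\subseteq |y| = x$, while $x\subseteq y$ by hypothesis; antisymmetry of $\subseteq$ forces $x = y$, whence $x\cdot y = x\cdot x = x$ by the idempotence of the complex product, Lemma \ref{lem:timestech}(3). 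Since $|x|$ is one of $x$ or $x'$, these two cases are exhaustive, and in both $x\cdot y = x$ as desired.

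I do not anticipate a genuine obstacle in this argument; it is essentially a bookkeeping exercise organized around the right case split. The one subtlety worth flagging is that the naive reduction ``just show $y\subseteq x'$'' fails precisely when $|x| = x$ (there $x'\subseteq x$ and the sandwich is vacuous), which is exactly why that branch must be handled by collapsing to the equality $x = y$ rather than by invoking Lemma \ref{lem:sandwichtimes}. The observation $y\subseteq |y|$ is the linchpin that makes both branches go through uniformly, so I would foreground it at the start of the proof.
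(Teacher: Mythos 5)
Your proof is correct and follows essentially the same route as the paper's: a two-case analysis that dispatches one case by the idempotence of $\cdot$ (Lemma \ref{lem:timestech}(3)) and the other via the sandwich Lemma \ref{lem:sandwichtimes}. The only cosmetic difference is that you split on whether $|x|=x$ or $|x|=x'$ and use $y\subseteq|y|$ to reach $x\subseteq y\subseteq x'$, whereas the paper first extracts $x=y$ or $x'=y$ directly from $|x|=|y|$; both reductions are sound and equally short.
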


\begin{proof}
The assumption that $|x|=|y|$ gives that either $x=y$ or $x'=y$. In the first case, $x\cdot y=x\cdot x=x=x\meet y$ by the idempotence of $\cdot$. In the second case, we have that $x\subseteq y\subseteq x'$, and Lemma \ref{lem:sandwichtimes} yields that $x\cdot y=x=x\meet y$.
\end{proof}

The following summarizes the results obtained above.

\begin{lemma}\label{lem:filtermult}
Let ${\bf A}$ be a bounded Sugihara monoid and let $x,y\in A_*\cup\{A\}$. Write $x\mathbin{\|}y$ if $x$ and $y$ are incomparable, and $x\perp y$ if $x$ and $y$ are comparable. Then
\[ x\cdot y = \begin{cases} 
      x\vee y & \text{ if }x,y\in I({\bf A})\text{ or } x\mathbin{\|}y\\
      y & \text{ if }x\perp y\text{ and }|x|\subset |y|\\
      x & \text{ if }x\perp y\text{ and }|y|\subset |x|\\
      x\wedge y & \text{ if }x\perp y\text{ and }|x|=|y|\\
   \end{cases}
\]
\end{lemma}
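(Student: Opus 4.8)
The plan is to recognize that Lemma~\ref{lem:filtermult} is a pure bookkeeping consolidation of the case-by-case results already established in Lemmas~\ref{lem:incomptimes}, \ref{lem:IofAtimes}, \ref{lem:absmult1}, \ref{lem:absmult2}, and \ref{lem:absmulteq}. The proof will therefore proceed by a single exhaustive case analysis on the mutual position of $x$ and $y$, invoking exactly one prior lemma in each case and using commutativity (Lemma~\ref{lem:timestech}(1)) to fold together the symmetric situations. No new computation is needed; the work is confined to organizing the citations and checking that the displayed cases are exhaustive and consistent.

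First I would dispose of the join cases comprising the first line of the formula. If $x\mathbin{\|}y$, then Lemma~\ref{lem:incomptimes} gives $x\cdot y=x\join y$ directly; if instead $x,y\in I({\bf A})$, then Lemma~\ref{lem:IofAtimes} gives $x\cdot y=x\join y$. Next I would treat the comparable case $x\perp y$, where $x\subseteq y$ or $y\subseteq x$. Here I would first invoke Corollary~\ref{cor:abstotorder} to note that $\{x,y,x',y'\}$ is a chain, so that $|x|=x\join x'$ and $|y|=y\join y'$ are themselves comparable; this guarantees that exactly one of $|x|\subset|y|$, $|y|\subset|x|$, $|x|=|y|$ holds, making the remaining three lines exhaustive and mutually exclusive on comparable pairs. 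When $|x|\subset|y|$, whichever of $x\subseteq y$ or $y\subseteq x$ obtains, Lemma~\ref{lem:absmult1} or Lemma~\ref{lem:absmult2} yields $x\cdot y=y$. The case $|y|\subset|x|$ follows at once by commutativity applied to the previous case, giving $x\cdot y=y\cdot x=x$. For $|x|=|y|$, I would use commutativity to assume without loss of generality $x\subseteq y$ and then apply Lemma~\ref{lem:absmulteq} to obtain $x\cdot y=x\meet y$.

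The only point worth verifying rather than asserting is the consistency of the overlap between the first line and the comparable cases: when $x,y\in I({\bf A})$ happen to be comparable, one must confirm that the value $x\join y$ agrees with the value assigned by whichever of lines two through four applies. This reduces to the observation that $z\in I({\bf A})$ forces $|z|=z$, since by Lemma~\ref{lem:orderprime} membership of $z$ in $I({\bf A})$ gives $z'\subseteq z$ (otherwise $z\subsetneq z'$ would place both $z$ and $z'$ in $I({\bf A})$, forcing $z=z'$ by part (4), a contradiction). Consequently two comparable elements of $I({\bf A})$ with, say, $x\subseteq y$ satisfy $|x|=x\subseteq y=|y|$, so they fall into the $|x|\subset|y|$ case (where $y=x\join y$) or the $|x|=|y|$ case (where $x\meet y=x=x\join y$); in either case the assigned value coincides with $x\join y$.

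I do not anticipate any genuinely hard step. Essentially all of the mathematical content has already been proved in the preceding lemmas, and the main obstacle, such as it is, is merely organizational: ensuring the case split is complete by appealing to Corollary~\ref{cor:abstotorder}, and discharging the symmetric cases ($|y|\subset|x|$, and $y\subseteq x$ under $|x|=|y|$) cleanly via the commutativity of $\cdot$ so as to avoid repeating arguments.
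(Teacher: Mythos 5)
Your proposal is correct and follows essentially the same route as the paper: the published proof is exactly the consolidation you describe, citing Lemmas \ref{lem:IofAtimes} and \ref{lem:incomptimes} for the join line, Lemmas \ref{lem:absmult1} and \ref{lem:absmult2} for the strict absolute-value cases, and Lemma \ref{lem:absmulteq} for $|x|=|y|$, with exhaustiveness via Corollary \ref{cor:abstotorder} noted by the authors immediately after the lemma. Your additional check that the overlap between the first line and the comparable cases is consistent (via $z\in I({\bf A})\Rightarrow |z|=z$) is sound but not strictly necessary, since $x\cdot y$ is a well-defined operation and the cited lemmas cannot conflict.
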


\begin{proof}
Note that if $x,y\in I({\bf A})$, then $x\cdot y=x\join y$ by Lemma \ref{lem:IofAtimes}. If $x\mathbin{\|}y$, then likewise $x\cdot y = x\join y$ by Lemma \ref{lem:incomptimes}.

If $x\perp y$ and one of $|x|\subset |y|$ or $|y|\subset |x|$ holds, then Lemmas \ref{lem:absmult1} and \ref{lem:absmult2} show that $x\cdot y$ is whichever of $x$ or $y$ has the greatest absolute value. If $x\perp y$ and $|x|=|y|$, then Lemma \ref{lem:absmulteq} gives that $x\cdot y = x\meet y$. This proves the claim.
\end{proof}

Observe that in light of Corollary \ref{cor:abstotorder}, if $x$ and $y$ are comparable, then exactly one of $|x|\subset |y|$, $|x|=|y|$, or $|y|\subset |x|$ holds. Hence the above lemma completely describes the multiplication $\cdot$ on $A_*\cup\{A\}$. With this operation now completely understood, we describe how $(-)^{\bowtie}$ operates on objects.

Let ${\bf X} = (X,\leq,D,\tau)$ be a Sugihara space and let $-D^\comp = \{-x : x\in D^\comp\}$ be a copy of $D^\comp$ with $X\cap -D^\comp=\emptyset$. Set $X^\bowtie = X\cup -D^\comp$. We extend our use of the formal symbol $-$ to define a unary operation on $X^{\bowtie}$ by stipulating that $-(-x)=x$ for $-x\in-D^\comp$ and $-x=x$ for $x\in D$. We also extend the order $\leq$ to a partial order $\leq^{\bowtie}$ on $X^{\bowtie}$ via the conditions
\begin{enumerate}
\item If $x,y\in X$, then $x\leq^{\bowtie} y$ if and only if $x\leq y$,
\item If $-x,-y\in -D^\comp$, then $-x\leq^{\bowtie} -y$ if and only if $y\leq x$,
\item If $-x\in -D^\comp$ and $y\in X$, then $-x\leq^{\bowtie} y$ if and only if $x$ and $y$ are comparable with respect to $\leq$.
\end{enumerate}
For a bounded Sugihara monoid ${\bf A}$, define a map $\Gamma_{\bf A}\colon A_*\to I({\bf A})^{\bowtie}$ by

\[ \Gamma_{\bf A}(x) = \begin{cases} 
      x & \text{ if } x\in I({\bf A})\\
      -(x') & \text{ if } x\notin I({\bf A})\\
   \end{cases}
\]
Lemma \ref{lem:orderprime} gives that one of $x\in I({\bf A})$ or $x'\in I({\bf A})$ holds for all $x\in A_*$, and $x=x'=-x$ if both hold. This guarantees that the above map is well-defined.

\begin{lemma}\label{lem:gamorderiso}
$\Gamma_{\bf A}$ is an order isomorphism.
\end{lemma}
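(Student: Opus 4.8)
The plan is to treat bijectivity and order-compatibility of $\Gamma_{\bf A}$ separately, in each part splitting into cases according to whether the prime filters involved belong to $I({\bf A})$. First I would record the effect of $\Gamma_{\bf A}$ on the two halves of $A_*$ using Lemma \ref{lem:orderprime}. For $x\in A_*$, either $x\in I({\bf A})$, in which case $x'\subseteq x$ and $\Gamma_{\bf A}(x)=x\in I({\bf A})$; or $x\notin I({\bf A})$, in which case $x\subsetneq x'$ and $\Gamma_{\bf A}(x)=-(x')$, where one checks $x'\in D^\comp$ (if $x'\in D$ then $x'=x''=x$, forcing $x=x'\in I({\bf A})$ by Lemma \ref{lem:orderprime}(4), a contradiction). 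Thus the two halves $A_*\cap I({\bf A})$ and $A_*\setminus I({\bf A})$ are sent into the disjoint pieces $I({\bf A})$ and $-D^\comp$ of $I({\bf A})^{\bowtie}$. Surjectivity onto $I({\bf A})$ is immediate, and for $-q\in-D^\comp$ (so $q\in D^\comp$, $q\neq q'$) the element $q'$ lies outside $I({\bf A})$, whence $\Gamma_{\bf A}(q')=-(q'')=-q$; injectivity then follows from the disjointness of the target pieces together with $'$ being an involution. This shows $\Gamma_{\bf A}$ is a bijection.

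For order-compatibility I would verify $x\subseteq y\iff \Gamma_{\bf A}(x)\lesb\Gamma_{\bf A}(y)$ in the four cases determined by membership of $x,y$ in $I({\bf A})$. When $x,y\in I({\bf A})$, both images are positive and clause (1) in the definition of $\lesb$ gives the equivalence directly, since the order on $I({\bf A})$ is $\subseteq$. When $x,y\notin I({\bf A})$, both images are negative, say $\Gamma_{\bf A}(x)=-(x')$ and $\Gamma_{\bf A}(y)=-(y')$ with $x',y'\in D^\comp$; clause (2) gives $-(x')\lesb-(y')\iff y'\subseteq x'$, and since $'$ is an order-reversing involution this is exactly $x\subseteq y$. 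When $x\in I({\bf A})$ and $y\notin I({\bf A})$, the image $\Gamma_{\bf A}(x)$ is positive and $\Gamma_{\bf A}(y)$ is negative; no clause of the definition relates an element of $I({\bf A})$ below an element of $-D^\comp$, so $\Gamma_{\bf A}(x)\not\lesb\Gamma_{\bf A}(y)$, and correspondingly $x\subseteq y$ is impossible because $t\in x\subseteq y$ would force $y\in I({\bf A})$; both sides are false and agree.

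The remaining case, $x\notin I({\bf A})$ and $y\in I({\bf A})$, is the main obstacle and the one requiring real work. Here $\Gamma_{\bf A}(x)=-(x')$ is negative and $\Gamma_{\bf A}(y)=y$ is positive, so clause (3) gives $\Gamma_{\bf A}(x)\lesb\Gamma_{\bf A}(y)$ if and only if $x'$ and $y$ are $\subseteq$-comparable; thus I must prove $x\subseteq y\iff x'\text{ and }y\text{ comparable}$. For the forward direction, $x\subseteq y$ makes $x,y$ comparable, so Corollary \ref{cor:abstotorder} yields that $\{x,y,x',y'\}$ is a chain, and in particular $x'$ and $y$ are comparable. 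For the converse, I would apply Corollary \ref{cor:abstotorder} to the comparable pair $x',y$: since $(x')'=x$, the set $\{x',y,x,y'\}$ is a chain, so $x$ and $y$ are comparable; and $y\subseteq x$ cannot hold, for $y\in I({\bf A})$ gives $t\in y$, which would put $t\in x$ and contradict $x\notin I({\bf A})$. Hence $x\subseteq y$, completing this case and the proof that $\Gamma_{\bf A}$ is an order isomorphism. The delicate point throughout is this upgrade from comparability of $x'$ and $y$ to the containment $x\subseteq y$, which is exactly what Corollary \ref{cor:abstotorder} (itself a consequence of Lemma \ref{lem:compar}) is designed to supply.
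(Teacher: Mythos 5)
Your proof is correct, and at its core it follows the same strategy as the paper's: a case analysis on membership of $x$ and $y$ in $I({\bf A})$, powered by Lemma \ref{lem:orderprime} and the comparability machinery. The organizational difference---you establish bijectivity first and then prove the biconditional $x\subseteq y\iff\Gamma_{\bf A}(x)\lesb\Gamma_{\bf A}(y)$, where the paper separately proves order-preservation, order-reflection, and surjectivity---is cosmetic. The one genuine local divergence is in the converse of the mixed case $x\notin I({\bf A})$, $y\in I({\bf A})$: you upgrade comparability of $x'$ and $y$ to $x\subseteq y$ via Corollary \ref{cor:abstotorder} (applied to the comparable pair $x',y$ to get the chain $\{x',y,x,y'\}$), then exclude $y\subseteq x$ because $t\in y\subseteq x$ would put $x$ in $I({\bf A})$; the paper instead argues by the direct containments $x=u'\subseteq u\subseteq y$ (when $u=x'\subseteq y$) or $x=u'\subseteq y'\subseteq y$ (when $y\subseteq u$), using only the elementary fact that $p'\subseteq p$ whenever $p\in I({\bf A})$. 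So the paper's reflection step does not actually need Corollary \ref{cor:abstotorder}, though its preservation step does invoke the comparability transfer of Lemma \ref{lem:compar} exactly as yours does. A small bonus of your write-up is that you make explicit two points the paper leaves tacit: that $x'\in D^\comp$ whenever $x\notin I({\bf A})$ (so $-(x')$ genuinely lands in $-D^\comp$ and the two image pieces are disjoint, which is what your injectivity argument rests on), and the vacuity of the case $x\in I({\bf A})$, $y\notin I({\bf A})$, where both sides of the biconditional fail.
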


\begin{proof}
To see that $\Gamma_{\bf A}$ is order-preserving, let $x,y\in A_*$ with $x\subseteq y$. In the event that $x,y\in I({\bf A})$, then the result is immediate. If $x,y\notin I({\bf A})$, then we may obtain that $\Gamma_{\bf A}(x)=-(x')\leq^{\bowtie} -(y')=\Gamma_{\bf A}(y)$ as $y'\subseteq x'$. If $x\notin I({\bf A})$ and $y\in I({\bf A})$, then there is $z\in I({\bf A})$ with $x=z'$. Since $x$ and $y$ are $\subseteq$-comparable, so too must be $y$ and $x'=z$. In this event, $-z\leq^{\bowtie} y$ gives $\Gamma_{\bf A}(x)\leq^{\bowtie}\Gamma_{\bf A}(y)$.

Next, to see that $\Gamma_{\bf A}$ reflects the order, let $x,y\in A_*$ with $\Gamma_{\bf A}(x)\leq^{\bowtie} \Gamma_{\bf A}(y)$. If $x,y\in I({\bf A})$, then it immediately follows that $x\subseteq y$. If $x,y\notin I({\bf A})$, then there exist $u,v\in I({\bf A})$ with $x=u'$ and $y=v'$ and $\Gamma_{\bf A}(x)=-u$ and $\Gamma_{\bf A}(y)=-v$. then we have $-u\leq^{\bowtie}-v$. By definition, this holds iff $v\subseteq u$, so $x=u'\subseteq v'=y$. In the final case, suppose that $x\notin I({\bf A})$ and $y\in I({\bf A)}$. Then there exists $u\in I({\bf A})$ with $x=u'$, and $\Gamma_{\bf A}(x)=-u$ and $\Gamma_{\bf A}(y)=y$. By definition $-u\leq^{\bowtie}y$ holds iff $u$ and $y$ are $\subseteq$-comparable. If $u\subseteq y$, then $u'\subseteq u\subseteq y$ gives that $x\subseteq y$. If $y\subseteq u$, then $x=u'\subseteq y'\subseteq y$ gives the result. It follows that $\Gamma_{\bf A}$ is order-reflecting.

Since $\Gamma_{\bf A}$ is order-preserving and order-reflecting, it suffices to see that it is onto in order to see that it is an order isomorphism. Let $x\in I({\bf A})^{\bowtie}$. If $x\in I({\bf A})$, then $\Gamma_{\bf A}(x)=x$. If $x\notin I({\bf A})$, then there exists $y\in I({\bf A})$ such that $x=-y$. Then $\Gamma_{\bf A}(y')=-y=x$. This gives the result.
\end{proof}

\begin{lemma}\label{lem:gamprimeiso}
For each $x\in A_*$ we have $\Gamma_{\bf A}(x')=-\Gamma_{\bf A}(x)$.
\end{lemma}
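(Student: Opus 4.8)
The plan is to verify the identity by a direct case analysis on the position of $x$ relative to $I({\bf A})$, using Lemma~\ref{lem:orderprime} to decide which of $x$ and $x'$ belongs to $I({\bf A})$. Two involutions drive the argument: the operation $(-)'$ on $A_*$ satisfies $x''=x$ (a defining condition of Sugihara relevant spaces), and the formal operation $-$ on $I({\bf A})^{\bowtie}$ is an involution that fixes each point of $D$ and interchanges $D^\comp$ with $-D^\comp$ via $-(-y)=y$. The key preliminary observation is that, by Lemma~\ref{lem:orderprime}(4), $x=x'$ holds exactly when both $x$ and $x'$ lie in $I({\bf A})$; hence when $x\neq x'$, parts (1) and (4) guarantee that precisely one of $x,x'$ is a member of $I({\bf A})$.

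With this in hand I would treat three cases. If $x=x'$, then $x\in I({\bf A})$ and $x\in D$, so $\Gamma_{\bf A}(x')=\Gamma_{\bf A}(x)=x$, while $-\Gamma_{\bf A}(x)=-x=x$ since $-$ fixes $D$; the two sides agree. If $x\in I({\bf A})$ with $x\neq x'$, then $x\in D^\comp$ and the observation above gives $x'\notin I({\bf A})$; thus $\Gamma_{\bf A}(x)=x$ yields $-\Gamma_{\bf A}(x)=-x\in -D^\comp$, whereas the clause of $\Gamma_{\bf A}$ for points outside $I({\bf A})$ gives $\Gamma_{\bf A}(x')=-(x'')=-x$ using $x''=x$, so $\Gamma_{\bf A}(x')=-\Gamma_{\bf A}(x)$. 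Finally, if $x\notin I({\bf A})$, then $x'\in I({\bf A})$ by Lemma~\ref{lem:orderprime}(1), and $x\neq x'$ (again by part (4)) forces $x'\neq(x')'=x$, so $x'\in D^\comp$; now $\Gamma_{\bf A}(x)=-(x')$ gives $-\Gamma_{\bf A}(x)=-(-(x'))=x'$ by involutivity of $-$, while $\Gamma_{\bf A}(x')=x'$ since $x'\in I({\bf A})$, and the sides coincide. These cases exhaust $A_*$.

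The computations are immediate once the case division is set up, so the only point requiring care — and the closest thing to an obstacle — is the bookkeeping of which elements land in $D$ versus $D^\comp$ and correspondingly which clause in the definitions of $\Gamma_{\bf A}$ and of $-$ applies; this is exactly what Lemma~\ref{lem:orderprime}(4) resolves. I would also note, as a sanity check and potential shortcut, that since both $(-)'$ and $-$ are involutions the second and third cases are formally dual, so establishing one essentially delivers the other.
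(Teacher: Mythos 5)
Your proof is correct and takes essentially the same approach as the paper's: the identical three-case analysis (both $x,x'\in I({\bf A})$, forcing $x=x'$ by Lemma~\ref{lem:orderprime}(4); $x\in I({\bf A})$ with $x'\notin I({\bf A})$; and $x\notin I({\bf A})$ with $x'\in I({\bf A})$), with the same computations via $x''=x$ and the involutivity of $-$. If anything, your explicit bookkeeping of which points land in $D$ versus $D^\comp$ is slightly more careful than the paper's version, which elides that detail.
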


\begin{proof}
Let $x\in A_*$. If $x\in I({\bf A})$ and $x'\notin I({\bf A})$, then we may obtain that $\Gamma_{\bf A}(x')=-(x'')=-x=\Gamma_{\bf A}(x)$. If $x,x'\in I({\bf A})$, then by Lemma \ref{lem:orderprime} we have $x=x'$. It follows that $\Gamma_{\bf A}(x')=x'=x=\Gamma_{\bf A}(x)$. For the final case, if $x\notin I({\bf A})$ and $x'\in I({\bf A})$, then $\Gamma_{\bf A}(x')=x'=-(-(x'))=-\Gamma_{\bf A}(x)$. This proves the claim.
\end{proof}
Taken together, Lemmas \ref{lem:gamorderiso} and \ref{lem:gamprimeiso} show $(A_*,\subseteq, ')$ and $(I({\bf A}), \subseteq^{\bowtie}, -)$ are isomorphic for a bounded Sugihara monoid ${\bf A}$. We extend this isomorphism to associated topological structures.

Let $\tau^{\bowtie}$ be the disjoint union topology on $X\cup -D^\comp$, where the topology on $-D^\comp$ is induced by considering it as a (copy of a) subspace of ${\bf X}$.

\begin{lemma}
$\Gamma_{\bf A}$ is continuous.
\end{lemma}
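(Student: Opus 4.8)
The plan is to exploit the fact that the domain $A_*$ splits into two \emph{clopen} cells on which $\Gamma_{\bf A}$ is manifestly continuous, and then to paste. First I would record that $I({\bf A})=\sigma(t)$ is a basic clopen subset of ${\bf A}_*$, and that $D=\sigma(t)\cap\sigma(\neg t)^{\comp}$ is clopen as well (exactly as in the proof of Lemma \ref{lem:IASugi}). Consequently $I({\bf A})^{\comp}$, the set $D^{\comp}=I({\bf A})\setminus D$, and the decomposition $A_*=I({\bf A})\cup I({\bf A})^{\comp}$ into two clopen cells are all at hand. Keeping track of the disjoint-union topology $\tau^{\bowtie}$ on $I({\bf A})^{\bowtie}=I({\bf A})\cup -D^{\comp}$ (in which each canonical inclusion of a summand is a continuous open embedding) will be the only genuine piece of bookkeeping.

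On the cell $I({\bf A})$ the map $\Gamma_{\bf A}$ is the identity onto the $I({\bf A})$-summand, and since that summand carries precisely the subspace topology $\tau_{\bowtie}$ it already has inside $A_*$, the map $\Gamma_{\bf A}\restriction_{I({\bf A})}$ is just the summand inclusion and is therefore continuous. On the cell $I({\bf A})^{\comp}$ the map is $x\mapsto -(x')$. Here I would first verify that $'$ carries $I({\bf A})^{\comp}$ into $D^{\comp}$: for $x\notin I({\bf A})$, Lemma \ref{lem:orderprime}(1) gives $x'\in I({\bf A})$, while $x'\in D$ would force $x=x''=x'\in I({\bf A})$ by Lemma \ref{lem:orderprime}(4), a contradiction, so $x'\in D^{\comp}$. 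Thus $\Gamma_{\bf A}\restriction_{I({\bf A})^{\comp}}$ factors as the restriction–corestriction $'\colon I({\bf A})^{\comp}\to D^{\comp}$, followed by the identification homeomorphism $-\colon D^{\comp}\to -D^{\comp}$, followed by the inclusion of the summand $-D^{\comp}\hookrightarrow I({\bf A})^{\bowtie}$. Since $'$ is continuous by the relevant-space axioms, its restriction to the clopen subspace $I({\bf A})^{\comp}$ and corestriction to the subspace $D^{\comp}$ is continuous; composing with a homeomorphism and a summand inclusion then yields continuity of $\Gamma_{\bf A}\restriction_{I({\bf A})^{\comp}}$.

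Finally I would invoke the pasting lemma: as $A_*$ is covered by the two open (indeed clopen) sets $I({\bf A})$ and $I({\bf A})^{\comp}$, and $\Gamma_{\bf A}$ restricts to a continuous map on each, $\Gamma_{\bf A}$ is continuous. No step is truly difficult; the only two points requiring care are the correct handling of $\tau^{\bowtie}$ (so that open sets of the $I({\bf A})$-summand pull back to open sets of all of $A_*$, which works precisely because $I({\bf A})$ is clopen in $A_*$) and the observation that $'$ sends $I({\bf A})^{\comp}$ into $D^{\comp}$ rather than merely into $I({\bf A})$, so that the target of the second restriction is the correct summand.
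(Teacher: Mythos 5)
Your proof is correct and is essentially the paper's argument in dual packaging: where you paste over the clopen cells $I({\bf A})$ and $I({\bf A})^{\comp}$ of the domain, the paper computes the preimage of an arbitrary open set $U\cup V$ of the disjoint union directly, obtaining $U\cup\{x' : -x\in V\}$ --- the same decomposition, the same use of the clopenness of $I({\bf A})$ and $D$, and the same appeal to the continuity of $'$. Your explicit verification that $'$ carries $I({\bf A})^{\comp}$ into $D^{\comp}$ (so that the second restriction lands in the correct summand) makes precise a point the paper's preimage computation uses only implicitly.
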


\begin{proof}
Let $U\cup V\subseteq I({\bf A})^{\bowtie}$ be open, where each of the sets $U\subseteq I({\bf A})$ and $V\subseteq -\{x\in I({\bf A}) : x=x'\}^\comp$ are open. Since $U$ is an open subset of a clopen subspace of ${\bf A}_*$, it is open in ${\bf A}_*$ as well. Moreover, $V$ being open in the set $-\{x\in I({\bf A}) : x=x'\}^\comp$ means precisely that $\{x\in I({\bf A}) : -x\in V\}$ is open in the clopen subspace $\{x\in I({\bf A}) : x\neq x'\}$ of ${\bf A}_*$, hence in ${\bf A}_*$ as well. Because the function $'\colon A_*\to A_*$ is continuous, we have also that the inverse image $\{x' : -x\in V\}$ of $\{x\in I({\bf A}) : -x\in V\}$ under $'$ is open as well. We hence have
\begin{align*}
\Gamma_{\bf A}^{-1}[U\cup V] &= \Gamma_{\bf A}^{-1}[U]\cup \Gamma_{\bf A}^{-1}[V]\\
&= U\cup \{x'\in A_* : -x\in V\}
\end{align*}
is open, which gives the result.
\end{proof}

\begin{lemma}\label{lem:bowtiecomphaus}
Let $(X,\leq,D,\tau)$ be an unpointed Sugihara space. Then $(X^{\bowtie},\tau^{\bowtie})$ is a compact Hausdorff space.
\end{lemma}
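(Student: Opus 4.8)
The plan is to prove the statement by decomposing $X^{\bowtie}$ into its two disjoint-union summands, showing that each is compact Hausdorff in its own topology, and then invoking the standard fact that a finite disjoint union of compact Hausdorff spaces is again compact Hausdorff. Thus the only genuine content is verifying that each of $X$ and $-D^{\comp}$ is compact and Hausdorff, after which the disjoint-union topology does all the remaining work.

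First I would establish that $(X,\tau)$ is compact Hausdorff. Since $(X,\leq,D,\tau)$ is an unpointed Sugihara space, $(X,\leq,\tau)$ is an Esakia space, hence in particular a Priestley space, and compactness is built into the definition of a Priestley space. Hausdorffness follows from the Priestley separation axiom: given distinct $x,y\in X$, antisymmetry gives $x\not\leq y$ or $y\not\leq x$, and in either case a clopen up-set $U$ together with its complement $U^{\comp}$ separates $x$ and $y$.

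Next I would treat $-D^{\comp}$. By the definition of an unpointed Sugihara space $D$ is clopen, so $D^{\comp}$ is clopen as well; in particular $D^{\comp}$ is a closed subspace of the compact space $X$, hence compact, and a subspace of a Hausdorff space, hence Hausdorff. Since $-D^{\comp}$ is by construction a homeomorphic copy of the subspace $D^{\comp}$, it too is compact Hausdorff.

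Finally, recalling that $\tau^{\bowtie}$ is the disjoint-union topology on $X\cup -D^{\comp}$, each summand is clopen in $X^{\bowtie}$. For compactness, any open cover of $X^{\bowtie}$ restricts to open covers of $X$ and of $-D^{\comp}$, each of which admits a finite subcover by the previous paragraphs; the union of these two finite subcovers is a finite subcover of $X^{\bowtie}$. For the Hausdorff property, two points lying in a common summand are separated by open sets of that summand, which are open in $X^{\bowtie}$ since the summand is open, while two points lying in different summands are separated by the two summands themselves. I expect no serious obstacle here; the only point requiring care is the observation that $D^{\comp}$ is compact, which rests essentially on $D$ being clopen rather than merely closed.
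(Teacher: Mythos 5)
Your proof is correct and follows essentially the same route as the paper: $D^{\comp}$ is closed in the compact Hausdorff Priestley space $(X,\tau)$ (because $D$ is clopen), so its copy $-D^{\comp}$ is compact Hausdorff, and the disjoint union of two compact Hausdorff spaces is compact Hausdorff. You merely spell out details the paper leaves implicit (the Hausdorff separation via clopen up-sets and the finite-subcover argument), and your closing observation that openness of $D$ is what makes $D^{\comp}$ compact is exactly the point the paper's proof relies on.
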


\begin{proof}
The subset $D$ is clopen by definition, so $D^\comp$ is a closed subspace of the compact Hausdorff space $(X,\tau)$. It follows that $D^\comp$, and hence its copy $-D^\comp$, is a compact Hausdorff space. Since $(X^{\bowtie},\tau^{\bowtie})$ is the disjoint union of two compact Hausdorff spaces, the result follows.
\end{proof}

\begin{lemma}\label{lem:gamhomeo}
$\Gamma_{\bf A}$ is a homeomorphism.
\end{lemma}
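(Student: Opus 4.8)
The plan is to invoke the standard fact that a continuous bijection from a compact space onto a Hausdorff space is automatically a homeomorphism. All the ingredients have essentially been assembled in the preceding lemmas, so the argument reduces to citing the right results in the right order.

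First I would record that $\Gamma_{\bf A}$ is a continuous bijection. That it is a bijection is immediate from Lemma \ref{lem:gamorderiso}, which shows that $\Gamma_{\bf A}$ is an order isomorphism and hence in particular a bijection of the underlying sets; its continuity is exactly the content of the lemma immediately above.

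Next I would pin down compactness of the domain and Hausdorffness of the codomain. The domain $A_*$ carries the topology of the Priestley space underlying the Urquhart dual ${\bf A}_*$, and every Priestley space is compact, so $A_*$ is compact. For the codomain, Lemma \ref{lem:IASugi} shows that $(I({\bf A}),\subseteq,D,\tau_{\bowtie})$ is an unpointed Sugihara space, where $D=\{x\in I({\bf A}) : x=x'\}$ is exactly the designated subset used in forming $I({\bf A})^{\bowtie}$. Applying Lemma \ref{lem:bowtiecomphaus} to this unpointed Sugihara space then yields that $(I({\bf A})^{\bowtie},\tau^{\bowtie})$ is compact Hausdorff, and in particular Hausdorff.

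Finally, combining these observations, $\Gamma_{\bf A}$ is a continuous bijection from the compact space $A_*$ onto the Hausdorff space $I({\bf A})^{\bowtie}$, hence a homeomorphism. There is no genuine obstacle here beyond bookkeeping: the only point requiring a moment's care is confirming that the subset $D$ appearing in the definition of $I({\bf A})^{\bowtie}$ coincides with the one making Lemma \ref{lem:IASugi} applicable, so that Lemma \ref{lem:bowtiecomphaus} can legitimately be invoked to supply Hausdorffness of the target.
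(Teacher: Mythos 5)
Your proof is correct and follows essentially the same route as the paper: cite continuity (the preceding lemma) and bijectivity (Lemma \ref{lem:gamorderiso}), note that ${\bf A}_*$ is compact as a Priestley space and that $I({\bf A})^{\bowtie}$ is compact Hausdorff via Lemmas \ref{lem:IASugi} and \ref{lem:bowtiecomphaus}, and conclude by the compact-to-Hausdorff criterion. Your careful identification of the designated subset as $D=\{x\in I({\bf A}) : x=x'\}$ is in fact more accurate than the paper's own proof, which misstates it as $\{x\in I({\bf A}) : x\neq x'\}$ (an evident typo, since Lemma \ref{lem:IASugi} uses the fixed points of $'$).
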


\begin{proof}
We have $(I({\bf A}), \subseteq,D,\tau)$, where $D=\{x\in I({\bf A}) : x\neq x'\}$ and $\tau$ is the topology inherited from ${\bf A}_*$, is an unpointed Sugihara space by Lemma \ref{lem:IASugi}. Lemma \ref{lem:bowtiecomphaus} hence shows that $I({\bf A})^{\bowtie}$ is a compact Hausdorff space. ${\bf A}_*$ is a Priestley space, and hence is compact, so $\Gamma_{\bf A}$ is a continuous bijection from a compact space to a Hausdorff space. It follows that $\Gamma_{\bf A}$ is a homeomorphism.
\end{proof}

Let ${\bf X} = (X,\leq,D,\tau)$ be an unpointed Sugihara space. The duality of Section \ref{sec:sugidual} shows that there exists a bounded Sugihara monoid ${\bf A}$ such that ${\bf X} \cong {\bf A}_+$. Moreover, by Remark \ref{rem:IAiso} we have that ${\bf A}_+$ is isomorphic to $I({\bf A})$ considered as an unpointed Sugihara space. As a consequence, for some bounded Sugihara monoid ${\bf A}$ we have that $(X^{\bowtie},\lesb,-)$ is isomorphic to $(I({\bf A})^{\bowtie}, \subseteq^{\bowtie}, -)$, and hence via $\Gamma_{\bf A}$ to $(A_*,\subseteq,\; ')$. Because the multiplication $\cdot$ on $A_*\cup\{A\}$ is determined entirely by the ordering and the involution $'$, so too is its restriction to a partial multiplication on $A_*$. Consequently, for each object ${\bf X} = (X,\leq,D,\tau)$ of \textsf{SS} we may define a partial multiplication $\cdot$ on $X^{\bowtie}$ by 
\[ x\cdot y = \begin{cases} 
      x\vee y & \text{ if }x,y\in X\text{ or } x\mathbin{\|}y\text{, provided the join exists}\\
      y & \text{ if }x\perp y\text{ and }|x|\slesb |y|\\
      x & \text{ if }x\perp y\text{ and }|y|\slesb |x|\\
      x\wedge y & \text{ if }x\perp y\text{ and }|x|=|y|\\
      \text{undefined} & \text{ otherwise}
   \end{cases}
\]
where $|x|=x$ if $x\in X$, and $|-x|=x$ if $-x\in -D^\comp$. The foregoing remarks along with Lemma \ref{lem:filtermult} show that this definition makes sense, and we may moreover define a ternary relation $R$ on $X^{\bowtie}$ by $Rxyz$ if and only if $x\cdot y$ exists and $x\cdot y\lesb z$. With these definitions, we finally arrive at our construction of the functor $(-)^{\bowtie}$.

\begin{definition}\label{def:upperbowtie}
For an unpointed Sugihara space ${\bf X} = (X,\leq,D,\tau)$, let $X^{\bowtie}$, $\lesb$, $-$, $R$, and $\tau^{\bowtie}$ be as above. Define ${\bf X}^{\bowtie}=(X^{\bowtie},\lesb,R,-,X,\tau^{\bowtie})$. For a morphism $\varphi\colon (X,\leq_X,D_X,\tau_X)\to (Y,\leq_Y,D_Y,\tau_Y)$ of \textsf{SS}, define $\varphi^{\bowtie}\colon{\bf X}^{\bowtie}\to {\bf Y}^{\bowtie}$ by
\[ \varphi^{\bowtie}(x) = \begin{cases} 
     \varphi(x) & \text{ if } x\in X,\\
     -\varphi(-x) & \text{ if } x\in -D_X^\comp
    \end{cases}
\]
\end{definition}

We will shortly show that $(-)^{\bowtie}$ produces a Sugihara relevant space when given an unpointed Sugihara space. We will show that it makes sense on the level of morphisms and provides a reverse functor for $(-)_{\bowtie}$ in Section \ref{sec:equiv}. While $(-)_{\bowtie}$ is a dual version of the enriched negative cone construction, $(-)^{\bowtie}$ is a dual version of the twist product variant appearing in Section \ref{sec:twist}. For reasons illustrated in the following example, we call it the \emph{reflection construction}.

\begin{figure}
\begin{center}
\begin{tikzpicture}
    \node[label=left:\tiny{$h_0$}] at (0,-0.5) {$\bullet$};
    \node[label=left:\tiny{$h_1$}] at (-0.5,-1)  {$\bullet$};
    \node[label=right:\tiny{$h_2$}] at (0.5,-1.25) {\textcircled{$\bullet$}};
    \node[label=left:\tiny{$-h_1$}] at (-0.5,-1.5) {$\bullet$};
    \node[label=left:\tiny{$-h_0$}] at (0,-2) {$\bullet$};

    \draw (0,-0.5) -- (-0.5,-1);
    \draw (0,-0.5) -- (0.5,-1.25);
    \draw (0.5,-1.25) -- (0,-2);
    \draw (-0.5,-1) -- (-0.5,-1.5);
    \draw (-0.5,-1.5) -- (0,-2);
    \draw [dashed] (-2,-1.25) -- (2,-1.25);
\end{tikzpicture}
\end{center}
\caption{Hasse diagram for $({\bf E}_+)^{\bowtie}$}
\label{fig:HasseEreflection}
\end{figure}

\begin{example}\label{ex:Ereflection}
The dual of the bounded Sugihara monoid ${\bf E}_\bot$ was described in Example \ref{ex:Edual}. Figure \ref{ex:Ereflection} shows the result of applying the reflection construction of this section to the dual of ${\bf E}_\bot$. Observe that the elements aside from $h_2$ (which is the sole element of the designated subset) are copied and reflected across an axis determined by the designated subset. One can easily check that this is isomorphic to the Urquhart dual of ${\bf E}_\bot$.
\end{example}

The following verifies that our definition of $(-)^{\bowtie}$ makes sense on the level of objects.

\begin{lemma}
Let ${\bf X}=(X,\leq,D,\tau)$ be an unpointed Sugihara space. Then ${\bf X}^{\bowtie}$ is a Sugihara relevant space.
\end{lemma}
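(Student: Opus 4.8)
The plan is to prove the statement by transport of structure: I would show that $\mathbf{X}^{\bowtie}$ is isomorphic, in the full relevant-space signature $(\leq, R, {}', I, \tau)$, to the Urquhart dual $\mathbf{A}_*$ of a suitable bounded Sugihara monoid $\mathbf{A}$, and then appeal to the fact that $\mathbf{A}_*$ is a Sugihara relevant space. First I would invoke the duality of Section \ref{sec:sugidual}: there is a bounded Sugihara monoid $\mathbf{A}$ with $\mathbf{X} \cong \mathbf{A}_+$, and by Remark \ref{rem:IAiso} together with Lemma \ref{lem:IASugi} the space $\mathbf{A}_+$ is isomorphic to $I(\mathbf{A})$ viewed as an unpointed Sugihara space. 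This isomorphism of Sugihara spaces lifts to an isomorphism $X^{\bowtie} \cong I(\mathbf{A})^{\bowtie}$ of the reflected structures, since the carrier $X^{\bowtie}$, the order $\lesb$, the involution $-$, and the topology $\tau^{\bowtie}$ are all built from the underlying Sugihara-space data. Composing with $\Gamma_{\bf A}$, which is an order isomorphism (Lemma \ref{lem:gamorderiso}), preserves the involution (Lemma \ref{lem:gamprimeiso}), and is a homeomorphism (Lemma \ref{lem:gamhomeo}), yields an isomorphism of $(X^{\bowtie}, \lesb, -, \tau^{\bowtie})$ with $(A_*, \subseteq, {}', \tau)$ that moreover carries the designated set $X$ onto $I(\mathbf{A})$.

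The crux is to check that this isomorphism also matches the ternary relations, i.e.\ that $\Gamma_{\bf A}$ carries the partial multiplication on $X^{\bowtie}$ to the complex product on $A_*$. This is exactly where Lemma \ref{lem:filtermult} is used: it expresses $x \cdot y$ on $A_* \cup \{A\}$ purely in terms of $\subseteq$, the involution ${}'$, and the absolute value $|x| = x \join x'$, and the partial multiplication on $X^{\bowtie}$ was defined by precisely this case analysis. Corollary \ref{cor:abstotorder} guarantees that comparable elements have totally ordered absolute values, so the case distinctions, and the domain on which the product is defined, agree on both sides under $\Gamma_{\bf A}$. Since $R$ on $X^{\bowtie}$ is given by $Rxyz$ iff $x \cdot y$ exists and $x \cdot y \lesb z$, while $R$ on $A_*$ is given by $Rxyz$ iff $x \cdot y \subseteq z$, the relation is preserved. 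Hence $\mathbf{X}^{\bowtie} \cong \mathbf{A}_*$ as structures in the relevant-space signature.

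It then remains to conclude. By Theorem \ref{thm:urquhartdual} and the specialization of Urquhart's correspondence to bounded Sugihara monoids, $\mathbf{A}_*$ is a Sugihara relevant space. Every defining condition of a relevant space and of a Sugihara relevant space—the clopen-up-set closure of $\ntwt$ and $\ntwi$, the monotonicity condition on $R$, the separation property, the continuity and antitonicity of ${}'$, the characterization of $\leq$ via $I$, and the five identities governing $\odot$ and ${}'$—is a condition on precisely this signature, and is therefore preserved by any isomorphism of such structures. Thus $\mathbf{X}^{\bowtie}$ inherits all of them from $\mathbf{A}_*$, which completes the proof. The main obstacle is the multiplication-matching step of the second paragraph: one must verify that the partiality and the four computational cases of the product on $X^{\bowtie}$ coincide with the complex product under $\Gamma_{\bf A}$, rather than merely confirming that the underlying ordered involutive spaces are isomorphic.
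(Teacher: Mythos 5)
Your proposal is correct and takes essentially the same route as the paper's own proof: both realize ${\bf X}\cong {\bf A}_+\cong I({\bf A})$ for a bounded Sugihara monoid ${\bf A}$, lift this isomorphism to the reflected structures, compose with $\Gamma_{\bf A}^{-1}$ (Lemmas \ref{lem:gamorderiso}, \ref{lem:gamprimeiso}, \ref{lem:gamhomeo}) to obtain an isomorphism of $(X^{\bowtie},\lesb,-,\tau^{\bowtie})$ with $(A_*,\subseteq,{}',\tau)$ carrying $X$ onto $I({\bf A})$, and then transfer the ternary relation by observing, exactly as in the paper, that the partial product is determined by the order and involution via Lemma \ref{lem:filtermult}. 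Your only cosmetic slip is writing ``composing with $\Gamma_{\bf A}$'' where the required map is $\Gamma_{\bf A}^{-1}$ (the paper's $\delta=\Gamma_{\bf A}^{-1}\circ\varphi$); otherwise the argument matches the paper's step for step, including isolating the $R$-matching as the crux.
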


\begin{proof}
By the duality of Section \ref{sec:sugidual}, there exists an unbounded Sugihara monoid ${\bf A}$ such that ${\bf X}\cong {\bf A}_+$. By Remark \ref{rem:IAiso}, the map $\psi_{\bf A}$ witnesses that ${\bf A}_+$ is isomorphic to $(I({\bf A}),\subseteq,D_I,\tau_I)$, where $D_I=\{x\in A_* : x=x'\}$ and $\tau_I$ is the topology on $I({\bf A})$ induced from the topology on ${\bf A}_*$. It follows that ${\bf X}$ is isomorphic to $(I({\bf A}),\subseteq,D_I,\tau_I)$ in the category of unpointed Sugihara spaces, whence that there is a map $\varphi\colon (X^{\bowtie},\lesb,-,\tau^{\bowtie})\to (I({\bf A})^{\bowtie},\subseteq^{\bowtie},-,\tau_I^{\bowtie})$ that is an order isomorphism, homeomorphism, and preserves $-$. Because $\Gamma_{\bf A}$ is an order isomorphism by Lemma \ref{lem:gamorderiso}, a homeomorphism by Lemma \ref{lem:gamhomeo}, and preserves the involution by Lemma \ref{lem:gamprimeiso}, we have that $\delta=\Gamma_{\bf A}^{-1}\circ \varphi$ is an order isomorphism, homeomorphism, and preserves the involution. Because ${\bf A}_*$ is a Sugihara relevant space, in order to show that ${\bf X}^{\bowtie}$ is as well it suffices to show that $\delta[X]=I({\bf A})$ and that for any $x,y,z\in X^{\bowtie}$, $Rxyz$ if and only if $R\delta(x)\delta(y)\delta(z)$.

Note that $\Gamma_{\bf A}$ and $\varphi$ being bijections gives that
$$\delta[X]=(\Gamma_{\bf A}^{-1}\circ \varphi)[X]=\Gamma^{-1}[I({\bf A})]=I({\bf A})$$
It remains only to show that $\delta$ is an isomorphism with respect to $R$, so let $x,y,z\in X^{\bowtie}$. Then by definition $x\cdot y$ exists and $x\cdot y\lesb z$. But since $\cdot$ is defined in terms of $-$ and the order $\lesb$ and $\delta$ preserves this structure, $x\cdot y\lesb z$ must hold exactly when $\delta(x)\cdot\delta(y)\subseteq\delta(z)$ holds in ${\bf A}_*$, i.e., exactly when $R\delta(x)\delta(y)\delta(z)$ holds. It follows that ${\bf X}^{\bowtie}$ is a Sugihara relevant space isomorphic to ${\bf A}_*$.
\end{proof}

\subsection{An equivalence between \textsf{SS} and \textsf{SRS}}\label{sec:equiv}
We turn our attention to verifying that $(-)_{\bowtie}$ and $(-)^{\bowtie}$ really extend to functors in the manner previously described, and provide an equivalence between \textsf{SS} and \textsf{SRS}. We first verify that our definitions make sense for morphisms.

\begin{lemma}
Let $\varphi\colon {\bf X}\to {\bf Y}$ be a morphism of \textsf{SRS}. Then $\varphi_{\bowtie}$ is a morphism of \textsf{SS}.
\end{lemma}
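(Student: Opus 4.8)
The plan is to avoid checking the \textsf{bGS}-morphism conditions for $\varphi_{\bowtie}$ by hand and instead transport $\varphi$ across the dualities already in place, reducing the claim to the functoriality of the (bounded) enriched negative cone on the algebraic side. First I would invoke Urquhart duality (Theorem \ref{thm:urquhartdual}): since \textsf{SRS} is dually equivalent to \textsf{SM}$_\bot$, there are bounded Sugihara monoids ${\bf A},{\bf B}$ and a homomorphism $h\colon {\bf B}\to {\bf A}$ (namely $h=\varphi^*$) with ${\bf X}\cong {\bf A}_*$, ${\bf Y}\cong {\bf B}_*$, and $\varphi\cong h_*$, where $h_*(x)=h^{-1}[x]$. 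As all the notions involved are invariant under isomorphism, it suffices to treat the case $\varphi=h_*$.

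Next I would run $h$ through the composite of functors. The restriction $h_{\bowtie}=h\restriction_{B^-}$ is a \textsf{bGA}-morphism by the equivalence of Section \ref{sec:twist}, and hence $(h_{\bowtie})_*\colon ({\bf A}_{\bowtie})_*\to ({\bf B}_{\bowtie})_*$ is a \textsf{bGS}-morphism by Theorem \ref{thm:bGAdual}. The key identification is that $I({\bf A})$, viewed as an unpointed Sugihara space, is isomorphic to $({\bf A}_{\bowtie})_*$ via the map $x\mapsto x\cap A^-$; indeed this map is exactly $\xi_{\bf A}\circ\psi_{\bf A}^{-1}$, which is an isomorphism of \textsf{bRS}-spaces by Lemma \ref{lem:bRSisomorphism} and which preserves the designated set $D$ by Remark \ref{rem:IAiso} (using the bounded analogues of these results). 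The crux is then the short computation that the square relating $\varphi_{\bowtie}$ and $(h_{\bowtie})_*$ commutes: for $x\in I({\bf A})$ one has $(h_{\bowtie})_*(x\cap A^-)=\{b\in B^- : h(b)\in x\}=h^{-1}[x]\cap B^-=\varphi(x)\cap B^-$, where $h(b)\in A^-$ holds automatically because $h$ preserves $t$ and the order. Thus under these identifications $\varphi_{\bowtie}$ corresponds to the \textsf{bGS}-morphism $(h_{\bowtie})_*$, so $\varphi_{\bowtie}$ is a morphism of \textsf{SS}.

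The main obstacle is precisely what motivates this indirect route. A relevant map is only continuous and isotone with respect to $\leq$, not a $p$-morphism, so the Esakia back-condition for $\varphi_{\bowtie}$ should not be expected to follow directly from the relevant-map axioms. A naive attempt would take $\varphi(x)\leq z$ with $x,z\in I$, use the axiom $y\leq z\iff(\exists w\in I)(R_{\bf Y}wyz)$ to produce $w\in I$ with $R_{\bf Y}w\varphi(x)z$, commute to $R_{\bf Y}\varphi(x)wz$, and apply the relevant-map condition on $R_{\bf Y}\varphi(x)yz$; this yields $u,v$ with $R_{\bf X}xuv$, $w\leq\varphi(u)$, and $\varphi(v)\leq z$, whence $u\in I$ and so $x\leq v$ by the same axiom, but it delivers only $\varphi(v)\leq z$ rather than the equality $\varphi(y)=z$ the back-condition demands. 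Passing through the negative cone sidesteps this, since the $p$-morphism property becomes automatic once $\varphi_{\bowtie}$ is exhibited as the Esakia dual of a \textsf{bGA}-homomorphism. The only genuinely elementary parts, which I would note as a sanity check rather than rely on, are well-definedness ($\varphi[I_{\bf X}]\subseteq I_{\bf Y}$, from $\varphi^{-1}[I_{\bf Y}]=I_{\bf X}$) and preservation of $D$ and its complement, which follow from $\varphi(x')=\varphi(x)'$ together with the characterization $x=x'\iff x,x'\in I$ of Lemma \ref{lem:orderprime}(4).
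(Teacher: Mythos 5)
Your proof is correct and non-circular, but it takes a genuinely different route from the paper's, and your stated motivation for avoiding the direct route is mistaken: the paper \emph{does} verify the \textsf{bGS}-morphism conditions by hand from the relevant-map axioms, including the Esakia back-condition. The idea your naive attempt misses is to dispense with the auxiliary witness $w$ entirely: if $\varphi(x)\leq z$ with $\varphi(x),z\in I_{\bf Y}$, then $\varphi(x)\cdot z=\varphi(x)\join z=z$, so $R_{\bf Y}\varphi(x)zz$ holds outright; the relevant-map back-condition applied to this instance gives $u,v$ with $R_{\bf X}xuv$, $z\leq\varphi(u)$, and $\varphi(v)\leq z$, and since $z\leq\varphi(u)$ forces $u\in\varphi^{-1}[I_{\bf Y}]=I_{\bf X}$, one gets $x\join u\leq v$ and the squeeze $\varphi(v)\leq z\leq\varphi(u)\leq\varphi(v)$, i.e.\ exactly the equality $\varphi(v)=z$ (with $x\leq v$) that your attempt with a generic $w$ could not produce. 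Your alternative---reducing via Theorem \ref{thm:urquhartdual} to the case $\varphi=h_*$ and exhibiting $\varphi_{\bowtie}$ as the \textsf{bGS}-morphism $(h_{\bowtie})_*$ through the isomorphism $x\mapsto x\cap A^-$, which is indeed $\xi_{\bf A}\circ\psi_{\bf A}^{-1}$ by Lemma \ref{lem:bRSisomorphism} and Remark \ref{rem:IAiso}---is sound: the commuting-square computation is right (including the observation that $h[B^-]\subseteq A^-$ because $h$ preserves $t$ and the order), all cited ingredients precede this lemma so there is no circularity, and the reduction to $\varphi=h_*$ is legitimate since \textsf{SRS}-isomorphisms plainly restrict to \textsf{SS}-isomorphisms on the $I$-parts, with $D$ matched via Lemma \ref{lem:orderprime}(4). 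As for what each approach buys: the paper's argument is self-contained at the level of spaces and shows that the relevant-map axioms alone suffice, in keeping with the section's aim of working natively on duals; yours is shorter given the machinery and makes the conceptual point that $(-)_{\bowtie}$ on dual spaces is literally the Esakia dual of the algebraic negative-cone functor, at the cost of leaning on the bounded analogues of the isomorphisms of Sections \ref{sec:sugidual} and \ref{sec:refcon}, which the paper states only in sketch form for the bounded case.
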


\begin{proof}
Note that since $\varphi$ is a relevant map, we have that $\varphi^{-1}[Y_{\bowtie}]=X_{\bowtie}$ by definition. This implies that $\varphi[X_{\bowtie}]= \varphi[\varphi^{-1}[Y_{\bowtie}]]\subseteq Y_{\bowtie}$, so $\varphi\restriction_{X_{\bowtie}}$ has image in $Y_{\bowtie}$ and $\varphi_{\bowtie}$ is well-defined.

$\varphi_{\bowtie}$ is a continuous isotone map because it is the restriction of a continuous isotone map. To see that $\varphi_{\bowtie}$ is an Esakia map, suppose that $x\in X_{\bowtie}$, $z\in Y_{\bowtie}$ with $\varphi_{\bowtie}(x)\leq z$. Then since $\varphi(x),z\in Y_{\bowtie}$, the definition of $\cdot$ provides that $\varphi(x)\cdot z = \varphi(x)\join z = z$ and $R_Y\varphi(x)zz$. As $\varphi$ is a relevant map, this gives that there exists $u,v\in X$ with $R_Xxuv$, $z\leq\varphi(u)$, and $\varphi(v)\leq z$. That $z\leq\varphi(u)$ and $z\in Y_{\bowtie}$ give $\varphi(u)\in Y_{\bowtie}$. Note that $\varphi(u)\in Y_{\bowtie}$ implies that $u\in\varphi^{-1}[Y_{\bowtie}]=X_{\bowtie}$ since $\varphi$ is a relevant map. Since $x,u\in X_{\bowtie}$, the definition of $\cdot$ gives $x\cdot u = x\join u$. But $R_Xxuv$ gives that $x\cdot u\leq v$, so $x,u\leq x\join u\leq v$. It follows by monotonicity that $\varphi(v)\leq z\leq \varphi(u)\leq\varphi(v)$, so $x\leq v$ and $z=\varphi(v)$. This yields that $\varphi_{\bowtie}$ is a $p$-morphism.

Finally, note that if $x\in X$ with $x=x'$, then $\varphi_{\bowtie}(x) = \varphi_{\bowtie}(x)'$ as $\varphi$ preserves $'$. On the other hand, if $x\neq x'$, then we may assume without loss of generality that $x\in X_{\bowtie}$ and $x'\notin X_{\bowtie}=\varphi^{-1}[Y_{\bowtie}]$. Then $\varphi(x)\in Y_{\bowtie}$ and $\varphi(x')\notin Y_{\bowtie}$, so $\varphi(x)\neq\varphi(x)'$. This yields the result.
\end{proof}

Given a \textsf{SS}-morphism $\varphi\colon {\bf X}\to {\bf Y}$, the function $\varphi^{\bowtie}$ is a relevant map. For comprehensibility we divide the proof into pieces.

\begin{lemma}\label{lem:pbowiso}
Let $\varphi\colon {\bf X}\to {\bf Y}$ be a morphism of \textsf{SS}. Then $\varphi^{\bowtie}$ is isotone.
\end{lemma}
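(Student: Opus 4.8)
The plan is to show that $\varphi^{\bowtie}$ preserves the partial order $\lesb$ by unwinding the three defining clauses of $\lesb$ on $X^{\bowtie}=X\cup -D_X^\comp$. Suppose $x\lesb y$ in ${\bf X}^{\bowtie}$; I would split into the three cases according to where $x$ and $y$ live, exactly mirroring the three clauses in the definition of $\lesb$. The key fact I will repeatedly invoke is that the underlying map $\varphi\colon {\bf X}\to {\bf Y}$ is a \textsf{SS}-morphism, hence isotone with respect to $\leq$, and that it respects the designated subset in both directions (i.e.\ $\varphi[D_X]\subseteq D_Y$ and $\varphi[D_X^\comp]\subseteq D_Y^\comp$), so that $\varphi^{\bowtie}$ is well-defined and sends $X$ into $Y$ and $-D_X^\comp$ into $-D_Y^\comp$.

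Concretely, in the first case $x,y\in X$, where $x\lesb y$ reduces to $x\leq y$; then $\varphi^{\bowtie}(x)=\varphi(x)\leq\varphi(y)=\varphi^{\bowtie}(y)$ by isotonicity of $\varphi$, and these images lie in $Y$, so $\varphi^{\bowtie}(x)\lesb\varphi^{\bowtie}(y)$. In the second case $-x,-y\in -D_X^\comp$, where $-x\lesb -y$ means $y\leq x$; applying $\varphi$ gives $\varphi(y)\leq\varphi(x)$, and since $\varphi^{\bowtie}(-x)=-\varphi(x)$ and $\varphi^{\bowtie}(-y)=-\varphi(y)$ with $\varphi(x),\varphi(y)\in D_Y^\comp$, the definition of $\lesb$ on the reflected copy yields $-\varphi(x)\lesb -\varphi(y)$. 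The third case $-x\in -D_X^\comp$ and $y\in X$, where $-x\lesb y$ means $x$ and $y$ are $\leq$-comparable, is where I expect the main work to lie: I must show $\varphi(x)$ and $\varphi(y)$ remain comparable. This follows because $\varphi$ is isotone, so it sends comparable pairs to comparable pairs ($x\leq y$ forces $\varphi(x)\leq\varphi(y)$ and $y\leq x$ forces $\varphi(y)\leq\varphi(x)$), and then $\varphi^{\bowtie}(-x)=-\varphi(x)\lesb\varphi(y)=\varphi^{\bowtie}(y)$ by clause (3).

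The main obstacle is bookkeeping rather than conceptual difficulty: one must be careful that the reflected element $-x$ is only defined for $x\in D_X^\comp$, that $-x=x$ precisely on $D_X$, and that $\varphi$ respecting $D_X^\comp$ is exactly what guarantees $-\varphi(x)$ makes sense in $Y^{\bowtie}$. I would also note the degenerate subcase where an element lies in $D_X$ (so $-x=x$), handled uniformly by the fact that $\varphi$ preserves $D_X$. Since $\lesb$ is defined entirely in terms of $\leq$ and the comparability relation, and $\varphi$ preserves both, no appeal to the topology, the relation $R$, or the involution beyond its interaction with the order is needed for this particular lemma; those will be addressed in the subsequent lemmas establishing that $\varphi^{\bowtie}$ is a full relevant map.
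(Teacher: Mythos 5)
Your proposal is correct and follows essentially the same route as the paper's own proof: a three-case analysis matching the clauses defining $\lesb$, with the isotonicity of $\varphi$ handling each case and the observation that $\varphi^{\bowtie}$ sends the reflected copy $-D_X^\comp$ into $-D_Y^\comp$ (via $\varphi[D_X^\comp]\subseteq D_Y^\comp$) so that clause (3) of the definition of $\lesb$ applies in the mixed case. Your added bookkeeping remarks (well-definedness of $-\varphi(-x)$ and the degenerate behaviour on $D_X$) are sound and consistent with the paper.
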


\begin{proof}
Suppose that $x\lesb y$. We consider cases.

First, if $x,y\in X$, then $\pbow(x) =\varphi(x)\leq\varphi(y)=\pbow(y)$ follows from the isotonicity of $\varphi$.

Second, if $x,y\notin X$, then $x\lesb y$ implies $-y\leq -x$. The isotonicity of $\varphi$ gives $-\pbow(y)=\varphi(-y)\leq\varphi(-x)=-\pbow(x)$, yielding $\pbow(x)\lesb\pbow(y)$.

Third, suppose that $x\notin X$ and $y\in X$. Then $x\notin X$ gives that $-x\in X$, and $x\lesb y$ gives that $-x$ and $y$ are $\leq$-comparable. Since $\varphi$ is isotone, this gives that either $-\pbow(x)=varphi(-x)$ and $\pbow(y)=\varphi(y)$ are $\leq$-comparable as well. Note that $\pbow(x)\notin Y$ by the definition of $\pbow$ since $x\notin X$. Hence by the definition of $\lesb$ we have that $\pbow(x)\lesb\pbow(y)$. This proves the lemma.
\end{proof}

\begin{lemma}\label{lem:pbowprime}
Let $\varphi\colon {\bf X}\to {\bf Y}$ be a morphism of \textsf{SS}. Then for any $x\in X^{\bowtie}$, $\pbow(-x)=-\pbow(x)$.
\end{lemma}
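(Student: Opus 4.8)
The plan is to unwind the definition of $\pbow$ given in Definition \ref{def:upperbowtie} and verify the claimed identity by a direct case analysis on whether $x$ lies in $X$, lies in $-D_X^\comp$, or is a self-paired (designated) point. Recall that $\pbow$ is defined piecewise: $\pbow(x)=\varphi(x)$ when $x\in X$, and $\pbow(x)=-\varphi(-x)$ when $x\in -D_X^\comp$. The unary operation $-$ on $X^{\bowtie}$ satisfies $-(-x)=x$ for reflected points and $-x=x$ for designated points $x\in D_X$, so the key subtlety is to track which regime each of $x$ and $-x$ falls into and to confirm that the two piecewise definitions of $\pbow$ agree up to the involution.

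First I would handle the case $x\in X\setminus D_X$, i.e., $x\in D_X^\comp$ when $x\in X$. Here $-x\in -D_X^\comp$, so the second clause of $\pbow$ applies to $-x$, giving $\pbow(-x)=-\varphi(-(-x))=-\varphi(x)=-\pbow(x)$, where the last equality uses the first clause $\pbow(x)=\varphi(x)$. Next I would treat the symmetric case $x\in -D_X^\comp$: then $-x\in X$ (indeed $-x\in D_X^\comp$), so the first clause applies to $-x$, yielding $\pbow(-x)=\varphi(-x)$; on the other hand $\pbow(x)=-\varphi(-x)$, so $-\pbow(x)=-(-\varphi(-x))=\varphi(-x)=\pbow(-x)$, again as desired. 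Finally, in the designated case $x\in D_X$, we have $-x=x$, so trivially $\pbow(-x)=\pbow(x)$; and since $\pbow(x)=\varphi(x)$ and $\varphi$ preserves the designated subset, $\pbow(x)\in D_Y$ so $-\pbow(x)=\pbow(x)=\pbow(-x)$.

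The only point requiring genuine care is the designated case, where I must invoke the fact that a \textsf{SS}-morphism $\varphi$ maps $D_X$ into $D_Y$ (so that the extended involution $-$ fixes $\pbow(x)$), together with the defining property $-z=z$ for $z\in D_Y$ in $Y^{\bowtie}$. This is the step where the structure-preservation hypothesis on $\varphi$ is actually used; the other two cases are purely formal consequences of $-(-x)=x$ and the piecewise definition. I do not anticipate any real obstacle beyond bookkeeping the three regimes correctly and being careful that $x\in X$ splits further into $x\in D_X$ and $x\in D_X^\comp$, since only the latter produces a genuine reflected counterpart.

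\begin{proof}
Let $x\in X^{\bowtie}$. We consider three cases according to Definition \ref{def:upperbowtie}.

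If $x\in D_X$, then $-x=x$, so $\pbow(-x)=\pbow(x)$. Moreover $\pbow(x)=\varphi(x)$, and since $\varphi$ is a \textsf{SS}-morphism we have $\varphi[D_X]\subseteq D_Y$, whence $\pbow(x)\in D_Y$. By the definition of $-$ on $Y^{\bowtie}$, it follows that $-\pbow(x)=\pbow(x)=\pbow(-x)$.

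If $x\in D_X^\comp$ with $x\in X$, then $-x\in -D_X^\comp$, so $\pbow(-x)=-\varphi(-(-x))=-\varphi(x)=-\pbow(x)$, using $\pbow(x)=\varphi(x)$.

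Finally, if $x\in -D_X^\comp$, then $-x\in X$ and $\pbow(-x)=\varphi(-x)$. On the other hand, $\pbow(x)=-\varphi(-x)$, so $-\pbow(x)=-(-\varphi(-x))=\varphi(-x)=\pbow(-x)$. In all cases $\pbow(-x)=-\pbow(x)$, as claimed.
\end{proof}
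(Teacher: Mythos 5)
Your proof is correct and follows essentially the same route as the paper's: the identical three-case split ($x\in D_X$, $x\in X\setminus D_X$, $x\in -D_X^\comp$) with the same computations in each case. The only difference is that in the designated case you make explicit the use of $\varphi[D_X]\subseteq D_Y$ to conclude $-\pbow(x)=\pbow(x)$, a step the paper's proof leaves implicit in the equality $\pbow(x)=-\pbow(x)$.
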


\begin{proof}
We consider cases. First, if $x\in X\setminus D_X$, then $-x\in -D_X^\comp$ gives that $\pbow(-x)=-\varphi(-(-x))=-\varphi(x)=-\pbow(x)$. Second, if $x\in D_X$ then we obtain $\pbow(-x)=\pbow(x)=-\pbow(x)$. Third, if $x\in -D_X^\comp$, then $-x\in X\setminus D_X$ gives that $\pbow(-x)=\varphi(-x)=-(-\varphi(-x))=-\pbow(x)$, which gives the result.
\end{proof}

\begin{lemma}\label{lem:abspres}
Let $\varphi\colon {\bf X}\to {\bf Y}$ be a morphism of \textsf{SS}. Then for any $x\in X^{\bowtie}$, $\pbow(|x|)=|\pbow(x)|$.
\end{lemma}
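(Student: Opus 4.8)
The plan is to prove the identity by a direct case analysis on the location of $x$ in $X^{\bowtie} = X\cup -D_X^\comp$, unwinding the definitions of the absolute value $|\cdot|$ and of $\varphi^{\bowtie}$. Recall that $|z| = z$ when $z\in X$ and $|z| = -z$ when $z\in -D_X^\comp$, and that $\varphi^{\bowtie}$ is defined by $\varphi^{\bowtie}(z) = \varphi(z)$ for $z\in X$ and $\varphi^{\bowtie}(z) = -\varphi(-z)$ for $z\in -D_X^\comp$. The key structural input is that $\varphi$, being an \textsf{SS}-morphism (hence a \textsf{bGS}-morphism), satisfies $\varphi[D_X^\comp]\subseteq D_Y^\comp$, which guarantees that $\varphi^{\bowtie}$ carries $-D_X^\comp$ into $-D_Y^\comp$; together with Lemma \ref{lem:pbowprime} this reduces the whole computation to a two-line verification in each case.

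First I would treat the case $x\in X$. Here $|x| = x$, so $\varphi^{\bowtie}(|x|) = \varphi^{\bowtie}(x) = \varphi(x)$, which lies in $Y$. Since every element of $Y$ is its own absolute value, $|\varphi^{\bowtie}(x)| = \varphi(x)$, and the two sides agree.

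Next I would treat the case $x\in -D_X^\comp$, where $|x| = -x\in D_X^\comp\subseteq X$. Then $\varphi^{\bowtie}(|x|) = \varphi^{\bowtie}(-x) = -\varphi^{\bowtie}(x)$ by Lemma \ref{lem:pbowprime}. It remains to observe that $\varphi^{\bowtie}(x) = -\varphi(-x)$ lies in $-D_Y^\comp$: indeed $-x\in D_X^\comp$ and $\varphi[D_X^\comp]\subseteq D_Y^\comp$ give $\varphi(-x)\in D_Y^\comp$, so $\varphi^{\bowtie}(x)\notin Y$ and consequently $|\varphi^{\bowtie}(x)| = -\varphi^{\bowtie}(x)$. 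Combining, $\varphi^{\bowtie}(|x|) = -\varphi^{\bowtie}(x) = |\varphi^{\bowtie}(x)|$.

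The argument involves no genuine obstacle: the only point requiring care is confirming that $\varphi^{\bowtie}$ respects the partition of $X^{\bowtie}$ into $X$ and $-D_X^\comp$ (equivalently, that it does not send a reflected point back into the base space $Y$), which is exactly the content of the morphism condition $\varphi[D_X^\comp]\subseteq D_Y^\comp$. Once this is noted, the identity follows immediately from the definitions and Lemma \ref{lem:pbowprime}.
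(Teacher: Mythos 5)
Your proof is correct, but it follows a genuinely different route from the paper's. The paper never touches the partition $X^{\bowtie}=X\cup -D_X^\comp$: it treats the absolute value order-theoretically, observing that $x$ and $-x$ are always $\lesb$-comparable so that $|x|$ is just the larger of the two, and then transports this maximum across $\pbow$ using only Lemma \ref{lem:pbowiso} (isotonicity) and Lemma \ref{lem:pbowprime} (preservation of $-$): whichever of $-x\lesb x$ or $x\lesb -x$ holds is preserved, so $\pbow(|x|)=\pbow(x)\join -\pbow(x)=|\pbow(x)|$. You instead unwind the piecewise definitions of $|\cdot|$ and $\pbow$ on each block of the partition, invoking the \textsf{bGS}-morphism clause $\varphi[D_X^\comp]\subseteq D_Y^\comp$ to verify that $\pbow$ sends reflected points to reflected points, so that the correct branch of the definition of $|\cdot|$ in $Y^{\bowtie}$ applies; notably you use Lemma \ref{lem:pbowprime} but not Lemma \ref{lem:pbowiso} at all. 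The paper's argument buys generality and brevity---it works verbatim for any $\lesb$-isotone, $-$-preserving map, with no bookkeeping about where elements live---while yours is more elementary and makes explicit exactly where the morphism axiom enters. One small observation: your appeal to $\varphi[D_X^\comp]\subseteq D_Y^\comp$ is slightly stronger than strictly necessary, since even if $\varphi(-x)$ landed in $D_Y$ both sides of the identity would collapse to $\varphi(-x)$ (as $-y=y$ on $D_Y$); but using the axiom to keep $\pbow$ partition-respecting is the cleanest way to organize the computation, and there is no gap.
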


\begin{proof}
Let $x\in X^{\bowtie}$. Then either $-x\lesb x$ or $x\lesb -x$. Since $\pbow$ preserves the ordering $\lesb$ by Lemma \ref{lem:pbowiso} and preserves $-$ by Lemma \ref{lem:pbowprime}, we have that $-\pbow(x)\lesb \pbow(x)$ in the first case, and $\pbow(x)\lesb -\pbow(x)$ in the second case. In the first case, we therefore have $\pbow(x)\join -\pbow(x) = \pbow(x) =\pbow(|x|)$, and in the second case we have $\pbow(x)\join -\pbow(x) = -\pbow(x)=\pbow(-x)=\pbow(|x|)$. In either event, the result follows.
\end{proof}

\begin{lemma}
Let $\varphi\colon {\bf X}\to {\bf Y}$ be a morphism of \textsf{SS}. Then $\varphi^{\bowtie}$ preserves the ternary relation $R$.
\end{lemma}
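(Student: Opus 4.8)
The plan is to unwind the definition of $R$ and verify the forward relevant-map condition by a direct case analysis on the clause defining the partial multiplication $\cdot$. Writing $\bar x=\pbow(x)$, etc., what must be shown is that $Rxyz$ --- that is, $x\cdot y$ exists and $x\cdot y\lesb z$ --- implies that $\bar x\cdot\bar y$ exists and $\bar x\cdot\bar y\lesb\bar z$. The three facts I would lean on are that $\pbow$ is isotone (Lemma \ref{lem:pbowiso}), that it commutes with $-$ (Lemma \ref{lem:pbowprime}), and that it commutes with $|\cdot|$ (Lemma \ref{lem:abspres}). From isotonicity I would first record two consequences used repeatedly: if $x$ and $y$ are $\lesb$-comparable then so are $\bar x$ and $\bar y$, and if $|x|\lesb|y|$ then $|\bar x|\lesb|\bar y|$ (rewriting $|\bar x|=\overline{|x|}$ via Lemma \ref{lem:abspres}).

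I would then split on the four defining clauses of $x\cdot y$ from Lemma \ref{lem:filtermult}. The key phenomenon to watch is that $\pbow$ is \emph{only} isotone, not an order embedding, so it can collapse an incomparable pair to a comparable one and a strict absolute-value inequality $\slesb$ to an equality; this can switch which clause governs $\bar x\cdot\bar y$. The whole point is that each such collapse can only make the product $\lesb$-smaller, so the inequality with $\bar z$ survives. Concretely: if $x\cdot y=x\join y$ (the case $x,y\in X$ or $x\mathbin{\|}y$, join existing), then $\bar x,\bar y\lesb\overline{x\join y}$, so $\overline{x\join y}$ is a common upper bound of $\bar x,\bar y$; whichever sub-case occurs, $\bar x\cdot\bar y$ is one of $\bar x\join\bar y$, $\bar x$, $\bar y$, or $\bar x\meet\bar y$, each $\lesb\overline{x\join y}\lesb\bar z$. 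If $x\cdot y=y$ (with $x\perp y$ and $|x|\slesb|y|$), then $\bar x\perp\bar y$ and $|\bar x|\lesb|\bar y|$; if the inequality stays strict the product is $\bar y\lesb\bar z$, and if it collapses to $|\bar x|=|\bar y|$ the product is $\bar x\meet\bar y\lesb\bar y\lesb\bar z$. The clause $x\cdot y=x$ is symmetric, and the clause $x\cdot y=x\meet y$ (with $|x|=|y|$) yields $|\bar x|=|\bar y|$ and $\bar x\cdot\bar y=\bar x\meet\bar y=\overline{x\meet y}\lesb\bar z$. Thus in every case $\bar x\cdot\bar y\lesb\bar z$, which together with existence is exactly $R\bar x\bar y\bar z$.

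The main obstacle, as flagged, is the bookkeeping of these degenerate sub-cases created by the collapsing behaviour of $\pbow$; the single observation that tames them is that the meet of a comparable pair lies below each factor, so dropping into a lower clause of the multiplication table can only shrink the product. A secondary point requiring a line of justification is the \textbf{existence} of $\bar x\cdot\bar y$ when $\bar x,\bar y$ turn out to be incomparable, since a mere upper bound need not give a join in an arbitrary poset. I would settle this by transporting to the Urquhart dual via $\Gamma$: on $\mathbf{A}_*$ the complex product is total onto $A_*\cup\{A\}$ and monotone, and the partial product on the reflected space is defined precisely when this total product is a proper filter; since the common upper bound $w=\overline{x\join y}$ gives $\bar x\cdot\bar y\subseteq w\cdot w=w$ by monotonicity (Lemma \ref{lem:timestech}(5)) and idempotence (Lemma \ref{lem:timestech}(3)), the product is proper and hence defined. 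This closes the remaining gap and completes the verification that $\pbow$ preserves $R$.
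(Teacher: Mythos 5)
Your proof is correct and takes essentially the same route as the paper's: a case analysis over the clauses of the partial multiplication, driven by the isotonicity of $\pbow$ and its preservation of $-$ and $|\cdot|$, with the same key observation that a strict absolute-value inequality may collapse to equality under $\pbow$ but the resulting product only drops to a meet, preserving the bound by $\pbow(z)$ (the paper merely compresses your comparable clauses into a single case via the WLOG assumption $x\lesb y$ and $x\cdot y=x$). Your explicit treatment of the existence of $\pbow(x)\cdot\pbow(y)$ in the join case, by transporting to $A_*\cup\{A\}$ and invoking Lemma \ref{lem:timestech}, is in fact slightly more careful than the paper, which leaves that point implicit in the phrase ``order-preserving and idempotent.''
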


\begin{proof}
Let $x,y,z\in X^{\bowtie}$ with $R_Xxyz$. Then $x\cdot y$ exists and $x\cdot y\leq^{\bowtie} z$. We consider two cases.

First, suppose that $x\cdot y = x\join y$. Then $x\join y\leq^{\bowtie} z$, so $x\leq^{\bowtie} z$ and $y\leq^{\bowtie} z$. Since $\pbow$ preserves the order, $\pbow(x),\pbow(y)\lesb\pbow(z)$. Since $\cdot$ is order-preserving and idempotent, this gives $\pbow(x)\cdot\pbow(y)\lesb\pbow(z)$, hence $R_Y\pbow(x)\pbow(y)\pbow(z)$.

Second, suppose that $x\cdot y\neq x\join y$. Then the definition of the partial multiplication $\cdot$ shows that $x\cdot y$ is one of $x$ or $y$ and $x\perp y$. Without loss of generality we may assume that $x\lesb y$ and (since $x\cdot y\neq x\join y$) that $x\cdot y = x$. In this situation, the definition of $\cdot$ gives that $|y|\lesb |x|$. Note Lemma \ref{lem:pbowiso} shows that $\pbow(x)\lesb\pbow(y)$, so $\pbow(x)\cdot\pbow(y)$ must exist by the definition of $\cdot$. Moreover, the fact that $|y|\lesb |x|$ together with Lemmas \ref{lem:pbowiso} and \ref{lem:abspres} give that $|\pbow(y)|\lesb |\pbow(x)|$. The definition of $\cdot$ then shows that $\pbow(x)\cdot\pbow(y)$ is either $\pbow(x)\meet\pbow(y)$ or whichever of $\pbow(x)$ and $\pbow(y)$ has greater absolute value, but this gives $\pbow(x)\cdot\pbow(y)=\pbow(x)$ in either cases. Because $x=x\cdot y\lesb z$, we hence have $\pbow(x)\cdot\pbow(y)=\pbow(x)\lesb\pbow(z)$, which gives $R_Y\pbow(x)\pbow(y)\pbow(z)$ as desired.
\end{proof}

\begin{lemma}
Let $\varphi\colon {\bf X}\to {\bf Y}$ be a morphism of \textsf{SS}. Then if $R_Yxy\varphi^{\bowtie}(z)$, there exists $u,v\in X^{\bowtie}$ such that $R_Xuvz$, $x\lesb\varphi^{\bowtie}(u)$, and $y\lesb\varphi^{\bowtie}(v)$.
\end{lemma}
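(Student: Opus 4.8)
The plan is to read off both the hypothesis and the conclusion from the explicit multiplication of Lemma \ref{lem:filtermult}, which renders the relation $R$ purely combinatorial: $R_Y xy\varphi^{\bowtie}(z)$ says that $x\cdot y$ is defined and $x\cdot y\lesb\varphi^{\bowtie}(z)$, where $x\cdot y$ is the join $x\twj y$ when $x,y\in Y$ or $x\mathbin{\|}y$, and otherwise (when $x,y$ are $\lesb$-comparable) is whichever factor has the larger absolute value, or $x\twm y$ when $|x|=|y|$. Throughout I would use that $\varphi^{\bowtie}$ is isotone (Lemma \ref{lem:pbowiso}), preserves the involution (Lemma \ref{lem:pbowprime}) and the absolute value (Lemma \ref{lem:abspres}), and respects the two halves, i.e. $\varphi^{\bowtie}[X]\subseteq Y$ and $\varphi^{\bowtie}[-D_X^\comp]\subseteq -D_Y^\comp$, which is immediate from $\varphi[D_X]\subseteq D_Y$ and $\varphi[D_X^\comp]\subseteq D_Y^\comp$. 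I also use the Esakia back condition for $\varphi$ on the upper halves: if $\varphi(a)\leq c$ in $Y$, then there is $b\in X$ with $a\lesb b$ and $\varphi(b)=c$.

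The first step is the easy case in which both $x\lesb\varphi^{\bowtie}(z)$ and $y\lesb\varphi^{\bowtie}(z)$; here I take $u=v=z$, noting that $z\cdot z=z\lesb z$ gives $R_X zzz$. This disposes in particular of the join case, since there $x,y\lesb x\twj y\lesb\varphi^{\bowtie}(z)$, and of every comparable case in which the smaller factor is already below $\varphi^{\bowtie}(z)$. Hence I may assume we are in a comparable case with exactly one factor below $\varphi^{\bowtie}(z)$. By Lemma \ref{lem:filtermult} the product $x\cdot y$ then equals one of the factors, say $p\in\{x,y\}$, and since $x\cdot y\lesb\varphi^{\bowtie}(z)$ this factor satisfies $p\lesb\varphi^{\bowtie}(z)$; writing $q$ for the other factor, we have $q\not\lesb\varphi^{\bowtie}(z)$. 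A short analysis (cf. Lemmas \ref{lem:orderprime} and \ref{lem:absmult2}) shows that in this situation $p$ lies in the lower half $-D_Y^\comp$, that $-p=|p|$, and that $q\lesb|p|=-p$.

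The heart of the argument is to manufacture the witnesses. Applying the (antitone) involution to $p\lesb\varphi^{\bowtie}(z)$ gives $\varphi^{\bowtie}(-z)=-\varphi^{\bowtie}(z)\lesb -p$ with $-p\in Y$ in the upper half. I claim this reduces to an honest upper-half inequality $\varphi(w_0)\leq -p$, where $w_0\in\{z,-z\}\cap X$: when $z$ is in the lower half this is immediate, and when $z\in X$ the cross-relation $-\varphi(z)\lesb -p$ forces $\varphi(z)$ and $-p$ to be $\leq$-comparable, the direction $-p\leq\varphi(z)$ being excluded since it would give $q\lesb -p\leq\varphi^{\bowtie}(z)$, contradicting $q\not\lesb\varphi^{\bowtie}(z)$. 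The Esakia back condition then yields $u\in X$ with $\varphi^{\bowtie}(u)=\varphi(u)=-p$ and $w_0\lesb u$; the latter gives $-u\lesb z$ directly from the definition of $\lesb$. As $p$ lies in the lower half off the axis, $-p\notin D_Y$, so $u\notin D_X$ and $-u$ is a genuine lower-half element. Taking the witness for $q$ to be $u$ (so $q\lesb -p=\varphi^{\bowtie}(u)$) and the witness for $p$ to be $-u$ (so $p\lesb p=\varphi^{\bowtie}(-u)$), their product is $u\cdot(-u)$; since $u$ and $-u$ are comparable with $|u|=|-u|$ and $-u\lesb u$, Lemma \ref{lem:filtermult} gives $u\cdot(-u)=u\twm(-u)=-u\lesb z$, so $R_X$ holds of the witnesses and $z$. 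Reinstating the labels $x,y$, using commutativity of $\cdot$ to place $u,-u$ in the correct slots, yields the required $u,v$.

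I expect the main obstacle to be the bookkeeping in this heart step: correctly translating the cross-half relation $\varphi^{\bowtie}(-z)\lesb -p$ into an upper-half $\leq$-inequality to which the Esakia back condition applies, and ruling out the wrong comparability direction by invoking the hypothesis $q\not\lesb\varphi^{\bowtie}(z)$. Once the lift $u$ with $\varphi(u)=-p$ and $w_0\lesb u$ is secured, the verification that $u\cdot(-u)=-u\lesb z$ is a direct reading of the multiplication table, and the remaining configurations collapse into this one via commutativity and the involution-preservation of $\varphi^{\bowtie}$.
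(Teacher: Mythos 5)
Your proof is correct, and its engine is the same as the paper's: settle all configurations with both factors below $\varphi^{\bowtie}(z)$ by taking $u=v=z$ (using $R_Xzzz$), and otherwise use the Esakia back condition to lift an upper-half inequality onto a point $u$ with $\varphi(u)=-p$, then take the reflected pair $u,-u$ as witnesses, whose product is $u\meet(-u)=-u\lesb z$. Where you genuinely diverge is the case decomposition. The paper's first case is ``the product equals the join,'' so its residual case still permits both factors to lie below $\varphi^{\bowtie}(z)$; it must therefore split again on the direction of comparability between $\varphi^{\bowtie}(|z|)$ and $-x$ (obtained via Corollary \ref{cor:abstotorder}), handling the direction $-x\lesb\varphi^{\bowtie}(|z|)$ with the lift-free witnesses $(z,|z|)$ and $R_Xz|z|z$. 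You instead strengthen the residual case to ``exactly one factor below,'' and that extra hypothesis $q\not\lesb\varphi^{\bowtie}(z)$ is exactly what powers your exclusion step: comparability of $\varphi(z)$ with $-p$ comes for free from the cross-half clause in the definition of $\lesb$ applied to $\varphi^{\bowtie}(-z)\lesb -p$, and the direction $-p\leq\varphi(z)$ is ruled out because it would force $q\lesb -p\lesb\varphi^{\bowtie}(z)$. As a result the paper's lift-free subcase collapses in your setup: when $z\in X$ it lands in your easy case, and when $z$ lies in the lower half it gives $\varphi(-z)=-p$, making your lift degenerate with $u=-z$. The trade is that you get a single uniform construction, one fewer subcase, and no appeal to Corollary \ref{cor:abstotorder}, at the modest cost of the preliminary analysis establishing $p\in -D_Y^\comp$, $|p|=-p$, and $q\lesb -p$ (which, note, should be read off the definitional multiplication table on $X^{\bowtie}$ from Lemma \ref{lem:filtermult} rather than the filter-level Lemmas \ref{lem:orderprime} and \ref{lem:absmult2} you cite, though the verification is the short one you indicate). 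Both routes then verify the same final bookkeeping via commutativity of $\cdot$ and preservation of $-$.
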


\begin{proof}
Suppose that $R_Yxy\pbow(z)$. Then $x\cdot y$ exists and $x\cdot y\lesb\pbow(z)$. We consider two cases.

First, suppose that $x\cdot y = x\join y$. Then $x\lesb\pbow(z)$ and $y\lesb\pbow(z)$. Taking $u=v=z$ gives the result as $R_Xzzz$.

Second, suppose that $x\cdot y\neq x\join y$. Then from the definition of $\cdot$ we have that $x\perp y$ and $x\cdot y$ is one of $x$ or $y$. We may assume without loss of generality that $x\lesb y$, that $x\cdot y = x$ (for if $x\cdot y=y$, then $x\cdot y = x\join y$, a contradiction), and that $|y|\lesb |x|$. Because $x,y\in Y$ would give that $x\cdot y=x\join y$ by the definition of $\cdot$, we may further assume that $x\notin Y$ and hence that $|x|=-x$ (for otherwise $x\lesb y$ and $Y$ being upward-closed would give $x,y\in Y$). Note that in this situation the hypothesis that $x=x\cdot y\lesb\pbow(z)$ gives that $\pbow(-z)\lesb -x$. It follows that $\pbow(|z|)$ must be comparable to $-x$ by Corollary \ref{cor:abstotorder} (as transferred along the obvious isomorphism), and we have either $\pbow(|z|)\lesb -x$ or $-x\lesb\pbow(|z|)$.

If $\pbow(|z|)\lesb -x$, then $\varphi(|z|)\leq -x$ and $\varphi$ being a p-morphism gives that there exists $u\in X$ such that $|z|\leq u$ and $\varphi(u)=-x$. Then $-u\lesb -|z|\lesb z$ and $y\lesb |y|\lesb |x|=-x\lesb\pbow(u)$, so $x\lesb\pbow(-u)$, $y\lesb\pbow(u)$, and $(-u)\cdot u= -u\lesb z$ gives the result.

If $-x\lesb\pbow(|z|)$, then $|y|\lesb |x|=-x$ gives that $y\lesb\pbow(|z|)$. Observing that $z\cdot |z| = z\meet |z|=z$, we obtain that $x\lesb\pbow(z)$, $y\lesb\pbow(|z|)$, and $R_Xz|z|z$, giving the result.
\end{proof}

\begin{lemma}
Let $\varphi\colon {\bf X}\to {\bf Y}$ be a morphism of \textsf{SS}. Then if $R_Y\varphi^{\bowtie}(x)yz$, there exists $u,v\in X^{\bowtie}$ such that $R_Xxuv$, $y\lesb\varphi^{\bowtie}(u)$, and $\varphi^{\bowtie}(v)\lesb z$.
\end{lemma}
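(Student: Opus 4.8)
The plan is to avoid a direct case analysis of the partial multiplication $\cdot$ and instead reduce this statement to the preceding lemma by exploiting the involution $-$ and the commutativity of $\odot$ that are available on the Sugihara relevant spaces ${\bf X}^{\bowtie}$ and ${\bf Y}^{\bowtie}$. Recall that both of these are Sugihara relevant spaces, so $\odot$ is commutative, $-$ is an antitone involution with $-(-p)=p$, and the law $r\in p\odot q\implies -q\in p\odot(-r)$ holds; moreover $\pbow$ commutes with $-$ by Lemma \ref{lem:pbowprime}, i.e. $\pbow(-w)=-\pbow(w)$ for all $w\in X^{\bowtie}$.

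First I would rewrite the hypothesis $R_Y\,\pbow(x)\,y\,z$ as $z\in\pbow(x)\odot y=y\odot\pbow(x)$ and apply the involution law in ${\bf Y}^{\bowtie}$ with $p=y$, $q=\pbow(x)$, $r=z$ to obtain $-\pbow(x)\in y\odot(-z)$. Using $-\pbow(x)=\pbow(-x)$, this reads $R_Y\,y\,(-z)\,\pbow(-x)$, which now has the image $\pbow(-x)$ in the \emph{output} position. This is exactly the shape to which the preceding lemma applies: it yields $s,t\in X^{\bowtie}$ with $R_X\,s\,t\,(-x)$, $y\lesb\pbow(s)$, and $-z\lesb\pbow(t)$.

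Next I would transport $R_X\,s\,t\,(-x)$ back into the required form. Reading it as $-x\in s\odot t$ and applying the involution law again, now in ${\bf X}^{\bowtie}$, with $p=s$, $q=t$, $r=-x$ gives $-t\in s\odot\big(-(-x)\big)=s\odot x$, that is, $R_X\,s\,x\,(-t)$; commutativity of $\odot$ then yields $R_X\,x\,s\,(-t)$. Setting $u=s$ and $v=-t$, this is precisely $R_X\,x\,u\,v$. The condition $y\lesb\pbow(s)=\pbow(u)$ is immediate, and for the last condition I would apply the antitonicity of $-$ to $-z\lesb\pbow(t)$ to get $-\pbow(t)\lesb -(-z)=z$, whence $\pbow(v)=\pbow(-t)=-\pbow(t)\lesb z$ again by Lemma \ref{lem:pbowprime}. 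This completes the argument.

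The genuine difficulty in this lemma lies not in the reduction above, which is essentially bookkeeping with the involution, but in the alternative of proving it directly by mirroring the preceding lemma's case analysis on whether $\pbow(x)\odot y$ is a join or equals one of its arguments; that route requires repeatedly lifting elements that may lie in the reflected copy $-D^\comp$ back through the $p$-morphism property of $\varphi$ and then checking that the resulting products fall into the correct case of the definition of $\cdot$ (invoking Lemmas \ref{lem:pbowiso} and \ref{lem:abspres} and Corollary \ref{cor:abstotorder}). The single point to get right in the reduction I propose is the precise placement of arguments in each application of the law $r\in p\odot q\implies -q\in p\odot(-r)$ and in the two uses of commutativity, since a mismatch there would produce a statement about the wrong coordinate.
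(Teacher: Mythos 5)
Your proof is correct, and it takes a genuinely different route from the paper's. The paper proves this lemma directly, by a four-case analysis (with subcases) on how $\pbow(x)\cdot y$ is computed---join, meet, or the argument of larger absolute value---repeatedly invoking the p-morphism property of $\varphi$ together with Lemmas \ref{lem:pbowiso} and \ref{lem:abspres} and Corollary \ref{cor:abstotorder} to construct witnesses $u,v$ by hand in each configuration. You instead exploit the fact, already established at this point in the paper, that ${\bf X}^{\bowtie}$ and ${\bf Y}^{\bowtie}$ are Sugihara relevant spaces, so commutativity of $\odot$, the antitone involution $-$, and the antilogism law ($z\in x\odot y$ implies $y'\in x\odot z'$) are all available; combined with $\pbow(-w)=-\pbow(w)$ (Lemma \ref{lem:pbowprime}), these convert the hypothesis $R_Y\,\pbow(x)\,y\,z$ into $R_Y\,y\,(-z)\,\pbow(-x)$, which has the shape of the preceding lemma, and convert its output $R_X\,s\,t\,(-x)$, $y\lesb\pbow(s)$, $-z\lesb\pbow(t)$ back into $R_X\,x\,s\,(-t)$ with $\pbow(-t)\lesb z$. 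I checked the bookkeeping you flagged as the delicate point: both applications of the antilogism law keep the first coordinate fixed, the preceding lemma is applied with preimage element $-x\in X^{\bowtie}$, and the final step uses antitonicity of $-$ in ${\bf Y}^{\bowtie}$ correctly; there is also no circularity, since the preceding lemma is proved independently. What your reduction buys is brevity and a conceptual point: over a commutative involutive relevant space, the two p-morphism conditions in Urquhart's definition are interderivable, so only one of them requires a case analysis. What the paper's direct proof buys is explicit witnesses in every order-configuration and independence from the object-level theorem that ${\bf X}^{\bowtie}$ is a Sugihara relevant space---though since that theorem precedes this lemma in the paper, your reliance on it is legitimate.
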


\begin{proof}
The fact that $R_Y\pbow(x)yz$ gives that $\pbow(z)\cdot y$ exists and $\pbow(x)\cdot y\lesb z$. We again consider cases.

For the first case, suppose that $\pbow(x)\cdot y = \pbow(x)\join y\lesb z$. Then $\pbow(x)\lesb z$ and $y\lesb z$. If $\pbow(x)\in Y$ (Subcase 1.1), then the p-morphism condition gives that there exists $u\in X$ with $x\leq u$ and $\varphi(u)=\pbow(u)=z$. Then $y\lesb\pbow(u)$, $\pbow(u)\lesb z$, and $R_Xxuu$ since $x\cdot u\lesb u$ follows from $x\lesb u$ by monotonicity and idempotence.

If $\pbow(x)\notin Y$ (Subcase 1.2), then we may assume that $\pbow(x)$ and $y$ are incomparable (i.e., since we are in the case where $\pbow(x)\cdot y=\pbow(x)\join y$). Moreover, $-\pbow(x)=\pbow(-x)\in Y$ and $-z\lesb\pbow(-x)$, $-z\lesb -y$. Were $-z\in Y$, this would contradict the fact that $Y$ is a forest, so $-z\notin Y$ and hence $z\in Y$. The fact that $-z$ and $\pbow(-x)$ are comparable gives that $z$ and $\pbow(-x)$ are comparable.

In the event that $z\lesb\pbow(-x)$ (Subcase 1.2.1), then $y\lesb\pbow(-x)$ and $\pbow(x)\lesb -z\lesb z$. The result follows in this situation from the fact that $-x\cdot x = x$ and hence $R_Xx(-x)x$.

In the situation that $\pbow(-x)\lesb z$ (Subcase 1.2.2), we note that $\pbow(x)\notin Y$ gives that $\pbow(-x)\in Y$ and $-x\in X$. Then $\varphi$ being a p-morphism gives that there exists $u\in X$ with $-x\leq u$ and $\varphi(u)=\pbow(u)=z$. Then since $x\notin X$, we have that $x\lesb -x\lesb u$ and this gives $x\cdot u\lesb u$. Since $y\lesb z = \pbow(u)$, $\pbow(u)\lesb z$, the fact that $R_Xxuu$ gives the result. This completes the first case.

For the remaining cases, we may assume that $\pbow(x)$ and $y$ are comparable and that not both of $\pbow(x)$ and $y$ are contained in $Y$. For the second case, assume that $|\pbow(x)|=|y|$, and thus that $\pbow(x)\cdot y = \pbow(x)\meet y$.

Suppose that $\pbow(x)\lesb y$ (Subcase 2.1). Then $\pbow(x)\cdot y = \pbow(x)\lesb z$. From $|\pbow(x)|=|y|$, we have $\pbow(x)=y$ or $\pbow(x)=-y$. If $\pbow(x)=y$, then $R_Xxxx$ gives the result. If $\pbow(x)=-y$, then $\pbow(-x)=y$ and $R_Xx(-x)x$ gives the result.

Now suppose that $y\lesb\pbow(x)$ (Subcase 2.2). Then $\pbow(x)\cdot y = y\lesb z$. Again, $|\pbow(x)|=|y|$ gives $\pbow(x)=y$ or $\pbow(x)=-y$. The former gives the result from $R_Xxxx$. The latter gives $\pbow(-x)=y\lesb z$, so $R_Xx(-x)(-x)$ gives the result. This yields the second case.

For the third case, suppose that $|y|<|\pbow(x)|$. Then $\pbow(x)\cdot y = \pbow(x)\lesb z$. If $y\lesb\pbow(x)$ (Subcase 3.1), this case may be concluded with $R_Xxxx$. On the other hand, if $\pbow(x)\lesb y$ (Subcase 3.2), we may assume that $\pbow(x)\notin Y$, hence that $\pbow(-x)\in Y$. Then $\pbow(-x)=|\pbow(x)|$, so $y\lesb |y|\lesb \pbow(-x)$. Then $R_Xx(-x)x$ gives the result and the third case.

For the fourth case, suppose that $|\pbow(x)|<|y|$. Then $\pbow(x)\cdot y = y\lesb z$. If $\pbow(x),y\notin Y$ (Subcase 4.1), then $|\pbow(x)|=-\pbow(x)\lesb -y = |y|$. This gives $\pbow(-x)\leq -y$ and the p-morphism condition implies that there exists $u\in Y$ with $-x\leq u$ and $\pbow(u)=\varphi(u)=-y$, whence $\pbow(-u)=y\lesb z$. Then $-u\lesb x$, and the fact that $-u,x\notin X$ gives that $x\cdot (-u) = -u$ since the value of $x\cdot (-u)$ is either the meet or the one with the larger absolute value. Hence $R_Xx(-u)(-u)$ and $y=\pbow(-u)\lesb z$ give the result. In the only remaining case, $\pbow(x)\in Y$ and $y\notin Y$ (Subcase 4.2). Then $|\pbow(x)|=\pbow(x)\lesb -y=|y|$. Since $\varphi$ is a p-morphism, this implies that there exists $u\in X$ with $x\leq u$ and $\pbow(u)=\varphi(u)=-y$. Then $y=\pbow(-u)$ and $y\lesb z$ hence yields $\pbow(-u)\lesb z$. Since $x\lesb u$, by monotonicity of $\cdot$ we have $x\cdot (-u) \lesb u\cdot (-u) = u\meet -u\lesb -u$. This gives $R_Xx(-u)(-u)$, and since $y\lesb\pbow(-u)$ and $\pbow(-u)\lesb z$, this settles the fourth case. This completes the proof.
\end{proof}

\begin{lemma}
Let $\varphi\colon {\bf X}\to {\bf Y}$ be a morphism of \textsf{SS}. Then $\varphi^{\bowtie}$ is continuous.
\end{lemma}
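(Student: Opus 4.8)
The plan is to exploit the fact that, by construction, $\tau^{\bowtie}$ is the disjoint union (coproduct) topology on $X^{\bowtie} = X\cup -D_X^\comp$. The central tool is the universal property of the topological coproduct: a map out of a disjoint union of spaces is continuous if and only if its restriction to each summand is continuous. Concretely, a subset $W\subseteq Y^{\bowtie}$ is open exactly when $W\cap Y$ and $W\cap -D_Y^\comp$ are open in the respective summands, so it suffices to verify that the two restrictions $\varphi^{\bowtie}\restriction_X$ and $\varphi^{\bowtie}\restriction_{-D_X^\comp}$ are each continuous as maps into $Y^{\bowtie}$. Thus I would reduce the problem to two independent continuity checks, one on each piece of the domain.

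For the restriction to $X$, observe that $\varphi^{\bowtie}\restriction_X$ is just $\varphi$ followed by the summand inclusion $Y\hookrightarrow Y^{\bowtie}$. Since $\varphi$ is an \textsf{SS}-morphism it is in particular continuous, and the inclusion of a summand into a disjoint union is continuous, so this composite is continuous. For the restriction to $-D_X^\comp$, I would factor $\varphi^{\bowtie}\restriction_{-D_X^\comp}$ as $-D_X^\comp \xrightarrow{-} D_X^\comp \xrightarrow{\varphi} D_Y^\comp \xrightarrow{-} -D_Y^\comp \hookrightarrow Y^{\bowtie}$, which sends $-x\mapsto x\mapsto\varphi(x)\mapsto -\varphi(x)$, matching the definition $\varphi^{\bowtie}(-x)=-\varphi(x)$. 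The first and third maps are homeomorphisms, since by definition the topology on $-D^\comp$ is the one transported from the subspace $D^\comp$ via the formal operation $-$. The middle map is the corestriction of $\varphi$ to $D_X^\comp$, which is continuous because $\varphi$ is continuous, $D_X^\comp$ carries the subspace topology, and $\varphi[D_X^\comp]\subseteq D_Y^\comp$ (a defining condition of \textsf{SS}-morphisms) guarantees that the image actually lands in $D_Y^\comp$. The final inclusion is again a summand inclusion. Chaining these continuous maps yields continuity of $\varphi^{\bowtie}\restriction_{-D_X^\comp}$.

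I do not expect any serious obstacle here; the only points demanding care are bookkeeping ones. First, one must confirm that $\varphi^{\bowtie}$ carries each summand of $X^{\bowtie}$ into a single summand of $Y^{\bowtie}$, which is precisely where the morphism conditions $\varphi[X]\subseteq Y$ and $\varphi[D_X^\comp]\subseteq D_Y^\comp$ are invoked; without the latter the corestriction to $D_Y^\comp$ would not be well-defined. Second, one must unwind the definition of the topology on $-D^\comp$ to see that $x\mapsto -x$ is a homeomorphism onto the subspace $D^\comp$, so that the two negation maps in the factorization are genuinely continuous. With these observations in place, the continuity of $\varphi^{\bowtie}$ follows immediately from the coproduct universal property, completing the proof.
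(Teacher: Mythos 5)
Your proof is correct and takes essentially the same approach as the paper's: both reduce the question to the two summands of the disjoint union topology, handling the $X$-part by the continuity of $\varphi$ and the $-D^\comp$-part by conjugating $\varphi$ with the negation maps, with the morphism condition $\varphi[D_X^\comp]\subseteq D_Y^\comp$ ensuring everything lands in the right summand. If anything your write-up is slightly cleaner, since you read off that $-\colon -D^\comp\to D^\comp$ is a homeomorphism directly from the definition of $\tau^{\bowtie}$, whereas the paper detours through the observation that $-$ is a continuous bijection of compact Hausdorff spaces, hence a homeomorphism, of ${\bf Y}^{\bowtie}$.
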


\begin{proof}
Let $U\cup V\subseteq {\bf Y}^{\bowtie}$ be open, where $U\subseteq Y$ and $V\subseteq\-D_Y^\comp$ are open. Note that the map $-\colon {\bf Y}^{\bowtie}\to{\bf Y}^{\bowtie}$ is a continuous bijection of compact Hausdorff spaces, and is therefore a homeomorphism. By definition, $(\pbow))^{-1}[V]$ is exactly the set $\{x\in Y^{\bowtie} : -\varphi(-x)\in V\}$. This is precisely $\{-x\in Y^{\bowtie} : \varphi(-x)\in V\}$, so it is the inverse image of $V$ under the continuous composite map $\varphi\circ -$. and hence the inverse image of $V$ under this map is open. Since $(\varphi^{\bowtie})^{-1}[U\cup V] = (\varphi^{\bowtie})^{-1}[U]\cup(\varphi^{\bowtie})^{-1}[V]$, the result follows.
\end{proof}

\begin{lemma}
Let $\varphi\colon {\bf X}\to {\bf Y}$ be a morphism of \textsf{SS}. Then $\varphi^{\bowtie}$ is a relevant map.
\end{lemma}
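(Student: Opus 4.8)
The plan is to verify directly that $\pbow$ satisfies each of the six conditions in the definition of a relevant map, for the structures ${\bf X}^{\bowtie}=(X^{\bowtie},\lesb,R,-,X,\tau^{\bowtie})$ and ${\bf Y}^{\bowtie}=(Y^{\bowtie},\lesb,R,-,Y,\tau^{\bowtie})$ produced by Definition \ref{def:upperbowtie}. The decisive point is that the entire computational burden has already been discharged by the preceding sequence of lemmas, so the argument is essentially an act of bookkeeping: each defining clause is matched to the lemma that establishes it.

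Concretely, I would argue as follows. Condition (1), that $\pbow$ is continuous and isotone, is exactly the content of Lemma \ref{lem:pbowiso} (isotonicity) together with the immediately preceding continuity lemma. Condition (2), preservation of the ternary relation $R$, is the lemma asserting that $\pbow$ preserves $R$. Conditions (3) and (4), the two back-and-forth clauses governing $R_{\bf Y}xy\pbow(z)$ and $R_{\bf Y}\pbow(x)yz$, are precisely the two lemmas providing the required witnesses $u,v\in X^{\bowtie}$. Condition (5), that $\pbow$ commutes with the involution $-$ (which plays the role of $'$ on these spaces), is Lemma \ref{lem:pbowprime}.

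The only clause not already isolated as a separate lemma is condition (6), namely $(\pbow)^{-1}[I_{{\bf Y}^{\bowtie}}]=I_{{\bf X}^{\bowtie}}$. Since the designated clopen up-set of ${\bf X}^{\bowtie}$ is $X$ itself and that of ${\bf Y}^{\bowtie}$ is $Y$, this reduces to showing $(\pbow)^{-1}[Y]=X$. This is immediate from the definition of $\pbow$: for $x\in X$ one has $\pbow(x)=\varphi(x)\in Y$, while for $x\in -D_X^\comp$ one has $\pbow(x)=-\varphi(-x)\in -D_Y^\comp$, and since $Y$ and $-D_Y^\comp$ are disjoint in $Y^{\bowtie}$, the preimage of $Y$ is exactly $X$. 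Assembling these observations yields that $\pbow$ is a relevant map.

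Since every substantive step has already been carried out, there is no genuine obstacle remaining; the only point requiring care is the correct identification of the designated sets $I_{{\bf X}^{\bowtie}}=X$ and $I_{{\bf Y}^{\bowtie}}=Y$ when verifying condition (6), so that the preservation clause is read against the right subsets rather than, say, the images of the original designated sets $D_X,D_Y$.
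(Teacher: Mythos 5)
Your proposal is correct and follows essentially the same route as the paper's proof: both assemble the preceding lemmas (isotonicity, continuity, preservation of $-$, and the three $R$-conditions) and then check the one remaining clause, $(\varphi^{\bowtie})^{-1}[Y]=X$, directly from the definition of $\varphi^{\bowtie}$. Your slightly more explicit verification of that last clause, noting that $\varphi^{\bowtie}$ sends $-D_X^\comp$ into $-D_Y^\comp$ (which is disjoint from $Y$), is a fair elaboration of what the paper states in one line.
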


\begin{proof}
Previous lemmas show that $\varphi^{\bowtie}$ is a continuous, isotone map that preserves and is a p-morphism with respect to the ternary relation $R$. We also have that $(\varphi^{\bowtie})^{-1}[Y] = \varphi^{-1}[Y]=X$, and $\varphi^{\bowtie}(-x)=-\varphi^{\bowtie}(x)$ by Lemma \ref{lem:pbowprime}. This proves the result.
\end{proof}

\begin{lemma}
$(-)_{\bowtie}\colon\textsf{SRS}\to\textsf{SS}$ is functorial.
\end{lemma}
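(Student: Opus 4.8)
The plan is to exploit the fact that $(-)_{\bowtie}$ acts on a morphism $\varphi$ by nothing more than set-theoretic restriction to the designated subset $X_{\bowtie}=I$, so that functoriality reduces to the elementary observation that restriction of functions respects identities and composites. The preceding results have already supplied everything that carries genuine content: ${\bf X}_{\bowtie}$ is an unpointed Sugihara space by the lemma following Definition \ref{def:lowerbowtie}, and $\varphi_{\bowtie}$ is an \textsf{SS}-morphism whenever $\varphi$ is an \textsf{SRS}-morphism. Hence the only thing left to check is preservation of identities and composition.

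First I would dispatch the identity. For a Sugihara relevant space ${\bf X}$, the identity morphism $\mathrm{id}_{\bf X}$ restricts on $X_{\bowtie}$ to $\mathrm{id}_{X_{\bowtie}}$, which is precisely the identity morphism of ${\bf X}_{\bowtie}$; thus $(\mathrm{id}_{\bf X})_{\bowtie}=\mathrm{id}_{{\bf X}_{\bowtie}}$. Next I would verify compatibility with composition. Given \textsf{SRS}-morphisms $\varphi\colon {\bf X}\to {\bf Y}$ and $\psi\colon {\bf Y}\to {\bf Z}$, I recall from the proof that $\varphi_{\bowtie}$ is well-defined that the relevant-map condition $\varphi^{-1}[I_{\bf Y}]=I_{\bf X}$ forces $\varphi[X_{\bowtie}]\subseteq Y_{\bowtie}$, so that $\varphi_{\bowtie}$ genuinely has codomain $Y_{\bowtie}$ and the composite $\psi_{\bowtie}\circ\varphi_{\bowtie}$ is defined. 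Then for each $x\in X_{\bowtie}$ the element $\varphi(x)$ lies in $Y_{\bowtie}$, whence $\psi_{\bowtie}(\varphi_{\bowtie}(x))=\psi(\varphi(x))=(\psi\circ\varphi)(x)=(\psi\circ\varphi)_{\bowtie}(x)$; since this holds pointwise, $(\psi\circ\varphi)_{\bowtie}=\psi_{\bowtie}\circ\varphi_{\bowtie}$.

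There is no real obstacle in this argument: the substantive work lies entirely in the previously established well-definedness of $(-)_{\bowtie}$ on objects and on morphisms, and once that is in hand the functorial equations follow immediately from the fact that $\varphi_{\bowtie}$ is a restriction. The only point requiring even a moment's care is ensuring that $\varphi_{\bowtie}$ lands in $Y_{\bowtie}$ so that the composite on the right-hand side makes sense, and this is exactly the inclusion $\varphi[X_{\bowtie}]\subseteq Y_{\bowtie}$ noted above.
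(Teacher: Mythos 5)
Your proof is correct and follows essentially the same route as the paper's: since $(-)_{\bowtie}$ acts on morphisms by restriction, both preservation of identities and of composites are checked pointwise and are immediate. Your extra remark that $\varphi[X_{\bowtie}]\subseteq Y_{\bowtie}$ (via $\varphi^{-1}[I_{\bf Y}]=I_{\bf X}$) makes the composite well-defined is exactly the point the paper establishes in the preceding lemma on $\varphi_{\bowtie}$ being an \textsf{SS}-morphism, so nothing is missing.
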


\begin{proof}
Let $\varphi\colon{\bf Y}\to {\bf Z}$ and $\psi\colon{\bf X}\to{\bf Y}$ be morphisms of \textsf{SRS}. We must show that $(\varphi\circ\psi)_{\bowtie} = \varphi_{\bowtie}\circ\psi_{\bowtie}$. Let $x\in X_{\bowtie}$. Then $(\varphi\circ\psi)_{\bowtie}(x) = \varphi(\psi(x)) = \varphi_{\bowtie}(\psi_{\bowtie}(x))$ follows immediately since $(-)_{\bowtie}$ acts by restriction. That $(-)_{\bowtie}$ preserves the identity morphism is obvious.
\end{proof}

\begin{lemma}
$(-)^{\bowtie}\colon\textsf{SS}\to\textsf{SRS}$ is functorial.
\end{lemma}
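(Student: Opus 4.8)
The plan is to verify the two defining properties of a functor directly from Definition~\ref{def:upperbowtie}, namely that $(-)^{\bowtie}$ preserves identity morphisms and respects composition. The earlier lemmas already establish that $\varphi^{\bowtie}$ is a relevant map whenever $\varphi$ is a morphism of \textsf{SS}, so the only remaining task is to check these two equalities of functions on the underlying set $X^{\bowtie}=X\cup -D_X^\comp$.

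Preservation of identities is immediate by a two-case evaluation. For the identity $\mathrm{id}_{\bf X}$ on an unpointed Sugihara space ${\bf X}$, the defining cases give $(\mathrm{id}_{\bf X})^{\bowtie}(x)=\mathrm{id}_{\bf X}(x)=x$ when $x\in X$, and $(\mathrm{id}_{\bf X})^{\bowtie}(x)=-\mathrm{id}_{\bf X}(-x)=-(-x)=x$ when $x\in -D_X^\comp$, using the involution law $-(-x)=x$. Hence $(\mathrm{id}_{\bf X})^{\bowtie}=\mathrm{id}_{{\bf X}^{\bowtie}}$.

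For preservation of composition, let $\psi\colon{\bf X}\to{\bf Y}$ and $\varphi\colon{\bf Y}\to{\bf Z}$ be morphisms of \textsf{SS}; I would show $(\varphi\circ\psi)^{\bowtie}=\varphi^{\bowtie}\circ\psi^{\bowtie}$ by evaluating both sides at an arbitrary $x\in X^{\bowtie}$ and splitting along the two cases of the definition. If $x\in X$, then $\psi^{\bowtie}(x)=\psi(x)\in Y$, so the first branch of $\varphi^{\bowtie}$ applies and $\varphi^{\bowtie}(\psi^{\bowtie}(x))=\varphi(\psi(x))=(\varphi\circ\psi)^{\bowtie}(x)$. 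If $x\in -D_X^\comp$, then $-x\in D_X^\comp$, and since $\psi$ is an \textsf{SS}-morphism (hence a \textsf{bGS}-morphism satisfying $\psi[D_X^\comp]\subseteq D_Y^\comp$) we obtain $\psi(-x)\in D_Y^\comp$, whence $\psi^{\bowtie}(x)=-\psi(-x)\in -D_Y^\comp$. Thus the second branch of $\varphi^{\bowtie}$ applies, giving $\varphi^{\bowtie}(\psi^{\bowtie}(x))=-\varphi(-(-\psi(-x)))=-\varphi(\psi(-x))=(\varphi\circ\psi)^{\bowtie}(x)$, as required.

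The only genuinely delicate point---and the step I would emphasize---is confirming that $\psi^{\bowtie}$ carries each piece of $X^{\bowtie}$ into the corresponding piece of $Y^{\bowtie}$, so that the correct branch of the piecewise definition of $\varphi^{\bowtie}$ is triggered in the composite. This is precisely where the morphism condition $\psi[D_X^\comp]\subseteq D_Y^\comp$ (together with the automatic $\psi[X]\subseteq Y$) is used; without it the case analysis would not close, since one could not determine whether $\psi^{\bowtie}(x)$ lands in $Y$ or in $-D_Y^\comp$. Everything else is a routine unwinding of Definition~\ref{def:upperbowtie} and the involution law $-(-x)=x$.
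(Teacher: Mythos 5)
Your proof is correct and takes essentially the same approach as the paper's: a case split along the two branches of Definition~\ref{def:upperbowtie}, with the composition check in the second case hinging on $\psi^{\bowtie}$ carrying $-D_X^\comp$ into $-D_Y^\comp$ so that the correct branch of $\varphi^{\bowtie}$ is triggered. You are in fact slightly more explicit than the paper, which asserts without comment that $\psi^{\bowtie}(x)=-\psi(y)\notin Y$ --- precisely the point you justify via the \textsf{bGS}-morphism condition $\psi[D_X^\comp]\subseteq D_Y^\comp$.
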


\begin{proof}
Given Sugihara spaces ${\bf X} = (X,\leq_{\bf X},D_{\bf X},\tau_{\bf X})$, ${\bf Y} = (Y,\leq_{\bf Y},D_{\bf Y},\tau_{\bf Y})$, and ${\bf Z} = (Z,\leq_{\bf Z},D_{\bf Z},\tau_{\bf Z})$, let $\varphi\colon{\bf Y}\to {\bf Z}$ and $\psi\colon{\bf X}\to{\bf Y}$ be morphisms of \textsf{SS}. Let $x\in X^{\bowtie}$. Then $x\in X$ or $x\in\{-y : y\notin D_{\bf X}\}$. In the former case, we immediately obtain that $(\varphi\circ\psi)^{\bowtie}(x)=(\varphi\circ\psi)(x)=\varphi(\psi(x))=\varphi^{\bowtie}(\psi^{\bowtie}(x))$ from the definition. If $x=-y$ where $y\notin D_{\bf X}$, then $(\varphi\circ\psi)^{\bowtie}(x)=-(\varphi\circ\psi)(y) = -\varphi(\psi(y))$. On the other hand, $\psi^{\bowtie}(x) = -\psi(y)$ is not in $Y$, and hence $\varphi^{\bowtie}(-\psi(y))=-\varphi(\psi(y))$. This shows that $(\varphi\circ\psi)^{\bowtie}=\varphi^{\bowtie}\circ\psi^{\bowtie}$ in each case. That $(-)^{\bowtie}$ preserves the identity morphism is obvious, so this gives the result
\end{proof}

\begin{lemma}
Let ${\bf X} = (X,\leq,R,\;',I,\tau)$ be a Sugihara relevant space. Then $({\bf X}_{\bowtie})^{\bowtie}\cong {\bf X}$.
\end{lemma}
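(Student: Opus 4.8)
The plan is to reduce to the representable case ${\bf X}={\bf A}_*$ and then show that the map $\Gamma_{\bf A}$ constructed in Section~\ref{sec:dualtwist} already witnesses the desired isomorphism. By the Urquhart duality (Theorem~\ref{thm:urquhartdual}) there is a bounded Sugihara monoid ${\bf A}$ and an \textsf{SRS}-isomorphism $\psi\colon {\bf A}_*\to {\bf X}$. Since $(-)_{\bowtie}$ and $(-)^{\bowtie}$ are functorial, they carry $\psi$ to isomorphisms, so $(({\bf A}_*)_{\bowtie})^{\bowtie}\cong ({\bf X}_{\bowtie})^{\bowtie}$. It therefore suffices to prove $(({\bf A}_*)_{\bowtie})^{\bowtie}\cong {\bf A}_*$, for then $({\bf X}_{\bowtie})^{\bowtie}\cong(({\bf A}_*)_{\bowtie})^{\bowtie}\cong {\bf A}_*\cong {\bf X}$.

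Next I would unwind the two functors applied to ${\bf A}_*$. By Definition~\ref{def:lowerbowtie}, $({\bf A}_*)_{\bowtie}$ is exactly the unpointed Sugihara space $(I({\bf A}),\subseteq,D,\tau_{\bowtie})$ with $D=\{x\in A_* : x=x'\}$, and $D\subseteq I({\bf A})$ by Lemma~\ref{lem:orderprime}(4). Applying the reflection construction of Definition~\ref{def:upperbowtie} then yields the Sugihara relevant space $(I({\bf A}))^{\bowtie}$, whose underlying set is $I({\bf A})\cup -D^{\comp}$, whose involution is $-$, whose distinguished clopen up-set is $I({\bf A})$, and whose ternary relation $R$ is built from the partial multiplication on $I({\bf A})^{\bowtie}$. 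Thus the goal becomes showing ${\bf A}_*\cong (I({\bf A}))^{\bowtie}$ in \textsf{SRS}.

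The crux is then to check that $\Gamma_{\bf A}\colon A_*\to I({\bf A})^{\bowtie}$ is an \textsf{SRS}-isomorphism. Lemmas~\ref{lem:gamorderiso}, \ref{lem:gamprimeiso}, and~\ref{lem:gamhomeo} already give that $\Gamma_{\bf A}$ is simultaneously an order isomorphism, a homeomorphism, and compatible with the involution via $\Gamma_{\bf A}(x')=-\Gamma_{\bf A}(x)$. Two conditions remain: preservation of the distinguished set and of the ternary relation. For the former, the definition of $\Gamma_{\bf A}$ gives $\Gamma_{\bf A}(x)\in I({\bf A})$ precisely when $x\in I({\bf A})$, so $\Gamma_{\bf A}^{-1}[I({\bf A})]=I({\bf A})$. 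For the latter, $Rxyz$ in ${\bf A}_*$ means $x\cdot y\subseteq z$, and Lemma~\ref{lem:filtermult} describes $\cdot$ purely in terms of $\subseteq$, the involution $'$, and absolute values; the partial multiplication on $I({\bf A})^{\bowtie}$ in Definition~\ref{def:upperbowtie} is defined by exactly the same case analysis. Because $\Gamma_{\bf A}$ preserves order and involution, and therefore absolute values, it transports the multiplication on $A_*$ to the partial multiplication on $I({\bf A})^{\bowtie}$, giving $Rxyz \iff R\,\Gamma_{\bf A}(x)\,\Gamma_{\bf A}(y)\,\Gamma_{\bf A}(z)$. Since $\Gamma_{\bf A}$ is a bijection preserving and reflecting all of the order, involution, distinguished set, and ternary relation, both $\Gamma_{\bf A}$ and $\Gamma_{\bf A}^{-1}$ satisfy the clauses for relevant maps (the zig-zag conditions for $R$ reduce to preservation-and-reflection once $\Gamma_{\bf A}$ is an order bijection), so $\Gamma_{\bf A}$ is an \textsf{SRS}-isomorphism and the proof concludes.

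The main obstacle is the ternary-relation step, but this is essentially the same verification already carried out in the proof that ${\bf X}^{\bowtie}$ is a Sugihara relevant space, where the multiplication is shown to be definable from order and involution alone; the remaining effort is the bookkeeping needed to confirm that the two case-by-case definitions of $\cdot$ literally coincide under $\Gamma_{\bf A}$, after which the back-and-forth relevant-map clauses follow formally from $\Gamma_{\bf A}$ being a structure-preserving bijection.
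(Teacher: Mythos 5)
Your proof is correct, but it takes a genuinely different route from the paper's. The paper proves this lemma directly and axiomatically: it defines an explicit map $\theta_{\bf X}\colon ({\bf X}_{\bowtie})^{\bowtie}\to {\bf X}$ by $\theta_{\bf X}(x)=x$ for $x\in I$ and $\theta_{\bf X}(x)=(-x)'$ for $x\notin I$ (essentially the abstract analogue of $\Gamma_{\bf A}^{-1}$), and then verifies by hand, for an arbitrary Sugihara relevant space, that $\theta_{\bf X}$ is an order isomorphism, a homeomorphism, preserves the involution and $I$, and is an isomorphism with respect to $R$ (the last by the same ``$R$ is determined by order and involution'' observation you make). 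You instead reduce to the representable case ${\bf X}\cong {\bf A}_*$ via the Urquhart duality and reuse the already-proved Lemmas \ref{lem:gamorderiso}, \ref{lem:gamprimeiso}, and \ref{lem:gamhomeo} about $\Gamma_{\bf A}$; your remaining verifications are sound --- the zig-zag clauses for relevant maps do collapse to preservation-and-reflection for a structure bijection (take $u=\varphi^{-1}(x)$, etc.), and the transport of $R$ via Lemma \ref{lem:filtermult} is exactly the argument the paper itself deploys in proving that ${\bf X}^{\bowtie}$ is a Sugihara relevant space, so your technique is squarely within the paper's toolkit, just applied to a different lemma. What each approach buys: yours recycles Section \ref{sec:dualtwist} wholesale and avoids re-verifying the order-isomorphism, homeomorphism, and involution conditions from scratch; the paper's buys a \emph{canonical}, choice-free isomorphism $\theta_{\bf X}$, which matters because the final equivalence theorem checks naturality via the explicit identity $\varphi\circ\theta_{\bf X}=\theta_{\bf Y}\circ(\varphi_{\bowtie})^{\bowtie}$. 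Your isomorphism depends on the choice of ${\bf A}$ and of the isomorphism $\psi\colon {\bf A}_*\to{\bf X}$, so if it were used in place of $\theta_{\bf X}$ one would owe an extra argument --- either that the composite is independent of these choices or, more simply, a computation showing it coincides with $\theta_{\bf X}$. For the lemma as stated, however, your proof is complete.
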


\begin{proof}
Define a map $\theta_{\bf X}\colon ({\bf X}_{\bowtie})^{\bowtie}\to {\bf X}$ by
\[ \theta_{\bf X}(x) = \begin{cases} 
      x & \text{ if } x\in I\\
      (-x)' & \text{ if } x\notin I
   \end{cases}
\]
Since $x\notin I$ implies that $-x\in I$ is an element of ${\bf X}$, this map is well-defined. We will show that $\theta_{\bf X}$ is an isomorphism in \textsf{SRS}. Following \cite{Urquhart}, it suffices to show that $\theta_{\bf X}$ is an order isomorphism, homeomorphism, preserves the involution, is an isomorphism with respect to $R$, and satisfies $\theta_{\bf X}[I]=I$.

To see that $\theta_{\bf X}$ is an order isomorphism, first suppose that $x,y\in ({\bf X}_{\bowtie})^{\bowtie}$ with $x\lesb y$. If $x,y\in X_{\bowtie}$, then this means that $\theta_{\bf X}(x)=x\leq y=\theta_{\bf X}(y)$. If $x,y\notin X_{\bowtie}$, then $-x,-y\in X_{\bowtie}$ and $x\lesb y$ means $-y\leq -x$, hence $(-x)'\leq (-y)'$. Then $\theta_{\bf X}(x)=(-x)'\leq (-y)'=\theta_{\bf X}(y)$. Finally, if $x\notin X_{\bowtie}$ and $y\in X_{\bowtie}$, then $x\lesb y$ gives that $-x$ and $y$ are $\leq$-comparable. If $-x\leq y$, then $(-x)'\leq -x\leq y$, and if $y\leq -x$, then $(-x)'\leq y'\leq y$. In either case, $\theta_{\bf X}(x)\leq\theta_{\bf X}(y)$. This shows that $\theta_{\bf X}$ preserves the order.

To show that it reflects the order, let $x,y\in ({\bf X}_{\bowtie})^{\bowtie}$ with $\theta_{\bf X}(x)\leq \theta_{\bf X}(y)$. If $x,y\in X_{\bowtie}$, then $x\lesb y$ is immediate. If $x,y\notin X_{\bowtie}$, then we have $(-x)'\leq (-y)'$, whence $-y\leq -x$. In this case, $-x,-y\in X_{\bowtie}$, so it follows that $x\lesb y$ from the definition. If $x\in X_{\bowtie}$ and $y\notin X_{\bowtie}$, then $x=\theta_{\bf X}(x)\leq\theta_{\bf X}(y)=\lesb (-y)'$. But $y\notin X_{\bowtie}$ implies that $(-y)'\notin X_{\bowtie}$, so this contradicts the fact that $X_{\bowtie}$ is an upset and hence cannot occur. For the final case, suppose that $x\notin X_{\bowtie}$ and $y\in X_{\bowtie}$. Then $(-x)'\leq y$ by hypothesis. Since $y$ and $-x$ are comparable, we obtain also that $-x$ and $y$ are comparable with $-x,y\in X_{\bowtie}$. By the definition of $\lesb$, this entails $x=-(-x)\lesb y$. This yields that $\theta_{\bf X}$ is order-reflecting.

To see that $\theta_{\bf X}$ is an order isomorphism, we show that it is onto. Let $x\in X$. If $x\in I$, then $x\in (X_{\bowtie})^{\bowtie}$ as well and $\theta_{\bf X}(x)=x$. If $x\notin I$, then $x'\in I$ and hence $-(x')\in (X_{\bowtie})^{\bowtie}$ and $-(x')\notin X_{\bowtie}$. Then $\theta_{\bf X}(-(x'))=(-(-(x')))'=x''=x$. This gives that $\theta_{\bf X}$ is an order isomorphism.

We turn to showing that $\theta_{\bf X}$ is a homeomorphism. The above shows that $\theta_{\bf X}$ is a bijection, so since $({\bf X}_{\bowtie})^{\bowtie}$ and ${\bf X}$ are compact Hausdorff spaces, it suffices to show that $\theta_{\bf X}$ is continuous. Let $W\subseteq X$ be open, and set $U=W\cap I$ and $V=W\cap I^\comp$. Since $I$ is open by definition, both $U$ and $V$ are open as well. By definition, $\theta_{\bf X}^{-1}[U]=U$. Observe that $\theta_{\bf X}(x)\notin I$ implies that $x\notin I$ because $x\in I$ would gives $\theta_{\bf X}(x)=x$. Using this fact, we obtain
\begin{align*}
\theta_{\bf X}^{-1}[V] &= \{x\in (X_{\bowtie})^{\bowtie} : \theta_{\bf X}(x)\in V\}\\
&= \{x\in (X_{\bowtie})^{\bowtie} : (-x)'\in V\}\\
\end{align*}
Now $'\colon {\bf X}\to{\bf X}$ and $-\colon ({\bf X}_{\bowtie})^{\bowtie}\to ({\bf X}_{\bowtie})^{\bowtie}$ are continuous bijections by definition, and the above is precisely the inverse image of $V$ under the composition of $-$ and $'$. It follows that $V$ is an open subset of $({\bf X}_{\bowtie})^{\bowtie}$ disjoint from $X_{\bowtie}$, whence $\theta_{\bf X}^{-1}[W]=\theta_{\bf X}^{-1}[U]\cup\theta_{\bf X}^{-1}[V]$ is open. It follows that $\theta_{\bf X}$ is a homeomorphism.

To see that $\theta_{\bf X}$ preserves the involution, let $x\in ({\bf X}_{\bowtie})^{\bowtie}$. If $-x\notin {\bf X}_{\bowtie}$, then $x\in {\bf X}_{\bowtie}$ and $\theta_{\bf X}(-x)=(-(-x))'=x'=\theta_{\bf X}(x)'$. If $-x\in {\bf X}_{\bowtie}$ with $-x=x$, then by definition $x=x'$ and $\theta_{\bf X}(-x)=-x=x=x'=\theta_{\bf X}(x)'$. If $-x\in {\bf X}_{\bowtie}$ with $-x\neq x$, then $x\notin X_{\bowtie}$ and $\theta_{\bf X}(-x)=-x=(-x)''=\theta_{\bf X}(x)'$. This gives the preservation of the involution.

That $\theta_{\bf X}[I]=I$ is immediate from $\theta_{\bf X}(x)=x$ for $x\in I$, so it remains only to show that $\theta_{\bf X}$ is an isomorphism with respect to $R$. But this follows immediately since $R$ is completely determined by the meet, join, and involution, and $\theta_{\bf X}$ is an involution-preserving order isomorphism. This gives the result.
\end{proof}

\begin{lemma}
Let ${\bf X}$ be an unpointed Sugihara space. Then $({\bf X}^{\bowtie})_{\bowtie}\cong {\bf X}$.
\end{lemma}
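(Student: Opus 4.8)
The plan is to show that $({\bf X}^{\bowtie})_{\bowtie}$ is in fact \emph{identical} to ${\bf X}$, so that the identity map furnishes the required isomorphism; unlike the reverse composite treated in the preceding lemma, no auxiliary map such as $\theta_{\bf X}$ or $\Gamma_{\bf A}$ is needed, and in particular the Urquhart duality need not be invoked. The entire argument is a matter of unwinding the definitions of the two functors on objects. Write ${\bf X} = (X,\leq,D,\tau)$ and recall from Definition \ref{def:upperbowtie} that ${\bf X}^{\bowtie} = (X^{\bowtie},\lesb,R,-,X,\tau^{\bowtie})$, whose distinguished clopen up-set (the $I$-component of a Sugihara relevant space) is exactly $X$. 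Applying $(-)_{\bowtie}$ as in Definition \ref{def:lowerbowtie} then yields underlying set $(X^{\bowtie})_{\bowtie} = I = X$, so the two spaces share the same carrier.

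It remains to check that the order, designated subset, and topology agree. First I would verify the order: $(-)_{\bowtie}$ equips $X$ with the restriction of $\lesb$, and clause (1) in the definition of $\lesb$ says precisely that $x\lesb y \iff x\leq y$ for $x,y\in X$, so $\lesb\restriction_X = \leq$. Since the comparability relation $\leq\cup\geq$ of an unpointed Sugihara space is determined by $\leq$, it is recovered automatically. Second, the designated subset produced by $(-)_{\bowtie}$ is $\{x\in X^{\bowtie} : x = -x\}$, where $-$ is the involution of ${\bf X}^{\bowtie}$; by the definition of the extended operation $-$ we have $-x = x$ exactly when $x\in D$, whereas each $x\in D^\comp$ satisfies $-x\in -D^\comp$ and each $x\in -D^\comp$ satisfies $-x\in D^\comp$, so in both of the latter cases $-x\neq x$. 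Hence $\{x\in X^{\bowtie} : x = -x\} = D$, and since $D\subseteq X$ this is exactly the designated subset of ${\bf X}$.

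Third, I would confirm the topology. By Definition \ref{def:lowerbowtie} the topology $\tau_{\bowtie}$ on $X_{\bowtie} = X$ is the subspace topology inherited from $(X^{\bowtie},\tau^{\bowtie})$. As $\tau^{\bowtie}$ is the disjoint union topology on $X\cup -D^\comp$ whose restriction to the summand $X$ is $\tau$, and $X$ is clopen in this disjoint union, the subspace topology on $X$ is $\tau$ itself. Combining the three points gives $({\bf X}^{\bowtie})_{\bowtie} = (X,\leq,D,\tau) = {\bf X}$, so the identity map is trivially a \textsf{SS}-isomorphism. The only steps requiring any care are the identification of the designated subset with $D$ and the topological claim, and both are immediate from the construction of $-$ and of $\tau^{\bowtie}$; thus there is no substantive obstacle, the difficulty having already been discharged in building the functors themselves.
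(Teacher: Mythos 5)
Your proposal is correct and is essentially the paper's own proof: the paper simply takes $i_{\bf X}$ to be the identity map and declares it ``obviously'' an isomorphism of \textsf{SS}, whereas you spell out the routine verifications (carrier $X_{\bowtie}=I=X$, order $\lesb\restriction_X=\leq$, fixed-point set of $-$ equal to $D$, and subspace topology of the clopen summand equal to $\tau$) that justify that claim. No gap and no genuinely different route --- just a more explicit rendering of the same one-line argument.
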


\begin{proof}
Let $i_{\bf X}\colon ({\bf X}^{\bowtie})_{\bowtie}\to {\bf X}$ be the identity map. Then $i_{\bf X}$ is obviously an isomorphism of \textsf{SS}, and the result follows.
\end{proof}

\begin{theorem}
$(-)_{\bowtie}$ and $(-)^{\bowtie}$ witness an equivalence of categories between \textsf{SRS} and \textsf{SS}.
\end{theorem}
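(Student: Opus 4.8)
The plan is to assemble the preceding lemmas into the standard criterion for an equivalence of categories: two functors together with natural isomorphisms between each composite and the relevant identity functor. The functoriality of $(-)_{\bowtie}\colon\textsf{SRS}\to\textsf{SS}$ and $(-)^{\bowtie}\colon\textsf{SS}\to\textsf{SRS}$ has already been established, and the two isomorphism lemmas supply, for each object, isomorphisms $\theta_{\bf X}\colon ({\bf X}_{\bowtie})^{\bowtie}\to{\bf X}$ in \textsf{SRS} and $i_{\bf X}\colon ({\bf X}^{\bowtie})_{\bowtie}\to{\bf X}$ in \textsf{SS}. All that remains is to check that the families $\{\theta_{\bf X}\}$ and $\{i_{\bf X}\}$ are natural, so that $(-)^{\bowtie}\circ(-)_{\bowtie}\cong 1_{\textsf{SRS}}$ and $(-)_{\bowtie}\circ(-)^{\bowtie}\cong 1_{\textsf{SS}}$ as functors.

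The \textsf{SS} side is immediate. Since the designated subset $\{x : x=-x\}$ of ${\bf X}^{\bowtie}$ meets $I=X$ exactly in $D$, one has $({\bf X}^{\bowtie})_{\bowtie}={\bf X}$ on the nose and $i_{\bf X}$ is literally the identity map; moreover $(\varphi^{\bowtie})_{\bowtie}=\varphi$ for every \textsf{SS}-morphism $\varphi$ directly from the definitions, since the reflection functor acts as the original map on the $X$-part and restricting back to $I$ discards the reflected part. Hence the naturality square for $i$ is trivial.

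The only genuine computation is naturality of $\theta$. For an \textsf{SRS}-morphism $\varphi\colon{\bf X}\to{\bf Y}$ I would verify $\varphi\circ\theta_{\bf X}=\theta_{\bf Y}\circ(\varphi_{\bowtie})^{\bowtie}$ by a two-case split on $x\in({\bf X}_{\bowtie})^{\bowtie}$. When $x\in I$, both composites equal $\varphi(x)$, using that $\varphi_{\bowtie}$ carries $I$ into $I$ so that $\theta_{\bf Y}$ acts as the identity on $\varphi(x)$. When $x\notin I$, we have $(\varphi_{\bowtie})^{\bowtie}(x)=-\varphi(-x)\notin I_{\bf Y}$, where $\varphi_{\bowtie}$ preserving $D^\comp$ guarantees that $\varphi(-x)$ is not self-reflected; then the left composite yields $(\varphi(-x))'$ while the right composite yields $\varphi((-x)')$, and these agree because $\varphi$ preserves the involution $'$. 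This is the step I expect to be the crux, though it is a short calculation rather than a real obstacle, resting entirely on the two structural facts that \textsf{SRS}- and \textsf{SS}-morphisms respect $'$ (respectively $-$) and the designated sets.

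With naturality in hand, the two natural isomorphisms exhibit $(-)_{\bowtie}$ and $(-)^{\bowtie}$ as mutually quasi-inverse functors, so they witness an equivalence of categories between \textsf{SRS} and \textsf{SS}, completing the proof. Combined with Theorem \ref{thm:urquhartdual} and the duality of Section \ref{sec:sugidual}, this delivers the promised dual presentation of the twist-product construction of Section \ref{sec:twist} on the level of spaces.
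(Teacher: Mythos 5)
Your proposal is correct and follows essentially the same route as the paper's proof: all the work is in the preceding lemmas, and the theorem reduces to naturality of $\theta_{\bf X}$ (checked by the same two-case split on $x\in I$) together with the trivial naturality of the identity $i_{\bf X}$. The only discrepancies are cosmetic: you have the two composites labeled in reverse in the $x\notin I$ case ($\varphi\circ\theta_{\bf X}$ gives $\varphi((-x)')$ and $\theta_{\bf Y}\circ(\varphi_{\bowtie})^{\bowtie}$ gives $(\varphi(-x))'$, which agree since $\varphi$ preserves $'$), and your explicit observation that $\varphi_{\bowtie}$ preserving $D^\comp$ forces $-\varphi(-x)\notin I_{\bf Y}$ is a detail the paper's proof leaves implicit.
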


\begin{proof}
The lemmas above yield this result provided that we show that the maps $\theta_{\bf X}$ and $i_{\bf X}$ are natural isomorphisms. This is obvious in the latter case, so we need only check the naturality of $\theta_{\bf X}$. Let $\varphi\colon {\bf X}\to {\bf Y}$ be a morphism of \textsf{SRS}. We must show that $\varphi\circ\theta_{\bf X} = \theta_{\bf Y}\circ (\varphi_{\bowtie})^{\bowtie}$, so let $x\in (X_{\bowtie})^{\bowtie}$. If $x\in X_{\bowtie}$, then providing $x$ as an input yields $\varphi(x)$ on both sides of this equation. If $x\notin X_{\bowtie}$, then both sides become $\varphi(-x)'$. This gives the result, and yields the equivalence.
\end{proof}

\section{Conclusion}

The foregoing analysis reveals a rich web of pairwise equivalences among various categories associated to ${\bf R}$-mingle. Although each of these equivalences is of interest in its own right, their mutually-supporting structure provides insight above and beyond that afforded by any of them individually. The Sugihara monoids have two features that allow for this sort of analysis. First, they have reducts among the normal $i$-lattices, granting access to the Davey-Werner duality and its connection to twist product constructions. Second, they are semilinear, which (among many other consequences) allows for the characterization of the ternary relation of the Urquhart duality in terms of the partial multiplication on prime filters only. Due to the powerful consequences of these properties, we expect that a similar analysis to that conducted here is possible for other classes of semilinear residuated lattices with normal $i$-lattice reducts.

\bibliographystyle{plain}
\bibliography{bibCatModRM}

\end{document}